 \newcommand{\R}{\ensuremath{\mathbb{R}}}
 \newcommand{\ba}{\begin{align}}
 \newcommand{\ea}{\end{align}}
 \newcommand{\bal}{\begin{align*}}
 \newcommand{\eal}{\end{align*}}
 \DeclareMathOperator{\supp}{supp}
 \DeclareMathOperator{\diam}{diam}
 \newcommand{\Rm}{\mathcal{R}m}
 \newcommand{\Rc}{\mathcal{R}c}
 \newcommand{\Sc}{\mathcal{R}}
 \newcommand{\dvol}{\text{d}V}
\newcommand{\dmu}{\text{d}\mu}
\newcommand{\tdmu}{\text{d}\tilde{\mu}}
 \newcommand{\darea}{\text{d}\sigma}
 \newcommand{\area}{\mathcal{A}}
 \newcommand{\vol}{Vol}
 \newcommand{\volfm}{\bar{\mu}}
\newcommand{\Euler}{\mathcal{P}_{\chi}}
\newcommand{\bEuler}{\mathcal{TP}_{\chi}}
\newcommand{\rad}{\mathbf{d}}
\renewcommand{\epsilon}{\varepsilon}
 \def\ExtendSymbol#1#2#3#4#5{\ext@arrow 0099{\arrowfill@#1#2#3}{#4}{#5}}
 \def\ExtendSymbol#1#2#3#4#5{\ext@arrow 0099{\arrowfill@#1#2#3}{#4}{#5}}
 \definecolor{hao}{rgb}{1,0.5,0}
 \definecolor{miao}{cmyk}{0.5,0,0.2,0.2}
 \definecolor{qiao}{gray}{0.96}
\newtheorem{prop}{Proposition}[section]
\newtheorem{proposition}[prop]{Proposition}
\newtheorem{theorem}[prop]{Theorem}
\newtheorem{lemma}[prop]{Lemma}
\newtheorem{claim}[prop]{Claim}
\newtheorem{corollary}[prop]{Corollary}
\newtheorem{remark}[prop]{Remark}
\newtheorem{definition}[prop]{Definition}
\newtheorem{conjecture}[prop]{Conjecture}
\newtheorem*{theorem*}{Theorem}
\newenvironment{dedication}
  {   
   \itshape             
   \raggedleft          
  }
  {\par 
  }
\numberwithin{equation}{section}
\title[\(\varepsilon\)-regularity for 4-d Ricci shrinkers]{$\epsilon$-Regularity and Structure of $4$-Dimensional Shrinking Ricci Solitons}
\author[Shaosai Huang]{Shaosai Huang}
\email{sshuang@math.wisc.edu}
\address{Department of Mathematics, University of Wisconsin - Madison, 480 Lincoln Drive, Madison, WI, 53706, U.S.A.}
\keywords{Ricci soliton, $\varepsilon$-regularity, collapsing, Bakry-\'Emery Ricci curvature}
\subjclass{53Cxx, 58Hxx}
\begin{document} 
\date{\today}

\maketitle
\begin{dedication}
----- to Kelly.
\end{dedication}
\begin{abstract}
A closed four dimensional manifold cannot possess a non-flat Ricci soliton
metric with arbitrarily small $L^2$-norm of the curvature. In this paper, we
localize this fact in the case of shrinking Ricci solitons by proving an
$\varepsilon$-regularity theorem, thus confirming a conjecture of
Cheeger-Tian~\cite{CT05}. As applications, we will also derive structural
results concerning the degeneration of the metrics on a family of complete
non-compact four dimensional shrinking Ricci solitons \emph{without} a uniform
entropy lower bound. In the appendix, we provide a detailed account of the
equivariant good chopping theorem when collapsing with locally bounded
curvature happens.
\end{abstract}

\tableofcontents

\section{Introduction}
A four dimensional gradient shrinking Ricci soliton (a.k.a. 4-d Ricci shrinker)
is a four dimensional Riemannian manifold $(M^4,g)$ equipped with a potential
function $f$ satisfying the defining equation (with a fixed scaling)
\begin{align}
\Rc_g+\nabla^2 f=\frac{1}{2}g,
\label{eqn: defn}
\end{align}
where $\Rc_g$ denotes the Ricci curvature of the Riemannian metric $g$.

We intend to study uniform behaviors of complete non-compact $4$-d Ricci
shrinkers through their moduli spaces, whose compactification is of
foundamental importance. Poineered by the work of
Cao-\v{S}e\v{s}um~\cite{CaoSes07}, Xi Zhang~\cite{XZhang06}, Brian
Weber~\cite{Weber11}, Chen-Wang~\cite{CW12} and Zhenlei Zhang~\cite{ZLZhang10} in this direction, the most satisfactory
compactness results to date, obtained by Robert Haslhofer and Reto M\"uller
(n\'e Buzano)~\cite{HM10}~\cite{HM14}, assume a uniform entropy lower bound.
In fact, Bing Wang has conjectured that a $4$-d Ricci shrinker should have an
\emph{a priori} entropy lower bound, depending solely on some topological
restrictions. (See Conjecture~\ref{thm: conjecture_entropy}.) To confirm this
conjecture, however, we need to study the degeneration of the metrics along
sequences of $4$-d Ricci shrinkers \emph{without} uniform entropy lower bound,
and then use contradiction arguments to rule out the potential occurrence of
such a situation. (For the relation between entropy lower bound and no local
collapsing property, see~\cite{Perelman02} and~\cite{KL08}.)

The obvious analogy between Ricci solitons and Einstein manifolds brings us the
foundational work of Cheeger-Tian~\cite{CT05}, which, built on Anderson's
$\epsilon$-regularity with respect to collapsing~\cite{A92}, obtains a new
$\epsilon$-regularity theorem for any four dimensional Einstein manifolds.
Cheeger-Tian conjectured (in Section 11 of~\cite{CT05}) that a similar result
should hold for four dimensional Ricci solitons, moreover:
 $$\textit{``Of particular interest is the case of shrinking Ricci
 solitons.''}$$

Our first theorem confirms their conjecture for 4-d Ricci shrinkers:
\begin{theorem}[$\epsilon$-regularity for 4-d Ricci shrinkers]
Let $(M^4,g,f)$ be a complete non-compact four dimensional shriking Ricci
soliton and fix $R>0$. Then there exists $r_R>0$, $\epsilon_R>0$ and $C_R>0$,
such that for any $r\in (0, r_R)$ and $B(p,r)\subset B(p_0,R)$, the weighted
local $L^2$-curvature control
\begin{align*}
\int_{B(p,r)}|\Rm_g|^2\ e^{-f}\dvol_g\ \le\ \epsilon_R 
\end{align*}
implies the local boundedness of curvature 
\begin{align*}
\sup_{B(p,\frac{1}{4}r)}|\Rm_g|\ \le\ C_R\ r^{-2}.
\end{align*}
\label{thm: main}
\end{theorem}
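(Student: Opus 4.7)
I would argue by contradiction, in the spirit of Anderson's and Cheeger-Tian's $\varepsilon$-regularity theorems adapted to the shrinker setting. If the statement fails, one obtains a sequence of $4$-d Ricci shrinkers $(M_i, g_i, f_i, p_i^0)$ and balls $B(p_i, r_i) \subset B(p_i^0, R)$ with $r_i \to 0$, on which the weighted $L^2$-curvature tends to $0$ while $\sup_{B(p_i, r_i/4)} |\Rm_{g_i}| > i\, r_i^{-2}$. A Hamilton-type point-picking then selects $q_i$ and $s_i \leq r_i/2$ with $Q_i := |\Rm_{g_i}|(q_i)$ satisfying $Q_i s_i^2 \to \infty$ and $|\Rm_{g_i}| \leq 4 Q_i$ on $B_{g_i}(q_i, s_i)$. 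Rescaling by $\tilde g_i := Q_i g_i$ yields pointed spaces with $|\Rm_{\tilde g_i}| \leq 4$ on balls of radius $A_i := Q_i^{1/2} s_i \to \infty$ and $|\Rm_{\tilde g_i}|(q_i) = 1$; the rescaled soliton identity becomes $\Rc_{\tilde g_i} + \nabla^2_{\tilde g_i} f_i = (2 Q_i)^{-1} \tilde g_i$, degenerating toward the steady equation $\Rc_\infty + \nabla^2 f_\infty = 0$.

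Because $B(p_i, r_i) \subset B(p_i^0, R)$ with $R$ fixed, the standard quadratic growth bounds for the shrinker potential give $R$-dependent estimates $c(R) \leq e^{-f_i} \leq C(R)$ on the relevant balls, and the $L^2$-curvature integral is scale-invariant in dimension $4$, so the smallness hypothesis transfers to $\int_{B_{\tilde g_i}(q_i, A_i)} |\Rm_{\tilde g_i}|^2 \dvol_{\tilde g_i} \to 0$. If the rescaled sequence is \emph{non-collapsed} at $q_i$, Cheeger-Gromov compactness and interior elliptic regularity for the gauged shrinker system produce a smooth pointed limit which is a complete Ricci-flat $4$-manifold with $|\Rm|(q_\infty) = 1$ and vanishing global $L^2$-curvature, an immediate contradiction.

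The genuine difficulty is the \emph{collapsed} case. Here I would follow the Cheeger-Tian strategy: pass to the orthonormal frame bundle, where the canonical $O(4)$-action together with the bounded curvature restores non-collapsedness via the Cheeger-Fukaya-Gromov theory, yielding an invariant $N$-structure with nilpotent leaves. Since the defining equation pins the Bakry-\'Emery Ricci curvature to $\tfrac{1}{2} g$, the \Weitzenbock identities for $|\Rm|$ carry over with $\Lap$ replaced by the drift Laplacian $\Lap_f := \Lap - \nabla f \cdot \nabla$ at no essential cost, and the equivariant good-chopping construction developed in the appendix supplies the cut-off functions needed to run a Moser iteration for $|\Rm|$ transversal to the collapsing fibres. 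Feeding the smallness of the transferred $L^2$-curvature into this iteration forces $|\Rm_{\tilde g_i}|(q_i) \to 0$, contradicting $|\Rm_{\tilde g_i}|(q_i) = 1$.

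The principal obstacle is precisely this collapsed step: one must ensure that the frame-bundle lifting, the $N$-structure decomposition, the equivariant chopping, and the weighted Moser iteration all respect the shrinker structure uniformly along the sequence, in particular treating the potential $f$ and the Bakry-\'Emery operator consistently and keeping track of how much the weight $e^{-f}$ and the drift term distort the standard Einstein-type estimates. The appendix's equivariant good chopping under collapsing with locally bounded curvature is tailored for exactly this purpose.
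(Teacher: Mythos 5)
Your proposal takes a genuinely different route from the paper. The paper's proof is not a blow-up argument: it runs a direct iterative scheme at the given scale $r$, combining a key estimate (Proposition~\ref{prop: KE}, which bounds the renormalized energy $I_{\Rm}^f$ via Gauss--Bonnet--Chern and the vanishing Euler characteristic of chopped saturated domains) with a fast-decay proposition (Proposition~\ref{prop: decay}, which shows $I_{\Rm}^f(p,r/2)\le (1-\eta_R) I_{\Rm}^f(p,r)$ via a Cheeger--Colding cone-rigidity contradiction), and then applies Anderson's Moser-iteration theorem (Proposition~\ref{prop: Anderson}) once the renormalized energy has been driven below $\varepsilon_A(R)$. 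Your non-collapsed branch (Hamilton point-picking, rescaling, $|\nabla f|_{\tilde g_i}\to 0$ forcing a Ricci-flat limit with a curvature point but vanishing $L^2$-curvature) is correct and is more elementary than the paper's machinery; it would give a clean contradiction there.

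However, the collapsed branch of your argument has a genuine gap, and it is precisely the gap that the paper's entire technical apparatus is built to close. You assert that, after lifting to the frame bundle and using the $N$-structure and equivariant chopping, one can ``run a Moser iteration for $|\Rm|$ transversal to the collapsing fibres'' and that ``feeding the smallness of the transferred $L^2$-curvature into this iteration forces $|\Rm_{\tilde g_i}|(q_i)\to 0$.'' This does not work. Moser iteration controls $\sup|\Rm|$ in terms of the \emph{volume-renormalized} energy $\mu_f(B)^{-1}\int_B|\Rm|^2\,\dmu_f$, not the raw $L^2$-norm. After your Hamilton point-picking, $|\Rm_{\tilde g_i}|(q_i)=1$ with $|\nabla\Rm_{\tilde g_i}|$ controlled (by Lemma~\ref{lem: elliptic_regulairty}), so $|\Rm_{\tilde g_i}|\ge 1/2$ on a ball of definite size, and hence the renormalized energy at unit scale around $q_i$ is \emph{bounded below} by a universal positive constant --- it is the volume $\mu_i\to 0$ that makes the raw $L^2$-norm small, not the curvature. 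Lifting to the frame bundle does not repair this: the frame-bundle metric keeps the $O(4)$ fibers at a fixed size, so the volume ratio of a frame-bundle ball still degenerates proportionally to the base volume (e.g.\ $F(T^2_\delta)\cong T^2_\delta\times S^1_1$ has unit-ball volume $\sim\delta^2$), and the average of $|\Rm|^2$ over a frame-bundle ball is the same as over the base ball. So Moser iteration on the frame bundle again sees a renormalized energy of order one, and gives nothing. The frame bundle is used in the paper's Appendix to \emph{construct saturated chopped domains with controlled second fundamental form}, whose Euler characteristic vanishes; the improved bound on $\int|\Rm|^2$ then comes from the Gauss--Bonnet--Chern identity $\chi(W)=0=\int_W\Euler+\int_{\partial W}\bEuler$, not from Moser iteration. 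You would need to replace your collapsed branch with an argument of this type (the Key Estimate plus Fast Decay, or some substitute that upgrades small $L^2$-energy to small \emph{renormalized} energy at a definite smaller scale) before Anderson's theorem can be invoked.
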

\noindent Here $p_0\in M$ is a minimum point of $f$, see Lemma~\ref{lem:
potential_growth} for more details.

In order to further motivate our theorem, we notice that Cheeger-Tian's
$\epsilon$-regularity theorem could be viewed as a non-trivial localization of
the fact that for a closed four dimensional Einstein manifold $(M,g)$, its
Euler characteristic can be computed as
$$\chi(M)=\frac{1}{8\pi^2}\int_M|\Rm_g|^2\ \dvol_g.$$ If $\|\Rm_g\|_{L^2(M)}$
is sufficiently small, then the integrity of $\chi(M)$ will force $\chi(M)=0$,
whence the flatness of $(M,g)$.

Similarly, on a closed four dimensional Ricci soliton $(M,g,f)$, we have
$$\chi(M)=\frac{1}{8\pi^2}\int_M\left(|\Rm_g|^2-|\mathring{\Sc}c_g|^2\right)\
\dvol_g,$$ where $\mathring{\Sc}c_g$ is the traceless Ricci tensor. Now if
$\|\Rm_g\|_{L^2(M)}<\pi$, as $|\mathring{\Sc}c_g|^2\le 2|\Rm_g|^2,$ we must
have $-1<\chi(M)<1$, and thus $\chi(M)=0$. It follows that $(M,g,f)$ must be a
steady or an expanding Ricci soliton: otherwise, were $(M,g,f)$ a 4-d Ricci
shrinker, $\pi_1(M)$ must be finite by~\cite{WW}, leading to $\chi(M)\ge 2$
that contradicts the vanishing of $\chi(M)$. But a closed steady or expanding
four dimensional Ricci soliton must be Einstein, so $|\mathring{\Sc}c|\le
|\nabla^2f|\equiv 0$ by (\ref{eqn: defn}), and
$\|\Rm_g\|^2_{L^2(M)}=8\pi^2\chi(M)=0$, which means that $(M,g)$ must be flat.

So our $\epsilon$-regularity theorem for 4-d Ricci shrinkers is a localization
of the above rigidity of closed four dimensional Ricci solitons, and
particularly suits the study of non-compact ones. Notice however, as
pointed out in~\cite{HM10}, that \emph{``most interesting singularity models
are non-compact, the cylinder being the most basic example''.}

 We also need to notice that the dependence of the constants in our
 $\epsilon$-regularity theorem is a new feature caused by the presence of the
 potential function: the allowence of the existence of non-compact Ricci
 shrinkers --- in fact, they strongly resemble positive Einstein manifolds,
 which are compact, scaling rigid, and which, may only admit families of
 metrics that are either collapsing everywhere, or else nowhere. Similar
 phenomenon occuring for geodesic balls of fixed size centered at the base
 point in a non-compact 4-d Ricci shrinker, our $\varepsilon$-regularity
 theorem only applies within a fixed distance from the base point. Moreover, in
 our future presentations, we will fix such a distance and do not elaborate on
 writing down scaling invariant formulae.

The proof of Theorem~\ref{thm: main} is based on the recent advances in the
study of shrinking Ricci solitons, and the comparison geometry of Bakry-\'Emery
Ricci curvature lower bound (see, among
others,~\cite{HM10}~\cite{HM14},~\cite{Cao10},
~\cite{BLChen09},~\cite{Lott03},~\cite{MWang15},~\cite{WW}
and~\cite{WangZhu13}, etc.). Here we briefly outline the proof of
Theorem~\ref{thm: main}, which follows the strategy of Cheeger-Tian~\cite{CT05}
in the Einstein case. We will indicate the necessary improvements in order to
deal with the lack of the Einstein's equation.

\subsubsection*{\textbf{Starting point}}Our starting point is a 4-d Ricci
shrinker version of Anderson's $\varepsilon$-regularity with respect to
collapsing~\cite{A92}, see Proposition~\ref{prop: Anderson}. For any $r\le 1$
and $B(p,r)\subset B(p_0,R)$, let the renormalized energy of $B(p,r)$ be
defined as (see Definition~\ref{defn: renormalized_energy}) $$I_{\Rm}^f(p,r)\
:=\ \frac{\bar{\mu}_{R}(r)}{\mu_f(B(p,r))}\int_{B(p,r)}|\Rm|^2\ \dmu_f,$$ where
$\dmu_f:=e^{-f}\dvol_g$ and we will denote $\mu_f(U)=\int_U1\ \dmu_f$ for any
$U\subset M$. We notice that it is continuously increasing in $r$. Anderson's
theorem asserts the existence of positive constants $\varepsilon_A(R)$ and
$C_A(R)$, such that
\begin{align*} 
I_{\Rm}^f(p,r)\ \le\ \varepsilon_A(R)\ \Longrightarrow\
\sup_{B(p,\frac{r}{2})}|\Rm|\ \le\  C_A(R)\ r^{-2}I_{\Rm}^f(p,r)^{\frac{1}{2}}.
\end{align*}
However, the input of our $\varepsilon$-regularity theorem seems to be quite
far from fulfilling the smallness of $I_{|Rm|}^f(p,r)$ required by this
theorem: when collapsing happens, the smallness of the energy
$\int_{B(p,r)}|\Rm|^2\ \dmu_f$ may be caused by the smallness of
$\mu_f(B(p,r))$. This difficulty is overcome in two steps: firstly the key
estimate guarantees a uniform bound of $I_{|\Rm|}^f(p,2r)$ from the smallness
of $\int_{B(p,r)}|\Rm|^2\ \dmu_f$, as assumed in Theorem~\ref{thm: main}; then
the fast decay proposition guarantees that after a definite number, say $j_R$
times, of bisecting the given scale $r$, $I_{|\Rm|}^f(p,2^{1-j_R}r)$ is small
enough so that Anderson's theorem applies. Throughout the introduction we will
let $B(U,s)$ denote the $s$-tubular neighborhood around any set $U\subset M$,
and $A(U;s,r)=B(U,r)\backslash B(U,s)$ for $r>s>0$.

\subsubsection*{\textbf{Key estimate}}The key estimate (Proposition~\ref{prop:
KE}) follows from an interation argument, in each step of which, the energy
over a domain $U$ is roughly bounded by the $\frac{3}{4}$-power of the energy
on some $s$-tubular neighborhood of $U$, with some carefully chosen small $s\in
(0,r)$:
\begin{align}
\int_{U}|\Rm|^2\ \dmu_f\ \le\ C(R)\
\mu_f(B(U,s))\left(s^{-4}+\left(\frac{s^{-\frac{4}{3}}}{\mu_f(B(U,s))}\int_{B(U,s)}|\Rm|^2\ \dmu_f\right)^{\frac{3}{4}}\right),
\label{eqn: intro_0}
\end{align}
 see also the estimates (\ref{eqn: Rm_L2}) and (\ref{eqn: Rm_L2i}). Now we
 briefly explain how to obtain this estimate.

If $U$ is collapsing with locally bounded curvature (see Definition~\ref{def:
truncated_collapsing}), Cheeger-Tian proved in Section 2 of~\cite{CT05} that a
slightly larger neighborhood $U'$ of $U$ acquires a nilpotent structure, which
implies the vanishing of the Euler characteristic of $U'$:
\begin{align}
0\ =\ \chi(U')\ =\ \int_{U'}\Euler\ +\int_{\partial U'}\bEuler.
\label{eqn: intro_1}
\end{align}
For 4-d Ricci shrinkers,
$8\pi^2\Euler=\left(|\Rm|^2-|\mathring{\nabla}^2f|^2\right)\dvol_g$ since
$\mathring{\Sc}c_g=\mathring{\nabla}^2f$ by the defining equation (\ref{eqn:
defn}); and $\bEuler$ is a three form on $\partial U'$ with coefficients
determined by $\Rm_g|_{\partial U'}$ and $II_{\partial U'}$, the second
fundamental form of $\partial U'$, see (\ref{eqn: boundary_Pfaffian}). The
integral of $|\Rm|^2-|\mathring{\nabla}^2f|^2$ over $U'$, using (\ref{eqn:
intro_1}), is then pushed to the boundary integral $\int_{\partial U'}\bEuler$.

The control of  $\int_{\partial U'}\bEuler$ relies on the equivariant good
chopping theorem, (stated and used in Theorem 3.1 of~\cite{CT05}, also see
Appendix A for a detailed proof,) which enables us to choose $U'$ so that
$\partial U'$ is saturated by the nilpotent structure, and essentially bounds
$|II_{\partial U'}|$ by $|\Rm|^\frac{1}{2}$. It follows that $|\bEuler|$ is
then controlled by $|\Rm|^{\frac{3}{2}}$, which, via (\ref{eqn: intro_1}),
improves the integration of $|\Rm|^2$ over $U'$ to integrating
$|\Rm|^{\frac{3}{2}}$ over $\partial U'$. Averaging on a tubular neighborhood
of $\partial U$ and using a maximal function argument, Cheeger-Tian then
obtained:
\begin{align}
\left|\int_{\partial U'}\bEuler\right|\le C(R)\mu_f(A(U;0,s))
\left(s^{-4}+\left(\frac{s^{-\frac{4}{3}}}{\mu_f(A(U;0,s))}\int_{A(U;\frac{1}{4}s,\frac{3}{4}s)}|\Rm|^2\ \dvol_g\right)^{\frac{3}{4}}\right).
\label{eqn: intro_2}
\end{align}
 Invoking (\ref{eqn: intro_1}), we obtain a control of
 $\int_{U}|\Rm|^2-|\mathring{\nabla}^2f|^2\ \dmu_f$ by the right-hand side of
 the above estimate, since $\dmu_f$ is comparable to $\dvol_g$ in $B(p_0,R)$ in
 a uniform way.

In the Einstein case, since $|\nabla^2f|\equiv 0$, the above dominating term on
the right-hand side suffices to provide the desired control of
$\int_{U}|\Rm|^2\ \dmu_f$ in (\ref{eqn: intro_0}). For 4-d Ricci shrinkers,
however, $|\mathring{\nabla}^2 f|^2$ does not vanish and the control of
$\int_{U}|\nabla^2 f|^2\ \dmu_f$ relies on the gradient estimate $|\nabla f|\le
R\slash 2+\sqrt{2}$ (see Lemma~\ref{lem: gradient_estimate}), as well as
Cheeger-Colding's cut-off function (see Lemma~\ref{lem: cutoff}):
\begin{align}
\int_{U}|\nabla^2f|^2\ \dmu_f\ \le\ C(R)\ \mu_f(B(U,s))\ s^{-2},
\label{eqn: intro_3}
\end{align}
see Lemma~\ref{lem: Hessian_control}. When $s>0$ is very small, the right-hand
side of this estimate is dominated by $\mu_f(B(U,s))\ s^{-4}$, so replacing
$\mu_f(A(U;0,s))$ by $\mu_f(B(U,s))$ in (\ref{eqn: intro_2}), we could obtain
the desired energy estimate (\ref{eqn: intro_0}) for the iteration argument.

\subsubsection*{\textbf{Fast decay}} The fast decay proposition
(Proposition~\ref{prop: decay}) asserts the existence of some gap $\eta_R\in
(0,1)$, such that if the energy $\int_{B(p,r)}|\Rm|^2\ \dmu_f$ and volume
$\mu_f(B(p,r))$ of a ball $B(p,r)$ is sufficiently small, then
$$I_{\Rm}^f(p,r\slash 2)\ \le\ (1-\eta_R)\ I_{\Rm}^f(p,r).$$ This is proved by
a contradiction argument. Suppose on the contrary, for positive $\eta\to 0$,
there are counterexamples $B(p,r)\subset B(p_0,R)$ of vanishing $\mu_f$-volume
$\mu_{f}B(p,r)\to 0$ and $$I_{\Rm}^{f}(p,r\slash 2)\ >\ (1-\eta)\
I_{\Rm}^{f}(p,r),$$ we could use volume comparison to see that $A(p;r\slash
2,r)$ is an almost $\mu_{f}$-volume annulus (\ref{eqn: volume_cone}). By the
theory of Cheeger-Colding (Lemma~\ref{lem: Cheeger-Colding}), this property
implies that a slightly smaller annular region in $A(p;r\slash 2,r)$ is an
almost metric cone, whose radial distance approximated by some smooth function
$\tilde{u}$. The approximation is in the $C^0$-sense, as well as the
\emph{average} $H^2$-sense, see (\ref{eqn: C_0}) -- (\ref{eqn: H_2}).

Moreover, the key estimate implies the almost vanishing (\ref{eqn:
annulus_curvature_vanishing}) and regularity (\ref{eqn: annulus_regularity}) of
the curvature on the annulus $A(p;r\slash 2,r)$, and all derivative controls of
$\tilde{u}$, see (\ref{eqn: u_i_regularity}).

Let $W=B(x,3r\slash 2)\cup \tilde{u}^{-1}(r\slash 2, a)$ for some regular value
$a\in (3r\slash 4, r)$ of $\tilde{u}$. On the one hand, $\int_W|\Rm|^2\
\dvol_g$ is positive but very small (as assumed by the $\varepsilon$-regularity
theorem), say $$0\ <\ \int_W|\Rm|^2\ \dvol_g\ <\ \frac{1}{4};$$ on the other
hand, $\partial W=\tilde{u}^{-1}(a)$ smoothly approximates the outer boundary
of an annulus $A_{\infty}$ in a flat cone. Intuitively, since the cone is flat,
we know that the second fundamental form of its outer boundary,
$II_{\partial_+A_{\infty}}$, is positive, and its boundary Gauss-Bonnet-Chern
term $|\bEuler|_{\partial A_{\infty}}|\equiv 1$. Thus the smoothness of the
approximation $\tilde{u}^{-1}(a)\rightarrow \partial_+A_{\infty}$ together with
the vanishing of curvature (\ref{eqn: annulus_curvature_vanishing}) will imply
the positivity of coefficients of $\bEuler|_{\partial
W}=\bEuler|_{\tilde{u}^{-1}(a)}$, and the collapsing implies the smallness of
its integral, say
\begin{align}
0\ <\ \int_{\partial W}\bEuler\ <\ \frac{1}{4}.
\label{eqn: intro_4}
\end{align}
 In this way, for Einstein manifolds, we have obtained a smooth bounded domain
 $W$ whose Euler characteristic $\chi(W)$ satisfies $$0\ <\ \chi(W)\ =\
 \frac{1}{8\pi^2}\int_W|\Rm|^2\ \dvol_g+\int_{\partial W}\bEuler\ <\
 \frac{1}{2}.$$ This is impossible.

More specifically, in the Einstein case, Cheeger-Tian appealed to the theory of
Cheeger-Colding-Tian (see Theorem 3.7 of~\cite{CCT02}), which controls the
average error $|II_{\tilde{u}^{-1}(a)}-II_{\partial_+A_{\infty}}|$ on the level
set $\tilde{u}^{-1}(a)$, see (8.14) -- (8.19) of~\cite{CT05}. This was
implemented by lifting to a local covering, which is non-collapsing, and where,
since (\ref{eqn: H_1}) and (\ref{eqn: H_2}) are estimates of the
\emph{average}, similar estimates (\ref{eqn: covering_H_1}) and (\ref{eqn:
covering_H_2}) hold.

In the case of 4-d Ricci shrinkers, the control of $\int_W|\Rm|^2\ \dmu_f$ does
not impose a control of $\int_W\Euler$ directly, due to the presence of the
term $|\mathring{\nabla}^2f|^2$. However, we could further assume
$I_{|\Rm|}^f(p,r\slash 2)>\varepsilon_A(R)$, since otherwise there is no need
of proving the proposition. This assumption gives us a lower bound of
$\int_W|\Rm|^2\ \dmu_f$ proportional to $\mu_f(B(p,r))\ r^{-4}$. On the other
hand, recall that we have the estimate (\ref{eqn: intro_3}) of $|\nabla^2
f|^2$, with $s=4$ and $U=B(p,r)$. Thus whenever $r$ is sufficiently small,
$\int_W|\nabla^2 f|^2\ \dmu_f$ is dominated by the energy $\int_W|\Rm|^2\
\dmu_f$, so $$0\ <\ \frac{1}{8\pi^2}\int_W|\Rm|^2-|\mathring{\nabla}^2f|^2\
\dvol_g\ =\ \int_W\Euler.$$ This mainly accounts for the bound of scale $r_R$
in the statement of Theorem~\ref{thm: main}. The small upper bound of
$\int_W\Euler$ follows easily from the assumption of the
$\varepsilon$-regularity theorem.

In controlling the boundary Gauss-Bonnet-Chern integral $\int_{\partial
W}\bEuler$, we notice that the theory of Cheeger-Colding-Tian~\cite{CCT02} is
not available for 4-d Ricci shrinkers. (Though we expect a version of this
theory to hold in the case of Bakry-\'Emery Ricci curvature bounded below.) We
turn to the regularity (\ref{eqn: u_i_regularity}) of $\tilde{u}$: we could
find a fine enough net $\{x_j\}$ in a slightly smaller annulus contained in
$A(p;r\slash 2,r)$, such that at each point of the net we have
$$II_{\tilde{u}^{-1}(\tilde{u}(x_j))}(x_j)\ >\ \frac{1}{2}\ I_3,$$ where $I_3$
is the $3\times 3$-identity matrix; by the regularity of $\tilde{u}$ (\ref{eqn:
u_i_regularity}) and the closeness of points in the net, we could then obtain a
bound $$\forall a\in (0.7r,0.8r),\quad II_{\partial \tilde{u}^{-1}(a)}\ >\
\frac{1}{4}\ I_3.$$ This, together with the vanishing of the curvature
(\ref{eqn: annulus_curvature_vanishing}), give the desired point-wise
positivity and upper bound of coefficients of $\bEuler|_{\partial
\tilde{u}^{-1}(a)}$, for any $a\in (0.7r,0.8r)$. Integrating over $\partial
\tilde{u}^{-1}(a)$ (for some $a\in (0.7r,0.8r)$) and using the volume
collapsing of $\partial \tilde{u}^{-1}(a)$, we could obtain the desired bound
(\ref{eqn: intro_4}), see (\ref{eqn: good_principal_curvature_x_i}) --
(\ref{eqn: small_boundary}), thus concluding the proof of the fast decay
proposition.\\

Our $\epsilon$-regularity theorem sees a few applications in understanding the
moduli space of complete non-compact 4-d Ricci shrinkers. Our second theorem is
in this direction:
\begin{theorem}
Let $(M_i,g_i,f_i)$, be a sequence of complete non-compact 4-d Ricci shrinkers
with $\Sc_{g_i}\le \bar{S}$ and $|\chi(M_i)|\le \bar{E}$, then there exist
positive numbers $\bar{R}=\bar{R}(\bar{S})$ and
$\bar{J}=\bar{J}(\bar{E},\bar{S})$ together with the following data:
\begin{enumerate} 
\item a subsequence, still denoted by $(M_i,g_i,f_i)$,
\item marked points $\{p_i^1,\cdots,p_i^J\}\subset B(p^0_i,\bar{R})$ with $J\le
\bar{J}$, and
\item a length space $(X,d_{\infty})$ with marked points
$\{x_{\infty}^1,\cdots,x_{\infty}^J\}$,
\end{enumerate}
such that $(M_i,g_i,p_i^1,\cdots, p_i^J)\rightarrow
(X,d_{\infty},x^1_{\infty},\cdots,x_{\infty}^J)$ in the sense of strong
multi-pointed Gromov-Hausdorff convergence.
\label{thm: moduli}
\end{theorem}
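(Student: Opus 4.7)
The plan is to find a uniform bound on the weighted curvature energy inside $B(p_i^0,\bar R)$, so that Theorem~\ref{thm: main} and a Vitali argument produce a uniformly bounded set of ``bad'' points, and to extract a multi-pointed Gromov--Hausdorff limit from the resulting uniform curvature control away from those points. For \textbf{Step~1}, using the scalar curvature bound $\Sc_{g_i}\le\bar S$ and the Hamilton identity for shrinkers, one picks $\bar R=\bar R(\bar S)$ so that throughout $B(p_i^0,\bar R+2)$ the potential $f_i$ and $|\nabla f_i|$ are uniformly bounded; in particular $e^{-f_i}$ is bounded above and below, making the constants $r_{\bar R}, \epsilon_{\bar R}, C_{\bar R}$ of Theorem~\ref{thm: main} uniform across the sequence and $\dmu_{f_i}$ uniformly comparable to $\dvol_{g_i}$.

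For \textbf{Step~2 (global energy bound)}, I would invoke the equivariant good chopping theorem (Appendix~A) to pick a domain $\Omega_i$ with $B(p_i^0,\bar R)\subset\Omega_i\subset B(p_i^0,\bar R+1)$ whose boundary is saturated by the nilpotent structure on the collapsed region, and apply Chern--Gauss--Bonnet. Since $\mathring{\Rc}_{g_i}=-\mathring{\nabla}^2 f_i$ by the soliton equation,
\begin{equation*}
\chi(\Omega_i)=\frac{1}{8\pi^2}\int_{\Omega_i}\bigl(|\Rm_{g_i}|^2-|\mathring{\nabla}^2 f_i|^2\bigr)\,\dvol_{g_i}+\int_{\partial\Omega_i}\bEuler.
\end{equation*}
Here $|\chi(\Omega_i)|$ is controlled by $\bar E$ up to the topology of the collapsed ends outside $\Omega_i$; the Hessian integral is controlled by $C(\bar R,\bar S)$ via Lemma~\ref{lem: Hessian_control} and the gradient bound; and the boundary Pfaffian is controlled by the $\tfrac{3}{2}$-power estimate of $|\Rm|$ on $\partial\Omega_i$ from the saturated boundary, exactly as in the key estimate. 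Rearranging yields $\int_{B(p_i^0,\bar R)}|\Rm_{g_i}|^2\,\dmu_{f_i}\le C(\bar E,\bar S)$.

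For \textbf{Step~3 (marked points and GH limit)}, a Vitali covering at scale $r_{\bar R}/5$ of the set where the hypothesis of Theorem~\ref{thm: main} fails has cardinality at most $\bar J:=C(\bar E,\bar S)/\epsilon_{\bar R}$; label these centers $p_i^1,\dots,p_i^{J_i}$ and pass to a subsequence fixing $J_i\equiv J$. Outside any fixed neighborhood of $\{p_i^j\}$, Theorem~\ref{thm: main} provides a uniform pointwise bound on $|\Rm_{g_i}|$, so Cheeger--Gromov compactness allowing collapsing with bounded curvature, combined with Bakry--\'Emery volume comparison for the weighted measure, yields pointed Gromov--Hausdorff subsequential limits on each annular region, which a standard diagonal argument in the scale of the neighborhoods of the $p_i^j$ glues into a single length space $(X,d_\infty,x_\infty^1,\dots,x_\infty^J)$ to which $(M_i,g_i,p_i^1,\dots,p_i^J)$ converges in the strong multi-pointed Gromov--Hausdorff sense.

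The main obstacle is Step~2: controlling $\chi(\Omega_i)$ in terms of $\chi(M_i)$ requires that $M_i\setminus\Omega_i$ contribute a uniformly bounded amount of topology (justified once the outside collapses with locally bounded curvature), while keeping $\int_{\partial\Omega_i}\bEuler$ small simultaneously requires a carefully saturated chopping; both goals are reachable by combining the Appendix~A machinery with the $\tfrac{3}{2}$-power boundary estimate already developed in the proof of Theorem~\ref{thm: main}.
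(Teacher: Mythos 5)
Your proposal has a genuine gap in Step~2, and the root cause is that you have misidentified what the scalar curvature bound $\bar{S}$ is for. In your Step~1 you use $\Sc_{g_i}\le\bar{S}$ to bound $f_i$ and $|\nabla f_i|$ inside $B(p_i^0,\bar{R}+2)$, but those bounds already follow from Lemmas~\ref{lem: potential_growth} and~\ref{lem: gradient_estimate} without any scalar curvature hypothesis. The paper uses $\bar{S}$ for something entirely different: it feeds into the Munteanu--Wang maximum-principle argument (Proposition~1.1--1.3 of~\cite{MWang15}) to obtain a \emph{pointwise} bound $\sup_{M\setminus D(R_{MW})}|\Rm|\le C_{MW}(\bar{S})$ outside a fixed sublevel set of $f$. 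This pointwise control at infinity is the engine of the whole proof, and your proposal never produces it.

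Without that estimate, Step~2 is circular. The equivariant good chopping theorem (Appendix~A) requires the domain to be $(\delta,a)$-collapsing with locally bounded curvature; by Lemma~\ref{lem: CT_local_bdd_curvature}, certifying this hypothesis already requires the smallness of $\int|\Rm|^2\dmu_f$ on unit balls --- the very quantity you are trying to bound. Likewise, the $\tfrac{3}{2}$-power boundary estimate from Proposition~\ref{prop: integral_TP_control} bounds $\int_{\partial\Omega_i}\bEuler$ in terms of $\int_{A(K;\cdot,\cdot)}|\Rm|^2\dmu_f$, so ``rearranging'' cannot produce an a priori bound; the key estimate only resolves this recursion under the smallness hypothesis $\int_{B(E,r)}|\Rm|^2\dmu_f\le\epsilon_{KE}(R)$, which is not available here. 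Finally, as you yourself flag, nothing in your setup relates $\chi(\Omega_i)$ to $\chi(M_i)$: the unbounded region $M_i\setminus\Omega_i$ need not collapse (e.g.\ round cylinders are non-collapsed at infinity), so there is no topological control without knowing the geometry outside. The paper avoids all three issues at once: with $|\Rm|\le C_{MW}$ and $|\nabla f|\ge 1$ outside $D(R_{MW})$, the function $f$ is a proper Morse function with no critical points there, so $D(R)$ is a deformation retract of $M$ and $\chi(D(R))=\chi(M)$; Gauss--Bonnet--Chern is then applied directly to a sublevel set $D(R_2)$, whose boundary Pfaffian is controlled pointwise (not via chopping) because $II_{\Sigma(R)}\sim\nabla^2 f/|\nabla f|$, $|\nabla^2 f|\le|\Rc|+\tfrac12$, and $|\Rm|$ are all uniformly bounded there; and a coarea argument picks a level $R_2\in[2R_{MW},3R_{MW}]$ with $\Vol(\Sigma(R_2))$ controlled. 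Your Step~3 is essentially Proposition~\ref{prop: convergence} and is fine once the energy bound is in hand, but Step~2 needs to be replaced by the Munteanu--Wang curvature-at-infinity estimate.
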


\noindent Here we notice that $f$ has a global minimum point $p_0$
(see~\cite{CaoZhou10} and~\cite{HM10}), which will be our designated base point.
For more details about the \emph{``strong multi-pointed Gromov-Hausdorff
convergence''}, see Definition~\ref{defn: strong_convergence}. Notice that once
we are given a global upper bound of scalar curvature, then as we will show
later, $\chi(M)$ is a finite number, so our consideration is well-posed. The
condition on bounded Euler characteristic is topological in nature, while the
assumption on the scalar curvature, although being natural in the K\"ahler
setting, is technical and we hope to remove in our future work.

The paper is organized as following: after the preliminary results in Section
2, we will discuss in Section 3 the regularity and collapsing of 4-d Ricci
shrinkers with locally bounded curvature; Section 4 consists of the proof of
the $\epsilon$-regularity theorem for $4$-d Ricci shrinkers, while its
applications in studying the moduli space are in Section 5; conjectures raised
in the final section, the paper concludes with an appendix: a detailed
discussion of the good chopping theorem, under the context of collapsing with
locally bounded curvature.

\subsubsection*{\textbf{Notations}}
Throughout this paper the following notations are employed: 

\begin{enumerate}





\item $p_0\in M$ denotes the base point of $M$; also use $p_i^0\in M_i$ for a
sequence $\{M_i\}$.

\item $\Rm_g$, $\Rc_g$ and $\Sc_g$ denote the Riemannian curvature, the Ricci
curvature, and the scalar curvature of a given Riemannian metric $g$,
respectively. For the sake of simplicity, we will write $\Rm$, $\Rc$ and $\Sc$
when there is no confusion.

\item For any $E\subset M$ and $r>0$, define
$$
B(E,r):=\left\{x\in M:\ \exists y\in E,\ d(x,y)<r\right\}.
$$
For any $E\subset M$ and $0<r_1<r_2$, define
$$
A(E;r_1,r_2):=\left\{x\in M: \forall y\in E,\ d(x,y)>r_1,\ \text{and}\ \exists
z\in E,\ d(x,z)<r_2\right\}.
$$ 
Especially, $B(x,r)$ is the geodesic ball of radius $r$ around $x\in M$ and
$A(x;r_1,r_2)$ is the geodesic annulus around $x\in M$, with inner and outer
radii specified by $r_1$ and $r_2$ respectively.

\item $\Psi(\alpha,\beta\ |\ a,b,c)$ will denote some positive function
depending on $\alpha,\beta,a,b,c$ such that for any fixed $a,b,c$, $$
\lim_{\alpha,\beta\rightarrow 0}\Psi(\alpha,\beta\ |\ a,b,c)\ =\ 0.
$$
Notice that the specific value of $\Psi$ may change from line to line.

\item We will use bold-face letter to denote a vector in $\R^4$, e.g. the origin
is denoted by $\mathbf{0}$ and a vector is denoted by $\mathbf{v}$.
\end{enumerate}

\textbf{Acknowledgements:} \emph{I am grateful to Bing Wang for introducing me
the topics discussed in this paper, and his many encouragements. I am also
thankful to Jeff Cheeger who shared with me his insights on the subject and
helped me improve the exposition of this paper. Many thanks going to Gao Chen,
Xiuxiong Chen, Bing Wang, Yuanqi Wang and Ruobing Zhang for helpful discussions
on the subject, I would also like to thank Huai-Dong Cao for helpful comments
on the first version of this paper. Last by not least, I would like to thank an
anonymous referee for careful proofreading and valuable suggestions.}

\section{Preliminaries}
Given a 4-d Ricci shrinker $(M,g,f)$, in this section we record those properties needed throughout the paper. The results, except for the equations concerning the Euler characteristic, are valid in general dimensions, but we present them in the four dimensional setting for the sake of simplicity.
\subsection{Basic properties of 4-d Ricci shrinkers}
This subsection collects the differential equations satisfied on 4-d Ricci shrinkers, as well as the growth estimates of the potential function and volume. A good overall reference on topics covered here is Cao's Lecture notes~\cite{Cao10}.
\subsubsection{\textbf{Equations of the potential}} We start with taking trace of the defining equation (\ref{eqn: defn}) to get
\begin{align}
\Sc+\Delta f=2.
\label{eqn: scalar_f}
\end{align}
We also notice the fundamental observation due to Hamilton~\cite{Ham93} states that the quantity $\Sc+|\nabla f|^2-f$ is a constant on $M$, and in this paper we will make the following normalization for the potential function:
\begin{align}
\Sc+|\nabla f|^2=f.
\label{eqn: normalization}
\end{align}
Subtracting (\ref{eqn: normalization}) from (\ref{eqn: scalar_f}), we will get an elliptic equation of $f$ that does not involve any curvature term:
\begin{align}
\Delta f-|\nabla f|^2=2-f.
\label{eqn: elliptic}
\end{align}
This equation is of fundamental importance for our argument to obtain various estimates in later sections, since it gives a the Weitzenb\"ock formula of $f$:
\begin{align}
\Delta^f|\nabla f|^2\ =\ 2|\nabla^2f|^2-|\nabla f|^2,
\label{eqn: Bochner}
\end{align}
where the drifted Laplacian $\Delta^f:=\Delta-\nabla f\cdot \nabla$, and we used the defining equation (\ref{eqn: defn}), the elliptic equation (\ref{eqn: elliptic}), together with the equality $\nabla |\nabla f|^2\cdot \nabla f=2\nabla^2f(\nabla f,\nabla f)$,.

\subsubsection{\textbf{Equations of the curvature}} On the other hand, the curvature satisfies the following elliptic equations (see~\cite{PW10}):
\begin{align}
\Delta \Sc-\nabla f\cdot \nabla \Sc\ &=\ \Sc-2|\Rc|^2,
\label{eqn: scalar_laplace}\\
\Delta \Rc-\nabla f\cdot \nabla \Rc\ &=\ \Rc- 2\Rm\ast \Rc,\quad \text{and}\\
\Delta \Rm-\nabla f\cdot \nabla \Rm\ &=\ \Rm+\Rm\ast\Rm.
\label{eqn: Rm_laplace}
\end{align}
By the maximum principle applied to (\ref{eqn: scalar_laplace}), it was observed in~\cite{BLChen09}:
\begin{lemma}
$\Sc> 0$ unless $(M,g)$ is flat.
\label{lem: R_nonnegative}
\end{lemma}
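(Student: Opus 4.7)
The plan is to apply a maximum principle to equation~(\ref{eqn: scalar_laplace}), which, in terms of the drifted Laplacian introduced around (\ref{eqn: Bochner}), reads
\[
\Delta^f \Sc \;=\; \Sc - 2|\Rc|^2.
\]
Since the defining equation~(\ref{eqn: defn}) yields $\Rc + \nabla^2 f = \tfrac12 g$, the Bakry-\'Emery Ricci tensor is strictly positive, so the Omori-Yau-type maximum principle for $\Delta^f$ is available on the complete non-compact $M$. I would use it to produce a sequence $\{x_k\}\subset M$ along which $\Sc(x_k)\to s_0 := \inf_M \Sc$ and $\Delta^f \Sc(x_k) \geq -\epsilon_k$ with $\epsilon_k\to 0$.

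To obtain nonnegativity, I would combine this with the dimension-four trace inequality $|\Rc|^2 \geq \tfrac14 \Sc^2$ and evaluate the equation at $x_k$:
\[
-\epsilon_k \;\leq\; \Sc(x_k) - 2|\Rc|^2(x_k) \;\leq\; \Sc(x_k) - \tfrac12 \Sc(x_k)^2.
\]
Passing to the limit yields $0 \leq s_0 - \tfrac12 s_0^2$, which rules out $s_0 < 0$ and hence gives $\Sc\geq 0$ on $M$.

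For the dichotomy, rewriting the equation as $\Delta^f \Sc - \Sc = -2|\Rc|^2 \leq 0$ exhibits $\Sc$ as a nonnegative supersolution of a linear elliptic operator with smooth coefficients, so E.~Hopf's strong maximum principle forces either $\Sc > 0$ on all of $M$ or $\Sc \equiv 0$. In the latter case, the equation forces $|\Rc|^2 \equiv 0$ and then (\ref{eqn: defn}) reads $\nabla^2 f = \tfrac12 g$; combined with Hamilton's normalization $\Sc + |\nabla f|^2 = f$, which implies $f(p_0) = 0$ at the minimum point, a classical Tashiro-type rigidity identifies $(M, g)$ isometrically with the Gaussian shrinker on $\R^4$, and in particular $(M, g)$ is flat.

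The main subtlety is the Omori-Yau step: the version for the drifted Laplacian on complete non-compact $M$ must be invoked, which is standard given the strict positivity of the Bakry-\'Emery Ricci tensor but should be cross-referenced carefully. The closing rigidity is essentially algebraic given $\Rc\equiv 0$, $\nabla^2 f = \tfrac12 g$, and $|\nabla f|^2 = f$.
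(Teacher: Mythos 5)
The paper proves Lemma~\ref{lem: R_nonnegative} by citing B.-L.\ Chen~\cite{BLChen09}, whose argument is a spatially localized parabolic maximum principle applied to the self-similar ancient Ricci flow generated by the shrinker; crucially, that argument extracts $\Sc\ge 0$ without ever assuming that $\Sc$ is bounded below. Your proposal takes a genuinely different, purely elliptic route via an Omori--Yau maximum principle for the drifted Laplacian $\Delta^f$, a strategy that also appears in the literature, and your subsequent Hopf dichotomy and Tashiro rigidity steps are handled correctly.

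The gap is the Omori--Yau step itself. Producing a minimizing sequence $x_k$ with $\Sc(x_k)\to\inf_M\Sc$ and $\Delta^f\Sc(x_k)\ge-\epsilon_k$ presupposes that $\inf_M\Sc>-\infty$; the Omori--Yau principle gives no information about a function that is unbounded below, so one cannot derive a contradiction from a hypothetical $\inf_M\Sc=-\infty$ in this way. Worse, the obvious routes to bounding $\Sc$ from below on a complete non-compact shrinker --- the potential lower bound of Lemma~\ref{lem: potential_growth} and the gradient estimate of Lemma~\ref{lem: gradient_estimate}, combined with $\Sc=f-|\nabla f|^2$ --- are themselves proved using $\Sc\ge 0$, so invoking them at this stage would be circular. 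To close the gap you must either first establish $\inf_M\Sc>-\infty$ by an independent argument (for instance a cut-off estimate exploiting $\Delta^f\Sc\le\Sc-\tfrac12\Sc^2$), or replace the Omori--Yau step altogether by Chen's localized cut-off argument, which sidesteps the boundedness issue entirely.
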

Also see~\cite{LiWang17} for a uniform lower bound only depending on the entropy.

\subsubsection{\textbf{Equations of the Euler characteristic}} Moreover, on the topological side, the 4-dimensional Riemannian manifold $(M,g)$ has the localized Euler characteristic of any open subset $U\subset M$ expressed as
\begin{align}
\chi(U)=\int_U\Euler+\int_{\partial U}\bEuler,
\label{eqn: Euler}
\end{align}
provided that the integrals are defined.
Here the Pfaffian 4-form $\Euler$ is given by
\begin{align}
\Euler=\frac{1}{8\pi^2}\left(|\mathcal{W}|^2-\frac{1}{2}\left|\mathring{\mathcal{R}}c\right|^2+\frac{\Sc^2}{24}\right)\dvol_g
=\frac{1}{8\pi^2}\left(|\Rm|^2-\left|\mathring{\nabla}^2 f\right|^2\right)\dvol_g,
\label{eqn: Pfaffian}
\end{align}
where $\mathcal{W}$ is the Weyl tensor of $\Rm$, $\mathring{\nabla}^2f=\nabla^2 f-\frac{\Delta f}{4}g$ is the traceless Hessian of $f$, and we have used the defining equation (\ref{eqn: defn}) in the second equality.
For the boundary 3-form $\bEuler$, if we denote the area form of $\partial U$ by $\text{d}\sigma$, and let $\{e_i\}$ ($i=1,2,3$) be an orthonormal local frame tangent to $\partial U$ diagonalizing its second fundamental form $II_{\partial U}$, then we have
\begin{align}
\bEuler=\frac{1}{4\pi^2}\left(2k_1k_2k_3-k_1\mathcal{K}_{23}-k_2\mathcal{K}_{13}-k_3\mathcal{K}_{12}\right)\text{d}\sigma,
\label{eqn: boundary_Pfaffian}
\end{align}
where for $i,j=1,2,3$, $\mathcal{K}_{ij}=\Rm(e_i,e_j,e_j,e_i)$ is the sectional curvature along the tangent plane spanned by $e_i$ and $e_j$, and $k_i=II_{\partial U}(e_i,e_i)$ is the principal curvature of $\partial U$, see~\cite{HM10}.

\subsubsection{\textbf{Potential growth}} The potential function $f$ obeys a
very nice growth control by distance function both from below and above. This
was first proved by Cao-Zhou~\cite{CaoZhou10}. Here, we shall need the improved
version by Haslhofer-M\"uller~\cite{HM10}:
\begin{lemma}[Potential growth]
Let $(M,g,f)$ be a 4-d Ricci shrinker such that the normalization condition
(\ref{eqn: normalization}) is satisfied. Then there exists a point $p_0\in M$
where $f$ attains its infimum and
\begin{align}
\forall x\in M,\quad \frac{1}{4}\left(\max\{\rad(x)-20,0\}\right)^2\le f(x)\le
\frac{1}{4}\left(\rad(x)+2\sqrt{2}\right)^2,
\label{eqn: potential_growth}
\end{align}
where $\rad(x):=d(x,p_0)$. Moreover, all minimum points of $f$ is contained in
the geodesic ball $B\left(p_0,10+2\sqrt{2}\right)$.
\label{lem: potential_growth}
\end{lemma}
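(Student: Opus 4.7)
The plan is to decompose the statement into three parts: (i) existence of a minimum $p_0$ together with a bound on $f(p_0)$; (ii) the upper bound $f(x)\le \tfrac14(\rad(x)+2\sqrt{2})^2$; (iii) the lower bound $f(x)\ge \tfrac14(\max\{\rad(x)-20,0\})^2$; then deducing the localization of the minimum set.

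For (i) and (ii), the decisive observation is Hamilton's normalized identity $\Sc+|\nabla f|^2=f$ combined with $\Sc\ge 0$ (Lemma on $\Sc>0$). This already implies $f\ge 0$ on $M$, so the claim that $f$ attains its infimum will follow once one shows $f$ is proper, which is a standard consequence of the upper bound we prove below applied to a tentative basepoint, combined with completeness. At any critical point $p_0$ of $f$ one has $\nabla f(p_0)=0$ and $\nabla^{2}f(p_0)\succeq 0$; inserting this into the soliton equation (\ref{eqn: defn}) yields $\Rc(p_0)\preceq \tfrac12 g$, hence $\Sc(p_0)\le 2$, and therefore $f(p_0)=\Sc(p_0)\in[0,2]$. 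The identity $\Sc+|\nabla f|^2=f$ together with $\Sc\ge 0$ gives $|\nabla f|^2\le f$, i.e.\ $\bigl|\nabla\sqrt{f}\bigr|\le \tfrac12$ on $\{f>0\}$. Integrating this gradient bound along a minimizing unit-speed geodesic from $p_0$ to $x$ yields
\begin{equation*}
\sqrt{f(x)}\ \le\ \sqrt{f(p_0)}+\tfrac12\rad(x)\ \le\ \sqrt{2}+\tfrac12\rad(x),
\end{equation*}
and squaring gives the upper bound in (\ref{eqn: potential_growth}).

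The lower bound is the only genuinely non-trivial piece. Let $\gamma:[0,L]\to M$ be a minimizing unit-speed geodesic from $p_0$ to $x$ with $L=\rad(x)$. Since $\nabla_{\gamma'}\gamma'=0$, one has $(f\circ\gamma)''=\nabla^{2}f(\gamma',\gamma')$, so integrating the trace $\Rc(\gamma',\gamma')+\nabla^{2}f(\gamma',\gamma')=\tfrac12$ of the soliton equation against $ds$ gives
\begin{equation*}
(f\circ\gamma)'(L)\ =\ \tfrac{L}{2}\ -\ \int_{0}^{L}\Rc(\gamma',\gamma')\,ds\ +\ (f\circ\gamma)'(0).
\end{equation*}
The plan is to bound $\int_{0}^{L}\Rc(\gamma',\gamma')\,ds$ from above by a universal constant. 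Because $\gamma$ is minimizing, the second variation of arc length applied to test vector fields $V_{j}=\phi(s)E_{j}(s)$, where $E_{1},E_{2},E_{3}$ are parallel orthonormal fields perpendicular to $\gamma'$ and $\phi$ is a suitable cutoff with $\phi(0)=\phi(L)=0$, yields
\begin{equation*}
\int_{0}^{L}\phi^{2}\,\Rc(\gamma',\gamma')\,ds\ \le\ 3\int_{0}^{L}(\phi')^{2}\,ds.
\end{equation*}
Choosing $\phi$ equal to $1$ on a long middle interval and linear on short end-intervals, and controlling the remaining integrals $\int(1-\phi^{2})\Rc(\gamma',\gamma')\,ds$ via the pointwise inequality $\Rc(\gamma',\gamma')\le \tfrac12-\nabla^{2}f(\gamma',\gamma')$ (integrating the latter produces telescoping boundary terms that are absorbed into $(f\circ\gamma)'(0)$ and $(f\circ\gamma)'(L)$), one gets $\int_{0}^{L}\Rc(\gamma',\gamma')\,ds\le C$ with an explicit $C$. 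Substituting back produces $(f\circ\gamma)'(s)\ge \tfrac{s}{2}-C$ for $s$ larger than some threshold, and integrating in $s$ gives $f(x)\ge \tfrac14(\rad(x)-20)^{2}$ once the threshold and $C$ are optimized; this is the step where the particular constant $20$ is pinned down, and it is the main technical obstacle (and the content of Haslhofer--M\"uller's refinement of Cao--Zhou).

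Finally, if $q\in M$ is any minimum point of $f$, then $f(q)=f(p_{0})\le 2$, so the lower bound forces $\tfrac14(\rad(q)-20)^{2}\le 2$, hence $\rad(q)\le 20+2\sqrt{2}$; a slightly more careful version of the same argument (using that both $p_0$ and $q$ are minima so $|\nabla f|$ vanishes at each) halves the shift and produces the stated $10+2\sqrt{2}$. The hard part throughout is the calibration of constants in the second-variation estimate; everything else is a straightforward consequence of the soliton equation and Hamilton's identity.
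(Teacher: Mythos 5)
The paper does not prove this lemma --- it is quoted from Haslhofer--M\"uller~\cite{HM10} --- so there is no in-paper argument to compare against; your overall strategy (Hamilton's identity and the gradient estimate $|\nabla\sqrt f|\le\tfrac12$ for the upper bound, second variation of arclength for the lower bound) is the right one and is what Cao--Zhou and Haslhofer--M\"uller do. However, the logical order is backwards. You claim existence of the minimum $p_0$ by saying properness of $f$ is ``a standard consequence of the upper bound we prove below applied to a tentative basepoint.'' An upper bound $f(x)\le\tfrac14(d(x,\hat p)+C)^2$ does not force $f\to\infty$, so it cannot give properness; it is the \emph{lower} quadratic bound, obtained from the second-variation estimate applied from an arbitrary base point, that makes $f$ proper and hence guarantees a minimum. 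As written, you use $p_0$ to derive the upper bound and the upper bound to deduce existence of $p_0$ --- circular. The repair is routine: run the second-variation argument from an arbitrary base point first, conclude properness and existence of $p_0$, then specialize both bounds to $p_0$.

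Separately, the final claim that ``a slightly more careful version of the same argument halves the shift'' from $20+2\sqrt 2$ to $10+2\sqrt 2$ for the radius of the ball containing the minimum set is asserted but not argued. Direct substitution of $f(q)\le 2$ into the lower bound only gives $\rad(q)\le 20+2\sqrt 2$. Reaching $10+2\sqrt 2$ requires genuinely re-running the soliton-equation integral along the minimizing geodesic from $p_0$ to $q$, exploiting that $(f\circ\gamma)'$ vanishes at \emph{both} endpoints, combining with the second-variation bound on $\int\Rc(\gamma',\gamma')\,ds$, and then tracking the numerics; you gesture at the idea but do not carry it out, and since this constant is exactly what the last assertion of the lemma depends on, this is a real gap rather than a detail. (Your sketch of how the constant $20$ itself emerges is similarly heuristic, but you flag that explicitly, and it is indeed the technical heart of Haslhofer--M\"uller's refinement.)
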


From the normalization (\ref{eqn: normalization}), the non-negativity of scalar
curvature Lemma~\ref{lem: R_nonnegative} and the above growth control of
potential (\ref{eqn: potential_growth}), we have the following gradient estimate:
\begin{lemma}[Gradient estimate for potential]
Let $(M,g,f)$ be a 4-d Ricci shrinker such that the normalization condition
(\ref{eqn: normalization}) is satisfied, then
\begin{align}
|\nabla f|\le \frac{\rad}{2}+\sqrt{2}.
\label{eqn: gradient_estimate}
\end{align}
\label{lem: gradient_estimate}
\end{lemma}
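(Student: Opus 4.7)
The plan is to combine the three ingredients explicitly cited immediately before the statement: Hamilton's normalization identity $\Sc + |\nabla f|^2 = f$ from (\ref{eqn: normalization}), the nonnegativity $\Sc \ge 0$ on a non-flat shrinker (Lemma~\ref{lem: R_nonnegative}; the flat case being trivial), and the upper bound $f(x) \le \frac{1}{4}(\rad(x) + 2\sqrt{2})^2$ from the potential growth Lemma~\ref{lem: potential_growth}. No PDE or maximum principle argument is actually required at this point, because the pointwise identity is already available.

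Concretely, I would argue as follows. Using $\Sc \ge 0$ and (\ref{eqn: normalization}), at every $x \in M$,
\begin{equation*}
|\nabla f(x)|^2 \;=\; f(x) - \Sc(x) \;\le\; f(x).
\end{equation*}
Plugging in the upper bound from (\ref{eqn: potential_growth}) yields
\begin{equation*}
|\nabla f(x)|^2 \;\le\; \frac{1}{4}\bigl(\rad(x) + 2\sqrt{2}\bigr)^2,
\end{equation*}
and taking square roots (the right-hand side is manifestly nonnegative) gives
$|\nabla f(x)| \le \frac{\rad(x)}{2} + \sqrt{2}$, which is precisely the claim.

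There is essentially no obstacle: the only subtlety is that the nonnegativity of $\Sc$ in Lemma~\ref{lem: R_nonnegative} is stated assuming $(M,g)$ is non-flat, but in the flat case $\Rc \equiv 0$ so $\nabla^2 f = \frac{1}{2} g$ gives $f = \frac{1}{4}\rad^2 + c$ after a suitable choice of base point, and the estimate holds trivially (in fact with a better constant). So the bound holds uniformly on all 4-d Ricci shrinkers with the normalization (\ref{eqn: normalization}).
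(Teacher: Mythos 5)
Your proof is correct and follows exactly the route the paper indicates: the paper states Lemma~\ref{lem: gradient_estimate} as an immediate consequence of the normalization (\ref{eqn: normalization}), the nonnegativity of $\Sc$ (Lemma~\ref{lem: R_nonnegative}), and the potential growth upper bound (\ref{eqn: potential_growth}), and you have simply filled in the one-line chain $|\nabla f|^2 = f - \Sc \le f \le \frac{1}{4}(\rad + 2\sqrt{2})^2$. Your aside on the flat case is fine but not strictly necessary, since $\Sc \ge 0$ holds there trivially.
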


\subsubsection{\textbf{Volume growth}} Moreover, the following control of
volume growth is discovered by~\cite{CaoZhou10} and~\cite{Mun09}:
\begin{lemma}
Let $(M,g,f)$ be a complete non-compact 4-d Ricci shrinker, then there exists
some constant $C_{CMZ}>0$, such that $\forall r>10,$ $$\quad
\vol_g(B(p_0,r))\le C_{CMZ} r^4.$$ Moreover, if $u$ is any function on $M$
satisfying $|u|\le Ae^{\alpha \rad^2}$ for some $\alpha \in [0,\frac{1}{4})$
and $A>0$, then $$\int_M|u|e^{-f}\dvol_g<\infty.$$ Especially, the weighted
volume of $M$ is finite, i.e.
$\int_Me^{-f}\dvol_g<\infty.$
\label{lem: volume_growth}
\end{lemma}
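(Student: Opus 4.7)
The plan is to establish the polynomial volume bound first, then derive the weighted integrability from it together with the Gaussian decay of $e^{-f}$.

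For the polynomial bound, by Lemma~\ref{lem: potential_growth} the function $2\sqrt{f}$ differs from $\rad$ by a bounded constant, so it suffices to estimate $V(r) := \vol_g(D(r))$, where $D(r) := \{f \le r\}$. From (\ref{eqn: scalar_f}) and Lemma~\ref{lem: R_nonnegative}, $\Delta f = 2 - \Sc \le 2$; applying the divergence theorem on $D(r)$ gives
\begin{align*}
\int_{\partial D(r)} |\nabla f|\, d\sigma \;=\; \int_{D(r)}\Delta f\, \dvol_g \;\le\; 2 V(r).
\end{align*}
The normalization (\ref{eqn: normalization}) combined with $\Sc \ge 0$ yields $|\nabla f|^2 \le f$, so $|\nabla f| \le \sqrt{r}$ on the level set $\{f = r\}$. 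Using the coarea formula $V'(r) = \int_{\{f=r\}}|\nabla f|^{-1}\, d\sigma$ together with Cauchy--Schwarz on the boundary integrals, one extracts a differential inequality of the form $V(r)^2 \le C\, r\, V(r)\, V'(r)$, which upon integration gives $V(r) \le C r^2$. Translating from the sublevel sets of $f$ back to geodesic balls via (\ref{eqn: potential_growth}) yields $\vol_g(B(p_0, R)) \le C_{CMZ} R^4$ for all $R > 10$.

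For the weighted integrability, I would invoke the lower bound $f(x) \ge \frac{1}{4}(\rad(x) - 20)^2$ from (\ref{eqn: potential_growth}), valid for $\rad(x) \ge 20$. Then for such $x$,
\begin{align*}
|u(x)|\, e^{-f(x)} \;\le\; A\, \exp\!\left((\alpha - \tfrac{1}{4})\,\rad(x)^2 + 10\, \rad(x) - 100\right).
\end{align*}
Since $\alpha < 1/4$, setting $\delta := \tfrac{1}{2}\bigl(\tfrac{1}{4}-\alpha\bigr) > 0$, the right-hand side is bounded by $C e^{-\delta\, \rad(x)^2}$ once $\rad(x)$ is sufficiently large. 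Decomposing $M$ into unit annuli $A_k := B(p_0, k+1) \setminus B(p_0, k)$ and using the polynomial bound from the first part:
\begin{align*}
\int_M |u|\, e^{-f}\, \dvol_g \;\le\; C + C'\!\sum_{k \ge 20}(k+1)^4\, e^{-\delta k^2} \;<\; \infty.
\end{align*}
Specializing to $u \equiv 1$ (i.e.\ $\alpha = 0$) yields $\int_M e^{-f}\, \dvol_g < \infty$.

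The main obstacle is extracting the polynomial growth of $V(r)$: while the divergence theorem and coarea formula are standard, combining them to isolate a clean ODE inequality requires the delicate interplay between $\Delta f \le 2$, $|\nabla f|^2 \le f$, and control of $|\nabla f|$ from \emph{below} on level sets (which breaks down near critical points of $f$). I would therefore start the argument for $r$ larger than some threshold where $|\nabla f|$ is known to be strictly positive on $\{f=r\}$, controlled via the growth of $f$ from Lemma~\ref{lem: potential_growth}, and absorb the compact region $\{f \le r_0\}$ into the constant $C_{CMZ}$. Once the polynomial bound is secured, the weighted integrability follows cleanly by pitting the Gaussian weight $e^{-f}$ against both the polynomial volume growth and the subcritical Gaussian growth of $|u|$.
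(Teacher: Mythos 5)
The paper states this lemma without proof, attributing it to Cao--Zhou~\cite{CaoZhou10} and Munteanu~\cite{Mun09}, so there is no in-paper proof to compare against; I am evaluating your sketch against the standard argument in those references.

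Your second half (deriving $\int_M |u|\,e^{-f}\,\dvol_g < \infty$ from the polynomial volume bound, the lower bound $f \ge \tfrac14(\rad-20)^2$, and decomposition into unit annuli) is correct and is essentially how this is done.

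The first half has a genuine gap, and the specific obstacle you flag is a misdiagnosis. The differential inequality you extract, $V(r)^2 \le C\,r\,V(r)\,V'(r)$, reduces to $V'/V \ge (Cr)^{-1}$, which upon integration yields the \emph{lower} bound $V(r) \ge c\,r^{1/C}$ --- the reverse of what is wanted. To conclude $V(r) \le Cr^2$ one needs something of the form $r\,V'(r) \le C\,V(r)$, and Cauchy--Schwarz applied to the two boundary integrals $\int_{\{f=r\}}|\nabla f|\,d\sigma$ and $\int_{\{f=r\}}|\nabla f|^{-1}\,d\sigma$ produces $\Area(\{f=r\})^2 \le 2V(r)V'(r)$, which is not closed in $V$ alone (it would require a reverse isoperimetric input). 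Moreover, the ``control of $|\nabla f|$ from below on level sets'' that you identify as the main difficulty is not actually what the Cao--Zhou argument requires. Their proof avoids any lower bound on $|\nabla f|$ by using two ways to compute $\int_{D(r)}|\nabla f|^2$: on one hand, via the coarea formula together with the divergence-theorem identity $\int_{\{f=t\}}|\nabla f|\,d\sigma = 2V(t) - \chi(t)$ (with $\chi(t) := \int_{D(t)}\Sc$), giving $\int_{D(r)}|\nabla f|^2 = \int_{r_0}^r (2V(t)-\chi(t))\,dt$; on the other hand, via the normalization $|\nabla f|^2 = f - \Sc$, giving $\int_{D(r)}|\nabla f|^2 = rV(r) - \int_{r_0}^r V(t)\,dt - \chi(r) + \text{const}$. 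Equating and using $0 \le \chi(r) \le 2V(r)$ produces an integral inequality for $V$ that iterates cleanly to $V(r) \le C r^2$ in the $f$-scale (hence $\vol B(p_0,R) \le CR^4$). So the ingredients you list ($\Delta f \le 2$, $|\nabla f|^2 \le f$, coarea) are the right ones, but the way you propose to combine them does not close, and the path around the ``critical points of $f$'' worry is an integral identity, not a pointwise gradient lower bound.
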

\subsection{Comparison geometry for 4-d Ricci shrinkers} 
Compared to Einstein manifolds, one drawback of Ricci solitons comes from the
lack of a uniform Ricci curvature lower bound, whence the lack of volume ratio
monotonicity. However, Ricci solitons do satisfy the Bakry-\'Emery Ricci
curvature bounds. If we define $\Rc_f:=\Rc+\nabla^2f$, then the defining
equation (\ref{eqn: defn}) becomes
\begin{align}
\Rc_f=\frac{1}{2}g,
\label{eqn: BER}
\end{align}
saying that the Bakry-\'Emery Ricci curvature of a 4-d Ricci shrinker is half
the metric tensor. This subsection explores the analogy of 4-d Ricci shrinkers
and manifolds with uniform Ricci lower bound, basic references
being~\cite{Lott03} and~\cite{WW}.
\subsubsection{\textbf{Weighted volume comparison}} The measure compatible with
the Bakry-\'Emery Ricci curvature is the weighted measure
$\text{d}\mu_g:=e^{-f}\dvol_g$, and according to~\cite{WW}, there stands a
weighted volume comparison theorem, the counterpart of the Bishop-Gromov volume
comparison for the Ricci lower bound case. For a 4-d Ricci shrinker $(M,g,f)$
viewed as a metric measure space $(M,g,\dmu_f)$, we define its comparison
metric measure space as following:
\begin{definition}[Metric measure space form]
For any $R>2\sqrt{2}$ fixed, define the metric measure space $M^4_R:=
(\mathbb{R}^4, g_{Euc}, \text{d}\bar{\mu}_R)$, the four dimensional Euclidean
space equipped with the weighted measure $\text{d}\volfm_R$. Here we define
$\text{d}\bar{\mu}_R(\mathbf{x}):=e^{R|\mathbf{x}|}\text{d}x^1\wedge\cdots\wedge
\text{d}x^4$ for any $\mathbf{x}=\left(x^1,x^2,x^3,x^4\right)^T\in
\mathbb{R}^4$. Also let the weighted volume of radius $r$ ball centered at the
origin of $M_R^4$ be defined as $$\volfm_R(r):=\int_{B(\mathbf{0},r)}1\
d\volfm_R.$$

 Moreover, define the area function $\volfm_R'(r):=2\pi^2e^{Rr}r^3$.
\end{definition} 

We immediately notice that 
\begin{align}
\omega_4r^4\le \mu_R(r)\le e^{R^2}\omega_4r^4,
\label{eqn: Euclidean_comparison}
\end{align}
where $\omega_4$ is the volume of the unit ball in the four dimensional
Euclidean space.

 The following monotonicity formula follows directly from~\cite{WW}.
\begin{lemma}[Monotonicity of area and volume ratio]
Let $(M,g,f)$ be a 4-d Ricci shrinker and fix $R>2\sqrt{2}$. For any $p\in
B(p_0,R)$ and any unit tangent vector $\mathbf{v}$ at $p$, let
$\mathcal{A}(\mathbf{v},r)$ be the area form of the geodesic sphere at
$\exp_p(r\mathbf{v})$, then
\begin{align}
  0<s<r<d(p,\partial B(p_0,R))\quad \Rightarrow\quad
  \frac{\area_f(\mathbf{v},r)}{\volfm'_R(r)}\ \le\
  \frac{\area_f(\mathbf{v},s)}{\volfm'_R(s)}.
\label{eqn: area_monotone}
\end{align}

Moreover, for any $B(p,r_1)\subset B(p,r_2)\subset B(p_0,R)$ and
$B(p,s_1)\subset B(p,s_2)\subset B(p_0,R)$,
\begin{align}
0<s_1< r_1\ \text{and}\ 0<s_2< r_2\quad \Rightarrow\quad
\frac{\mu_f(A(p;r_1,r_2))}{\volfm_R(r_2)-\volfm_R(r_1)}\ \le\
\frac{\mu_f(A(p;s_1,s_2))}{\volfm_R(s_2)-\volfm_R(s_1)}
\label{eqn: volume_monotone}
\end{align}
\label{lem: volume_comparison}
\end{lemma}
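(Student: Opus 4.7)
The plan is to invoke the Bakry--\'Emery mean curvature comparison of Wei--Wylie~\cite{WW}. The two inputs provided by the shrinker structure are: the identity $\Rc_f = \frac{1}{2}g$ (in particular $\Rc_f \ge 0$), and a uniform radial drift bound along any minimal geodesic segment from $p$ contained in $B(p_0, R)$: by Lemma~\ref{lem: gradient_estimate} and the standing assumption $R > 2\sqrt{2}$,
\begin{align*}
|\partial_r f| \;\le\; |\nabla f| \;\le\; \frac{\rad}{2} + \sqrt{2} \;\le\; \frac{R}{2} + \sqrt{2} \;\le\; R.
\end{align*}
These are exactly the data fitting the Wei--Wylie framework with Ricci lower bound $H = 0$ and radial drift at most $a = R$.

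Setting $m_f(r) := \partial_r \log \area_f(\mathbf{v}, r)$, the standard Jacobi field computation gives the drifted Riccati inequality
\begin{align*}
m_f'(r) \;\le\; -\Rc_f(\partial_r, \partial_r) - \frac{(m_f(r) + \partial_r f)^2}{3},
\end{align*}
together with the initial asymptotic $m_f(r) - \tfrac{3}{r} \to -\partial_r f(p)$ as $r \downarrow 0$. Combined with $\Rc_f \ge 0$ and $|\partial_r f| \le R$, the Wei--Wylie ODE comparison (specialized to $H = 0$) yields
\begin{align*}
m_f(r) \;\le\; \frac{3}{r} + R \;=\; \partial_r \log \volfm'_R(r).
\end{align*}
Integrating in $r$ produces the directional area ratio monotonicity (\ref{eqn: area_monotone}), first up to the cut locus of $p$; a standard truncation of Jacobi fields at cut points then extends the inequality throughout the geodesic ball $B(p, d(p, \partial B(p_0, R)))$.

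For the annulus statement (\ref{eqn: volume_monotone}), I would use the elementary calculus fact that if $\phi, \psi > 0$ and $\phi/\psi$ is nonincreasing on an interval, then for $0 < s_i < r_i$ in that interval,
\begin{align*}
\frac{\int_{r_1}^{r_2} \phi(t)\,dt}{\int_{r_1}^{r_2} \psi(t)\,dt} \;\le\; \frac{\int_{s_1}^{s_2} \phi(t)\,dt}{\int_{s_1}^{s_2} \psi(t)\,dt},
\end{align*}
applied with $\phi(t) = \area_f(\mathbf{v}, t)$ and $\psi(t) = \volfm'_R(t)$, and then integrated in $\mathbf{v}$ over the unit sphere in $T_p M$ via Fubini. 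The main conceptual step is really the identification of the model density: Euclidean area $2\pi^2 r^3$ amplified by the factor $e^{Rr}$ that encodes the gradient estimate. Once this model is pinned down, the argument is a direct transcription of Wei--Wylie and I anticipate no serious obstacle; the only mildly technical point is the treatment of the cut locus, which is handled in the usual way by restricting to maximal star-shaped domains about $p$.
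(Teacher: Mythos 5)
Your proposal follows essentially the same route as the paper: identify the hypotheses $\Rc_f=\tfrac{1}{2}g\ge 0$ and $|\partial_r f|\le \tfrac{R}{2}+\sqrt{2}\le R$ (using $R>2\sqrt{2}$), invoke Wei--Wylie's weighted mean curvature comparison with $H=0$ and drift bound $a=R$ to obtain the area ratio monotonicity against $\volfm_R'(r)=2\pi^2 e^{Rr}r^3$, and then pass to the annular volume ratio by the Gromov-type integral ratio lemma followed by integration over $S_pM$. The paper simply cites (4.8) and Theorem 3.1 of~\cite{WW} rather than spelling out the drifted Riccati inequality, and records only the one-sided bound $\partial_r f\ge -R$ that the comparison actually needs, but these are presentational differences rather than mathematical ones.
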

\begin{proof}
Recall that (\ref{eqn: gradient_estimate}) implies 
$$\sup_{B(p_0,R)}|\nabla f|\le \frac{R}{2}+\sqrt{2}.$$
When $R>2\sqrt{2}$, we have the radial derivative $\partial_r f\ge -R$ on
$B(p_0,R)$. Now we can apply (4.8) of~\cite{WW} directly to obtain (\ref{eqn:
area_monotone}). See also Theorem 3.1 and (4.3) of~\cite{WW}.

For (\ref{eqn: volume_monotone}), integrate (\ref{eqn: area_monotone}) along
geodesics gives the directional comparison
$$\frac{\int_{r_1}^{r_2}\area_f(\mathbf{v},t)\ \text{d}t}{\int_{r_1}^{r_2}\volfm'_R(t)\ \text{d}t}\ \le\ \frac{\int_{s_1}^{s_2}\area_f(\mathbf{v},t)\ \text{d}t}{\int_{s_1}^{s_2}\volfm'_R(t)\ \text{d}t},$$ and integrating the above inequality in $\mathbf{v}\in S_pM$ gives the desired inequality.
\end{proof}
Notice that the doubling property of the weighted measure follows easily from
the above monotonicity: for any $R>2\sqrt{2}$ fixed,
\begin{align}
B(p,2r)\subset B(p_0,R)\ \Rightarrow\ \mu_f(B(p,2r))\le C_D(R) \mu_f(B(p,r)),
\label{eqn: doubling}
\end{align}
with the doubling constant $C_D(R)=16e^{R^2}$.
\subsubsection{\textbf{Sobolev inequality}} Another important consequence of
the monotonicity (\ref{eqn: area_monotone}) is the segment inequality,
originally due to Cheeger-Colding~\cite{ChCo} for manifolds with uniform Ricci
lower bound. We will provide a proof here as this is the first time the segment
inequality appears in the context of Bakry-\'Emery Ricci curvature bounded
below.
\begin{lemma}[Segment inequality]
Let $(M,g,f)$ be a 4-d Ricci shrinker, and fix $R>0$. For any $U\subset
B(p_0,R)$ and any non-negative $u\in C^0(U)$, there is a constant
$C_{ChCo}(R)>0$ such that if a subset $A$ of $U$ sees almost all pairs of its
points connected by minimal geodesics contained in $U$, then
\begin{align}
\int_{A\times A}\mathcal{F}_u(x,y)\ \dmu_f(x)\dmu_f(y)\ \le\ C_{ChCo}(R)\
\mu_f(A) \diam U\int_{U}u\ \dmu_f,
\label{eqn: segment}
\end{align}
where $$\mathcal{F}_u(x,y):=\inf_{\gamma_{xy}}\int_0^{d(x,y)}u(\gamma_{xy}(t))\
\text{d}t,$$ the infimum being taken over all minimal geodesics $\gamma_{xy}$
connecting $x$ and $y$.
\end{lemma}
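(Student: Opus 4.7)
The plan is to adapt Cheeger--Colding's classical segment inequality argument, substituting the weighted area monotonicity of Lemma~\ref{lem: volume_comparison} for Bishop--Gromov. The mechanism is a rescaling in geodesic polar coordinates at one endpoint of $\gamma_{xy}$: parametrizing the geodesic by $\sigma=t/d(x,y)$, the map $x\mapsto z_\sigma:=\gamma_{xy}(\sigma\,d(x,y))$, with $y$ fixed, is a radial contraction toward $y$ by factor $1-\sigma$, and its weighted Jacobian is controlled by the $\volfm_R'$-ratio.

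First I would split each geodesic at its midpoint. By the $(x,y)$-symmetry of $A\times A$ together with the orientation-reversing substitution $t\mapsto d(x,y)-t$ on the second half, this reduces the problem to bounding
\[
\int_A\dmu_f(y)\int_A\dmu_f(x)\int_0^{d(x,y)/2}u(\gamma_{xy}(t))\,dt.
\]
Rewriting the innermost integral as $d(x,y)\int_0^{1/2}u(z_\sigma)\,d\sigma$ and working (for fixed $y$ and $\sigma$) in geodesic polar coordinates at $y$, I would write $x=\exp_y(r\mathbf{w})$ on the full-measure complement of $y$'s cut locus, so that $z_\sigma=\exp_y((1-\sigma)r\mathbf{w})$; the substitution $r'=(1-\sigma)r$ then turns $\int_A u(z_\sigma)\,d(x,y)\,\dmu_f(x)$ into an integral of $u(\exp_y(r'\mathbf{w}))$ against a weighted Jacobian factor involving $\area_f(\mathbf{w},r'/(1-\sigma))$.

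Next I would invoke Lemma~\ref{lem: volume_comparison} to bound this Jacobian factor by $\area_f(\mathbf{w},r')\cdot\volfm_R'(r'/(1-\sigma))/\volfm_R'(r')$; the latter ratio evaluates explicitly to $e^{Rr'\sigma/(1-\sigma)}(1-\sigma)^{-3}$, which is uniformly bounded in $\sigma\in[0,1/2]$ and $r'\le\diam U$ by a constant depending only on $R$. Extending $u$ by zero outside $U$ (licit since $u\ge 0$) and invoking the ``almost all pairs'' hypothesis to ensure $z_\sigma\in U$ for a.e.\ $x\in A$, I conclude $\int_A u(z_\sigma)\,d(x,y)\,\dmu_f(x)\le C(R)\diam U\int_U u\,\dmu_f$ uniformly in $\sigma\in[0,1/2]$. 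Integrating $\sigma$ over $[0,1/2]$ and $y$ over $A$ and adding the symmetric estimate for the second half of the geodesic produces~(\ref{eqn: segment}).

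The main technical obstacle is the compound $(1-\sigma)^{-5}$ singularity --- the $(1-\sigma)^{-2}$ from the radial change of variables times the $(1-\sigma)^{-3}$ from the weighted Jacobian ratio --- and it is precisely the midpoint splitting that restricts $\sigma$ to $[0,1/2]$ and keeps this factor bounded. A minor book-keeping subtlety is that Lemma~\ref{lem: volume_comparison} requires the radial domain to lie in the fixed reference ball $B(p_0,R)$, so one may need to apply it with a slightly enlarged reference scale to cover radii up to $\diam U/(1-\sigma)\le 2\diam U$; the cut locus contributes measure zero and is ignored throughout.
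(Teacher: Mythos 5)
Your proof is correct and takes essentially the same approach as the paper: both split $\mathcal{F}_u$ at the geodesic midpoint, use the symmetry of $A\times A$ to reduce to one half, and invoke the weighted area-ratio monotonicity of Lemma~\ref{lem: volume_comparison} to dominate the area density at the farther radius by that at the nearer radius, gaining a constant $C(R)$ that is uniform precisely because the midpoint split confines the contraction factor $1-\sigma$ to $[1/2,1]$. The only difference is cosmetic: the paper fixes the starting endpoint $x$ and swaps two iterated $1$-dimensional integrals along each geodesic ray, whereas you fix the opposite endpoint $y$ and carry out the radial change of variables $r\mapsto(1-\sigma)r$ in polar coordinates --- these are the same computation in dual packaging, and your ``enlarged reference scale'' remark correctly handles the same boundary bookkeeping the paper glosses over.
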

\begin{proof}
We may consider $\mathcal{F}_u(x,y)=\mathcal{F}_u^+(x,y)+\mathcal{F}_u^-(x,y)$
where 
$$\mathcal{F}_u^+(x,y):=\inf_{\{\gamma_{xy}\}}\int_{\frac{d(x,y)}{2}}^{d(x,y)}u(\gamma_{xy}(f))\ \text{d}t\quad \text{and}\quad \mathcal{F}_u^-(x,y):=\inf_{\{\gamma_{xy}\}}\int_0^{\frac{d(x,y)}{2}}u(\gamma_{xy}(f))\ \text{d}t.$$ Since $\mathcal{F}^+_u(x,y)=\mathcal{F}^-_u(y,x)$, by Fubini's theorem, $$\int_{A\times A}\mathcal{F}_u^+(x,y)\ \dmu_f(x)\dmu_f(y)\ =\ \int_{A\times A}\mathcal{F}_u^-(x,y)\ \dmu_f(x)\dmu_f(y),$$
and so we only need to do the estimate for $\mathcal{F}_u^+$. For any $x\in A$
and any $\mathbf{v}\in S_xM$ fixed, define $d_{x,\mathbf{v}}:=\min\{t>0:
\exp_x(t\mathbf{v})\in \partial U\}$, also denote
$\gamma_{\mathbf{v}}(t)=\exp_x(t\mathbf{v})$. Then $\forall t\in
(0,d_{x,\mathbf{v}})$, by the area ratio monotonicity (\ref{eqn:
area_monotone}),
\begin{align*}
\mathcal{F}_u^+(\gamma_{\mathbf{v}}(t\slash 2),\gamma_{\mathbf{v}}(t))\
\dmu_f(\gamma_{\mathbf{v}}(t))\ &\le\
\left(\int_{\frac{t}{2}}^tu(\gamma_{\mathbf{v}}(s))\ \text{d}s\right)\
\area_f(\mathbf{v},t)\text{d}t\\
&\le\  8e^{R^2}\left(\int_{\frac{t}{2}}^tu(\gamma_{\mathbf{v}}(s))\
\area_f(\mathbf{v},s)\text{d}s\right)\ \text{d}t.
\end{align*}
By the assumption on $A\subset U$, for almost every $y\in A$, there exists some
$\mathbf{v}\in S_xM$ such that $\gamma_{\mathbf{v}}(d(x,y))=y$, we have
\begin{align*}
\int_{A}\mathcal{F}_u^+(x,y)\ \dmu_f(y)\ &\le\
\int_{S_xM}\int_0^{d_{x,\mathbf{v}}}
\mathcal{F}_u^+(\gamma_{\mathbf{v}}(t\slash
2),\gamma_{\mathbf{v}}(t))\area_f(\mathbf{v},t)\ \text{d}t\text{d}\mathbf{v}\\
&\le\ 8e^{R^2}\diam U
\int_{S_xM}\int_0^{d_{x,\mathbf{v}}}u(\gamma_{\mathbf{v}}(s))\
\area_f(\mathbf{v},s)\ \text{d}s\text{d}\mathbf{v}\\
&\le\ 16e^{R^2}\diam U \int_{U}u\ \dmu_f.
\end{align*}
Finally, integrate the above inequality for $x\in A$, we get
\begin{align*}
\int_{A}\int_{A}\mathcal{F}_u^+(x,y)\ \dmu_f(y)\dmu_f(x)\ &\le\
16e^{R^2}\mu_f(A)\diam U \int_U u\ \dmu_f.
\end{align*}
\end{proof}

 Iterating the segment inequality, one easily obtains the local
 $L^2$-Poincar\'e inequality, whose constants are determined by $C_{ChCo}(R)$:
\begin{lemma}[Poincar\'e inequality]
Let $(M,g,f)$ be a 4-d Ricci shrinker and fix $R>2\sqrt{2}$. There exists a
positive constant $C_P(R)>0$ such that for any $B(p,r)\subset B(p_0,R)$ and any
$u\in C^1(B(p,r))$,
\begin{align}
\int_{B(p,r)}\left|u-\fint_{B(p,r)}u\ \dmu_f\right|^2\ \dmu_f\ \le\ C_P(R)\
r^2\int_{B(p,r)}|\nabla u|^2\ \dmu_f.
 \label{eqn: Poincare}
\end{align}
\end{lemma}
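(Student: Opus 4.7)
The plan is to derive a weak $L^2$-Poincar\'e inequality directly from the segment inequality (\ref{eqn: segment}) via a Cauchy--Schwarz/Fubini trick, and then upgrade to the same-ball form using the doubling property (\ref{eqn: doubling}).

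For the weak form, fix $B=B(p,r)\subset B(p_0,R)$ and $u\in C^1(B(p,3r))$. For any $x,y\in B$ and any minimal geodesic $\gamma_{xy}$, the triangle inequality forces $\gamma_{xy}\subset B(p,3r)$, while Cauchy--Schwarz gives
\begin{align*}
|u(x)-u(y)|^2\ \le\ d(x,y)\int_0^{d(x,y)}|\nabla u|^2(\gamma_{xy}(t))\ \text{d}t\ \le\ 2r\,\mathcal{F}_{|\nabla u|^2}(x,y).
\end{align*}
Choosing $A=B$, $U=B(p,3r)$ in the segment inequality (\ref{eqn: segment}) (enlarging $R$ by a bounded factor if necessary, which only affects the constants) and integrating this pointwise bound against $\dmu_f\otimes\dmu_f$ yields
\begin{align*}
\int_{B\times B}|u(x)-u(y)|^2\ \dmu_f(x)\dmu_f(y)\ \le\ C_1(R)\,r^2\,\mu_f(B)\int_{B(p,3r)}|\nabla u|^2\ \dmu_f.
\end{align*}
Since the left-hand side equals $2\mu_f(B)\int_B\bigl|u-\fint_B u\,\dmu_f\bigr|^2\dmu_f$ by Fubini, dividing gives the weak Poincar\'e inequality
\begin{align*}
\int_{B(p,r)}\left|u-\fint_{B(p,r)}u\ \dmu_f\right|^2\dmu_f\ \le\ C_2(R)\,r^2\int_{B(p,3r)}|\nabla u|^2\ \dmu_f.
\end{align*}

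The last step is to replace $B(p,3r)$ by $B(p,r)$ on the right-hand side. This is the classical Jerison / Saloff-Coste upgrade from weak to strong Poincar\'e on doubling metric measure spaces (see also Haj{\l}asz--Koskela): one decomposes $B(p,r)$ into a Whitney-type family of sub-balls, chains the weak Poincar\'e inequalities along a telescoping sequence joining a generic point to the barycenter, and absorbs the geometric series of overlaps using the doubling constant $C_D(R)=16e^{R^2}$ from (\ref{eqn: doubling}). All the resulting constants depend only on $R$, which delivers $C_P(R)$.

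The only substantive obstacle lies in the upgrade in the preceding paragraph: directly obtaining the Poincar\'e inequality on the \emph{same} ball from the segment inequality requires a convexity-like hypothesis on geodesic balls that we do not have on a general $4$-d Ricci shrinker, so the doubling-plus-Whitney chaining is essential. Everything else -- the Cauchy--Schwarz pointwise bound and the Fubini symmetrization -- is routine once the segment inequality is in hand.
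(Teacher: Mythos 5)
Your proof is correct and matches the approach the paper intends when it says ``Iterating the segment inequality, one easily obtains the local $L^2$-Poincar\'e inequality'': the Cauchy--Schwarz plus Fubini symmetrization from the segment inequality gives the weak $(2,2)$-Poincar\'e with the enlarged ball $B(p,3r)$ on the right, and the paper's ``iterating'' is precisely the Whitney-chaining upgrade to the same ball, which is legitimate here because the doubling property (\ref{eqn: doubling}) holds with a constant depending only on $R$. Your remark that one cannot take $U=A=B(p,r)$ directly --- absent geodesic convexity of metric balls --- correctly identifies why the chaining step is genuinely needed rather than cosmetic.
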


It is well-know that the doubling property (\ref{eqn: doubling}) and the
$L^2$-Poincar\'e inequality (\ref{eqn: Poincare}) implies a local Sobolev
inequality, with whose constants are determined by $C_D(R)$ and $C_P(R)$,
see~\cite{SCoste92}:
\begin{lemma}[Sobolev inequality]
 Let $(M,g,f)$ be a 4-d Ricci shrinker and fix $R>2\sqrt{2}$. For any
 $B(p,r)\subset B(p_0,R)$ and $u\in C^1_c(B(p,r))$,
\begin{align}
\left(\int_{B(p,r)}u^4\ \dmu_f\right)^{\frac{1}{2}}\le \frac{C_S(R)\
r^2}{\mu_f(B(p,r))^{\frac{1}{2}}}\int_{B(p,r)}\left(|\nabla
u|^2+r^{-2}u^2\right)\ \dmu_f.
\end{align}
\label{lem: Sobolev}
\end{lemma}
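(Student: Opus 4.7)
The plan is to invoke the abstract equivalence theorem of Saloff-Coste~\cite{SCoste92}: on a metric measure space satisfying both a local doubling property and a local $L^2$-Poincar\'e inequality, a local $L^2$-Sobolev inequality on balls follows automatically, with constants determined only by the doubling constant and the Poincar\'e constant. Both inputs are already in hand, namely the doubling property (\ref{eqn: doubling}) with $C_D(R)=16e^{R^2}$ and the local $L^2$-Poincar\'e inequality (\ref{eqn: Poincare}) with constant $C_P(R)$. So the task reduces to verifying that the target Sobolev exponent $4$ is admissible, then applying the abstract upgrade and tracking the dependence of $C_S(R)$ on $C_D(R)$ and $C_P(R)$.

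First I would iterate (\ref{eqn: doubling}) to extract a ``reverse volume'' estimate
\begin{align*}
\frac{\mu_f(B(p,s))}{\mu_f(B(p,r))}\ \ge\ C_D(R)^{-1}\left(\frac{s}{r}\right)^{\nu(R)},\qquad 0<s\le r,\ B(p,r)\subset B(p_0,R),
\end{align*}
with $\nu(R)=\log_2 C_D(R)=4+R^2/\log 2$ playing the role of a local homogeneous dimension. Since $R>2\sqrt{2}$, we have $\nu(R)>4$, so $4$ lies strictly below the critical Sobolev exponent $\tfrac{2\nu(R)}{\nu(R)-2}$ predicted by Saloff-Coste's theorem, and is therefore admissible as a target.

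Next, on a fixed ball $B(p,r)\subset B(p_0,R)$, I would carry out Saloff-Coste's covering argument: cover $B(p,r)$ by a Whitney-type family of sub-balls, on each of which (\ref{eqn: Poincare}) applies, and combine these local Poincar\'e inequalities via a maximal-function/telescoping estimate to upgrade $L^2$-Poincar\'e to an $L^2$--$L^4$ Sobolev inequality. For $u\in C^1_c(B(p,r))$ no boundary mean-value correction arises, and the $r^{-2}u^2$ term on the right-hand side absorbs the normalization constants coming from the homogeneous-dimension bookkeeping, yielding exactly the stated inequality with $C_S(R)$ depending polynomially on $C_D(R)$ and $C_P(R)$.

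The main work is bookkeeping rather than new geometry: since $C_D(R)=16e^{R^2}$ blows up exponentially in $R$, one must verify that only the \emph{local} forms of doubling and Poincar\'e are ever invoked, on sub-balls of $B(p_0,R)$ where the constants $C_D(R), C_P(R)$ are uniform, and not any global hypothesis on the metric measure space $(M,g,\dmu_f)$. Since both (\ref{eqn: doubling}) and (\ref{eqn: Poincare}) are stated uniformly for every sub-ball $B(p,r)\subset B(p_0,R)$, the localized Saloff-Coste argument applies verbatim, and the resulting $C_S(R)$ is explicit in $C_D(R)$ and $C_P(R)$.
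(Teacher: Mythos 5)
Your route — invoke Saloff-Coste's local equivalence (doubling $+$ $L^2$-Poincar\'e $\Rightarrow$ local $L^2$-Sobolev) — is exactly what the paper does; the paper gives no proof and simply cites~\cite{SCoste92}. However, there is a concrete error in the admissibility check that, as written, would make your argument fail to yield the stated exponent.

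You write that $\nu(R)=\log_2 C_D(R)=4+R^2/\log 2>4$ implies ``$4$ lies strictly below the critical Sobolev exponent $\frac{2\nu(R)}{\nu(R)-2}$.'' The direction is reversed: $\nu\mapsto\frac{2\nu}{\nu-2}$ is \emph{decreasing} for $\nu>2$, so $\nu>4$ gives $\frac{2\nu}{\nu-2}<4$, and $4$ would be \emph{super}critical. With $\nu=\log_2 C_D(R)$ as the homogeneous dimension, Saloff-Coste's theorem produces a Sobolev inequality into $L^{\frac{2\nu}{\nu-2}}$ with $\frac{2\nu}{\nu-2}<4$, which does \emph{not} yield the claimed $L^4$-bound — one cannot improve an $L^q$-Sobolev estimate with $q<4$ to an $L^4$-estimate by H\"older on a bounded domain. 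In short, reading the homogeneous dimension off the raw doubling constant over-counts: the large factor $e^{R^2}$ in $C_D(R)$ is a scale-uniform multiplicative constant, not genuine extra dimension, and naively folding it into $\log_2 C_D$ destroys the exponent.

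The repair is to replace the crude reverse-volume estimate you iterate out of doubling with the sharp reverse doubling that the Bakry-\'Emery comparison actually provides. Lemma~\ref{lem: volume_comparison}, i.e.~(\ref{eqn: volume_monotone}), and (\ref{eqn: Euclidean_comparison}) give, for $0<s\le r$ and $B(p,r)\subset B(p_0,R)$,
\begin{align*}
\frac{\mu_f(B(p,s))}{\mu_f(B(p,r))}\ \ge\ \frac{\bar\mu_R(s)}{\bar\mu_R(r)}\ \ge\ e^{-R^2}\left(\frac{s}{r}\right)^4,
\end{align*}
since the model measure $\text{d}\bar\mu_R=e^{R|\mathbf{x}|}\text{d}x$ has bounded density on $|\mathbf{x}|\le R$. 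This is reverse doubling with exponent exactly $4$ and an $R$-dependent multiplicative constant. Feeding $\nu=4$ (not $\log_2 C_D(R)$) into the Saloff-Coste / Grigor'yan chaining argument, together with $C_D(R)$ and $C_P(R)$ to control the covering multiplicity and the local Poincar\'e constants, produces precisely the stated $L^2$--$L^4$ Sobolev inequality with $C_S(R)$ depending on $C_D(R)$, $C_P(R)$, and the $e^{-R^2}$ factor above. Everything else in your outline (Whitney covering, compact support removing the mean-value term, absorbing normalization into $r^{-2}u^2$) is fine; it is only the identification of the admissible homogeneous dimension — and the direction of the criticality inequality — that needs to be corrected.
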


\subsubsection{\textbf{Cheeger-Colding theory}} The theory of
Cheeger-Colding~\cite{ChCo}~\cite{ChCoII} provides powerful tools in studying
the structure of manifolds with uniform lower Ricci bounds. In the context of
lower bounded Bakry-\'Emery Ricci curvature, a similar theory has been
developed in~\cite{WangZhu13}, where the study is focused on
\emph{non-collapsing} manifolds. Yet our major concern is the \emph{collapsing}
phenomenon. Still, some of their lemmas see a few applications in our situation.

The existence of a cut-off function with controlled gradient and Laplacian will
play a fundamental role in our local $L^2$-Ricci curvature estimate.
In~\cite{WangZhu13}, such a cut-off function on a \emph{unit ball} has been
constructed following~\cite{ChCo}. However, noticing that the equation
(\ref{eqn: BER}) is not scaling invariant, we need a more careful argument when
dealing with the general case, see also~\cite{CoNa11}.
\begin{lemma}[Existence of good cut-off function]
For any $R>10$, there is a constant $C(R)>0$ such that for any $r\in (0,1)$,
and any compact $K\subset B(p_0,R-r)$, there is a smooth cut-off function
$\varphi$ supported on $B(K,r)$, with $\varphi\equiv 1$ on $B(K,\frac{r}{2})$,
$\varphi\equiv 0$ outside $B(K,\frac{3r}{4})$, and $r|\nabla
\varphi|+r^2|\Delta^f \varphi|\le C(R)$.
\label{lem: cutoff}
\end{lemma}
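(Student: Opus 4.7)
The plan is to adapt Cheeger--Colding's cut-off construction~\cite{ChCo}, as extended to the Bakry--\'Emery setting in~\cite{WangZhu13} and~\cite{CoNa11}. Because the shrinker equation (\ref{eqn: BER}) is not scale-invariant, I cannot reduce to a fixed unit ball and must track the $r$-dependence directly; fortunately, the Section 2 tools --- doubling (\ref{eqn: doubling}), the Sobolev inequality of Lemma~\ref{lem: Sobolev}, and the weighted Bochner identity --- all carry constants depending only on $R$ on $B(p_0, R)$, so the whole scheme goes through with $r$-independent constants.

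First I will construct an auxiliary $f$-harmonic function on the transition annulus $A := A(K; r/2, 3r/4)$, slightly perturbing the two radii to nearby regular values of $d(\cdot, K)$ (via Sard) so that $\partial A$ is smooth when $K$ has irregular structure. On $A$, solve the Dirichlet problem $\Delta^f h = 0$ with $h \equiv 1$ on $\partial B(K, r/2)$ and $h \equiv 0$ on $\partial B(K, 3r/4)$. The maximum principle gives $0 \le h \le 1$, and elliptic boundary regularity yields $h \in C^\infty(\overline A)$.

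The heart of the argument is the gradient estimate $|\nabla h| \le C(R)/r$ on a region covering the level sets $\{h = 1/4\}$ and $\{h = 3/4\}$. Substituting (\ref{eqn: BER}) and $\Delta^f h = 0$ into the weighted Bochner identity yields
\begin{align*}
\Delta^f |\nabla h|^2 = 2|\nabla^2 h|^2 + 2\Rc_f(\nabla h, \nabla h) = 2|\nabla^2 h|^2 + |\nabla h|^2 \ge 0,
\end{align*}
so $|\nabla h|^2$ is a non-negative $f$-subsolution. Moser iteration (powered by Lemma~\ref{lem: Sobolev} and (\ref{eqn: doubling})), combined with the Caccioppoli estimate for $\Delta^f h = 0$ tested against $h\xi^2$ and using $|h| \le 1$, gives $|\nabla h|(x) \le C(R)/\rho$ whenever $B(x, \rho) \subset A$. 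Since barrier comparison shows the two level sets $\{h = 1/4\}$ and $\{h = 3/4\}$ sit at distance $\gtrsim r$ from $\partial A$, one may take $\rho \sim r$ there, obtaining $|\nabla h| \le C(R)/r$ on the transition region.

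Finally I will choose $\chi \in C^\infty([0, 1], [0, 1])$ that is identically $0$ near $0$, identically $1$ near $1$, with all derivatives vanishing at the endpoints, and set $\varphi := \chi \circ h$ on $A$, extended by $1$ on $B(K, r/2)$ and by $0$ on $M \setminus B(K, 3r/4)$. Flatness of $\chi$ at the endpoints ensures that every derivative of $\varphi$ matches at $\partial A$, so the extension is $C^\infty$. Since $\Delta^f h = 0$, the chain rule gives $\nabla \varphi = \chi'(h)\nabla h$ and $\Delta^f \varphi = \chi''(h)|\nabla h|^2$; as $\chi'$ and $\chi''$ vanish outside the transition region where the gradient bound applies, $r|\nabla \varphi| + r^2|\Delta^f \varphi| \le C(R)$ follows. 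The main obstacle is the gradient bound itself: verifying that the Moser iteration constants are genuinely $r$-independent requires running the iteration on balls of radius $\sim r$, which is precisely enabled by the scale-invariant form of the Sobolev inequality in Lemma~\ref{lem: Sobolev}.
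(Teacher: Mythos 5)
Your approach is genuinely different from the paper's and the difference matters. The paper first handles $K=\{x_0\}$ a single point by rescaling $\tilde g=4r^{-2}g$, which makes $\Rc^f_{\tilde g}=\frac{r^2}{8}\tilde g\ge 0$ and keeps $|\tilde\nabla f|\le R+2$ uniformly for $r<1$; this reduces matters to a \emph{unit-scale} ball where Lemma 1.5 of~\cite{WangZhu13} produces the cut-off directly (so, contrary to your opening remark, the rescaling trick \emph{does} allow a reduction to a fixed unit ball — the right quantities to track are the Bakry-\'Emery lower bound and $|\nabla f|$, both of which improve under the rescaling). For a general compact $K$, the paper then covers $B(K,r/2)$ by balls $B(x_i,r/10)$ of bounded multiplicity, sums the one-ball cut-offs $\varphi_i$, and composes with a monotone function $u$ to obtain $\varphi=u(\sum\varphi_i)$. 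Your proposal instead poses a global Dirichlet problem for the drift-harmonic $h$ on the annulus $A(K;r/2,3r/4)$ and sets $\varphi=\chi\circ h$.

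The crux — and the genuine gap — is your ``barrier comparison shows the transition region sits at distance $\gtrsim r$ from $\partial A$.'' That claim is not automatic for an arbitrary compact $K$. The inner barrier is fine: for $x$ with $d(x,K)=r/2+\varepsilon$ and $y\in K$ nearest to $x$, one compares $h$ on $A\cap\bigl(B(y,3r/4)\setminus\overline{B(y,r/2)}\bigr)$ with the radial drift-harmonic function on the spherical annulus around $y$, and because $B(y,r/2)\subset B(K,r/2)$ (so $\partial B(K,3r/4)\cap B(y,3r/4)=\emptyset$), the maximum principle gives $h\ge g_{\text{rad}}$, forcing $h$ close to $1$ near the inner boundary. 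But the outer boundary has no analogous barrier: the candidate sub/super-solution $3-\frac{4}{r}d(\cdot,K)$ would need a one-sided bound on $\Delta^f d(\cdot,K)$ of the \emph{wrong} sign, and Laplacian comparison only controls $\Delta^f d(\cdot,K)$ from above. Moreover $M\setminus B(K,3r/4)$ need not satisfy an exterior ball or corkscrew condition — take $K$ two points at distance slightly less than $3r/2$, producing a cusp in $\partial B(K,3r/4)$ at the tangency. Without uniform outer boundary regularity (or a Wiener/capacity-density argument you do not supply), the set $\{a<h<b\}$ may approach $\partial B(K,3r/4)$, and the Moser iteration scale $\rho\sim r$ is then unavailable exactly where you need the gradient bound. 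The covering argument in the paper sidesteps all of this by never formulating a Dirichlet problem on a possibly irregular tubular domain; the only ``geometry of $K$'' it uses is the bounded multiplicity of a Vitali-type cover, which follows from weighted volume comparison alone. Secondarily, Sard's theorem does not directly apply to the Lipschitz function $d(\cdot,K)$; this is a minor point since the flatness of $\chi$ at the endpoints makes $\varphi$ locally constant near $\partial A$ in any case — but that workaround requires $h$ to \emph{continuously} attain its boundary data, which again runs into the same outer boundary thickness issue.
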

\begin{proof}
Fix $r\in (0,0.1)$. When $K=\{x_0\}\subset B(p_0,R-r)$, the construction of
such a cut-off function originates in the work of Cheeger-Colding~\cite{ChCo},
and a Bakry-\'Emery version was constructed in~\cite{WangZhu13}. For shrinking
Ricci solitons, consider the rescaled metric $\tilde{g}=4r^{-2}g$, then
$\Rc_{\tilde{g}}+\tilde{\nabla}^2f=\frac{r^2}{8}\tilde{g}$, or the
Bakry-\'Emery Ricci curvature satisfies
$\Rc_{\tilde{g}}^f=\frac{r^2}{8}\tilde{g}\ge 0$ as symmetric two tensors.
Moreover, $|\tilde{\nabla} f|=\frac{r}{2}|\nabla f|\le R+2$ since $r<1$.
 Then we can apply Lemma 1.5 of~\cite{WangZhu13} to obtain a cut-off function
 $\varphi$ supported on $\tilde{B}(x_0,2)$, $\varphi\equiv 1$ on
 $\tilde{B}(x_0,\frac{5}{4})$ and $\varphi\equiv 0$ outside
 $\tilde{B}(x_0,\frac{7}{4})$, moreover
\begin{align}
|\tilde{\nabla} \varphi|+|\tilde{\Delta}^f \varphi|\le C(R).
\label{eqn: rescaled_cutoff}
\end{align} 
Notice that the constant $C(R)$ depends on the lower Bakry-\'Emery Ricci
curvature bound, which is $0$, thus scaling invariant, and it also depends on
an uniform upper bound of $|\tilde{\nabla} f|$ on $\tilde{B}(p_0,r^{-1}R)$,
which is uniformly bounded above by $R+2$, regardless of the scaling by $r$ as
long as $r<1$.  In the original metric, (\ref{eqn: rescaled_cutoff}) reads
$r|\nabla \varphi|+ r^2|\Delta^f \varphi|\le C(R)$.

Now suppose $K\subset B(p_0,R-r)$, let a maximal subset of points
$\{x_i\}\subset B(K,\frac{r}{2})$ with $d(x_i,x_j)>\frac{r}{20}$. Then the
maximality implies that $B(K,\frac{r}{2})\subset \cup_iB(x_i,\frac{r}{10})$.
Moreover, if $x\in \cap_{j=1}^kB(x_{i_j},\frac{1}{5}r_{i_j})$, then by
Lemma~\ref{lem: volume_comparison}, relations $$B(x,r\slash 5)\ \subset\
B(x_{i_j},2r\slash 5)\ \subset\ B(x,3r\slash 5),\quad \text{and}\quad
B(x_{i_j},r\slash 40)\cap B(x_{i_{j'}},r\slash 40)\ =\ \emptyset,$$ bound the
multiplicity of the covering $\{B(x_i,\frac{r}{10})\}$ by some $m(R)$.

Then we use the first step of the lemma on each $B(x_i,\frac{r}{5})$, to
construct cutoff functions $\varphi_i$ supported on $B(x_i,\frac{r}{5})$ such
that $\varphi_i|_{B(x_i,\frac{r}{10})}\equiv 1$, and $r|\nabla
\varphi_i|+r^2|\Delta^f \varphi_i|\le c(R)$. Let
$\bar{\varphi}=\sum_i\varphi_i$, then $1\le \bar{\varphi}\le m(R)$ on
$B(K,\frac{r}{2})$, and vanishes outside $B(K,\frac{7r}{10})$. Let
$u:[0,\infty)\rightarrow [0,1]$ be a smooth function that vanishes near zero
and constantly equals one on $[1,\infty)$, then $\varphi=u(\bar{\varphi})$ is
the desired cutoff function.
\end{proof}

A fundamental tool of Cheeger-Colding theory is a controlled smoothing of the
distance function using solutions to the Poisson equations with prescribed
Dirichlet boundary conditions given by the distance function. In the case of
Bakry-\'Emery Ricci curvature uniformly bounded below, similar estimates were
obtained in~\cite{WangZhu13}:
\begin{lemma}
For any $\eta>0$ and $\epsilon>0$, let $(M,g,f)$ be a 4-dimensional smooth
Riemannian manifold with $\Rc^f\ge 0$ and $|\nabla f|\le \epsilon\ A$.
Suppose that 
$$\frac{\mu_f(\partial B(p,s))}{\mu_f(\partial B(p,r))}\ge
(1-\eta)\frac{\volfm_{\epsilon A}'(r)}{\volfm_{\epsilon A}'(s)},$$ and that $u$
solves the following Poisson-Dirichlet problem $$\Delta^fu=4\quad \text{on}\
\text A(p;r,s),\quad \quad u|_{\partial
B(p,r)}=\frac{r^2}{2}\quad\text{and}\quad u|_{\partial B(p,s)}=\frac{s^2}{2}.$$
Then for $r<r_1<r_2<s_2<s_1<s$, denoting $d^2_p(x):=d^2(p,x)$ and
$\tilde{u}:=\sqrt{2u}$, then $u$ and $\tilde{u}$ satisfies the following
estimates:
\begin{enumerate}
\item $\sup_{A(p;r_1,s_1)}|\tilde{u}-d_p|\ \le\ \Psi(\eta,\epsilon\ |\
A,r,s,r_1,s_1)$;
\item $\fint_{A(p;r,s)}|\nabla \tilde{u}-\nabla d_p|^2\ \dmu_f\ \le\
\Psi(\eta,\epsilon\ |\ A,r,s)$; and
\item $\fint_{A(p;r_2,s_2)}|\nabla^2 u-g|^2\ \dmu_f\ \le\ \Psi(\eta,\epsilon\
|\ A,r,r_1,r_2,s,s_1,s_2)$.
\end{enumerate}
\label{lem: Cheeger-Colding}
\end{lemma}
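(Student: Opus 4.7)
The plan is to adapt Cheeger-Colding's smoothing argument to the weighted setting, with $\Delta^f$ replacing $\Delta$ and $\dmu_f$ replacing $\dvol_g$; the weighted monotonicity of Lemma~\ref{lem: volume_comparison} substitutes for Bishop-Gromov, and the hypothesis $|\nabla f|\le \epsilon A$ keeps all drift terms perturbatively small. The core identity is the Bakry-\'Emery Bochner formula
$$\tfrac{1}{2}\Delta^f|\nabla u|^2\ =\ |\nabla^2 u|^2\ +\ \langle\nabla u,\nabla\Delta^fu\rangle\ +\ \Rc^f(\nabla u,\nabla u).$$
Since $\Delta^fu\equiv 4$ and $\Rc^f\ge 0$, this gives $\Delta^f|\nabla u|^2\ge 2|\nabla^2u|^2$. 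Expanding $|\nabla^2u|^2=|\nabla^2u-g|^2+2\Delta u-4$ and using $\Delta u=4+\nabla f\cdot\nabla u$, and noting $\Delta^f(2u)=8$, rearranges the inequality into
$$\Delta^f h\ \ge\ 2|\nabla^2u-g|^2\ +\ 4\,\nabla f\cdot\nabla u,\qquad h\ :=\ |\nabla u|^2-2u,$$
where $h$ is the natural defect that vanishes on the Euclidean model for $u=d_p^2/2$.

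For statement (1), the Bakry-\'Emery Laplacian comparison (valid since $\Rc^f\ge 0$ and $|\nabla f|\le\epsilon A$) gives $\Delta^f d_p\le 3/d_p+\epsilon A$, whence $\Delta^f(d_p^2/2)=1+d_p\,\Delta^f d_p\le 4+\epsilon A\cdot d_p$. Comparing with $\Delta^f u\equiv 4$ via the weighted maximum principle on $A(p;r,s)$ with matching boundary values yields the one-sided closeness $u\ge d_p^2/2-\Psi$; the reverse direction follows from the almost-equality hypothesis by integrating $h$ along radial geodesics, exactly as in the classical argument. For statement (2), the almost-equality of weighted area ratios forces, on the annulus, the model identity $|\nabla u_0|^2=2u_0$ to hold in weighted $L^1$-average, namely $\fint_{A(p;r,s)}|h|\,\dmu_f\le\Psi$; combined with $\tilde u=\sqrt{2u}\approx d_p$ from (1) and the chain rule $\nabla\tilde u=\nabla u/\sqrt{2u}$, this converts into the desired $L^2$ gradient estimate after expanding $|\nabla\tilde u-\nabla d_p|^2$. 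For statement (3), integrate the Bochner-type inequality displayed above against $\varphi^2$ for a good cut-off $\varphi$ from Lemma~\ref{lem: cutoff} supported in $A(p;r_1,s_1)$ and equal to one on $A(p;r_2,s_2)$; integration by parts with respect to $\dmu_f$ converts $\int\varphi^2\Delta^f h\,\dmu_f$ into boundary-type terms which, by the already-established control on $h$ and $\nabla h$, are of size $\Psi(\eta,\epsilon\ |\ A,r,r_1,r_2,s,s_1,s_2)$.

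The main technical obstacle is tracking the drift perturbation $4\,\nabla f\cdot\nabla u$ in the Bochner inequality and the error $\epsilon A\cdot d_p$ from the Bakry-\'Emery Laplacian comparison: while formally $O(\epsilon)$, these couple to $|\nabla u|$, for which an a priori $C^0$ bound must first be extracted from a maximum-principle comparison with $d_p$ before being fed back into the Hessian step. Collapsing all these errors uniformly across the nested radii $r<r_1<r_2<s_2<s_1<s$ into a single $\Psi(\eta,\epsilon\ |\ \cdots)$ is the bulk of the technical work; however, no new conceptual input beyond Cheeger-Colding's original argument and its Bakry-\'Emery adaptation in~\cite{WangZhu13} is required.
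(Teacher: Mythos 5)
The paper gives no proof of this lemma: it is stated with a citation to Wang--Zhu~\cite{WangZhu13} for the Bakry-\'Emery adaptation of Cheeger--Colding's distance-smoothing, and the only accompanying remark is that when $|\nabla f|\le\epsilon A$ is small the argument effectively reduces to the classical lower Ricci bound case. Your sketch is a faithful reconstruction of exactly that route: the Bakry-\'Emery Bochner identity, the defect function $h=|\nabla u|^2-2u$, the drifted Laplacian comparison from~\cite{WW}, the $C^0$ estimate via the maximum principle, the $L^2$ gradient bound via cut-off integration by parts, and the Hessian bound from the Bochner inequality. All the displayed computations (the Bochner formula with $\Rc^f$, the expansion $|\nabla^2u|^2=|\nabla^2u-g|^2+2\Delta u-4$ in dimension four, and the resulting $\Delta^f h\ge 2|\nabla^2u-g|^2+4\nabla f\cdot\nabla u$) are correct.

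One slip in step (1): the maximum principle gives the opposite one-sided bound from the one you state. Since $\Delta^f(d_p^2/2)\le 4+\epsilon A\,d_p$ and $\Delta^fu\equiv 4$, the difference $v:=u-d_p^2/2$ satisfies $\Delta^fv\ge-\epsilon A\,d_p$; after correcting by a comparison function $v$ is sub- (not super-) solution vanishing on $\partial A(p;r,s)$, so the maximum principle yields $u\le d_p^2/2+\Psi$. It is the lower bound $u\ge d_p^2/2-\Psi$ that must be extracted from the almost-equality hypothesis (near-rigidity in the Laplacian comparison along most radial geodesics, fed through the segment inequality). This is a transposition in a sketch, not a conceptual gap, but worth correcting before expansion.
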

 Basically, this lemma states that when $f$ is approximately a constant
 function, the situation is reduced to the Ricci lower bound case and
 corresponding estimates follow from the work of Cheeger-Colding~\cite{ChCo}.
\subsubsection{\textbf{Anderson's theorem}} Anderson's $\epsilon$-regularity
with respect to collapsing~\cite{A92} is the starting point of Cheeger-Tian's
$\epsilon$-regularity theorem for four dimensional Einstein manifolds~\cite{CT05}.
By the bound on the Sobolev constant for $\dmu_f$, as obtained in
Lemma~\ref{lem: Sobolev}, the proof of this theorem is by now standard using
Moser iteration, see~\cite{A92} and~\cite{HM10} for the original work.
\begin{proposition}[Weighted $\epsilon$-regularity with respect to collapsing] 
Let $(M,g,f)$ be a 4-d Ricci shrinker. There exist $\epsilon_A(R)>0$ and
$C_A(R)>0$ such that if $B(p,r)\subset B(p_0,R)$, then
\begin{align}
\frac{\volfm_R(r)}{\mu_f(B(p,r))}\int_{B(p,r)}|\Rm|^2\ \dmu_f\ \le\ \epsilon_A(R)
\label{eqn: small_Rm_L2}
\end{align}
implies that 
$$\sup_{B(p,\frac{r}{2})}|\Rm|\ \le\ C_A(R)\
r^{-2}\left(\frac{\volfm_R(r)}{\mu_f(B(p,r))}\int_{B(p,r)}|\Rm|^2\
\dmu_f\right)^{\frac{1}{2}}.$$
\label{prop: Anderson}
\end{proposition}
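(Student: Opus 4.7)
The plan is the standard Moser iteration of Anderson, now performed in the $f$-weighted setting controlled by the drifted Laplacian $\Delta^f = \Delta - \nabla f \cdot \nabla$. All of the analytic ingredients are already in place: the elliptic equation (\ref{eqn: Rm_laplace}) for the curvature, the good cut-off function of Lemma \ref{lem: cutoff}, and the weighted Sobolev inequality of Lemma \ref{lem: Sobolev}. The only piece of input really tied to the soliton structure is the curvature subequation; everything after that is the same iteration one runs on a manifold with a bounded Sobolev constant.

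First, I would derive a Bochner-type inequality. Pairing (\ref{eqn: Rm_laplace}) with $\Rm$ and applying Kato yields, for the weak subsolution $v := |\Rm|$,
$$\tfrac{1}{2}\,\Delta^f v^2 \ \geq\ |\nabla v|^2 + v^2 - C\,v^3,$$
with $C$ universal. Second, fix $p \geq 2$ and two radii $r/2 \leq \rho_1 < \rho_2 \leq r$, and pick a cutoff $\eta$ from Lemma \ref{lem: cutoff} with $\eta \equiv 1$ on $B(p,\rho_1)$, supported in $B(p,\rho_2)$, and $|\nabla \eta| \leq C(R)/(\rho_2 - \rho_1)$. Multiply the Bochner inequality by $\eta^2 v^{2(p-1)}$ and integrate against $d\mu_f$. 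Since $\Delta^f$ is self-adjoint for $d\mu_f$ on compactly supported functions, integration by parts gives
$$\int |\nabla(\eta v^p)|^2 \, d\mu_f \ \leq\ C p^2 \int \bigl(|\nabla \eta|^2 + \eta^2 v\bigr)\, v^{2p} \, d\mu_f,$$
and feeding $\eta v^p$ into the weighted Sobolev inequality of Lemma \ref{lem: Sobolev} converts this into an $L^{2p}\to L^{4p}$ improvement for $v$.

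The one place that needs actual thought is the nonlinear term $\int \eta^2 v^{2p+1}\, d\mu_f$. By Hölder and the same Sobolev inequality, it is dominated by $\bigl(\int_{B(p,r)} v^2 \, d\mu_f\bigr)^{1/2}$ times the Sobolev right-hand side. From the hypothesis (\ref{eqn: small_Rm_L2}) combined with $\bar\mu_R(r) \geq \omega_4 r^4$ in (\ref{eqn: Euclidean_comparison}),
$$\Bigl(\int_{B(p,r)} v^2 \, d\mu_f\Bigr)^{1/2} \ \leq\ C(R)\,\frac{\mu_f(B(p,r))^{1/2}}{r^2}\,\epsilon_A(R)^{1/2},$$
so for $\epsilon_A(R)$ chosen sufficiently small (depending on $C_S(R)$) this term can be absorbed onto the left. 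Iterating along the geometric sequence $p_k = 2^k$, $\rho_k = r/2 + r/2^{k+1}$ in the usual Moser fashion produces
$$\sup_{B(p,\,r/2)} v^2 \ \leq\ \frac{C(R)}{\mu_f(B(p,r))} \int_{B(p,r)} v^2 \, d\mu_f,$$
and rewriting the right-hand side via $\bar\mu_R(r) \leq e^{R^2}\omega_4 r^4$ gives the stated bound.

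The conceptual point, and where the weighted setting has to do slightly more work than the Einstein case, is that the Sobolev constant of Lemma \ref{lem: Sobolev} depends only on $R$ and is completely insensitive to collapsing of $B(p,r)$, so the entire iteration is scale-free once the smallness hypothesis is put in its properly renormalized form: both the collapsing factor $\mu_f(B(p,r))$ and the Euclidean scale $\bar\mu_R(r) \asymp r^4$ are packaged into the dimensionless quantity on the left of (\ref{eqn: small_Rm_L2}). The main obstacle, such as it is, is verifying that the drift contributions from $\nabla f$ in integration by parts (controlled uniformly on $B(p_0,R)$ by Lemma \ref{lem: gradient_estimate}) really do reduce to the harmless constant $C(R)$ in the estimates above, so that the iteration constants do not degenerate as $r \to 0$.
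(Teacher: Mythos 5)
Your proposal is correct and is exactly the standard Moser iteration that the paper invokes (via~\cite{A92} and~\cite{HM10}) without writing out, with the weighted Sobolev inequality of Lemma~\ref{lem: Sobolev} and the curvature equation (\ref{eqn: Rm_laplace}) as the two key inputs, just as you identify. The only slip is in the last line: to pass from the iterated bound $\sup_{B(p,r/2)}|\Rm|^2\le C(R)\,\mu_f(B(p,r))^{-1}\int_{B(p,r)}|\Rm|^2\,\dmu_f$ to the renormalized form in the statement one should invoke the lower bound $\volfm_R(r)\ge\omega_4 r^4$ from (\ref{eqn: Euclidean_comparison}), not the upper bound as you wrote, though since the two sides of (\ref{eqn: Euclidean_comparison}) differ only by an $R$-dependent constant this is immaterial.
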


This proposition basically says that even if a geodesic ball has no uniform
volume lower bound, and consequently no uniform estimate from the Sobolov
inequality, when the local energy is sufficiently small --- much smaller
compared to the volume --- we still have uniform curvature control. Adapted to
this phenomenon, we define the ``renormalized energy" as following:
\begin{definition}
Fix $r\in (0,1]$. For any $p\in B(p_0,R)$, define the scale $r$ renormalized
energy as
$$I^f_{\Rm}(p,r):=\frac{\volfm_{R+1}(r)}{\mu_f(B(p,r))}\int_{B(p,r)}|\Rm|^2\
\dmu_f.$$
\label{defn: renormalized_energy}
\end{definition}
So Proposition~\ref{prop: Anderson} says that for $p\in B(p_0,R)$, 
\begin{align}
I_{\Rm}^f(p,r)<\epsilon_A(R) \Rightarrow \sup_{B(p,\frac{r}{2})}|\Rm|\ \le\
C_A(R)\ r^{-2}I_{\Rm}^f(p,r)^{\frac{1}{2}}.
\label{eqn: Anderson}
\end{align}
Moreover, we immediately notice the following key properties of the
renormalized energy:
\begin{enumerate}
\item $I_{\Rm}^f$ is invariant under rescaling, so is (\ref{eqn: Anderson});
\item  $I^f_{\Rm}$ is continuous and monotonically \emph{non-decreasing} in
radius $r$.
\end{enumerate}

\subsection{Convergence and collapsing of Riemannian manifolds} 
In this subsection, we start by introducing various convergence concepts of metric spaces, whose canonical reference is~\cite{GLP}, then discuss Fukaya's structural results about the collapsing limit under bounded curvature, see~\cite{Fukaya87b} and~\cite{Fukaya88}. See also~\cite{GMR90} for a relevant result concerning the local structure of Riemannian manifolds.
\subsubsection{\textbf{Weak convergence}} Given a sequence of metric spaces $(X_i,d_i)$ with diameter bounded above by $R$, we say that $(X_i,d_i)\rightarrow_{GH}(X_{\infty,d_{\infty}})$ if when $i\rightarrow \infty$, the Gromov-Hausdorff distance, $d_{GH}((X_i,d_i),(X_{\infty},d_{\infty}))\rightarrow 0$. Recall that $d_{GH}((X_i,d_i),(X_{\infty},d_{\infty}))$ is defined as the infimum of the Hausdorff distance between $X$ and $Y$ in $X\sqcup Y$, equipped with all possible metrics. If $(X_i,d_i)\rightarrow_{GH}(X_{\infty,d_{\infty}})$, we could then find maps $G_i: X_i\rightarrow X_{\infty}$ and $H_i:X_{\infty}\rightarrow X_i$ such that for any $\epsilon>0$, there exists some $i_{\epsilon}$ so that $\forall i>i_{\epsilon}$, $\forall x_i,x_i'\in X_i$ and $\forall x_{\infty},x'_{\infty}\in X_{\infty}$,
\begin{enumerate}
\item $\left|d_i(x_i,x_i')-d_i(H_i\circ G_i(x_i),H_i\circ G_i(x_i'))\right|\ <\ \epsilon$, and
\item $\left|d_{\infty}(x_{\infty},x_{\infty}')-d_{\infty}(G_i\circ H_i(x_{\infty}),G_i\circ H_i(x_{\infty}'))\right|\ <\ \epsilon$.
\end{enumerate}

Gromov's fundamental observation says that if $\{(X_i,d_i)\}$ has uniformly bounded Hausdorff dimension, diameter and volume doubling property, then there exists some metric space $(X_{\infty},d_{\infty})$ with the same diameter bound, such that a subsequence Gromov-Hausdorff converges to $(X,d)$. Notice that if $(X_i,d_i)\subset B(p_i^0,R)\subset (M_i,g_i,f_i)$ with $d_i$ induced by $g_i|_{X_i}$, then by the uniform doubling property (\ref{eqn: doubling}) for $\mu_{f_i}$:
\begin{lemma}
Suppose $\{(X_i,d_i)\subset B(p_i^0,R)\subset (M_i,g_i,f_i)\}$ is a sequence of uniformly bounded domains in 4-d Ricci shrinkers, possibly with marked points, then there exists a metric space $(X_{\infty},d_{\infty})$ with $\diam_{d_{\infty}}X_{\infty}\le R$, such that some subsequence, still denoted by $\{(X_i,d_i)\}$, Gromov-Hausdorff converges to $(X_{\infty},d_{\infty})$.
\end{lemma}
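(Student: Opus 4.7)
The plan is to invoke Gromov's precompactness theorem, whose uniform-total-boundedness hypothesis I would derive from the weighted-volume doubling (\ref{eqn: doubling}). The uniform diameter bound is built into the assumption $X_i \subset B(p_i^0, R)$, and the doubling constant $C_D(\cdot) = 16 e^{(\cdot)^2}$ is an explicit universal function of its argument, so (\ref{eqn: doubling}) supplies an $i$-independent doubling bound for each $\mu_{f_i}$ on parent balls around $p_i^0$. The Hausdorff-dimension condition is automatic since each $X_i$ sits in a 4-manifold.

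The quantitative step is to produce, for every $\epsilon \in (0,R)$, an $\epsilon$-net in $X_i$ whose cardinality is controlled independently of $i$. Let $\{x_i^1,\ldots,x_i^{N_i}\}$ be a maximal $\epsilon$-separated subset of $X_i$; the balls $B(x_i^j, \epsilon/2)$ are pairwise disjoint and all contained in $B(p_i^0, 2R)$. Choose $k = \lceil \log_2(4R/\epsilon) \rceil$ so that $B(x_i^j, 2^{k-1}\epsilon) \supset B(p_i^0, R)$ (using $x_i^j \in B(p_i^0, R)$). Iterating (\ref{eqn: doubling}) $k$ times inside the parent ball $B(p_i^0, 3R)$ gives
\[
\mu_{f_i}\bigl(B(x_i^j, \epsilon/2)\bigr)\ \ge\ C_D(3R)^{-k}\, \mu_{f_i}\bigl(B(x_i^j, 2^{k-1}\epsilon)\bigr)\ \ge\ C_D(3R)^{-k}\, \mu_{f_i}\bigl(B(p_i^0, R)\bigr).
\]
Summing over $j$ by disjointness and comparing to $\mu_{f_i}(B(p_i^0, 2R)) \le C_D(3R)\, \mu_{f_i}(B(p_i^0, R))$,
\[
N_i\cdot C_D(3R)^{-k}\, \mu_{f_i}\bigl(B(p_i^0, R)\bigr)\ \le\ C_D(3R)\, \mu_{f_i}\bigl(B(p_i^0, R)\bigr),
\]
hence $N_i \le C_D(3R)^{k+1}$, a bound depending only on $\epsilon$ and $R$.

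With uniform total boundedness and a uniform diameter bound in hand, Gromov's precompactness theorem extracts a Gromov-Hausdorff convergent subsequence with compact limit $(X_\infty, d_\infty)$, and the diameter bound passes to the limit. When marked points $p_i^1,\ldots,p_i^J$ are present, I would apply the same reasoning to the augmented family $(X_i, d_i, p_i^1,\ldots, p_i^J)$ and diagonalize so that each $p_i^j$ converges to some $x_\infty^j \in X_\infty$; this step is standard once the unmarked precompactness is established. There is no substantive obstacle: the only point worth flagging is the need to verify that $C_D$ is uniform across the sequence, which is immediate from its explicit form in (\ref{eqn: doubling}), independent of the particular shrinker $(M_i, g_i, f_i)$.
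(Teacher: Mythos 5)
Your proposal is correct and is exactly the argument the paper has in mind: the paper states this lemma without proof, presenting it as an immediate consequence of the uniform doubling property (\ref{eqn: doubling}) via Gromov's precompactness theorem, and your packing estimate simply makes that implication explicit. The only minor imprecision is bookkeeping of the parent-ball radius (your iterated doubling may reach balls $B(x_i^j, 2^{k-1}\epsilon)$ of radius up to nearly $4R$, so the doubling constant should be taken on a parent ball of radius roughly $5R$ rather than $3R$), which does not affect the soundness of the argument.
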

For a sequence of complete non-compact 4-d Ricci shrinkers, we may define the multi-pointed Gromov-Hausdorff convergence to respect the specified base point, i.e. a minimum of the potential function.
\begin{definition}
We say that a sequence of complete non-compact 4-d Ricci shrinkers $(M_i,g_i,f_i,p_i^0)$ with base points $p_i^0$ (a minimum of $f_i$) and $J$ marked points $Mk_i:=\{p_i^1,\cdots,p_i^J\}$ multi-pointed Gromov-Hausdorff converges to a metric space $(X_{\infty},d_{\infty},x_{\infty}^0)$ with $J$ marked points $Mk_{\infty}=\{x_{\infty}^1,\cdots,x_{\infty}^J\}$, if for any $R>0$, $B(p_i^0,R)\rightarrow_{GH}B(x_{\infty}^0,R)$, and there are maps $G_i: M_i\rightarrow X_{\infty}$ and $H_i:X_{\infty}\rightarrow M_i$ such that $G_i(p_i^j)=x_{\infty}^j$ and $H_i(x_{\infty}^j)=p_i^j$ ($j=0,1,\cdots,J$). Moreover, for any $\epsilon>0$, there exists some $i_{\epsilon}(R)$ so that $\forall i>i_{\epsilon}(R)$,
\begin{enumerate}
\item  $\forall p_i,p_i'\in B(p_i^0,R)\backslash Mk_i$, $\left|d_i(p_i,p_i')-d_i(H_i\circ G_i(p_i),H_i\circ G_i(p_i'))\right|\ <\ \epsilon$, and
\item  $\forall x_{\infty},x_{\infty}'\in B(x^0_{\infty},R)\backslash Mk_{\infty}$, $\left|d_{\infty}(x_{\infty},x_{\infty}')-d_{\infty}(G_i\circ H_i(x_{\infty}),G_i\circ H_i(x_{\infty}'))\right|\ <\ \epsilon$.
\end{enumerate}
\end{definition}
\noindent For convenience we will also use the notation $X_i\rightarrow_{pGH}X_{\infty}$ and $Mk_i\rightarrow_{GH}Mk_{\infty}$ for such type of convergence. Also notice, it is possible that $p_i^0\in Mk_i$.


\subsubsection{\textbf{Strong convergence}} Gromov's compactness result provides a weak limit in the category of metric spaces. In order to extract information from a convergent sequence, we need to consider stronger convergence. For a sequence of 4-d Ricci shrinkers $\{M_i,g_i,f_i\}$, suppose $\{(X_i,d_i)\subset (M_i,g_i,f_i)\}$ multi-pointed Gromov-Hausdorff converges to a limit space $(X_{\infty},d_{\infty})$, with marked points $Mk_i\rightarrow_{GH} Mk_{\infty}$. According to (a trivial generalization of) the work of~\cite{ChCoI} and~\cite{CoNa11}, $X_{\infty}\backslash Mk_{\infty}=\mathcal{R}(X_{\infty})\cup \mathcal{S}(X_{\infty})$, with $\dim_H(\mathcal{R}(X_{\infty}))\le 4$, and $\dim_H(\mathcal{S}(X_{\infty}))<\dim_H(\mathcal{R}(X_{\infty}))$. We define the strong convergence as following:
\begin{definition}[Strong convergence]
Let $(M_i,g_i,f_i)$ be a sequence of 4-d Ricci shrinkers, whose subsets $(X_i,d_i)\rightarrow_{pGH} (X_{\infty},d_{\infty})$, with $J$ marked points $Mk_i\rightarrow_{GH}Mk_{\infty}$. We say that the convergence is strong if there is an exhaustion of $X_{\infty}\backslash Mk_{\infty}$ by compact subsets $K_j$ ($j=1,2,3,\cdots$), such that for each $j$, there is an $i_j>0$ and for all $i>i_j$,
\begin{enumerate}
\item if $\dim_H\left(\mathcal{R}(X_{\infty})\right)=4$, then $\mathcal{S}(X_{\infty})=\emptyset$, $X_{\infty}$ is a smooth $4$-manifold, and each $H_i|_{K_j}$ can be chosen as a diffeomorphism onto its image, with $H_i^{\ast}g_i\rightarrow g_{\infty}$ smoothly as symmetric 2-tensor fields; or else,
\item if $\dim_H\left(\mathcal{R}(X_{\infty})\right)<4$, then each $G_i^{-1}(K_j)$ has uniformly bounded curvature $C_j$, and $G_i^{-1}(K_j)\rightarrow_{GH} K_j$ is collapsing with bounded curvature, in the sense of Cheeger-Fukaya-Gromov~\cite{CFG92}.
\end{enumerate}
\label{defn: strong_convergence}
\end{definition}
We notice that the two cases in the above definition are alternatives. Case (1) above is guaranteed to happen if a sequence has uniformly locally bounded curvature and uniform volume ratio lower bound, through the work of~\cite{Cheeger70}. 
See Theorem~\ref{thm: CFGT} for a more detailed description of case (2).

\subsubsection{\textbf{Collapsing with bounded curvature}} When collapsing with bounded curvature, i.e. case (2) in Definition~\ref{defn: strong_convergence}, happens, there is a rich structural theory about the Riemannian metric, mainly developed by Cheeger, Fukaya and Gromov, see~\cite{Gromov78}, \cite{Ruh}, \cite{CGI}, \cite{CGII}, \cite{Fukaya87a}, \cite{Fukaya88} and~\cite{CFG92}.
 The following proposition gives a full account of Fukaya's results in~\cite{Fukaya87b} and~\cite{Fukaya88} that are relevant to our argument in the following subsections:
\begin{proposition}[Structure of collapsing limit]
Let $X_i\subset (M^n_i,g_i)$ be bounded domains in a sequence of $n$-dimensional Riemannian manifolds such that
$$|\nabla^k \Rm_{g_i}|\ \le\ C_k\  (k=0,1,2,3,\cdots)\quad \text{on}\quad X_i.$$
Suppose $X_i\rightarrow_{GH}X_{\infty}$ for some metric space $(X_{\infty},d_{\infty})$, with $\dim_HX_{\infty}=m< n$, then there is a regular-singular decomposition $X_{\infty}=\mathcal{R}(X_{\infty})\cup \mathcal{S}(X_{\infty})$, such that 
\begin{enumerate}
\item $(\mathcal{R}(X_{\infty}),d_{\infty}) \equiv (\mathcal{R}(X_{\infty}),g_{\infty})$, a smooth $m$-dimensional Riemannian manifold, such that $$\sup_{\mathcal{R}(X_{\infty})}|\Rm_{g_{\infty}}|\le C_0;$$
\item $\mathcal{S}(X_{\infty})$ is a closed subset of $X_{\infty}$ with $\dim_H(\mathcal{S}(X_{\infty}))=m'\le m-1$;
\item there is a stratification $\emptyset\subset \mathcal{S}_0\subset \mathcal{S}_1\subset\cdots\subset \mathcal{S}_{m'}=\mathcal{S}(X_{\infty})$, each strata $\mathcal{S}_j$ is by itself a $j$-dimensional smooth Riemannian manifold;
\item there exists some $\iota_{X_{\infty}}\ >\ 0$ such that $\text{inj}_{\mathcal{R}(X_{\infty})}\ x\ =\ \min\{\iota_{X_{\infty}},d_{\infty}(x,\mathcal{S}(X_{\infty}))\}$, for any $x\in \mathcal{R}(X_{\infty})$.
\end{enumerate}

For all $i$ sufficiently large, the Gromov-Hausdorff approximation $G_i:X_i\rightarrow X_{\infty}$ can be chosen such that on $U_i:=G_i^{-1}(\mathcal{R}(X_{\infty}))$,
$$G_i: U_i\rightarrow \mathcal{R}(X_{\infty})$$
is an almost Riemannian submersion, and for each $x\in \mathcal{R}(X_{\infty})$, $G_i^{-1}(x)$ is diffeomorphic to $N$, an infranil-manifold.


\label{prop: Fukaya}
\end{proposition}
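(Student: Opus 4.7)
The approach is the classical one originating with Gromov, Fukaya, and Cheeger-Fukaya-Gromov: lift the problem to the orthonormal frame bundle to eliminate the collapsing, invoke smooth Cheeger-Gromov compactness there, and then descend the limit structure to $X_\infty$ via the isometric $O(n)$-action. First I would equip the orthonormal frame bundles $F(X_i)\subset FM_i^n$ with the canonical Sasaki-type metric $\hat g_i$ built from $g_i$ together with the Levi-Civita connection. A standard computation shows that the hypotheses $|\nabla^k\Rm_{g_i}|\le C_k$ lift to uniform $C^\infty$-bounds on the curvature of $\hat g_i$, while the right $O(n)$-action remains free and isometric with fibers of a \emph{fixed} bi-invariant size. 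In particular $F(X_i)$ has a uniform positive injectivity radius on compact subdomains, so no collapsing occurs upstairs.

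Consequently, by Cheeger-Gromov compactness in the $C^\infty$ topology, a subsequence of $(F(X_i),\hat g_i)$ converges smoothly to a smooth Riemannian manifold $(F_\infty,\hat g_\infty)$ carrying a smooth isometric $O(n)$-action, and the quotient map induces an isometry $F_\infty/O(n)\cong (X_\infty,d_\infty)$. The regular-singular decomposition and the stratification then come from the orbit-type decomposition of a smooth compact Lie group action: define $\mathcal{R}(X_\infty)$ as the image of the principal orbit stratum, and let $\mathcal{S}_j$ consist of the images of orbits of dimension $n-j$ (equivalently, with $(n-m+j)$-dimensional stabilizer). Standard facts about smooth proper $O(n)$-actions (Mostow, Bredon) give that each $\mathcal{S}_j$ is a smooth manifold of dimension $j$, that $\mathcal{S}(X_\infty)$ is closed, and that $\mathcal{R}(X_\infty)$ inherits a smooth Riemannian metric from $\hat g_\infty$. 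The bound $|\Rm_{g_\infty}|\le C_0$ on $\mathcal{R}(X_\infty)$ follows from $C_0$ via O'Neill's submersion formulae applied to $F_\infty\to\mathcal{R}(X_\infty)$ (the non-negative correction terms drop out at the quotient level since the $O(n)$-orbits on $F_\infty$ are totally geodesic in the relevant directions). The injectivity radius estimate in (4) follows by comparing distances in $F_\infty$ with orbit distances, taking $\iota_{X_\infty}$ to be a fixed fraction of the diameter of the principal orbit at an interior regular point.

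To obtain the final assertion about $G_i$ on $U_i$, I would fix a relatively compact regular region and pull back local smooth sections of $F_\infty\to\mathcal{R}(X_\infty)$ through the smooth convergence; for $i$ large this produces a smooth map $G_i\colon U_i\to\mathcal{R}(X_\infty)$ whose differential is arbitrarily close to a Riemannian submersion. The fibers $G_i^{-1}(x)$ are the orbits of a local pure nilpotent Killing structure on $X_i$, obtained from the connected component of the identity of the local isotropy of the $O(n)$-action on $F_\infty$; by Gromov's almost flat manifold theorem and its refinement by Ruh, each fiber is then diffeomorphic to an infranil-manifold $N$, as claimed.

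The step I expect to be the main technical obstacle is the passage from the orbit-type stratification of $F_\infty/O(n)$ to the pure nilpotent Killing structure on $X_i$ with prescribed infranil fibers: this is where one must patch local pseudogroups into a consistent global N-structure, handle possible monodromy as the base point moves through $\mathcal{R}(X_\infty)$, and verify that the fiber diffeomorphism type is independent of the point. This is precisely the content of Fukaya's fibration theorem, which I would invoke rather than reprove. Everything else is a soft consequence of compact group action theory and smooth Cheeger-Gromov compactness, and no feature of the Ricci soliton equation is used.
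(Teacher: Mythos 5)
Your overall strategy---lift to the orthonormal frame bundle to undo the collapsing, invoke smooth Cheeger-Gromov compactness upstairs, and descend via the isometric $O(n)$-action---is exactly Fukaya's argument, and since the paper itself does not prove this proposition but simply records the relevant results from \cite{Fukaya87b} and \cite{Fukaya88}, your proposal is consistent with the paper's treatment. The orbit-type decomposition of a smooth compact group action is the right mechanism for the regular-singular stratification (items (2)--(4)), and you rightly identify that the hardest part, producing the almost Riemannian submersion $G_i$ with infranil fibers, is precisely Fukaya's fibration theorem, which you cite rather than reprove.

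However, your argument for the curvature bound $\sup_{\mathcal{R}(X_\infty)}|\Rm_{g_\infty}|\le C_0$ is wrong as stated. You claim the O'Neill correction terms ``drop out\ldots since the $O(n)$-orbits on $F_\infty$ are totally geodesic.'' Total geodesicity of the fibers kills the $T$-tensor, but O'Neill's base-curvature formula
$$K_B(X,Y)\ =\ K_{F_\infty}(\bar X,\bar Y)\ +\ 3\,|A_{\bar X}\bar Y|^2$$
involves the integrability $A$-tensor, which does not vanish: for the frame bundle with its canonical metric, the horizontal distribution is the Levi-Civita connection, and the $A$-tensor is precisely the curvature form of that connection. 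The correction term is therefore comparable to $|\Rm_{g_\infty}|^2$, and a naive O'Neill estimate is circular. The correct route is to observe that the $O(n)$-connection on $F_\infty$ is the $C^\infty$-limit of the connections on $F(X_i)$, whose curvature forms are exactly $\Rm_{g_i}$ and hence bounded by $C_0$; the smooth convergence then transfers the bound to $g_\infty$ directly. Apart from this slip, the frame-bundle mechanism and your invocations of compact-group-action theory, Gromov's almost-flat theorem, Ruh, and Fukaya's fibration theorem are the right ones.
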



\subsection{Collapsing and local scales}
The collapsing of Riemannian manifolds could mean different things in different contexts. Our original concern (as stated in introduction) is about \emph{volume collapsing}, i.e. the manifold admitting a family of Riemannian metrics under which the volume of fix-sized metric balls approaches zero. If we assume uniformly bounded Riemannian curvature, then the volume collapsing is equivalent to \emph{collapsing with uniformly bounded curvature}, meaning that the injectivity radius of each point, under the family of metrics, approaches zero. When collapsing with bounded curvature happens, the structure theory of Cheeger-Fukaya-Gromov~\cite{CFG92} will be of great help in studying the underlying manifold.

\subsubsection{\textbf{Curvature scale}} In general, however, no \emph{a
priori} uniform curvature bound could be assumed. One then realizes that the
above mentioned structural theory about \emph{collapsing with uniformly bounded
curvature} could be localized if the metrics in consideration are regular. This
is because the curvature scale,
is locally 1-Lipschitz. See Section 3 for a detailed discussion about
Cheeger-Tian's localization adopted to the 4-d Ricci shrinkers, and here we
will focus on the basic properties of the curvature scale. See
also~\cite{Cheeger10} for an exposition of the theory of locally bounded
curvature and the curvature scale.
\begin{definition}[Curvature scale] For any $p\in M$, define 
$$r_{\Rm}(p):=\sup\left\{r>0: B(p,s)\ \text{has compact closure in}\ B(p,r),\
\text{and}\ \sup_{B(p,s)}|\Rm|\le s^{-2}\right\}.$$
\end{definition}

 Equivalently, $r_{\Rm}(p)$ is the maximal scale such that if one rescales the
 metric to make it unit size, then the rescaled curvature will have its norm
 uniformly bounded by $1$ on the resulting unit ball around $p\in M$.

In fact, $\forall x\in B(p,r_{\Rm}(p))$, we have
$B\left(x,r_{\Rm}(p)-d(p,x))\subset B(p,r_{\Rm}(p)\right)$, so
\begin{align}
\sup_{B\left(x,r_{\Rm}(p)-d(p,x)\right)}|\Rm|\le r_{\Rm}(p)^{-2}\le
\left(r_{\Rm}(p)-d(p,x)\right)^{-2},
\label{eqn: r_Rm_Lip}
\end{align}
and thus $d(x,p)<r_{\Rm}(p)$ implies that $r_{\Rm}(x)\ge r_{\Rm}(p)-d(p,x)$.
Reversing the role of $x$ and $p$, we have shown that the curvature scale is
locally 1-Lipschitz as mentioned above:
\begin{lemma}
Either $r_{\Rm}\equiv \infty$ and $\Rm\equiv 0$, or $r_{\Rm}$ is locally
Lipschitz with
\begin{align}
\text{Lip}\ r_{\Rm}\le 1.
\label{eqn: Rm_Lip}
\end{align}
\label{lem: Rm_Lip}
\end{lemma}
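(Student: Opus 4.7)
The bulk of the argument has already been supplied in the paragraph immediately preceding the lemma. My plan is to organize it into the required dichotomy and add the small amount needed to obtain a genuine local Lipschitz bound rather than just the one-sided inequality displayed there.

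First I would dispose of the degenerate alternative. Suppose $r_{\Rm}(p_0)=\infty$ at some $p_0\in M$. By the very definition of $r_{\Rm}$, this forces $\sup_{B(p_0,s)}|\Rm|\le s^{-2}$ for every $s>0$; letting $s\to\infty$ on the connected manifold $M$ gives $|\Rm|\equiv 0$. But then at every $q\in M$ the defining condition is satisfied for all $s$, so $r_{\Rm}(q)=\infty$ as well. This puts us in the first alternative of the statement.

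Next, assume $r_{\Rm}(p)<\infty$ for every $p\in M$, and fix a point $p$. The already-established inequality \eqref{eqn: r_Rm_Lip} reads $r_{\Rm}(x)\ge r_{\Rm}(p)-d(p,x)$ whenever $d(p,x)<r_{\Rm}(p)$. Reversing the roles of $p$ and $x$ (which is legitimate provided also $d(p,x)<r_{\Rm}(x)$) yields the companion bound $r_{\Rm}(p)\ge r_{\Rm}(x)-d(p,x)$, and combining the two gives
\[
|r_{\Rm}(x)-r_{\Rm}(p)|\ \le\ d(p,x).
\]
So all that remains is to pick a neighborhood of $p$ on which both one-sided bounds simultaneously apply. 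Take for instance $V=B\bigl(p,\tfrac{1}{3}r_{\Rm}(p)\bigr)$: for any $x\in V$, the first one-sided inequality already gives $r_{\Rm}(x)\ge \tfrac{2}{3}r_{\Rm}(p)$, so for $x,y\in V$ we have $d(x,y)<\tfrac{2}{3}r_{\Rm}(p)\le \min\{r_{\Rm}(x),r_{\Rm}(y)\}$, and the two-sided bound applies to the pair $(x,y)$. This verifies that $r_{\Rm}$ is 1-Lipschitz on $V$, hence locally Lipschitz with Lipschitz constant at most $1$.

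There is no real obstacle in this proof; the only place requiring a bit of care is the last step, where one must exhibit an honest neighborhood on which the two-sided bound holds uniformly. The radius $\tfrac{1}{3}r_{\Rm}(p)$ is convenient but the precise fraction is immaterial, any value less than $\tfrac{1}{2}r_{\Rm}(p)$ would work equally well.
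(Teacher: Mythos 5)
Your proof is correct and takes essentially the same route as the paper: the paper also deduces $r_{\Rm}(x)\ge r_{\Rm}(p)-d(p,x)$ from the containment $B(x,r_{\Rm}(p)-d(p,x))\subset B(p,r_{\Rm}(p))$ and concludes by reversing the roles of $x$ and $p$. You simply make the symmetrization rigorous by exhibiting an explicit neighborhood $B(p,\tfrac13 r_{\Rm}(p))$ on which both one-sided bounds apply, and you spell out the trivial $r_{\Rm}\equiv\infty$ alternative, which the paper leaves implicit.
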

In order to facilitate our local arguments, it is also convenient to truncate
the curvature scale:
\begin{definition}[Truncated curvature scale]
For any fixed $0<r\le 	1$, we put
$$l_a:=\min\{r_{\Rm},a\}.$$
\end{definition}
\noindent Clearly $l_a$ is locally 1-Lipschitz.
\subsubsection{\textbf{Energy scale}} Associated to Anderson's theorem
(Proposition~\ref{prop: Anderson}) is another local scale, called the energy
scale. This scale is particularly well-adapted to the \emph{analytical} side of
the problem, and its interaction with the curvature scale, responsible for the
\emph{geometric} side of the problem, consists of the technical core of
Cheeger-Tian's argument.
\begin{definition}
The energy scale $\rho_f(p)$ is defined by
$$\rho_f(p):=\min\left\{\sup\left\{r\in (0,R):I^f_{\Rm}(p,r)\le\epsilon_A(R)\right\},1\right\}.$$
\end{definition}
\noindent Moreover, we could assume $\varepsilon_A(R)<4C_A(R)^{-2}$ in
Anderson's theorem (Proposition~\ref{prop: Anderson}), so that
$I^f_{\Rm}(p,\rho_f(p))\le \epsilon_A(R)$, and Proposition~\ref{prop: Anderson}
tells that
\begin{align}
\rho_f(p)\le 2r_{\Rm}(p),
\label{eqn: scale_comparison}
\end{align}
since 
$$
\sup_{B(p,\frac{1}{2}\rho_f(p))}|\Rm|\le C_A(R)\epsilon_A(R)\rho_f(p)^{-2}\le \left(\frac{1}{2}\rho_f(p)\right)^{-2}.
$$
\subsubsection{\textbf{Volume collapsing and collapsing with locally bounded curvature}}
As mentioned above, we are concerned with the phenomenon of volume collapsing defined as:
\begin{definition}[$\delta$-volume collapsing]
$U\subset B(p_0,R)$ is $\delta$-volume collapsing if $\forall p\in U$, $\mu_f(B(p,1))\le \delta$.
\end{definition}
However, volume collapsing does not give much information of the underlying geometry. The concept associated to localizing the structural theory of Cheeger-Fukaya-Gromov in~\cite{CFG92} is $(\delta,a)$-collapsing with locally bounded curvature:
\begin{definition}[$(\delta,a)$-collapsing with locally bounded curvature]
$U\subset B(p_0,R)$ is $(\delta,a)$-collapsing with locally bounded curvature if 
$\forall p\in U,\ \mu_f(B(p,l_a(p)))\le \delta\ l_a(p)^4.$
\label{def: truncated_collapsing}
\end{definition}
Anderson's $\epsilon$-regularity with respect to collapsing bridges these two concepts:
\begin{lemma}
Suppose for some $\delta\in (0,1)$, and $\forall p\in U\subset B(p_0,R)\subset M$, 
$$\mu_f(B(p,1))\le \frac{\delta}{16\volfm_R(1)}\quad \text{and}\quad \int_{B(p,1)}|\Rm|^2\ \dmu_f\le \frac{\epsilon_A(R)\ \delta}{16\volfm_R(1)},$$
then $U$ is $(\delta,a)$-collapsed with locally bounded curvature, i.e. $\forall p\in U$
\begin{align}
\mu_f\left(B(p,l_a(p))\right)\ \le\ \delta\ l_a(p)^4.
 \label{eqn: CT_local_bdd_curvature}
\end{align}
\label{lem: CT_local_bdd_curvature}
\end{lemma}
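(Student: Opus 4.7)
My plan is to fix \(a=a(R)\in(0,1]\) depending only on \(R\), and for every \(p\in U\) verify \(\mu_f(B(p,l_a(p)))\le \delta l_a(p)^4\) by a dichotomy on the size of \(r_{\Rm}(p)\). The main engine is Proposition~\ref{prop: Anderson} applied in the contrapositive: failure of the curvature bound at a scale \(r_0\) forces the renormalized energy \(I^f_{\Rm}(p,2r_0)\) to exceed \(\varepsilon_A(R)\), which when combined with the energy hypothesis produces a small weighted volume at scale \(r_0\).

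If \(r_{\Rm}(p)\ge a\), then \(l_a(p)=a\), and monotonicity of \(\mu_f\) gives
\[
\mu_f(B(p,l_a(p)))\le \mu_f(B(p,1))\le \frac{\delta}{16\volfm_R(1)},
\]
so the choice \(a\ge (16\volfm_R(1))^{-1/4}\) makes the right-hand side at most \(\delta a^4=\delta l_a(p)^4\).

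If instead \(r_0:=r_{\Rm}(p)<a\), then \(l_a(p)=r_0\). By the definition of \(r_{\Rm}\), \(\sup_{B(p,s)}|\Rm|>s^{-2}\) for every \(s>r_0\). Were \(I^f_{\Rm}(p,2r_0)\le \varepsilon_A(R)\), Proposition~\ref{prop: Anderson} at scale \(2r_0\) would yield, using \(\varepsilon_A(R)<4C_A(R)^{-2}\),
\[
\sup_{B(p,r_0)}|\Rm|\le C_A(R)(2r_0)^{-2}\varepsilon_A(R)^{1/2}<\tfrac{1}{2}r_0^{-2};
\]
by continuity of \(|\Rm|\) on the compact set \(\overline{B(p,r_0)}\), this bound would persist on \(B(p,r_0+\epsilon)\) for some \(\epsilon>0\), forcing \(r_{\Rm}(p)>r_0\), a contradiction. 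Hence \(I^f_{\Rm}(p,2r_0)>\varepsilon_A(R)\), and unwinding the definition of \(I^f_{\Rm}\) together with the energy hypothesis gives
\[
\mu_f(B(p,2r_0))<\frac{\volfm_{R+1}(2r_0)}{\varepsilon_A(R)}\int_{B(p,2r_0)}|\Rm|^2\,\dmu_f\le \frac{\delta\,\volfm_{R+1}(2r_0)}{16\volfm_R(1)}.
\]
The Euclidean comparison (\ref{eqn: Euclidean_comparison}) bounds \(\volfm_{R+1}(2r_0)\le C(R)\,r_0^4\) for \(r_0\le a\), and choosing \(a\) small enough (depending only on \(R\)) so that \(C(R)\le 16\volfm_R(1)\) throughout \(r_0\le a\), one concludes \(\mu_f(B(p,r_0))\le \mu_f(B(p,2r_0))\le \delta r_0^4\) by monotonicity of \(\mu_f\).

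The main obstacle is to reconcile the two constraints on \(a\): the lower bound \(a\ge (16\volfm_R(1))^{-1/4}\) arising from the first case, and the upper bound coming from the Euclidean comparison in the second. Since \(\volfm_R(1)\) grows rapidly in \(R\) while the relevant Euclidean thresholds scale more mildly, a suitable \(a=a(R)\) should exist for all \(R>2\sqrt{2}\); a direct computation using the definition of \(\volfm_R\) pins down an explicit admissible value. One should also verify that \(B(p,2a)\subset B(p_0,R)\) for \(p\in U\), which is a mild ambient condition implicit in the setup.
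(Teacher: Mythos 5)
Your overall strategy (dichotomy on the size of $r_{\Rm}(p)$, contrapositive of Anderson's theorem to get a renormalized-energy lower bound, then trade this for a weighted-volume upper bound) is the same as the paper's, where the dichotomy is phrased in terms of the energy scale $\rho_f(p)$ rather than directly in terms of $r_{\Rm}(p)$. However, your final step has a genuine gap. You invoke (\ref{eqn: Euclidean_comparison}) to write $\volfm_{R+1}(2r_0)\le C(R)\,r_0^4$, which gives the \emph{constant} $C(R)=16\,e^{(R+1)^2}\omega_4$ — independent of $r_0$. The subsequent step ``choosing $a$ small enough so that $C(R)\le 16\volfm_R(1)$ throughout $r_0\le a$'' is therefore vacuous: $C(R)$ does not change as $a$ shrinks. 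Worse, the inequality $C(R)\le 16\volfm_R(1)$ can never hold, because (\ref{eqn: Euclidean_comparison}) also gives $\volfm_R(1)\le e^{R^2}\omega_4$, so $16\volfm_R(1)\le 16\,e^{R^2}\omega_4<16\,e^{(R+1)^2}\omega_4=C(R)$. Thus the displayed chain in the case $r_{\Rm}(p)<a$ does not terminate at $\delta r_0^4$ and the conclusion is not reached.

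The fix, which is what the paper's displayed estimate $\frac{\delta\,\volfm_R(r_{\Rm}(p))}{\volfm_R(1)}\le\delta\,r_{\Rm}(p)^4$ is secretly using, is the sharper monotonicity statement: $r\mapsto\volfm_R(r)/r^4$ is non-decreasing on $(0,1]$ (it equals $\frac{\int_0^r e^{Rt}t^3\,\mathrm dt}{\int_0^r t^3\,\mathrm dt}$ up to a constant, which is an increasing weighted average of the increasing function $e^{Rt}$). Hence $\volfm_R(2r_0)\le(2r_0)^4\,\volfm_R(1)=16\,r_0^4\,\volfm_R(1)$ for $2r_0\le 1$, and the chain closes cleanly as $\mu_f(B(p,2r_0))<\delta\,r_0^4$ with no extra smallness condition on $a$ coming from this case. (You should also reconcile the index mismatch between $\volfm_{R+1}$ in Definition~\ref{defn: renormalized_energy} and $\volfm_R$ in the hypothesis and in Proposition~\ref{prop: Anderson}; the paper's own proof uses $\volfm_R$ throughout.) Your other two observations — the constraint $a\ge(16\volfm_R(1))^{-1/4}$ from the $r_{\Rm}(p)\ge a$ case and the mild ambient condition on $B(p,2r_0)$ — are real but subsidiary, and the constraint on $a$ is indeed satisfiable once the crude Euclidean bound is replaced.
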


\begin{proof}[Proof (following Cheeger-Tian)]
Without loss of generality we only need to consider points with $r_{\Rm}\le 1$. 
If $\rho_f(p)=1$, then 
\begin{align*}
\mu_f(B(p,r_{\Rm}(p)))&\le \mu_f(B(p,2r_{\Rm}(p)))\le \frac{16\mu_f(B(p,1))\volfm_{R}(r_{\Rm}(p))}{\volfm_{R}(1)}\ \le\ \frac{\delta\ \volfm_R(r_{\Rm}(p))}{\volfm_{R}(1)^2}\\
&\le\ \frac{\delta\ r_{\Rm}(p)^4}{\volfm_{R}(1)}\ \le\ \delta\ r_{\Rm}(p)^4.
\end{align*}
Otherwise, if $\rho_f(p)<1$, and by continuity of $I^f_{\Rm}(p,r)$ in $r$, $I^f_{\Rm}(p,\rho_f(p))=\epsilon_A(R)$, and we can estimate
\begin{align*}
\mu_f(B(p,r_{\Rm}(p)))\ &\le\ \frac{16\mu_f(B(p,\rho_f(p)))\volfm_{R}(r_{\Rm}(p))}{\volfm_{R}(\rho_f(p))}\\
&=\ \frac{16\volfm_R(r_{\Rm}(p))}{\epsilon_A(R)}\int_{B(p,\rho_f(p))}|\Rm|^2\ \dmu_f\\
&\le\ \frac{\delta\ \volfm_R(r_{\Rm}(p))}{\volfm_R(1)}\ \le\  \delta\ r_{\Rm}(p)^4,
\end{align*}
in the case $r_{\Rm}(p)<a$, and a similar argument for $r_{\Rm}(p)\ge a$ implies (\ref{eqn: CT_local_bdd_curvature}).
\end{proof}
This lemma says that if we have sufficiently small energy, \emph{local volume collapsing} of a region does imply \emph{collapsing with locally bounded curvature}. 


\section{Regularity and collapsing with locally bounded curvature}
When collapsing with bounded curvature happens, Cheeger-Fukaya-Gromov~\cite{CFG92} gives a complete structural theory of the underlying manifold, one important consequence being the vanishing of the Euler characteristics. 
When the metric is locally regular, a similar structural theory could be obtained when collapsing with only \emph{locally} bounded curvature happens on a domain. This observation was essentially discovered in~\cite{CGII}, in the context of $F$-structures, and was made of full use in~\cite{CT05}. The vanishing of the Euler characteristic of the domain and (\ref{eqn: Euler}) then help obtain an improved energy bound (Proposition~\ref{prop: integral_TP_control}), which will be crucial for the iteration argument for the key estimate (Proposition~\ref{prop: KE}) later. In this section we will follow the expositions of Sections 2 and 3 of Cheeger-Tian~\cite{CT05} to see why their theory also works for 4-d Ricci shrinkers. The equivariant good chopping for sets collapsing with locally bounded curvature (Proposition~\ref{prop: GC}), which is the main theorem of Section 3 in~\cite{CT05}, is proved in the Appendix. 

\subsection{Elliptic regularity at the curvature scale}
Besides the fact that $r_{\Rm}$ is locally Lipschitz, another key ingredient in Cheeger-Tian's localization is that the higher regularities of Einstein metrics follow directly from local curvature bounds. This essentially follows from elliptic regularity theory and is independent of non-collapsing assumptions.

In the case of 4-d Ricci shrinkers, equations (\ref{eqn: scalar_f}) and (\ref{eqn: Rm_laplace}) form an elliptic system, which could be bootstrapped to give higher regularities of both the metric and the potential function, once a local curvature bound assumed. Also notice that according to (\ref{eqn: potential_growth}) and (\ref{eqn: gradient_estimate}), we already have a local $C^1$-bound of the potential function $f$. 
\begin{lemma}(Local elliptic regularity)
Let $p\in B(p_0, R)$, then there exists $C_k(R),D_k(R)$ such that 
\begin{align}
\sup_{B\left(p,\frac{1}{2}l_a(p)\right)}|\nabla^k \Rm|\ \le C_k(R)l_a(p)^{-2-k},\quad \text{and}\quad \sup_{B\left(p,\frac{1}{2}l_a(p)\right)}|\nabla^k f|\ \le\ D_k(R)l_a(p)^{-1-k},
\label{eqn: regularity}
\end{align}
for $k=0,1,2,3,\cdots$.
\label{lem: elliptic_regulairty}
\end{lemma}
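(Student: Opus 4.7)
The plan is a standard elliptic bootstrap carried out at the curvature scale, after rescaling, with the Ricci-soliton equation (\ref{eqn: defn}), the elliptic equation (\ref{eqn: elliptic}) for $f$, and the curvature equation (\ref{eqn: Rm_laplace}) driving the iteration. The only non-routine points are that the soliton equation is \emph{not} scaling invariant, and that one must keep the zeroth/first order bounds on $f$ supplied by Lemma~\ref{lem: potential_growth} and Lemma~\ref{lem: gradient_estimate} under control during the rescaling.

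First I would rescale. Set $\lambda=l_a(p)$ and $\tilde g=\lambda^{-2}g$. By the very definition of the truncated curvature scale, $|\tRm|_{\tilde g}\le 1$ on the unit ball $\tilde B(p,1)$ in $\tilde g$. The defining equation (\ref{eqn: defn}) becomes
\[
\tRc_{\tilde g}+\tilde\nabla^2 f=\tfrac{\lambda^2}{2}\tilde g,
\]
and since $\lambda\le 1$, the source $\tfrac{\lambda^2}{2}\tilde g$ is uniformly bounded. From Lemma~\ref{lem: potential_growth} and Lemma~\ref{lem: gradient_estimate} we have $|f|\le C(R)$ and $|\nabla f|_g\le C(R)$ on $B(p_0,R)$; hence $|f|+|\tilde\nabla f|_{\tilde g}\le C(R)$ on $\tilde B(p,1)$. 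So after rescaling we are in the setting of a four-manifold with $|\tRm|\le 1$ and uniformly $C^1$-bounded potential, and it suffices to produce, on $\tilde B(p,\tfrac12)$, bounds
\[
|\tilde\nabla^k\tRm|\le C_k(R),\qquad |\tilde\nabla^k f|\le D_k(R),
\]
and then undo the rescaling, which contributes exactly the factors $l_a(p)^{-(2+k)}$ and $l_a(p)^{-(1+k)}$.

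Next I would invoke Anderson's theorem on the existence of harmonic coordinates: the bound $|\tRm|\le 1$ gives, on a slightly smaller ball $\tilde B(p,r_0)$, harmonic coordinates $\{y^i\}$ in which $\tilde g_{ij}$ is $C^{1,\alpha}$ with uniform bounds and $\tilde g_{ij}$ is uniformly close to $\delta_{ij}$. In these coordinates, for any tensor $T$,
\[
\tilde\Delta T_{i_1\cdots i_k}=\tilde g^{ab}\partial_a\partial_b T_{i_1\cdots i_k}+(\text{lower order involving }\partial\tilde g),
\]
so $\tilde\Delta$ is a uniformly elliptic operator with $C^\alpha$ coefficients. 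Then the bootstrap proceeds in alternating half-steps. Starting from $\tilde g\in C^{1,\alpha}$ and $f\in C^{1,\alpha}$:
\begin{enumerate}
\item The elliptic equation (\ref{eqn: elliptic}), $\tilde\Delta f=|\tilde\nabla f|^2+\lambda^2(2-f)$ (rescaled), has a $C^{0,\alpha}$ right hand side, so interior Schauder upgrades $f$ to $C^{2,\alpha}$.
\item The soliton equation rewritten in harmonic coordinates reads
\[
-\tfrac12 \tilde g^{ab}\partial_a\partial_b\tilde g_{ij}+Q(\tilde g,\partial\tilde g)=\tfrac{\lambda^2}{2}\tilde g_{ij}-\tilde\nabla_i\tilde\nabla_j f,
\]
whose right hand side is now $C^\alpha$, so Schauder upgrades $\tilde g$ to $C^{2,\alpha}$; in particular $\tRm\in C^{0,\alpha}$.
\item Equation (\ref{eqn: Rm_laplace}), $\tilde\Delta\tRm=\tilde\nabla f\cdot\tilde\nabla\tRm+\lambda^2\tRm+\tRm\ast\tRm$, is a semilinear elliptic equation for $\tRm$ with $C^\alpha$ coefficients and right hand side, so Schauder upgrades $\tRm$ to $C^{2,\alpha}$.
\end{enumerate}
Iterating (1)--(3) on progressively smaller balls and using that each step produces a $C^\alpha$ gain from the preceding one, I obtain $\tRm\in C^{k,\alpha}$ and $f\in C^{k+1,\alpha}$ on $\tilde B(p,\tfrac12)$ with constants depending only on $k$ and $R$. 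Undoing the rescaling yields (\ref{eqn: regularity}).

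The main obstacle I anticipate is not any one step but the careful tracking of the scaling: the soliton equation is not scale invariant, so one has to verify that the prefactor $\lambda^2$ produced by rescaling actually \emph{helps} rather than hurts (it does, since $\lambda\le 1$), and that the $C^1$-bound on $f$ inherited from Lemma~\ref{lem: gradient_estimate} really gives a $\tilde g$-bound on $\tilde\nabla f$ that is uniform in $\lambda\in(0,1]$. Once that bookkeeping is in place, the argument is a routine elliptic bootstrap exactly as in the Einstein case treated by Anderson and Cheeger--Tian, the only new element being that one must feed the $f$-equation and the $\Rm$-equation to each other rather than use $\Rc=\frac12 g$ directly.
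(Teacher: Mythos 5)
Your bootstrap structure is correct and matches the paper's strategy, but there is a genuine gap at the very first nontrivial step: you cannot obtain harmonic coordinates on $\tilde B(p,r_0)$ from the bound $|\Rm_{\tilde g}|\le 1$ alone. Anderson's and Anderson--Cheeger's harmonic-coordinate existence theorems require, in addition to a curvature (or Ricci) bound, a lower bound on the injectivity radius or on volume, and precisely such a bound is \emph{unavailable} in the present setting --- the entire point of the paper is that the metric may collapse, so a ball of radius $l_a(p)$ can have arbitrarily small volume and injectivity radius, and hence arbitrarily small harmonic radius on the manifold itself. (Think of a flat $\mathbb{R}^3\times S^1_\varepsilon$: curvature is identically zero, yet no unit-size harmonic chart exists.)

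The paper circumvents this by first pulling back the metric via the exponential map. The bound $\sup_{B(p,l_a(p))}|\Rm|\le l_a(p)^{-2}$ gives a conjugate-radius lower bound $\ge \pi l_a(p)$, so $\exp_p\colon B(\mathbf 0,l_a(p))\subset\mathbb{R}^4\to B(p,l_a(p))$ is a nonsingular local diffeomorphism. Setting $\tilde g:=\exp_p^\ast g$ and $\tilde f:=\exp_p^\ast f$ on the Euclidean ball $B(\mathbf 0,l_a(p))$ removes the topological obstruction: in these exponential coordinates the geodesics through $\mathbf 0$ are straight lines, so the exponential map of $\tilde g$ at $\mathbf 0$ is injective, and the injectivity radius of $(B(\mathbf 0,l_a(p)),\tilde g)$ at the origin is controlled by the conjugate radius alone. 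This is exactly where Greene--Wu and Anderson--Cheeger then apply, giving $\|\tilde g\|_{C^{1,\alpha}}\le C l_a(p)^{-1}$ on $B(\mathbf 0, C l_a(p)/2)$. Your rescaling normalization and the alternating Schauder bootstrap between the $f$-equation and the $\Rm$-equation are then exactly as in the paper, and the scaling bookkeeping of the non-scale-invariant soliton equation (the $\lambda^2$ factor and the $C^1$-control of $f$ from Lemmas~\ref{lem: potential_growth}--\ref{lem: gradient_estimate}) is handled correctly. But without first passing to the exponential pull-back, the claimed harmonic-coordinate chart simply need not exist, so that step must be added before the argument is valid.
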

\begin{proof}
Fix $p\in B(p_0,R)$, then $B(p,l_a(p))\subset B(p_0,R+1)$. Since $\sup_{B(p,l_a(p))}|\Rm|\le l_a(p)^{-2}$, the conjugate radius $r_{\text{conj}}$ has a definite lower bound on $B(p,l_a(p))$:
$$\inf_{B(p,l_a(p))} r_{\text{conj}}\ \ge\ \pi l_a(p).$$
This means that the exponential map $\exp_p:B(\mathbf{0},l_a(p))\rightarrow B(p,r_a(p))$ is well-defined and has no singularity. We can pull the manifold metric back to $B(\mathbf{0},l_a(p))\subset \mathbb{R}^4$, denote $\tilde{g}:=\exp_p^{\ast}g$ and $\tilde{f}:=\exp_x^{\ast}f$. Then the pull-back metric and potential function still satisfy the defining equation (\ref{eqn: defn}) 
$$Rc_{\tilde{g}}+\tilde{\nabla}^2\tilde{f}=\frac{1}{2}\tilde{g},$$
understood as matrix equations on an open subset of $\mathbb{R}^4$, with $\tilde{\nabla}^2$ the Hessian defined by the metric $\tilde{g}$. Notice that the equations (\ref{eqn: scalar_f}) and (\ref{eqn: Rm_laplace}) now become the elliptic system
\begin{align}
\tilde{\Delta} \tilde{f}=2-\Sc_{\tilde{g}}\quad \text{and}\quad
\tilde{\Delta} \Rm_{\tilde{g}}=\tilde{\nabla}\tilde{f}\ast \Rm_{\tilde{g}}+\Rm_{\tilde{g}}+\Rm_{\tilde{g}}\ast \Rm_{\tilde{g}},
\label{eqn: pullback_elliptic_system}
\end{align}
 defined on an open subset of $\mathbb{R}^4$, as equations of functions and of 4-tensors, respectively. Here $\tilde{\nabla}$ is the gradient under $\tilde{g}$ and $\tilde{\Delta}:=tr_{\tilde{g}}\tilde{\nabla}^2$ is the Laplacian of $\tilde{g}$. Moreover, since $\exp_p$ is an isometry, the local $C^1$-bounds (\ref{eqn: potential_growth}) and (\ref{eqn: gradient_estimate}) of $f$ translates as $\|\tilde{f}\|_{C^1(B(\mathbf{0},l_a(p)))}\le (R+1)^2$.

On the other hand, as in~\cite{GW88} and~\cite{AC92}, on $B(\mathbf{0},l_a(p))\subset \mathbb{R}^4$ we can use harmonic coordinates to deduce that $|\Rm_{\tilde{g}}|\le l_a(p)^{-2}$ implies $\|\tilde{g}\|_{C^{1,\alpha}}\le Cl_a(p)^{-1}$ on $B(\mathbf{0},Cl_a(p)\slash 2)$.

Then we can bootstrap to get that $\|\tilde{f}\|_{C^{k,\alpha}}\le D_k(R)l_a(p)^{-1-k}$ and $\|\Rm_{\tilde{g}}\|_{C^{k,\alpha}}\le C_k(R)l_a(p)^{-2-k}$ under harmonic coordinates. Since $\exp_p:(B(\mathbf{0},l_a(p)),\tilde{g})\rightarrow (B(p,l_a(p)),g)$ is an isometry, these estimates prove (\ref{eqn: regularity}). 
\end{proof}
\begin{remark}

As explained in~\cite{AC92}, given the results of~\cite{CheegerThesis}, the passage from a lower bound on the harmonic radius to a corresponding compactness theorem is immediate.
\end{remark}

It is straightforward to obtain the following elliptic regularity under rescaling:
\begin{lemma}[Rescaling]
Given $\lambda \in (0,1)$. The rescaling $g\mapsto \tilde{g}:=\lambda^{-2} g$ gives the equation $Rc_{\tilde{g}}+\nabla^2f=\frac{\lambda^2}{2}\tilde{g}$. Moreover, $r_{\Rm_{\tilde{g}}}=\lambda^{-1}r_{\Rm_g}$ and $\forall p\in B(p_0,R)$ we have:
$$
\sup_{\tilde{B}\left(p,\frac{1}{2\lambda}l_a(p)\right)}|\tilde{\nabla}^k\Rm_{\tilde{g}}|_{\tilde{g}}\ \le\ C_k(R)\left(\frac{l_a(p)}{\lambda}\right)^{-2-k}\quad\text{and}\quad \sup_{\tilde{B}\left(p,\frac{1}{2\lambda}l_a(p)\right)}|\tilde{\nabla}^kf|_{\tilde{g}}\ \le\ D_k(R)\left(\frac{l_a(p)}{\lambda}\right)^{-1-k}.
$$
\label{lem: regularity_rescaling}
\end{lemma}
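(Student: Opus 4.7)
The plan is to carry out a purely tensorial scaling computation combined with Lemma~\ref{lem: elliptic_regulairty}, with essentially no new analytic input. The proof splits naturally into three short steps.

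First I would verify the rescaled soliton equation. Because $\tilde g = \lambda^{-2} g$ is a constant conformal change, the Christoffel symbols of $\tilde g$ agree with those of $g$, so $\Rc_{\tilde g} = \Rc_g$ and $\tilde\nabla^2 f = \nabla^2 f$ as $(0,2)$-tensors. Substituting $g = \lambda^2 \tilde g$ into the defining equation (\ref{eqn: defn}) then immediately gives $\Rc_{\tilde g} + \tilde\nabla^2 f = \tfrac{\lambda^2}{2} \tilde g$.

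Next I would compute the rescaling of $r_{\Rm}$. Distances transform as $\tilde d = \lambda^{-1} d$, so geodesic balls are related by $\tilde B(p,s) = B(p,\lambda s)$; a direct index calculation with $\tilde g^{ab} = \lambda^2 g^{ab}$ and the $(0,4)$-Riemann tensor gives $|\Rm_{\tilde g}|_{\tilde g} = \lambda^2 |\Rm_g|_g$. Hence the condition $\sup_{\tilde B(p,s)} |\Rm_{\tilde g}|_{\tilde g} \le s^{-2}$ is equivalent to $\sup_{B(p,\lambda s)} |\Rm_g|_g \le (\lambda s)^{-2}$, which by definition of $r_{\Rm_g}$ holds exactly when $\lambda s \le r_{\Rm_g}(p)$. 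Taking suprema yields $r_{\Rm_{\tilde g}}(p) = \lambda^{-1} r_{\Rm_g}(p)$, and in particular $\tilde B(p, l_a(p)/(2\lambda))$ coincides set-theoretically with $B(p, l_a(p)/2)$.

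Finally, the derivative bounds come from scaling the conclusion of Lemma~\ref{lem: elliptic_regulairty}. A parallel tensorial computation yields
\begin{align*}
|\tilde\nabla^k \Rm_{\tilde g}|_{\tilde g}\ =\ \lambda^{k+2}\,|\nabla^k \Rm_g|_g\quad\text{and}\quad |\tilde\nabla^k f|_{\tilde g}\ =\ \lambda^k\,|\nabla^k f|_g
\end{align*}
for $k\ge 1$, with $|\tilde f| = |f|$ when $k=0$. Feeding the bounds of Lemma~\ref{lem: elliptic_regulairty} on $B(p, l_a(p)/2)$ into the left-hand sides and absorbing factors of $\lambda \in (0,1)$ into $C_k(R)$ and $D_k(R)$ gives the stated estimates with base length $l_a(p)/\lambda$. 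I do not expect any serious obstacle here; the only point needing care is bookkeeping of the scaling exponents --- each lowered index on $\Rm$ contributes $\lambda^{-2}$ while each contracted metric in the norm contributes $\lambda^{2}$, giving the net $\lambda^{k+2}$ factor above, and analogously for $f$.
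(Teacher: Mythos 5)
Your computation for $\Rm$ is complete and correct: $|\tilde\nabla^k\Rm_{\tilde g}|_{\tilde g}=\lambda^{k+2}|\nabla^k\Rm_g|_g$, and feeding in Lemma~\ref{lem: elliptic_regulairty} gives $\lambda^{k+2}C_k(R)l_a(p)^{-2-k}$, which is \emph{exactly} $C_k(R)\left(l_a(p)/\lambda\right)^{-2-k}$ — no absorption of $\lambda$ is needed, the exponents match on the nose. The verification of the rescaled soliton equation and of $r_{\Rm_{\tilde g}}=\lambda^{-1}r_{\Rm_g}$ is likewise fine, and since the paper offers no proof (it calls the lemma ``straightforward''), this tensorial scaling is surely the intended route.

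The gap is in the $f$ estimate, and it cannot be papered over by ``absorbing factors of $\lambda$.'' Your scaling identity $|\tilde\nabla^k f|_{\tilde g}=\lambda^k|\nabla^k f|_g$ combined with Lemma~\ref{lem: elliptic_regulairty} yields
$$\sup_{\tilde B\left(p,\frac{1}{2\lambda}l_a(p)\right)}|\tilde\nabla^kf|_{\tilde g}\ \le\ D_k(R)\,\lambda^k\,l_a(p)^{-1-k}\ =\ D_k(R)\,l_a(p)^{-1}\left(\frac{l_a(p)}{\lambda}\right)^{-k},$$
whereas the lemma asserts $D_k(R)\left(l_a(p)/\lambda\right)^{-1-k}=D_k(R)\,\lambda^{1+k}\,l_a(p)^{-1-k}$. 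These two differ by a factor of $\lambda$: the asserted bound is $\lambda$ \emph{times} the one you actually derived, hence strictly smaller for $\lambda\in(0,1)$ — absorbing this into $D_k(R)$ would require $D_k(R)\ge \lambda^{-1}D_k(R)$, impossible as $\lambda\to 0$. Already $k=0$ shows the stated bound cannot be literally correct: $|f|$ is unchanged by rescaling, yet $D_0(R)\left(l_a(p)/\lambda\right)^{-1}\to 0$ as $\lambda\to 0$. The right conclusion from your computation is $\sup|\tilde\nabla^kf|_{\tilde g}\le D_k(R)\,l_a(p)^{-1}\bigl(l_a(p)/\lambda\bigr)^{-k}$ (exponent $-k$, not $-1-k$), and this weaker form is what is actually used downstream: in the fast decay argument, (\ref{eqn: annulus_regularity}), one takes $\lambda=r_i\lesssim l_a(p)$, so $l_a(p)/\lambda\ge c>0$ and the weaker bound still gives the uniform constant $c'(k,R)$ that is needed. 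You should either correct the exponent to $-k$ in the $f$ estimate (as your own scaling computation forces), or else flag that the stated inequality only holds once $l_a(p)/\lambda$ is bounded above — which is the regime in which the lemma is applied.
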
 

Moreover, for a general function solving the Poisson equation on a 4-d Ricci shrinker, we can argue similarly and obtain the following interior estimates under locally bounded curvature:
\begin{lemma}
Suppose $u\in C^2(B(p,l_a(p)))\subset B(p_0,R)$ solves $\Delta^f u=c$ for some constant $c$, then there are constants $C''_k(R,c)$ for $k=1,2,3,\cdots$, such that 
$$\sup_{B(p,\frac{1}{2}l_a(p))}|\nabla^k u|\ \le\ C''_k(R,c)\ l_a(p)^{-k}.$$
\label{lem: interior_regularity}
\end{lemma}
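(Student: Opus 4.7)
The strategy parallels the proof of Lemma~\ref{lem: elliptic_regulairty}: rescale so the curvature scale becomes unit, pull back by the exponential map to a Euclidean ball, switch to harmonic coordinates, and invoke linear elliptic regularity on the resulting PDE, whose coefficients now live on a uniformly controlled background.

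First, rescale by $\tilde g := l_a(p)^{-2} g$. On $\tilde B(p,1)$ we then have $|\Rm_{\tilde g}|\le 1$, and by Lemma~\ref{lem: regularity_rescaling} all higher derivatives of $\Rm_{\tilde g}$ and of the rescaled potential $\tilde f$ are bounded in terms of $R$; meanwhile the equation becomes $\tilde\Delta^{\tilde f} u = c\,l_a(p)^2$, whose right-hand side is bounded by $|c|$ since $l_a(p)\le 1$. Because $|\Rm_{\tilde g}|\le 1$, the conjugate radius at $p$ is at least $\pi$, so $\exp_p\colon B(\mathbf{0},1)\subset\R^4\to\tilde B(p,1)$ is an immersion through which we can pull the equation back. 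Exactly as in the proof of Lemma~\ref{lem: elliptic_regulairty}, introduce harmonic coordinates on a ball of definite size, in which $\|\tilde g\|_{C^{k,\alpha}}\le C_k(R)$ and $\|\tilde f\|_{C^{k,\alpha}}\le D_k(R)$.

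In these coordinates the equation takes the form
$$\tilde g^{ij}\partial_i\partial_j u \;-\; \tilde g^{ij}(\partial_i\tilde f)(\partial_j u) \;=\; c\,l_a(p)^2,$$
a uniformly elliptic linear PDE with $C^{k-1,\alpha}$ coefficients bounded in terms of $R$. Standard interior Schauder estimates on concentric balls then yield
$$\|u\|_{C^{k,\alpha}(B(\mathbf{0},1/2))}\;\le\; C(R,k)\bigl(\|u\|_{L^\infty(B(\mathbf{0},3/4))} + |c|\bigr),$$
and a Moser iteration using the Sobolev inequality of Lemma~\ref{lem: Sobolev} reduces the $L^\infty$ term to an $L^2$ bound of $u$ on a slightly larger ball. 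Undoing the rescaling by Lemma~\ref{lem: regularity_rescaling} converts these bounds into $|\nabla^k u|\le C''_k(R,c)\,l_a(p)^{-k}$ on $B(p,l_a(p)/2)$, as claimed.

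The one delicate point, which I expect to be the main obstacle, is the initial size control on $u$: since $\Delta^f u = c$ is invariant under adding $\Delta^f$-harmonic functions, the Schauder bootstrap cannot pin down the derivatives of $u$ without some a~priori size input. In the applications where this lemma is invoked—most notably the Poisson–Dirichlet solution supplied by Lemma~\ref{lem: Cheeger-Colding}, where $u|_{\partial B}\in\{r^2/2,\,s^2/2\}$—the requisite $L^\infty$ bound is immediate from the boundary data, and once that is in hand the remainder of the argument is a standard elliptic bootstrap driven by the $C^{k,\alpha}$ control on the background metric and potential established above.
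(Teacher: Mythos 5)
The paper gives no proof for this lemma; it is preceded only by the remark that for solutions of the Poisson equation one can ``argue similarly'' to Lemma~\ref{lem: elliptic_regulairty}. Your proof fills in exactly that sketch: rescale by $l_a(p)^{-2}$ to unit curvature scale, pull back through $\exp_p$ to a Euclidean ball, pass to harmonic coordinates in which $\tilde g$ and $\tilde f$ enjoy uniform $C^{k,\alpha}$ control, and bootstrap with interior Schauder estimates, with the rescaling $|\nabla^k u|_g = l_a(p)^{-k}|\tilde\nabla^k u|_{\tilde g}$ giving the stated power of $l_a(p)$. So the method matches what the paper intends.

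Your caveat about the $L^\infty$ input is not merely a worry --- it is a genuine defect in the lemma as stated. The equation $\Delta^f u = c$ is affine, so adding to $u$ any $\Delta^f$-harmonic function with large gradient produces another $C^2$ solution with arbitrarily large $\nabla u$; hence a constant $C''_k$ depending only on $R$ and $c$ cannot exist, and in fact it must also depend on some $C^0$ bound of $u$ on a slightly larger ball (or equivalently on an $L^2$ bound, after your Moser step). The paper implicitly concedes this: when the lemma is applied in the proof of Proposition~\ref{prop: decay}, it is invoked \emph{jointly} with ``the $C^0$ bound (\ref{eqn: C_0})'', which supplies precisely the missing input. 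So your reading is correct, and a precise statement of the lemma should include a hypothesis such as $\|u\|_{C^0(B(p,l_a(p)))}\le A$, with $C''_k$ permitted to depend on $A$ as well.
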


\subsection{Nilpotent structure and locally bounded curvature}
In this subsection, we will discuss why the main theorems of Sections 2 and 3 of~\cite{CT05} also work for 4-d Ricci shrinkers. Also see the Appendix for the proof of Proposition~\ref{prop: GC}.

We start with constructing a good covering, which sees a nice partition into sub-collections that makes the gluing arguments in~\cite{CFG92} and~\cite{CGIII} possible:
\begin{lemma}[Existence of a good covering]
Fix $a\le 1$. There is a covering of $E\subset M$ by geodesic balls with radius being a uniform multiple of the curvature scale, such that it can be partitioned into at most $N$ sub-collections $S_j$ ($j=1,\cdots,N$) of mutually disjoint balls in the covering, with any ball in a sub-collection intersecting at most one ball from another.
\label{lem: covering} 
\end{lemma}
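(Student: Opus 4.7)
The plan is to combine a Vitali-type maximal net argument (using the 1-Lipschitz property of $r_{\Rm}$ to get uniform comparability of the radii) with weighted volume comparison (to bound multiplicity), and then to finish with a greedy coloring on the square of the intersection graph in order to achieve the strong ``at most one intersection between different sub-collections'' property.

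First I would construct the covering. Choose inductively a maximal collection of points $\{x_i\}_{i\in I}\subset E$ satisfying $d(x_i,x_j)\geq \tfrac{1}{200}\min\{l_a(x_i),l_a(x_j)\}$ for all $i\neq j$. By maximality the balls $B_i:=B(x_i,\tfrac{1}{20}l_a(x_i))$ cover $E$, while the smaller balls $B(x_i,\tfrac{1}{400}l_a(x_i))$ are pairwise disjoint. Next I would record comparability: if $B_i\cap B_j\neq\emptyset$, then $d(x_i,x_j)\leq \tfrac{1}{20}(l_a(x_i)+l_a(x_j))$, and Lemma~\ref{lem: Rm_Lip} (via $l_a$ being locally $1$-Lipschitz) forces $l_a(x_i)$ and $l_a(x_j)$ to be comparable to within a universal constant. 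The same estimate propagates (with a worse constant) to balls at graph-distance at most $2$ in the intersection graph.

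Second, I would bound the multiplicity. Fix $q\in\bigcap_{k=1}^K B_{i_k}$. Then every center $x_{i_k}$ lies in a fixed ball around $q$ of radius comparable to $l_a(q)$, hence the pairwise disjoint small balls $B(x_{i_k},\tfrac{1}{400}l_a(x_{i_k}))$ are contained in a slightly larger ball $B(q, c\, l_a(q))\subset B(p_0,R+1)$. Weighted volume comparison (Lemma~\ref{lem: volume_comparison}), applied both to bound $\mu_f(B(q,c\,l_a(q)))$ from above and each $\mu_f(B(x_{i_k},\tfrac{1}{400}l_a(x_{i_k})))$ from below relative to the Euclidean model $\bar\mu_R$, gives $K\leq m(R)$. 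Consequently the intersection graph $G$, whose vertices are the balls $B_i$ and whose edges record non-empty intersection, has maximum degree bounded by $m(R)-1$.

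Third, I would color. Consider the square graph $G^2$ in which two vertices are adjacent whenever their graph distance in $G$ is $1$ or $2$; its maximum degree is at most $(m(R)-1)^2$. Greedy coloring of $G^2$ yields a proper coloring with at most $N:=m(R)^2$ colors; declare the color classes to be the sub-collections $S_1,\ldots,S_N$. Two balls in the same $S_j$ are non-adjacent in $G^2$ and, in particular, disjoint in $G$. Moreover, if some $B\in S_i$ met two distinct balls $B_1,B_2\in S_j$, then $B_1,B_2$ would share the common $G$-neighbour $B$ and hence be adjacent in $G^2$, contradicting their common color; so each ball intersects at most one ball from every other sub-collection, as required.

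The main obstacle is the bookkeeping in the first two steps: the scale factor in the maximal-net construction has to be chosen small enough that (a) the resulting enlarged balls still cover $E$, (b) the $1$-Lipschitz property delivers genuinely useful comparability of $l_a$ at nearby centers, and (c) the disjoint cores fit inside a comparison window to which Lemma~\ref{lem: volume_comparison} applies on $B(p_0,R+1)$. All three are compatible for a small absolute constant, and the $R$-dependence of $N$ enters only through the weighted doubling constant in the volume-comparison step.
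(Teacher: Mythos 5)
Your argument is correct in outline and it does reach the conclusion, but it takes a genuinely different route in both of its two main steps, and one of those choices is slightly costly.

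For the multiplicity bound, you invoke the \emph{weighted} volume comparison (Lemma~\ref{lem: volume_comparison}), which forces two things: first, the constant $m(R)$ — and hence your $N=m(R)^2$ — picks up an $R$-dependence through the doubling constant $C_D(R)=16e^{R^2}$; second, the argument silently assumes $B(q, c\,l_a(q))\subset B(p_0,R+1)$, i.e.\ that $E$ sits inside a fixed ball around the base point. The lemma as stated is for arbitrary $E\subset M$ and makes no mention of $R$. The paper avoids both issues by rescaling locally: set $\tilde g:=l_a(p_{i_0})^{-2}g$ on the ball around $p_{i_0}$, so that $|\Rm_{\tilde g}|\le 1$ there by the very definition of $l_a$, hence $\Rc_{\tilde g}\ge -(n-1)$ and \emph{ordinary} Bishop--Gromov comparison with the constant-curvature $-1$ model applies. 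The resulting multiplicity bound is purely dimensional (and $\zeta$-dependent), with no $R$ anywhere. This is the one observation you are missing: locally bounded curvature gives a local Ricci bound after rescaling, so no recourse to the Bakry--\'Emery machinery is needed for this packing estimate, and consequently no base-point or radius restriction on $E$ is required.

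For the partitioning, your argument is a legitimate alternative. You color the square of the intersection graph $G^2$ greedily; same-color balls are at graph distance $\ge 3$ (so disjoint), and a ball adjacent to two same-colored balls would make those two adjacent in $G^2$, a contradiction. That is clean and correct. The paper instead extracts $S_1$ as a maximal subcollection satisfying the \emph{stronger} separation $d(p_i,p_j)>10\zeta\max\{l_a(p_i),l_a(p_j)\}$, peels it off, and iterates; it then checks directly that this spacing prevents two balls of one $S_j$ from meeting a single ball of $S_{j'}$, and bounds the number of rounds $N$ again by a local rescaling-and-packing argument. The two combinatorial strategies achieve the same thing; yours is perhaps conceptually tidier, while the paper's keeps every constant tied to the same local packing estimate and thus entirely $R$-free. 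If you replace your weighted-volume step with the paper's local rescaling step, your $G^2$-coloring gives a complete and $R$-independent proof.
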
 
\begin{proof}
Let $\{p_i\}$ ($i= 1,2,3,\cdots$) be a maximal subset of $E$ satisfying 
\begin{align}
d(p_i,p_j)\ \ge\ \zeta \min\{l_a(p_i),l_a(p_j)\},
\label{eqn: distance>}
\end{align}
then for suitably chosen $\zeta\in (0,1)$, $\{B(p_i,2\zeta l_a(p_i)\}$ is a locally finite covering with uniformly bounded multiplicity. If $B(p_i,2\zeta l_a(p_i))\cap B(p_j,2\zeta l_a(p_j))\not=\emptyset$, then 
\begin{align*}
d(p_i,p_j)\ \le\ 4\zeta\max\{l_a(p_i),l_a(p_j)\}.
\end{align*}
Assuming $\zeta<\frac{1}{4}$, then as done in (\ref{eqn: r_Rm_Lip}), 
$$
\min\{l_a(p_i),l_a(p_j)\}\ \le \max\{l_a(p_i),l_a(p_j)\}\ \le\ \min\{l_a(p_i),l_a(p_j)\}+d(p_i,p_j),
$$
so we can estimate the distance 
\begin{align}
d(p_i,p_j)\ \le\ \frac{4\zeta}{1-2\zeta}\min\{l_a(p_i),l_a(p_j)\},
\label{eqn: distance<}
\end{align}
and thus 
\begin{align}
\min\{l_a(p_i),l_a(p_j)\}\ \le\ \max\{l_a(p_i),l_a(p_j)\}\ \le\ \frac{1+2\zeta}{1-2\zeta}\min\{l_a(p_i),l_a(p_j)\}.
\label{eqn: l_a_Harnack}
\end{align}

Now if $B(p_{i_0},2\zeta l_a(p_{i_0}))\cap B(p_{i_j},2\zeta l_a(p_{i_j}))\not=\emptyset$ for $j=1,\cdots,N(i_0)$, then by (\ref{eqn: distance<}) and (\ref{eqn: l_a_Harnack}), 
$$
d(p_{i_0},p_{i_j})\ \le\ \frac{4\zeta}{1-2\zeta}l_a(p_{i_0})\quad\text{and}\quad l_a(p_j)\ \ge\ \frac{1-2\zeta}{1+2\zeta}l_a(p_{i_0}),
$$
and thus we have the following containment relations: $\forall j=1,\cdots,N(i_0),$
\begin{align}
B\left(p_{i_0},\zeta l_a(p_{i_0})\right)\ \subset\ B\left(p_{i_j},\frac{5\zeta-2\zeta^2}{1-2\zeta}l_a(p_{i_0})\right)\ \subset\ B\left(p_{i_0},\frac{9\zeta-2\zeta^2}{1-2\zeta}l_a(p_{i_0})\right),
\label{eqn: covering}
\end{align}
while for $1\le j_1<j_2\le N(i_0)$, (\ref{eqn: distance>}) gives
\begin{align}
B\left(p_{j_1},\frac{\zeta(1-2\zeta)}{2(1+2\zeta)}l_a(p_{i_0})\right)\bigcap B\left(p_{j_2},\frac{\zeta(1-2\zeta)}{2(1+2\zeta)}l_a(p_{i_0})\right)\ =\ \emptyset.
\label{eqn: disjoint}
\end{align}
Let $\zeta<\frac{1}{20}$, and do the rescaling $g\mapsto l_a(p_{i_0})^{-2}g=:\tilde{g}$, then since $a\le 1$, 
$$
\sup_{\tilde{B}\left(p_{i_0},\frac{9\zeta}{1-2\zeta}\right)}|\Rm_{\tilde{g}}|_{\tilde{g}}\ \le\ 1.
$$
Now apply (\ref{eqn: covering}), (\ref{eqn: disjoint}) and volume comparison on $\tilde{B}(p_{i_0},10\zeta)$ we get
\begin{align*}
\vol_{\tilde{g}}\left(\tilde{B}\left(p_{i_0},\zeta\right)\right)\ &\le\ \frac{1}{N(i_0)}\sum_{j=1}^{N(i_0)}\vol_{\tilde{g}}\left(\tilde{B}\left(p_{i_j},\frac{5\zeta-2\zeta^2}{1-2\zeta}\right)\right)\\
&\le\ \frac{1}{N(i_0)}\Lambda_{-1}\left(\frac{5\zeta-2\zeta^2}{1-2\zeta}\right)\Lambda_{-1}\left(\frac{\zeta(1-2\zeta)}{2(1+2\zeta)}\right)^{-1}\vol_{\tilde{g}}\left(\tilde{B}\left(p_0, \frac{9\zeta-2\zeta^2}{1-2\zeta}\right)\right)\\
&\le\ \Lambda_{-1}\left(\frac{5\zeta-2\zeta^2}{1-2\zeta}\right)\Lambda_{-1}\left(\frac{9\zeta-2\zeta^2}{1-2\zeta}\right)\Lambda_{-1}\left(\frac{\zeta(1-2\zeta)}{2(1+2\zeta)}\right)^{-1}\frac{\vol_{\tilde{g}}\left(\tilde{B}\left(p_{i_0},\zeta\right)\right)}{\Lambda_{-1}(\zeta)\ N(i_0)},
\end{align*}
where $\Lambda_{-1}(r)$ is the volume of radius $r$ ball in a space form of constant curvature $-1$, and thus $N(i_0)\le N'$, a dimensional constant once we fix $\zeta\in (0,\frac{1}{20})$.

Now start with a maximal subset of $\{p_i\}$ with $d(p_i,p_j)>10\zeta \max\{l_a(p_i),l_a(p_j)\}$ denoted by $S_1$; then choose $S_2$ as a maximal subset of $\{p_i\}\backslash S_1$, etc. In this way we could obtain $S_1,\cdots,S_{N}$. Notice that for $k=1,2$, if there exist $p_{i_k}\in S_i$ and $p_{j}\in S_{j}$ satisfying $B(p_{i_k},2\zeta l_a(p_{i_k}))\cap B(p_j,2\zeta l_a(p_j))\not=\emptyset$, then 
by (\ref{eqn: distance<}) we have
$$10\zeta \max\{l_a(p_{i_1}),l_a(p_{i_2})\}\ \le\ d(p_{i_1},p_{i_2})\ \le\ \frac{8\zeta}{1-2\zeta}\max\{l_a(p_{i_1}),l_a(p_{i_2})\},$$
impossible for $\zeta<\frac{1}{20}$. Thus the ball centered at any element of $S_j$ can intersect with at most one ball centered at some element of a different $S_i$. 

On the other hand, by the maximality of each $S_j$ ($j=1,\cdots, N$), if $p_{i_0}\not\in S_1\cup \cdots \cup S_{N}$, then as observed in~\cite{CG85a}, there exist $p_{i_j}\in S_j$ for \emph{each} $j=1,\cdots,N$ (note that there may be more than one $p_{i_j}$ from a single $S_j$, but we just pick one of them), such that 
$$d(p_{i_0},p_{i_j})\ <\ 10\zeta \max\{l_a(p_{i_0}),l_a(p_{i_j})\}\quad \text{(compare (\ref{eqn: distance<}))}$$
implying as before, since $\zeta<\frac{1}{20}$, that
$$\max\{l_a(p_{i_0}),l_a(p_{i_j})\}\ \le\ \frac{\min\{l_a(p_{i_0}),l_a(p_{i_j})\}}{1-10\zeta}\quad\text{and}\quad d(p_{i_0},p_{i_j})\ \le\ \frac{10\zeta}{1-10\zeta}l_a(p_{i_0}).$$
 So we have the following containment relations 
$$B(p_{i_0},\zeta l_a(p_{i_0}))\ \subset\ B\left(p_{i_j},\frac{11\zeta-10\zeta^2}{1-10\zeta}l_a(p_{i_0})\right)\ \subset\ B\left(p_{i_0},\frac{21\zeta-10\zeta^2}{1-10\zeta}l_a(p_{i_0})\right),$$
and by (\ref{eqn: distance>}), the mutual disjointness of $B\left(p_{i_j},\frac{1}{2}\zeta(1-10\zeta)l_a(p_{i_0})\right)$ for $j=1,\cdots,N$. Now we fix some $\zeta\in (0,\frac{1}{40})$, and do the same rescaling as before $g\mapsto l_a(p_{i_0})^{-2}g$. The unit curvature bound on the rescaled unit ball around $p_{i_0}$, the containment relations and mutual disjointness, together with the multiplicity estimate, give a dimensional bound on $N$, as argued by volume comparison within $\tilde{B}(p_{i_0},1)$ above.
\end{proof}

The fact that the number of partitions of the covering is independent of specific manifold, together with the elliptic regularity (\ref{eqn: regularity}), ensure that the work of Cheeger-Fukaya-Gromov~\cite{CFG92} go through. Thus we have arrived at
\begin{theorem}[Cheeger-Fukaya-Gromov~\cite{CFG92}, Cheeger-Tian~\cite{CT05}]
For any $\epsilon>0$ and $r\in (0,1)$, there exists a $\delta_{CFGT}(\epsilon)>0$ and $\alpha_0,k>0$, such if $U\subset M$ is $(\delta,a)$-collapsing with locally bounded curvature, for some $\delta<\delta_{CFGT}$, then there is an approximating metric $g^{\epsilon}$ on some open subset $W$ with $U\subset W\subset B(U,\frac{a}{2})$, together with an $a$-standard $N$-structure on $W$, such that:
\begin{enumerate}
\item $g^{\epsilon}$ is $(\alpha_0\ l_a,k)$-round in the sense of (1.1.1)-(1.1.6) of~\cite{CFG92};
\item the approximation satisfies
\begin{align*}
e^{-\varepsilon }g^{\varepsilon}\ l_a^2\ \le\ &\ g \le\ e^{\varepsilon}g^{\varepsilon}\ l_a^2,\\
|\nabla^{g}-&\nabla^{g^{\epsilon}}|\ <\ \epsilon\ l_a^{-1},\\
\text{and}\quad |\nabla^k\Rm_{g^{\epsilon}}-&\nabla^k\Rm_g|\ <\ \Psi(\epsilon\ |\ k)\ l_a^{-2-k};
\end{align*}
\item $g^{\epsilon}$ is invariant under the local nilpotent actions of the $N$-structure;
\item $\forall x\in W$, its orbit, $\mathcal{N}(x)$ is compact with $\diam_{g^{\epsilon}}\mathcal{N}(x)\ \le\ \epsilon\ l_a(x)$; and
\item $W=\cup_{x\in W}\mathcal{N}(x)$, i.e. $W$ is \emph{saturated}.
\end{enumerate}
\label{thm: CFGT}
\end{theorem}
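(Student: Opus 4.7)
The plan is to show that the Cheeger--Fukaya--Gromov smoothing and $N$-structure construction, as extended in Section~2 of~\cite{CT05}, applies verbatim to 4-d Ricci shrinkers once we feed in the soliton-adapted elliptic regularity from Lemma~\ref{lem: elliptic_regulairty} and the good covering from Lemma~\ref{lem: covering}. The point is that the original CFG machinery is purely local and requires only (i) uniform $C^k$-bounds on the rescaled metric at the curvature scale, (ii) collapsing at the curvature scale in the sense of Definition~\ref{def: truncated_collapsing}, and (iii) a covering whose combinatorics (multiplicity and partitioning) are dimension-universal.

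First, I would fix a good covering $\{B(p_i,2\zeta l_a(p_i))\}$ of $U$ by Lemma~\ref{lem: covering}, with partition into $N$ sub-collections $S_1,\dots,S_N$. On each ball rescale $g\mapsto \tilde g_i:=l_a(p_i)^{-2}g$, so that $\tilde B(p_i,1)$ has $|\Rm_{\tilde g_i}|\le 1$, and by the rescaled elliptic regularity (Lemma~\ref{lem: regularity_rescaling}), uniform bounds $|\tilde\nabla^k\Rm_{\tilde g_i}|_{\tilde g_i}\le C_k(R)$ and $|\tilde\nabla^k f|_{\tilde g_i}\le D_k(R)$ hold on, say, $\tilde B(p_i,\tfrac12\zeta^{-1})$. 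Thus in the scale-invariant picture, each ball looks like a region in a Riemannian manifold with a priori $C^k$-bounded geometry, while the $(\delta,a)$-collapsing hypothesis forces $\mu(\tilde B(p_i,1))\le \delta$ (switching freely between $\dvol_g$ and $\dmu_f$ since $f$ is $C^1$-bounded on $B(p_0,R+1)$).

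Second, on each rescaled ball I would invoke the local CFG construction: for $\delta<\delta_{CFGT}(\epsilon)$, the collapsed, $C^k$-regular ball carries an almost flat local nilpotent Killing structure, and a smoothing by a fiber-averaging procedure yields an invariant metric $\tilde g^\epsilon_i$ satisfying the $\epsilon$-closeness bounds of item (2), expressed in unrescaled form via $l_a$. The averaging also produces compact orbits of $\tilde g^\epsilon_i$-diameter $<\epsilon$, giving items (3)--(4) locally. The $(\alpha_0 l_a,k)$-roundness in item (1) follows since after rescaling the constructed $\tilde g^\epsilon_i$ has uniform $C^k$-norms depending only on the a priori bounds above, which are all $R$-determined via Lemma~\ref{lem: elliptic_regulairty}.

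Third, and this is the main obstacle, is globalizing: the local $N$-structures produced on different balls must be glued to yield a single structure on a saturated open set $W$ with $U\subset W\subset B(U,a/2)$. Here one uses the partition from Lemma~\ref{lem: covering}: on each $S_j$ the balls are mutually disjoint, and balls from different $S_j$ intersect at most one other ball, so the gluing is an inductive two-ball problem carried out $N$ times. The key compatibility is that on any overlap $B_i\cap B_{i'}$ the two local nilpotent structures admit a common refinement whose orbits still have diameter $O(\epsilon\, l_a)$; this is precisely the content of Proposition~4.9 and Section~7 of~\cite{CFG92}, and requires the scale-invariant $C^k$-bounds of step one together with the uniform multiplicity bound and bounded number $N$ of sub-collections. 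Finally, saturating with respect to the resulting $N$-structure gives $W=\bigcup_{x\in W}\mathcal{N}(x)\subset B(U,a/2)$ thanks to the diameter bound in (4), concluding the proof. The role of the equivariant good chopping of Proposition~\ref{prop: GC}, proved in the appendix, is only needed later to cut $W$ along a saturated boundary; it is not needed for the construction of the $N$-structure itself.
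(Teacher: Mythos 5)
Your proposal is correct and follows essentially the same route as the paper: rescale at the curvature scale, invoke the elliptic regularity of Lemma~\ref{lem: elliptic_regulairty} (in its rescaled form, Lemma~\ref{lem: regularity_rescaling}) to get uniform $C^k$-bounds, run the local CFG construction, and glue via the partitioned good covering of Lemma~\ref{lem: covering}, noting that the Harnack-type comparability (\ref{eqn: l_a_Harnack}) on overlapping balls makes the two-ball gluing of~\cite{CFG92} applicable one ball at a time. The paper's own treatment is simply a short paragraph citing Sections~2, 5, 6, 7 and Proposition~A2.2 of~\cite{CFG92}, so your write-up is a faithful expansion of the same argument; your closing remark that Proposition~\ref{prop: GC} is not needed for this theorem but only for the subsequent chopping is also accurate.
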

We immediately have:
\begin{corollary}[Vanishing Euler characteristics]
If $U\subset B(p_0,R)$ is $(\delta,a)$-collapsing with locally bounded curvature, then $\chi(W)=0$.
\label{cor: vanishing_euler}
\end{corollary}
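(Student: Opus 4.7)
My plan is to invoke the $a$-standard $N$-structure on $W$ produced by Theorem~\ref{thm: CFGT} and apply the classical principle that a manifold carrying a nilpotent structure with positive-dimensional orbits admits a nowhere-vanishing vector field tangent to its boundary, whence $\chi = 0$ by Poincar\'e--Hopf.

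First I would ensure that every orbit $\mathcal{N}(x)$ of the $N$-structure has dimension at least one, provided $\delta_{CFGT}$ is chosen small enough. Indeed, if $\mathcal{N}(x_0)$ were a single point, then since $g^{\varepsilon}$ is $(\alpha_0 l_a, k)$-round and is $\varepsilon$-close to $g$ in the sense of property (2) of Theorem~\ref{thm: CFGT}, the ball $B(x_0, \alpha_0 l_a(x_0))$ would admit a non-collapsed volume lower bound comparable to $l_a(x_0)^4$; this contradicts the assumption $\mu_f(B(x_0,l_a(x_0))) \le \delta\, l_a(x_0)^4$ once $\delta$ is sufficiently small. Hence all orbits are genuinely positive-dimensional.

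Next I would produce a continuous nowhere-vanishing vector field $V$ on $W$ tangent to the orbit foliation. Locally, the $N$-structure gives the action of a nilpotent Lie algebra $\mathfrak{n}$ with positive-dimensional orbits; on each chart $V_\alpha$ of a good cover (Lemma~\ref{lem: covering}) pick a nonzero $\xi_\alpha \in \mathfrak{n}$ and let $X_\alpha$ denote the induced fundamental vector field. The compatibility of the local nilpotent actions on the overlaps --- the defining feature of an $N$-structure --- allows one to average $V := \sum_\alpha \phi_\alpha X_\alpha$ against a subordinate partition of unity so that $V$ is nowhere zero and tangent to the orbits. Because $W$ is saturated by property (5) of Theorem~\ref{thm: CFGT}, every orbit through a boundary point of $W$ lies in $\partial W$, so $V$ is tangent to $\partial W$, and Poincar\'e--Hopf yields $\chi(W) = 0$.

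The main technical obstacle lies in the construction of $V$: when the orbit dimensions jump across strata, naive partition-of-unity averaging could produce cancellations and zero loci. The resolution, carried out in Cheeger--Gromov~\cite{CGII} for pure $F$-structures and extended in Cheeger--Fukaya--Gromov~\cite{CFG92} to general $N$-structures, is that any $N$-structure of positive rank admits a canonical nowhere-zero invariant section of its orbit distribution. This is precisely the output we need from Theorem~\ref{thm: CFGT}, and the corollary follows.
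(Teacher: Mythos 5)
Your proof takes a genuinely different route from the paper's. The paper argues that the $a$-standard $N$-structure of positive rank induces a topological $\mathbb{S}^1$-fibration $\mathbb{S}^1 \hookrightarrow W \to B$ over the orbit space, and concludes $\chi(W) = \chi(\mathbb{S}^1)\chi(B) = 0$ by multiplicativity of the Euler characteristic. You instead construct a nowhere-vanishing vector field tangent to the orbit distribution and invoke Poincar\'e--Hopf. Both arguments are in essence shorthand for the Cheeger--Gromov theorem (from~\cite{CGI},~\cite{CGII}) that a manifold admitting an $F$- or $N$-structure of positive rank has vanishing Euler characteristic; and both compress the genuinely delicate point, namely that for a \emph{mixed} $N$-structure the orbit dimension can jump, so neither a global $\mathbb{S}^1$-fibration (as the paper asserts) nor a globally defined tangent vector field (as you assert) falls out for free. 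The paper's phrasing implicitly treats the structure as pure; yours is closer in spirit to the actual Cheeger--Gromov construction, which buys a more self-contained argument but only if the partition-of-unity gluing across strata of different orbit dimension is carried out carefully, as in Sections 5--8 of~\cite{CGII}. You are also slightly imprecise in attributing to~\cite{CGII} and~\cite{CFG92} a ``canonical nowhere-zero invariant section of the orbit distribution'': what those papers prove is the vanishing of $\chi$, and the vector-field construction is a step within their proof rather than a stated output.

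There is one further gap in the Poincar\'e--Hopf step that you should close. A nowhere-vanishing vector field on a compact manifold $\overline{W}$ with boundary that is merely \emph{tangent} to $\partial W$ does not by itself force $\chi(W)=0$; doubling along $\partial W$ gives a nowhere-zero field on the closed double $\widehat{W}$, hence $0 = \chi(\widehat{W}) = 2\chi(W) - \chi(\partial W)$, so what you actually obtain is $\chi(W) = \tfrac{1}{2}\chi(\partial W)$. This does vanish here, but only because $\partial W$ is a closed $3$-manifold and therefore has zero Euler characteristic -- a dimensional fact worth stating explicitly, since your argument as written appears to claim more generality than it has.
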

\begin{proof}
By the existence of an $a$-standard $N$-structure of positive rank over $W$, we have a topological fibration $\mathbb{S}^1\hookrightarrow W\rightarrow B$ where $B$ is the collection of all orbits of the $\mathbb{S}^1$ action, induced by the action associated to the $N$-structure. Thus $\chi(W)=\chi(\mathbb{S}^1)\chi(B)=0$. 
\end{proof}
The construction of the $N$-structure and approximating metric $g^{\epsilon}$ starts on geodesic balls of scale $l_a$. Once we do the rescaling $g\mapsto l_a(p)^{-2}g$, we can carry out the constructions of Section 2 and 5 of~\cite{CFG92} to obtain local fibrations. In order to glue the local fibration and group actions, as done in Section 6 and 7 of~\cite{CFG92}, we need Lemma~\ref{lem: covering} which tells, essentially, that one can carry out the gluing procedure by adjusting within a single ball at a time. Finally, notice that once two balls intersect non-trivially, then (\ref{eqn: l_a_Harnack}) is in effect, and rescaling one ball to unit curvature bound will ensure the rescaled metric having curvature norm bounded by $2$ on the union of both balls, and Proposition A2.2 of~\cite{CFG92} works for the gluing. The same principles apply for the following equivariant good chopping only assuming locally bounded curvature:
\begin{proposition}[Good chopping for collapsing sets]
Let $(M,g,f)$ be a 4-d Ricci shrinker and fix $a\in (0,1)$. There exist constants $\delta_{GC}(R)>0$ and $C_{GC}(R)>0$ such that if an open set $U$ is $(\delta,a)$-collapsing with locally bounded curvature with $\delta<\delta_{GC}$, then there exists another open set $W$ such that
\begin{enumerate}
\item $U\subset W\subset B(U,\frac{a}{2})$;
\item $\partial W$ is smooth and $|II_{\partial W}|\ \le\ C_{GC}l_a^{-1}$;
\item $W$ is saturated by some $a$-standard $N$-structure, whose existence is guaranteed by the last theorem above.
\end{enumerate}
\label{prop: GC}
\end{proposition}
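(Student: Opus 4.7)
The plan is to construct $W$ as a sub-level set of a smoothed, $N$-structure-equivariant distance function from $U$, following the Cheeger-Gromov-Tian chopping strategy. First, invoke Theorem~\ref{thm: CFGT} with a sufficiently small parameter $\epsilon$ to produce an approximating metric $g^{\epsilon}$ and an $a$-standard $N$-structure on an open set $W_0$ satisfying $U\subset W_0\subset B(U,a/2)$, with orbits $\mathcal{N}(x)$ of $g^{\epsilon}$-diameter at most $\epsilon\, l_a(x)$. The desired $W$ will sit between $U$ and $W_0$, obtained by smoothing $\rho_0(x):=d_g(x,U)$ to a $C^2$ function with scale-invariant Hessian bound, averaging over $N$-orbits to enforce saturation, and selecting a regular sub-level set via Sard's theorem.

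For the smoothing, apply Lemma~\ref{lem: covering} to get a good locally finite covering of $A(U;0,a/2)$ by balls $\{B(p_i,\zeta\, l_a(p_i))\}$ of uniformly bounded multiplicity, partitioned into at most $N$ disjoint sub-collections. The elliptic regularity Lemma~\ref{lem: elliptic_regulairty} together with the quasi-Harnack bound (\ref{eqn: l_a_Harnack}) for overlapping balls produces a subordinate partition of unity $\{\chi_i\}$ with $|\nabla^k\chi_i|\le C_k\, l_a(p_i)^{-k}$ for $k=1,2$. Set
\begin{align*}
\rho(x)\ :=\ \sum_i \chi_i(x)\, \rho_0(p_i).
\end{align*}
Since $\rho_0$ is $1$-Lipschitz and every $p_i$ contributing at $x$ lies within $O(l_a(x))$ of $x$, one obtains $|\rho-\rho_0|\le C\,l_a$, the gradient bound $|\nabla \rho|\ge 1-\Psi(\zeta\ |\ 1)$ on the annular shell $A(U;a/8,3a/8)$, and the scale-invariant Hessian estimate $|\nabla^2 \rho|\le C\, l_a^{-1}$.

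To ensure $N$-equivariance, average $\rho$ over the local orbits using the invariant measure induced by $g^{\epsilon}$:
\begin{align*}
\bar{\rho}(x)\ :=\ \fint_{\mathcal{N}(x)}\rho\ \text{d}\mu^{\epsilon}_{\mathcal{N}(x)}.
\end{align*}
Since $\diam_{g^{\epsilon}}\mathcal{N}(x)\le \epsilon\, l_a(x)$ and $g^{\epsilon}$ is $C^k$-close to $l_a^{-2}g$ by Theorem~\ref{thm: CFGT}, the averaging shifts $\rho$ pointwise by at most $O(\epsilon\, l_a)$ and preserves the first and second derivative estimates up to a constant; moreover $\bar\rho$ is $N$-invariant. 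By Sard's theorem applied to the smooth function $\bar{\rho}$ on the shell $A(U;a/8,3a/8)$, choose a regular value $s\in (a/4,3a/8)$ and set $W:=B(U,a/8)\cup\{\bar\rho<s\}$. Then $U\subset W\subset B(U,a/2)$, $\partial W=\{\bar{\rho}=s\}$ is smooth and saturated by the $N$-structure as a regular level set of an $N$-invariant function, and its second fundamental form
\begin{align*}
II_{\partial W}\ =\ \left.\frac{\nabla^2\bar\rho}{|\nabla\bar\rho|}\right|_{T\partial W}
\end{align*}
satisfies $|II_{\partial W}|\le C_{GC}(R)\, l_a^{-1}$ by the lower bound on $|\nabla\bar\rho|$ and the upper bound on $|\nabla^2\bar\rho|$.

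The main technical obstacle is controlling the smoothing simultaneously at the variable scale $l_a$ and in an $N$-equivariant fashion. The partition of unity must carry derivative bounds calibrated to each $l_a(p_i)$, handled by the good covering together with (\ref{eqn: l_a_Harnack}); and the orbit averaging must not destroy the Hessian estimate, which requires that the $N$-action is almost isometric with respect to $g$, guaranteed by the $C^k$-closeness of $g^{\epsilon}$ to $g$ at scale $l_a$ in Theorem~\ref{thm: CFGT}. Once these scale-invariant derivative bounds are secured via the elliptic regularity Lemma~\ref{lem: elliptic_regulairty}, the construction proceeds exactly as in Section 3 of Cheeger-Tian~\cite{CT05}.
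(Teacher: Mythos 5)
Your proposal follows the direct smoothing-and-orbit-averaging strategy of Section~3 of Cheeger--Tian~\cite{CT05}, but the paper's remark immediately following Proposition~\ref{prop: GC} explicitly identifies a gap in precisely this route. The crucial step
\begin{align*}
\bar{\rho}(x)\ :=\ \fint_{\mathcal{N}(x)}\rho\ \text{d}\mu^{\epsilon}_{\mathcal{N}(x)}
\end{align*}
is not a smooth function of $x$ when the $N$-structure is \emph{mixed}, i.e.\ when the orbits $\mathcal{N}(x)$ jump in dimension across the singular strata $\mathcal{S}(X_{\infty})$ of the collapsed limit. In that case $x\mapsto\mathcal{N}(x)$ is only a stratified assignment, not a smooth fibration, so the orbit-average does not propagate the $C^2$ bounds on $\rho$ to $\bar{\rho}$, and the level sets of $\bar{\rho}$ need not be smooth hypersurfaces. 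Your claim that the averaging ``shifts $\rho$ pointwise by at most $O(\epsilon\, l_a)$ and preserves the first and second derivative estimates up to a constant'' is true fiberwise on each pure stratum but fails in a neighborhood of a jump in orbit dimension, which is where the saturation of $\partial W$ is most delicate.

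The paper's actual argument (Appendix~A) avoids this by lifting to the orthonormal frame bundle $FB(E,a)$ following Fukaya~\cite{Fukaya88}. Upstairs the $N$-structure is always \emph{pure}: every orbit $\mathcal{N}(\bar{x})$ is diffeomorphic (up to finite cover) to a fixed compact nilmanifold $\bar{N}/\Lambda$, and the $O(n)$-action commutes with the nilpotent action. Cheeger--Gromov's equivariant good chopping~\cite{CGIII} then applies cleanly: one smooths the $O(n)$- and $\bar{N}$-invariant distance to orbits $\bar{\rho}_{\bar{U}}$ using mollification at the scale $\alpha_0 l_a$, selects a regular value via Yomdin's quantitative Morse lemma (not merely Sard, since one needs a \emph{quantitative} lower bound on $|\nabla\bar{\rho}^{\sharp}_{\bar{U}}|$ to control $|II|$), and glues the local choppings across the good covering of Lemma~\ref{lem: covering}. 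The saturated $W$ downstairs is recovered as the $O(n)$-quotient. Your construction could be salvaged by inserting the frame-bundle lift before the smoothing/averaging steps, or restricted to the case of a pure $N$-structure; as stated it has a genuine gap at the singular strata.
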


\begin{remark}
We notice the inconsistency of estimates (3.7) and (3.10) of~\cite{CT05} when collapsing happens. 
Instead, we should use relations (1.8) and (1.9) of~\cite{CGIII} in place of estimate (3.7) of~\cite{CT05}. 
\end{remark}

When collapsing happens, the basic idea is to smooth the \emph{distance to the orbits} of a given set (generated by the $N$-structure), rather than the \emph{distance to the original set}. However, due to the possible occurrence of a mixed $N$-structure, Cheeger-Gromov's equivariant good chopping theorem~\cite{CGIII} does \emph{not} apply directly (as done in~\cite{CT05}), to the smoothing of the distance function. See Appendix A for a detailed proof, where we will use Fukaya's frame bundle argument~\cite{Fukaya88}.

Combining the above propositions, Cheeger-Tian~\cite{CT05} obtain the following estimates of the boundary Gauss-Bonnet-Chern term:
\begin{proposition}
Let $(M,g,f)$ be a 4-d Ricci shrinker and fix $a\in (0,1)$. There exist positive constants $\delta_{CT}(R)\le \delta_{GC}(R)$ and $C_{CT}(R)>0$ such that for any $K\subset B(p_0,R-a)$ with $B(K,a)$ being $(\delta,a)$-collapsing with locally bounded curvature for some $\delta<\delta_{CT}(R)$, then there exists an open subset $Z$, saturated with respect to the associated $N$-structure of an approximating metric, such that 
\begin{enumerate}
\item $B(K,\frac{1}{4}a)\ \subset\ Z\ \subset\ B(K,\frac{3}{4}a),$
\item $|II_{\partial Z}|\ \le\ C_{CT}(R) (a^{-1}+r_{\Rm}^{-1}),$ \quad and
\item $\left|\int_{\partial Z} \bEuler\right|\ \le\ C_{CT}(R)\ a^{-1}\int_{A(K,\frac{1}{4}a,\frac{3}{4}a)}\left(a^{-3}+r_{\Rm}^{-3}\right)\ \dvol_g.$
\end{enumerate}
\label{prop: integral_TP_control}
\end{proposition}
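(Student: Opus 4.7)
The plan is to combine the equivariant good chopping (Proposition~\ref{prop: GC}) with a coarea-based averaging argument. The bound in~(3) has the shape \emph{surface integral} $\le a^{-1}\cdot$\emph{volume integral}, which is the signature of averaging a surface integral over a family of saturated choppings whose boundaries foliate the annulus $A(K,a/4,3a/4)$ at roughly unit speed.

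First I would arrange for a family of choppings rather than a single one. The point is that since $l_{a/2}\ge \tfrac{1}{2}l_a$ pointwise (as $l_a=\min\{r_{\Rm},a\}$), a $(\delta,a)$-collapsing set is automatically $(16\delta,a/2)$-collapsing, so choosing $\delta_{CT}\le\delta_{GC}/16$ I may apply Proposition~\ref{prop: GC} at the smaller parameter $a/2$. For each $t$ in a short subinterval of $[a/4,a/2]$, applying the theorem to $U_t:=B(K,t)$ produces a saturated open set $Z_t$ with $B(K,t)\subset Z_t\subset B(K,t+a/4)\subset B(K,3a/4)$, and
$$
|II_{\partial Z_t}|\ \le\ C_{GC}(R)\, l_{a/2}^{-1}\ \le\ C(R)\left(a^{-1}+r_{\Rm}^{-1}\right),
$$
which verifies~(1) and~(2) for each $Z_t$. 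Plugging this into the formula~(\ref{eqn: boundary_Pfaffian}) for $\bEuler$ in terms of the principal curvatures $k_i$ and the sectional curvatures $\mathcal{K}_{ij}$, and using the defining inequality $|\Rm(x)|\le r_{\Rm}(x)^{-2}$, I obtain the pointwise estimate
$$
|\bEuler|\ \le\ C\left(|II_{\partial Z_t}|^{3}+|II_{\partial Z_t}|\,|\Rm|\right)\darea\ \le\ C(R)\left(a^{-3}+r_{\Rm}^{-3}\right)\darea,
$$
after absorbing the cross terms by Young's inequality.

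The remaining step, and the main obstacle, is to convert the area integral over some $\partial Z_{t^{\ast}}$ into the stated volume integral on $A(K,a/4,3a/4)$. My plan is to organize the family $\{Z_t\}$ as the sublevel sets of a single $1$-Lipschitz function $\phi$ whose range over the annulus has length comparable to $a$---essentially a smoothing of $d_K$ that is compatible with the $N$-structure of Theorem~\ref{thm: CFGT}. Granting such a $\phi$, the coarea formula gives
$$
\int_{a/4}^{3a/4}\int_{\partial Z_t}\left(a^{-3}+r_{\Rm}^{-3}\right)\darea\, dt\ \le\ \int_{A(K,a/4,3a/4)}\left(a^{-3}+r_{\Rm}^{-3}\right)\dvol_g,
$$
and the mean value theorem over the parameter interval of length comparable to $a$ produces some $t^{\ast}$ with
$$
\int_{\partial Z_{t^{\ast}}}\left(a^{-3}+r_{\Rm}^{-3}\right)\darea\ \le\ \frac{C}{a}\int_{A(K,a/4,3a/4)}\left(a^{-3}+r_{\Rm}^{-3}\right)\dvol_g.
$$
Setting $Z=Z_{t^{\ast}}$ and combining with the pointwise bound on $\bEuler$ from the previous step then yields~(3) with $C_{CT}(R)$ collecting all the previous constants.

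The hard part is the construction of the coherent monotone family $\{Z_t\}$, equivalently of the Lipschitz function $\phi$: independent applications of Proposition~\ref{prop: GC} to different $U_t$ are in general not compatible, so one needs a parametric version of the equivariant good chopping. I would carry this out by smoothing $d_K$ through an averaging at scale comparable to $l_a$ as in Cheeger-Fukaya-Gromov~\cite{CFG92}, then correcting on each orbit of the $N$-action of Theorem~\ref{thm: CFGT} to enforce saturation of the level sets; this mirrors the argument in Appendix~A that establishes Proposition~\ref{prop: GC} itself, now applied to a one-parameter family of levels rather than to a single one.
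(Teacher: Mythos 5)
Your strategy matches the one the paper invokes from Cheeger--Tian (Theorem 3.9 of~\cite{CT05}): obtain the second-fundamental-form bound from the equivariant good chopping at a slightly smaller parameter, combine it with $|\Rm|\le r_{\Rm}^{-2}$ and Young's inequality to get the pointwise bound $|\bEuler|\le C(R)\left(a^{-3}+r_{\Rm}^{-3}\right)\darea$, and then extract the $a^{-1}$ factor by coarea-averaging over a one-parameter family of saturated choppings given by sublevel sets of a saturation-compatible smoothing of the distance function. You have also correctly isolated the nontrivial step, namely the parametric version of the good chopping, and your observation that a $(\delta,a)$-collapsing set is $(16\delta,a/2)$-collapsing (so that Proposition~\ref{prop: GC} may be applied at parameter $a/2$) is fine. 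One caution on the part you leave sketched: for the mean-value step to produce $a^{-1}$ (rather than $l_a^{-1}$) in item~(3), the set of parameter values $t$ for which $\partial Z_t$ is smooth with $|II_{\partial Z_t}|\le C\,l_a^{-1}$ must have measure on the order of $a$; the quantitative Morse (Yomdin) estimate as applied in Appendix~A only yields a good interval of length $\sim\Psi_{YM}(a,n,\eta)\,l_a$ inside each covering ball, so a genuinely global version of that estimate for the saturating approximation to $d_K$ is required. This is precisely the additional content that distinguishes the present proposition from Proposition~\ref{prop: GC}, and it should be made explicit when the parametric chopping is written out.
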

The proof of this proposition only used, in addition to the previous propositions, the volume comparison, and this is available within $B(p_0,R)$ by Lemma~\ref{lem: volume_comparison}.

\section{Proof of the $\epsilon$-regularity theorem for 4-d Ricci shrinkers}
The foundation of the proof is Anderson's $\epsilon$-regularity with respect to collapsing, which basically asserts that the smallness of the renormalized energy $I_{\Rm}^f$ (see Definition~\ref{defn: renormalized_energy}) at certain scale guarantees the uniform curvature bound at half of that scale. However, the (more natural) input of our theorem is the smallness of the local energy $$E(p,r):=\int_{B(p,r)}|\Rm|^2\ \dmu_f\ <\ \epsilon,$$
which, when collapsing happens, may well be caused by the smallness of $\mu_f(B(p,r))$, and it is not obvious at all that small local energy implies the smallness of the renormalized energy. However, we will follow the strategy of Cheeger-Tian~\cite{CT05} to find that for 4-d Ricci shrinkers, the above smallness of energy indeed implies the smallness of the renormalized energy, at a much smaller, but definite scale.

\subsection{The key estimate for $4$-d Ricci shrinkers} 
Recall that the curvature can be controlled by 
$$|\Rm|^2\ \le\ 8\pi^2|\Euler| + |\mathring{\nabla}^2f|^2.$$
The main task is to obtain an average control of $|\Euler|$. This is done by an induction process, which is based on Proposition~\ref{prop: integral_TP_control} and the vanishing of the Euler characteristics on subsets that are $(\delta,a)$-collapsing with locally bounded curvature. In order to better extract information from Proposition~\ref{prop: integral_TP_control}, we start with a maximal function argument.

For each $u\in L^1(M, g,\dmu_f)$, we can define 
\begin{align*}
M^f_u(x,s):=\sup_{s'\le s} \frac{1}{\mu_f(B(x,s'))}\int_{B(x,s')}u\ \dmu_f.
\end{align*}
Recall the volume doubling property (\ref{eqn: doubling}) and applying Lemma 4.1 of~\cite{CT05}, we get
\begin{lemma}
There is a constant $C_{4.1}(R,\alpha)>0$, for each $R,\alpha>0$, such that for any $\dmu_f$-measurable subset $W\subset B(p_0,R)$,
\begin{align}
\left(\frac{1}{\omega}\int_WM^f_u(x,s)^{\alpha}\ \dmu_f\right)^{\frac{1}{\alpha}}\le \frac{C_{4.1}(R,\alpha)}{\mu_f(W)}\int_{B(W,6s)}|u|\ \dmu_f.
\end{align} 
\label{lem: maximal_function}
\end{lemma}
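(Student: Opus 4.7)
The plan is to mimic the classical Hardy--Littlewood maximal function machinery, adapted to the weighted measure $\dmu_f$. The key structural input we get for free from the earlier material is the uniform doubling property (\ref{eqn: doubling}) for $\dmu_f$ on balls contained in $B(p_0,R)$. Since the inequality to be proved relates an $L^\alpha$ average of $M^f_u$ to an $L^1$ quantity of $u$, the relevant regime is $\alpha\in(0,1)$, and the route is: first a weak-type $(1,1)$ bound, then a layer-cake decomposition.

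\textbf{Step 1 (weak $(1,1)$).} For each $\lambda>0$ set $E_\lambda:=\{x\in W:M^f_u(x,s)>\lambda\}$. For every $x\in E_\lambda$ pick a radius $s_x\le s$ with
\bal
\frac{1}{\mu_f(B(x,s_x))}\int_{B(x,s_x)}|u|\ \dmu_f\ >\ \lambda.
\eal
Apply the standard Vitali $5r$--covering lemma to $\{B(x,s_x)\}_{x\in E_\lambda}$ to extract a countable pairwise disjoint subfamily $\{B(x_k,s_{x_k})\}$ with $E_\lambda\subset\bigcup_k B(x_k,5s_{x_k})$. Since $x_k\in W$ and $s_{x_k}\le s$, the original balls lie in $B(W,s)$ and their $5$-dilations in $B(W,6s)$. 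Iterating (\ref{eqn: doubling}) three times yields $\mu_f(B(x_k,5s_{x_k}))\le C_D(R)^3\mu_f(B(x_k,s_{x_k}))$. Summing over the disjoint subfamily,
\bal
\mu_f(E_\lambda)\ \le\ \sum_k\mu_f(B(x_k,5s_{x_k}))\ \le\ \frac{C_D(R)^3}{\lambda}\sum_k\int_{B(x_k,s_{x_k})}|u|\ \dmu_f\ \le\ \frac{C_1(R)}{\lambda}\int_{B(W,6s)}|u|\ \dmu_f.
\eal

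\textbf{Step 2 (layer-cake for $\alpha<1$).} By Cavalieri's principle and the trivial bound $\mu_f(E_\lambda)\le\mu_f(W)$,
\bal
\int_W M^f_u(\cdot,s)^\alpha\ \dmu_f\ =\ \alpha\int_0^\infty \lambda^{\alpha-1}\mu_f(E_\lambda)\ \text{d}\lambda.
\eal
Splitting the integral at the natural threshold $\lambda_0:=\mu_f(W)^{-1}\int_{B(W,6s)}|u|\ \dmu_f$, the low-$\lambda$ range contributes at most $\mu_f(W)\lambda_0^\alpha$, while Step 1 bounds the high-$\lambda$ range by $\tfrac{\alpha C_1(R)}{1-\alpha}\lambda_0^{\alpha-1}\int_{B(W,6s)}|u|\ \dmu_f$. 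Adding these and rearranging,
\bal
\int_W M^f_u(\cdot,s)^\alpha\ \dmu_f\ \le\ C(R,\alpha)\ \mu_f(W)^{1-\alpha}\left(\int_{B(W,6s)}|u|\ \dmu_f\right)^\alpha,
\eal
and taking the $\alpha$-th root after dividing by $\mu_f(W)$ gives the stated inequality.

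\textbf{Main obstacle and remarks.} There is no essential difficulty; the only bookkeeping one must perform carefully is the propagation of the dilation constant ($5r\leadsto 6s$ in $B(W,6s)$) and the fact that the doubling constant in (\ref{eqn: doubling}) applies only so long as the balls involved remain inside $B(p_0,R)$ --- this is why the constant depends on $R$. The apparent dependence on $\alpha$ comes from the factor $(1-\alpha)^{-1}$ in the layer-cake split, so $C_{4.1}(R,\alpha)\to\infty$ as $\alpha\uparrow 1$; this is unavoidable because $M^f_u$ is not of strong type $(1,1)$. In the subsequent key estimate the lemma will be applied with a fixed $\alpha<1$ (in fact $\alpha=3/4$, matching the exponent appearing in (\ref{eqn: intro_0})), so the blow-up of $C_{4.1}(R,\alpha)$ at $\alpha=1$ is harmless.
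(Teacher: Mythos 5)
Your proof is correct and is, in substance, exactly what lies behind the paper's one-line justification: the paper simply invokes Lemma 4.1 of Cheeger--Tian together with the doubling property (\ref{eqn: doubling}) for $\dmu_f$, and the underlying argument there is precisely this Vitali $5r$-covering plus Kolmogorov (layer-cake) estimate for a weak $(1,1)$ operator. Two small bookkeeping remarks: the $\frac{1}{\omega}$ in the statement is a typo for $\frac{1}{\mu_f(W)}$, which your proof correctly treats; and since the doubling constant in (\ref{eqn: doubling}) is only asserted for balls whose dilation stays inside $B(p_0,R)$, the three iterations on $B(x_k,8s_{x_k})\subset B(p_0,R+8s)$ should really quote $C_D(R+8)$ (or, since $s\le 1$ in all applications, simply absorb this into the $R$-dependence of $C_{4.1}$) --- a harmless adjustment you already gesture at. Your observation that the inequality is one-sided in $\alpha$ (valid for $\alpha\in(0,1)$, with $C_{4.1}(R,\alpha)\sim(1-\alpha)^{-1}$) and that the paper only ever needs $\alpha=\tfrac34$ is also on point.
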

From Proposition~\ref{prop: integral_TP_control}, we can estimate:
\begin{lemma}
Fix $r\in (0,1)$ and $\delta<\min\{\delta_{CFGT},\delta_{GC}\}$. There exists a
$C_{4.2}(R)>0$, such that if some compact set $K\subset B(p_0,R-r)$ has its
$r$-neighborhood $B(K,r)$ being $(\delta, r)$-collapsing with locally bounded
curvature, then we have some saturated open set $Z\subset B(K,\frac{1}{2}r)$
with smooth boundary, containing $B(K,\frac{1}{4}r)$ such that
\begin{align}
\left|\int_{Z}\mathcal{P}_{\chi}\right|\le C_{4.2}(R)\mu_f(A(K;0,r)) r^{-1}\left(r^{-3}+\left(\frac{1}{\mu_f(A(K;0,r))}\int_{A(K;\frac{1}{4}r,\frac{3}{4}r)}|\Rm|^2\ \dmu_f\right)^{\frac{3}{4}}\right).
\label{eqn: P_bound}
\end{align}
\end{lemma}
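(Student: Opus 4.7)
The plan is to combine the boundary Pfaffian estimate from Proposition~\ref{prop: integral_TP_control} with the vanishing of Euler characteristic on saturated sets, then to trade the integral of $r_{\Rm}^{-3}$ for an integral of $|\Rm|^2$ via a weighted maximal function argument. First, apply Proposition~\ref{prop: integral_TP_control} with $a$ a suitable fixed multiple of $r$ (chosen so that $Z \subset B(K,r/2)$), which, using the hypothesis $\delta < \delta_{CT}(R)$, produces a saturated open $Z \supset B(K,r/4)$ with smooth boundary and
\begin{align*}
\left|\int_{\partial Z}\bEuler\right|\ \le\ C_{CT}(R)\,r^{-1}\int_{A(K;r/4,3r/4)}\left(r^{-3}+r_{\Rm}^{-3}\right)\dvol_g.
\end{align*}
Since $Z$ is saturated by the associated $N$-structure of positive rank, Corollary~\ref{cor: vanishing_euler} gives $\chi(Z)=0$, so (\ref{eqn: Euler}) lets me replace $\int_Z\Euler$ by $-\int_{\partial Z}\bEuler$. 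Because $f$ is bounded on $B(p_0,R)$ by Lemma~\ref{lem: potential_growth}, $\dvol_g$ and $\dmu_f$ are comparable there with constants depending only on $R$, so, absorbing that factor into $C_{4.2}(R)$, it remains to estimate $\int_{A(K;r/4,3r/4)} r_{\Rm}^{-3}\,\dmu_f$.

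The crux is the pointwise comparison
\begin{align*}
r_{\Rm}(x)^{-4}\ \le\ C(R)\bigl(M^f_{|\Rm|^2}(x,s)+s^{-4}\bigr),\qquad x\in B(p_0,R),\ s\in(0,1),
\end{align*}
proved by splitting on the energy scale $\rho_f(x)$. When $\rho_f(x)\le s$, continuity of $r\mapsto I_{\Rm}^f(x,r)$ forces $I_{\Rm}^f(x,\rho_f(x))=\varepsilon_A(R)$, so averaging $|\Rm|^2$ over $B(x,\rho_f(x))$ yields $M^f_{|\Rm|^2}(x,s)\ \ge\ c(R)\,\rho_f(x)^{-4}\ \gtrsim\ r_{\Rm}(x)^{-4}$ via the comparison $\rho_f\le 2r_{\Rm}$ from (\ref{eqn: scale_comparison}). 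When $\rho_f(x)>s$, monotonicity of $I^f_{\Rm}$ gives $I_{\Rm}^f(x,s)\le\varepsilon_A(R)$ and Proposition~\ref{prop: Anderson} forces $r_{\Rm}(x)\ge c(R)\,s$, so $r_{\Rm}(x)^{-4}\le C(R)\,s^{-4}$. Taking the $3/4$-power and invoking the sub-additivity $(a+b)^{3/4}\le a^{3/4}+b^{3/4}$ produces $r_{\Rm}^{-3}(x)\le C(R)\bigl(M_{|\Rm|^2}^{f\,3/4}(x,s)+s^{-3}\bigr)$.

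Choosing $s$ as a small fixed multiple of $r$ ensures that the $6s$-enlargement in Lemma~\ref{lem: maximal_function} is absorbed into $A(K;0,r)$, and the weighted-volume doubling from Lemma~\ref{lem: volume_comparison} makes the weighted volume of that enlargement comparable to $\mu_f(A(K;0,r))$. Then Lemma~\ref{lem: maximal_function} with $\alpha=3/4$ gives
\begin{align*}
\int_{A(K;r/4,3r/4)} M^{f\,3/4}_{|\Rm|^2}(\cdot,s)\,\dmu_f\ \le\ C(R)\,\mu_f(A(K;0,r))^{1/4}\Bigl(\int_{A(K;r/4,3r/4)}|\Rm|^2\,\dmu_f\Bigr)^{3/4},
\end{align*}
while the $s^{-3}$-term contributes $C(R)\,r^{-3}\mu_f(A(K;0,r))$. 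Inserting both into the Pfaffian bound from the first stage and collecting the overall factor of $r^{-1}$ yields (\ref{eqn: P_bound}). The main obstacle I anticipate is precisely this calibration: the maximal-function argument naturally enlarges the annulus by $6s$, yet (\ref{eqn: P_bound}) keeps $A(K;r/4,3r/4)$ on both sides. Reconciling this requires tuning $s$ against $r$ and using the weighted doubling to absorb the widening, and rests on the base-point hypothesis $K\subset B(p_0,R-r)$ so that everything takes place inside $B(p_0,R)$ where Lemma~\ref{lem: volume_comparison} applies.
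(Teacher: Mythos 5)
Your plan is the paper's plan: start from Proposition~\ref{prop: integral_TP_control} and the vanishing Euler characteristic of a saturated $Z$, pass from $\dvol_g$ to $\dmu_f$ using the uniform bound on $f$ in $B(p_0,R)$, prove a pointwise comparison of $r_{\Rm}^{-3}$ with $s^{-3}+M^f_{|\Rm|^2}(\cdot,s)^{3/4}$ by splitting on whether $\rho_f$ exceeds $s$ and using $\rho_f\le 2r_{\Rm}$, and feed this into Lemma~\ref{lem: maximal_function} with $\alpha=3/4$. This is identical to the paper's (\ref{eqn: local_scale_below})--(\ref{eqn: integral_local_scale_below}).

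The one place the write-up is actually wrong is the maximal-function display, and your proposed repair does not work. You apply Lemma~\ref{lem: maximal_function} with $W=A(K;\tfrac14 r,\tfrac34 r)$ and write $A(K;\tfrac14 r,\tfrac34 r)$ again as the domain of the $|\Rm|^2$-integral on the right; but the lemma produces $\int_{B(W,6s)}|\Rm|^2\,\dmu_f$, and $B(W,6s)$ strictly contains $W$ for any $s>0$. Weighted doubling cannot shrink this: doubling controls $\mu_f$ of enlarged regions, not the integral of $|\Rm|^2$ over them, so ``using the weighted doubling to absorb the widening'' is not a valid step. The correct fix --- and what the paper does --- is to run the maximal-function estimate over a strictly smaller annulus, taking $W=A(K;\tfrac13 r,\tfrac23 r)$ (legitimate because $\partial Z$ can be taken to lie well inside $A(K;\tfrac14 r,\tfrac34 r)$, so the bound from Proposition~\ref{prop: integral_TP_control} already localizes there), and then to pick $s$ a small definite multiple of $r$ (the paper uses $s=r/512$) so that $B(A(K;\tfrac13 r,\tfrac23 r),6s)\subset A(K;\tfrac14 r,\tfrac34 r)$. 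The harmless inequality $\mu_f(A(K;\tfrac13 r,\tfrac23 r))^{1/4}\le \mu_f(A(K;0,r))^{1/4}$ then handles the volume prefactor without invoking doubling. With that correction your proof closes.
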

\begin{proof}
By the measure equivalence (\ref{eqn: Euclidean_comparison}) and Proposition~\ref{prop: integral_TP_control}, we get
\begin{align}
\begin{split}
\left|\int_{\partial Z}\mathcal{TP}_{\chi}\right|\ &\le\ C_{CT}(R)r^{-1}\int_{A(K;\frac{1}{3}r,\frac{2}{3}r)}(s^{-3}+r_{\Rm}^{-3})\ \dvol_g\\
&\le\ C_{CT}(R)e^{(2R+\sqrt{2})^2}r^{-1} \int_{A(K;\frac{1}{3}r,\frac{2}{3}r)}(s^{-3}+r_{\Rm}^{-3})\ \dmu_f.
\end{split}
\label{eqn: TP_bound}
\end{align}
Now we notice that for $s\in (0,1]$,
$$\rho_f(p)^{-1}\le c_{4.2.2}(R)\max\left\{M^f_{|\Rm|^2}(p,s)^{\frac{1}{4}},s^{-1}\right\}.$$
This is because if $\rho_f(p)<s\le 1$, then 
$$\frac{\volfm_R(\rho_f(p))}{\mu_f(B(p,\rho_f(p)))}\int_{B(p,\rho_f(p))}|\Rm|^2\ \dmu_f=\epsilon_A(R),$$
which gives
\begin{align*}
M^f_{|\Rm|^2}(p,s)&\ge \frac{1}{\mu_f(B(p,\rho_f(p)))}\int_{B(p,\rho_f(p))}|\Rm|^2\ \dmu_f\\
&=\frac{\epsilon_A(R)}{\volfm_R(\rho_f(p))}\ge \frac{e^{-(2R+\sqrt{2})}\epsilon_A(R)}{\volfm_R(1)}\rho_f(p)^{-4},
\end{align*}
and thus 
$$\rho_f(p)^{-1}\le c_{4.2.2}(R)M^f_{|\Rm|^2}(p,s)^{\frac{1}{4}},$$
where $c_{4.2.2}(R):= (e^{-(2R+\sqrt{2})}\epsilon_A(R)\slash\volfm_f(1))^{-\frac{1}{4}}$.

Now for $s\le r\le 1$ we have
\begin{align}
r_{\Rm}(p)^{-3}\le 8\rho_f(p)^{-3}\le c_{4.2.3}(R)\left(s^{-3}+\left(M^f_{|\Rm|^2}(p,s)\right)^{\frac{3}{4}}\right),
\label{eqn: local_scale_below}
\end{align}
with $c_{4.2.3}(R):=8\max\{1,c_{4.2.2}(R)^3\}$. 

Now we can choose $s=\frac{r}{512}$ and apply Lemma~\ref{lem: maximal_function} to the function $|\Rm|^2$ with $\alpha=\frac{3}{4}$ to obtain
\begin{align}
\int_{A(K;\frac{1}{3}r,\frac{2}{3}r)}\left(M^f_{|\Rm|^2}(\cdot,s)\right)^{\frac{3}{4}}\ \dmu_f\ \le\ \mu_f(A(K;0,r))\left(\frac{C_{4.1}(R,\frac{3}{4})}{\mu_f(A(K;0,r))}\int_{A(K;\frac{1}{4}r,\frac{3}{4}r)}|\Rm|^2\ \dmu_f\right)^{\frac{3}{4}}.
\label{eqn: integral_local_scale_below}
\end{align}

Then the estimates (\ref{eqn: TP_bound}), (\ref{eqn: local_scale_below}) and (\ref{eqn: integral_local_scale_below}) together give
\begin{align}
\left|\int_{\partial Z}\mathcal{TP}_{\chi}\right|\ \le\ C_{4.2}(R)\mu_f(A(K;0,r))\left(r^{-4}+\left(\frac{r^{-\frac{4}{3}}}{\mu_f(A(K;0,r))}\int_{A(K;\frac{1}{4}r,\frac{3}{4}r)}|\Rm|^2\ \dmu_f\right)^{\frac{3}{4}}\right).
\label{eqn: PX_iteration}
\end{align}

Since there exists an $r$-standard N-structure on $Z$, $\chi(Z)=0$, and we can employ the Gauss-Bonnet-Chern formula on $Z$ to finish the proof, i.e. $\int_Z\mathcal{P}_{\chi}=-\int_{\partial Z}\mathcal{TP}_{\chi}$.
\end{proof}

Recall that our purpose is to use (\ref{eqn: P_bound}) together with the special relation (\ref{eqn: Pfaffian}) between $\mathcal{P}_{\chi}$ and $|\Rm|^2$ in dimension four to estimate $\|\Rm\|_{L^2_{loc}}$. In the Einstein case $\mathring{\Sc}c\equiv 0$ but for non-trivial 4-d Ricci shrinkers, $\mathring{\Sc}c=\mathring{\nabla}^2f$ does not vanish identically. However, we could employ the good cut-off function constructed in Lemma~\ref{lem: cutoff} to obtain a local $L^2$-control of the full Hessian of $f$ by its energy. This is the content of the following lemma:
\begin{lemma}
Given $K\subset B(p_0,R-r)$, we have estimate (\ref{eqn: Hessian_L2}) for the potential function $f$.
\label{lem: Hessian_control}
\end{lemma}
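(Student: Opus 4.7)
The plan is to derive the $L^2$-Hessian control from the Weitzenb\"ock formula (\ref{eqn: Bochner}) via integration by parts against a Cheeger--Colding cutoff, using the global gradient bound for $f$ as the only pointwise input.

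First I would choose a cutoff $\varphi$ via Lemma~\ref{lem: cutoff} that is identically $1$ on $B(K,s/2)$, supported in $B(K,s)$, and satisfies $s|\nabla\varphi| \le C(R)$. Note that $U \subset B(K, s/2)$ can be arranged (or one applies the lemma to $K$ with an appropriate radius, since the conclusion is stated in terms of $B(U,s)$). Multiplying the Weitzenb\"ock identity
\begin{align*}
2|\nabla^2 f|^2 \ =\ \Delta^f|\nabla f|^2 + |\nabla f|^2
\end{align*}
by $\varphi^2$ and integrating against $\dmu_f$, I would use the key fact that the drifted Laplacian $\Delta^f$ is self-adjoint with respect to $\dmu_f$, giving
\begin{align*}
2\int \varphi^2|\nabla^2f|^2\ \dmu_f \ =\ -2\int \varphi\,\nabla\varphi\cdot \nabla|\nabla f|^2\ \dmu_f + \int \varphi^2|\nabla f|^2\ \dmu_f.
\end{align*}

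Next I would estimate the cross term using $|\nabla|\nabla f|^2| \le 2|\nabla^2 f|\,|\nabla f|$ (which follows from $\nabla|\nabla f|^2 = 2\nabla^2 f(\nabla f,\cdot)$, as already noted after (\ref{eqn: Bochner})). A Cauchy--Schwarz with absorption yields
\begin{align*}
\left|2\int \varphi\,\nabla\varphi\cdot \nabla|\nabla f|^2\ \dmu_f\right|\ \le\ \int \varphi^2 |\nabla^2 f|^2\ \dmu_f + 4\int |\nabla\varphi|^2|\nabla f|^2\ \dmu_f,
\end{align*}
so that, after rearranging,
\begin{align*}
\int \varphi^2|\nabla^2 f|^2\ \dmu_f\ \le\ 4\int|\nabla\varphi|^2|\nabla f|^2\ \dmu_f + \int\varphi^2|\nabla f|^2\ \dmu_f.
\end{align*}

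Finally I would invoke Lemma~\ref{lem: gradient_estimate} to bound $|\nabla f|^2 \le (R/2+\sqrt{2})^2$ on $B(p_0,R)$, and use $|\nabla\varphi| \le C(R)s^{-1}$ together with $\supp\varphi \subset B(K,s) \subset B(U,s)$. Since $\varphi \equiv 1$ on $U$, this produces
\begin{align*}
\int_U |\nabla^2 f|^2\ \dmu_f\ \le\ C(R)\left(s^{-2}+1\right)\mu_f(B(U,s))\ \le\ C(R)\,\mu_f(B(U,s))\,s^{-2},
\end{align*}
the last inequality holding because we only care about $s \in (0,1)$. No real obstacle is expected: the arguments are standard once one recognizes that the drifted Laplacian is the natural self-adjoint operator for $\dmu_f$ and that the Weitzenb\"ock formula (\ref{eqn: Bochner}) is curvature-free, so no Bakry--\'Emery lower bound enters the computation, and the whole estimate depends only on the a priori bound on $|\nabla f|$ within $B(p_0,R)$.
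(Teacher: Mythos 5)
Your proof is correct and follows essentially the same route as the paper's: apply the Weitzenb\"ock identity (\ref{eqn: Bochner}), localize with a Cheeger--Colding cutoff from Lemma~\ref{lem: cutoff}, integrate by parts against $\dmu_f$ using the self-adjointness of $\Delta^f$, and close with the gradient estimate (\ref{eqn: gradient_estimate}). The one genuine (but minor) technical difference is the integration-by-parts step: the paper tests (\ref{eqn: Bochner}) against $\varphi$ itself and moves $\Delta^f$ entirely onto the cutoff, so it needs the bound $r^2|\Delta^f\varphi|\le C(R)$ supplied by Lemma~\ref{lem: cutoff}; you test against $\varphi^2$, move only one derivative, and absorb the resulting Hessian term via Cauchy--Schwarz, so you only need $r|\nabla\varphi|\le C(R)$. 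Your version is marginally more elementary since the Laplacian bound on $\varphi$ is never invoked, but both land on the same estimate. One cosmetic point: the stated inequality (\ref{eqn: Hessian_L2}) has $\dvol_g$ on the left while the computation (yours and the paper's) naturally produces $\dmu_f$; the conversion costs only the factor $\sup_{B(p_0,R+1)}e^{f}$, which is controlled by (\ref{eqn: potential_growth}) and is where the exponential factor in (\ref{eqn: Hessian_L2}) comes from.
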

\begin{proof}
By Lemma~\ref{lem: cutoff}, we have a cut-off function $\varphi$ such that $0\le \varphi \le 1$, $\supp \varphi\subset B(K,r)$, $\varphi\equiv 1$ on $B(K,r\slash 4)$ and $r|\nabla \varphi|+r^2|\Delta^f\varphi|\le C_{2.10}(R)$, then we can use the Weitzenb\"ock formula (\ref{eqn: Bochner}) to compute
\begin{align*}
\int_{B(K,\frac{1}{2}r)}2|\nabla^2 f|^2\ \dmu_f\ &\le\ \int_{B(K,r)}2\varphi |\nabla^2 f|^2\ \dmu_f\\
&=\ \int_{B(K,r)}\varphi \left(\Delta^f|\nabla f|^2+|\nabla f|^2\right)\ \dmu_f\\
&\le\ \int_{A(K;0,r)}|\Delta^f\varphi| |\nabla f|^2\ \dmu_f+\int_{B(K,r)}|\nabla f|^2\ \dmu_f\\
&\le\ c(R)\left(2R+\sqrt{2}\right)^2r^{-2}\mu_f(A(K;0,r))+(2R+\sqrt{2})^2\mu_f(B(K,r)),
\end{align*}
and thus
\begin{align}
\begin{split}
\int_{B(K;\frac{1}{2}r)}2|\nabla^2 f|^2\ \dvol_g\ \le\  C_{4.3}(R)\left(2R+\sqrt{2}\right)^2e^{(2R+\sqrt{2})^2}\left(r^{-2}\mu_f(A(K;0,r))+\mu_f(B(K;r))\right).
\end{split}
\label{eqn: Hessian_L2}
\end{align}
\end{proof}

From now on, we fix $\delta_{KE}:=\frac{1}{2}\{\delta_{CFGT},\delta_{GC}\}$. Now we can generalize the following key estimate of~\cite{CT05} to 4-d Ricci shrinkers: 
\begin{proposition}[Key estimate]
Fix $r\in (0,1)$ and $R>0$. There exist constants $\epsilon_{KE}(R)>0$ and 
$C_{KE}(R)>0$, such that any $B(E,r)\subset B(p_0,R)$ which is $\delta$-volume collapsing for any $\delta<\delta_{KE}$ sufficiently small, and with
\begin{align}
\int_{B(E,r)}|\Rm|^2\ \dmu_f\ \le\ \epsilon_{KE}(R),
\end{align}
has the estimate
$$\int_{E}|\Rm|^2\ \dmu_f\ \le\ C_{KE}(R)\mu_f(B(E;r))\ r^{-4}.$$
\label{prop: KE}
\end{proposition}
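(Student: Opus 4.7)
The plan is to derive the key estimate by iterating the boundary Gauss-Bonnet-Chern bound (\ref{eqn: PX_iteration}) over a nested family of tubular neighborhoods of $E$, handling the Ricci-soliton-specific Hessian term via Lemma~\ref{lem: Hessian_control}, and closing the recursion by Young's inequality.

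The first step is to verify that the collapsing-with-locally-bounded-curvature hypothesis of Proposition~\ref{prop: integral_TP_control} is available at every step. Under the $\delta$-volume collapsing of $B(E,r)$ together with the small energy input $\int_{B(E,r)}|\Rm|^2\,\dmu_f\le\epsilon_{KE}(R)$, Lemma~\ref{lem: CT_local_bdd_curvature} (suitably rescaled to operate at tubular scale $s$ rather than unit scale) shows that for $\delta_{KE}(R)$ and $\epsilon_{KE}(R)$ chosen small enough in terms of $R$, $\delta_{CFGT}(R)$, $\delta_{GC}(R)$, $\epsilon_A(R)$, and $\volfm_R(1)$, every sub-tube $B(K,s)\subset B(E,r)$ with $s\in(0,r]$ is $(\delta_0,s)$-collapsing with locally bounded curvature for some fixed $\delta_0<\min\{\delta_{CFGT}(R),\delta_{GC}(R)\}$.

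Next I would set up the iteration: define $K_i:=B(E, t_i)$ with $t_i:=r(1-2^{-i})$, so $K_0=E$, $K_i\nearrow B(E,r)$, and $s_i:=t_{i+1}-t_i=r\cdot 2^{-i-1}$ satisfies $B(K_i,s_i)=K_{i+1}$. Estimate (\ref{eqn: PX_iteration}) at the $i$-th stage, combined with the vanishing of $\chi(Z_i)$ from Corollary~\ref{cor: vanishing_euler}, the Pfaffian identity (\ref{eqn: Pfaffian}), and the fact that $\dmu_f$ and $\dvol_g$ are comparable on $B(p_0,R)$ via (\ref{eqn: potential_growth}), delivers
\begin{align*}
\int_{K_i}|\Rm|^2\,\dmu_f\ \le\ C(R)\int_{Z_i}|\mathring{\nabla}^2 f|^2\,\dmu_f+C(R)\mu_f(K_{i+1})\left(s_i^{-4}+\frac{s_i^{-1}F_{i+1}^{3/4}}{\mu_f(K_{i+1})^{3/4}}\right),
\end{align*}
with $F_i:=\int_{K_i}|\Rm|^2\,\dmu_f$. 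Lemma~\ref{lem: Hessian_control} applied with $K=K_i$ at scale $s_i\le 1$ bounds $\int_{Z_i}|\mathring{\nabla}^2 f|^2\,\dmu_f\le\int_{Z_i}|\nabla^2 f|^2\,\dmu_f\le C(R)\mu_f(K_{i+1})s_i^{-2}$, and $s_i^{-2}\le s_i^{-4}$ since $s_i\le 1$. Collecting terms yields the fundamental recursion
\begin{align*}
F_i\ \le\ C_1(R)\,\mu_f(K_{i+1})s_i^{-4}+C_1(R)\bigl(\mu_f(K_{i+1})s_i^{-4}\bigr)^{1/4}F_{i+1}^{3/4}.
\end{align*}

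The final step is to close the recursion. Applying Young's inequality $x^{1/4}y^{3/4}\le\tfrac14 x+\tfrac34 y$ with tuning parameter $\eta>0$ (so that $x=\eta^{-3}\mu_f(K_{i+1})s_i^{-4}$ and $y=\eta F_{i+1}$) produces
\begin{align*}
F_i\ \le\ C_2(R,\eta)\,\mu_f(K_{i+1})\,s_i^{-4}+C_3(R)\eta\,F_{i+1}.
\end{align*}
Iterating $N$ times, using $\mu_f(K_{i+1})\le\mu_f(B(E,r))$, $s_i^{-4}=16\,r^{-4}\cdot 16^i$, and the absolute ceiling $F_N\le F(B(E,r))\le\epsilon_{KE}(R)$ from the hypothesis, one obtains
\begin{align*}
F_0\ \le\ 16\,C_2(R,\eta)\,\mu_f(B(E,r))\,r^{-4}\sum_{i=0}^{N-1}\bigl(16\,C_3(R)\eta\bigr)^i+\bigl(C_3(R)\eta\bigr)^N\epsilon_{KE}(R).
\end{align*}
Choosing $\eta<(16\,C_3(R))^{-1}$ makes the geometric series convergent and the tail vanish as $N\to\infty$, yielding $F_0=\int_E|\Rm|^2\,\dmu_f\le C_{KE}(R)\mu_f(B(E,r))r^{-4}$. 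The principal technical hurdle is twofold: tuning $\delta_{KE}$ and $\epsilon_{KE}$ (depending only on $R$) so the collapsing-with-locally-bounded-curvature hypothesis of Proposition~\ref{prop: integral_TP_control} persists uniformly along the entire iteration, and balancing $\eta$ against the $16^i$ blow-up of $s_i^{-4}$ so that the effective ratio $16\,C_3(R)\eta$ stays strictly below one.
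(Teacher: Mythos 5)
Your iteration geometry on the growing tubes $K_i$ and the Young-inequality closure of the nonlinear recursion are workable in form; the fatal problem is the opening step. The assertion that \emph{every} sub-tube $B(K,s)\subset B(E,r)$ with $s\in(0,r]$ is $(\delta_0,s)$-collapsing with locally bounded curvature is false. At any point $p$ with $r_{\Rm}(p)\ge s$ one has $l_s(p)=s$, so Definition~\ref{def: truncated_collapsing} demands $\mu_f(B(p,s))\le\delta_0\,s^4$; but on any smooth manifold $\mu_f(B(p,s))/s^4\to e^{-f(p)}\omega_4>0$ as $s\to 0$, and on $B(p_0,R)$ this limit is bounded below by a positive constant depending only on $R$ via (\ref{eqn: potential_growth}). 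The fixed-scale hypotheses --- $\mu_f(B(p,1))\le\delta$ and $\int_{B(E,r)}|\Rm|^2\,\dmu_f\le\epsilon_{KE}$ --- give no control here, because points with large $r_{\Rm}$ are precisely those where the local energy carries no information. Since $s_i=r\,2^{-i-1}\to0$, the hypothesis of Proposition~\ref{prop: integral_TP_control} inevitably fails on the full tube $K_i$ for large $i$, and (\ref{eqn: PX_iteration}) cannot be invoked as you propose. The rescaling you mention does not help: the weighted Bishop--Gromov monotonicity of Lemma~\ref{lem: volume_comparison} is one-sided and cannot propagate the smallness of $\mu_f(B(p,1))$ downward in scale.

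The paper circumvents this by splitting each annular layer $E_i$ into $D_i:=\{x\in E_i:r_{\Rm}(x)\le 2^{-(i+1)}r\}$ and $F_i:=E_i\setminus D_i$, applying (\ref{eqn: PX_iteration}) only with $K=D_i$ at scale $s=2^{-(i+1)}r$. On $B(D_i,2^{-(i+1)}r)$ the Lipschitz bound $\text{Lip}\,r_{\Rm}\le 1$ forces $r_{\Rm}\le 2^{-i}r$, hence $\rho_f\le 2^{-(i-1)}r$, \emph{comparable} to the tubular scale --- this is what makes the collapsing-with-locally-bounded-curvature hypothesis verifiable (the Claim inside the paper's proof of Proposition~\ref{prop: KE}). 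On $F_i$ the complementary inequality $r_{\Rm}>2^{-(i+1)}r$ gives the elementary pointwise bound $|\Rm|\le 4^{i+1}r^{-2}$, so $\int_{F_i}|\Rm|^2\,\dmu_f\le c(R)\,16^{i+1}r^{-4}\mu_f(B(E,r))$ is absorbed directly into the additive error with no topological input whatever. This curvature-scale dichotomy is the technical heart of the key estimate and has no analogue in your proposal; without it, the recursion you wish to iterate cannot even be set up. (Replacing Lemma~5.1 of~\cite{CT05} by a Young-inequality linearization of the $x_i\le a_i+b_ix_{i+1}^{3/4}$ recursion is, taken by itself, a perfectly reasonable variant of the closure step.)
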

\begin{proof}
The estimates (\ref{eqn: PX_iteration}) and (\ref{eqn: Hessian_L2}) (with (\ref{eqn: Pfaffian})) show that $\forall K\subset B(p_0,R-s)$ that is $(\delta,s)$-collapsing with locally bounded curvature (assume $s\in (0,1)$),
\begin{align}
\begin{split}
&\int_{B(K,\frac{1}{4}s)}|\Rm|^2\ \dmu_f\\
 \le\
&C_{4.2}(R)\mu_f(A(K;0,s))\left(s^{-4}+\left(\frac{s^{-\frac{4}{3}}}{\mu_f(A(K;0,s))}\int_{A(K;\frac{1}{4}s,\frac{3}{4}s)}|\Rm|^2\ \dmu_f\right)^{\frac{3}{4}}\right)\\
&+C_{4.3}(R)\mu_f(B(K,s)).
\end{split}
\label{eqn: Rm_L2}
\end{align} 
Here the point is that even in practice we have $\delta\to 0$, but the threshold, $\delta_{KE},$ for the theory developed in Section 3 to be applied to obtain (\ref{eqn: PX_iteration}), is universal.
 
Define $E_1:=B(E,r)$; for $i=2,3,4,\cdots$, set $E_i:=A(E;2^{-i}r,r-2^{-i}r)$,
\begin{align*}
D_i:=\{x\in E_i:r_{\Rm}(x)\le 2^{-(i+1)}r\},\quad \text{and}\quad F_i:=E_i\backslash D_i.
\end{align*}
Clearly $B(D_i,2^{-(i+1)}r)\subset E_{i+1}$ and in fact we have:
\begin{claim}
$B(D_i,2^{-(i+1)}r)$ is $(\delta_{KE},2^{-(i+1)}r)$-collapsing with locally
bounded curvature.
\end{claim} 
\begin{proof}[Proof of claim]
If $x\in B(D_i,2^{-(i+1)}r)$ has $r_{\Rm}(x)< 2^{-(i+1)}r$ then this follows
from Lemma~\ref{lem: CT_local_bdd_curvature}, if we assume
$\varepsilon_{KE}(R)\le \frac{\varepsilon_A(R)\delta_{KE}}{16\volfm_R(1)}$.

 Otherwise, if $x\in B(D_i,2^{-(i+1)}r)$ has $r_{\Rm}(x)\ge 2^{-(i+1)}r$, then
 since $Lip\ r_{\Rm}\le 1$ and $\sup_{B(D_i,2^{-(i+1)}r)} r_{\Rm}\le 2^{-i}r$,
 we have $\rho_f(x)\le 2^{-(i-1)}r$, and
\begin{align*}
\mu_f(B(x,2^{-(i+1)}r))\ &\le\ \mu_f(B(x,2^{-(i-1)}r))\ \le\
\frac{\mu_f(B(x,\rho_f(x)))\volfm_R(1)}{\volfm_R(\rho_f(x))2^{4(i-1)}r^{-4}}\\
&=\ \frac{\volfm_R(1)\ r^4}{\epsilon_A(R)2^{4(i-1)}}\int_{B(x,\rho_f(x))}|\Rm|^2\ \dmu_f\
 \le\ \frac{\volfm_R(1)\ r^4}{\epsilon_A(R)2^{4(i-1)}}\int_{B(x,r)}|\Rm|^2\
 \dmu_f\\
&\le\ \delta_{KE}2^{-4(i+1)} r^{4},
\end{align*}
provided $\epsilon_{KE}(R) \le
\frac{\epsilon_A(R)\ \delta_{KE}}{256\volfm_R(1)}.$
\end{proof}
\noindent Here we could clearly see how the energy threshold
$\varepsilon_{KE}(R)$ is determined by $\delta_{KE}$.

 Now we can apply (\ref{eqn: Rm_L2}) to $K=D_i$, $s=2^{-(i+1)}r$ to obtain
\begin{align}
\begin{split}
\frac{1}{\mu_f(B(E,r))}\int_{B(D_i,2^{-(i+3)}r)}|\Rm|^2\ \dmu_f\ \le\
c(R)\left(\frac{2^{4i}}{r^4}+\frac{2^i}{r}\left(\frac{1}{\mu_f(B(E,r))}\int_{E_{i+1}}|\Rm|^2\ \dmu_f\right)^{\frac{3}{4}}\right),
\end{split}
\label{eqn: Rm_L2i}
\end{align} 
where we need to notice that 
$$A(D_i;r\slash 2^{i+3},3r\slash 2^{i+3})\subset A(D_i;0,2^{-(i+1)}r)\subset B(D_i,2^{-(i+1)}r)\subset E_{i+1}.$$
On $F_i$, we have $|\Rm|\le 4^{i+1}r^{-2}$, so $\int_{F_i}|\Rm|^2\ \dmu_ f\
\le\ c(R) 2^{4(i+1)}r^{-4} \mu_f(A(E;0,r))$. Now we can estimate
\begin{align*}
\int_{E_i}|\Rm|^2\ \dmu_f\ &\le\ \int_{D_i}|\Rm|^2\ \dmu_f+\int_{F_i}|\Rm|^2\
\dmu_f\\
&\le\ \int_{B(D_i,2^{-(i+3)}r)}|\Rm|^2\
\dmu_f+c(R)2^{4(i+1)}r^{-4}\mu_f(B(E,r))\\
&\le\ c(R) \mu_f(B(E,r))\left(\frac{2^{4i}}{r^4}
+\frac{2^i}{r}\left(\frac{1}{\mu_f(B(E,r))}\int_{E_{i+1}}|\Rm|^2\
\dmu_f\right)^{\frac{3}{4}}\right).
\end{align*}
Similarly, (\ref{eqn: Rm_L2}) directly implies that
$$
\int_{E_1}|\Rm|^2\ \dmu_f\ \le\
c(R)\mu_f(B(E,r))\left(16r^{-4}
+2r^{-1}\left(\frac{1}{\mu_f(B(E,r))}\int_{E_2}|\Rm|^2\ \dmu_f\right)^{\frac{3}{4}}\right).
$$
Therefore, we could set $a_i:=c(R)r^{-4}16^i$, $b_i:=c(R)r^{-1}2^i$, and $x_i:=\frac{1}{\mu_f(B(E,r))}\int_{E_i}|\Rm|^2\ \dmu_f$ for $i=1,2,3,\cdots$, then $a_i,b_i,x_i$ satisfy the relations
$$x_i\le a_i+b_ix_{i+1}^{\frac{3}{4}},\quad\text{and}\quad \limsup_{i\rightarrow \infty} x_i^{(\frac{3}{4})^i}=1.$$
Notice that $\sum_{j=0}^{\infty}\left(\frac{3}{4}\right)^j=4$, we can apply Lemma 5.1 of~\cite{CT05} to obtain
$$\frac{1}{\mu_f(B(E,r))}\int_{E}|\Rm|^2\ \dmu_f\ =\ x_1\ \le\ C_{KE}(R)r^{-4}.$$
\end{proof}

As mentioned in the Introduction, (\ref{eqn: Hessian_L2}) gives a bound that blows up in the induction process. However, the blow up rate is of second order in the inductive scale, which is absorbed by the controlling terms, i.e. the right-hand side of (\ref{eqn: Rm_L2}), blowing up of fourth order in the same scale. This observation will also be crucial for our arguments in the next sub-section.


\subsection{The fast decay proposition.}
As the key estimate tells, as long as the energy is sufficiently small at a given scale, the renormalized energy at that scale is bounded.
In order to find a \emph{uniform} scale, reducing to which the renormalized energy is small enough to apply Anderson's $\epsilon$-regularity theorem, we need the following proposition:

\begin{proposition}
Let $(M,g,f)$ be a 4-d Ricci shrinker and fix $R>2\sqrt{2}$. There exists some $r_{FD}(R)>0$, $\varepsilon_{FD}(R)>0$, $\delta_{FD}(R)>0$ and $\eta_R>0$, such that for $B(p,2r)\subset B(p_0,R)$ with $r<r_{FD}(R)$, if
\begin{align}
\frac{\mu_f(B(p,r))}{\volfm_R(r)}\ <\ \delta_{FD}(R),
\label{eqn: vol_collapsing_decay}
\end{align}
 and
\begin{align}
\int_{B(p,2r)}|\Rm|^2\ \dmu_f\ \le \varepsilon_{FD}(R),
\label{eqn: small_energy}
\end{align} 
then 
\begin{align}
\frac{\volfm_R(r)}{\mu_f(B(p,r))}\int_{B(p,r)}|\Rm|^2\ \dmu_f\le (1-\eta_R)\frac{\volfm_R(2r)}{\mu_f(B(p,2r))}\int_{B(p,2r)}|\Rm|^2\ \dmu_f.
\label{eqn: decay}
\end{align}
\label{prop: decay}
\end{proposition}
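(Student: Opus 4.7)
The plan is to argue by contradiction following the strategy outlined in the Introduction, with the necessary adaptations to the soliton setting. Suppose no such $\eta_R$ exists: then for any $\eta>0$, $\delta>0$, and $\varepsilon>0$ I can find a 4-d Ricci shrinker $(M,g,f)$ and a ball $B(p,2r)\subset B(p_0,R)$ with $r$ as small as desired, such that $\mu_f(B(p,r))/\volfm_R(r)<\delta$, $\int_{B(p,2r)}|\Rm|^2\,\dmu_f\le \varepsilon$, and $I_{\Rm}^f(p,r)>(1-\eta)I_{\Rm}^f(p,2r)$. Without loss of generality I may also assume $I_{\Rm}^f(p,r)>\varepsilon_A(R)$, since otherwise Anderson's theorem (Proposition~\ref{prop: Anderson}) directly yields smoothness and there is nothing to prove; this gives a matching lower bound $\int_{B(p,r)}|\Rm|^2\,\dmu_f\gtrsim \mu_f(B(p,r))\,r^{-4}$.

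First I would exploit the monotonicity of $I_{\Rm}^f$ and the doubling property to convert the near-equality $I_{\Rm}^f(p,r)\approx I_{\Rm}^f(p,2r)$ into an almost $\mu_f$-volume annulus condition on $A(p;r,2r)$. Applying Lemma~\ref{lem: Cheeger-Colding} on a slightly smaller sub-annulus produces a smooth harmonic approximation $\tilde u=\sqrt{2u}$ of the distance function $d_p$, with $C^0$-closeness, and $L^2$-average closeness of $\nabla\tilde u$ to $\nabla d_p$ and of $\nabla^2 u$ to $g$. In parallel, the key estimate (Proposition~\ref{prop: KE}) and Anderson's theorem force $|\Rm|$ to be pointwise small --- after suitable rescaling --- on a slightly shrunken annulus; combined with local elliptic regularity (Lemma~\ref{lem: elliptic_regulairty}) this upgrades the $L^2$-average bounds of $\tilde u$ to higher-order derivative controls of $\tilde u$ at the curvature scale.

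Next I would construct a comparison domain $W=B(p,3r/2)\cup \tilde u^{-1}(r/2,a)$ for some regular value $a\in(3r/4,r)$ of $\tilde u$, and analyze $\chi(W)=\int_W\Euler+\int_{\partial W}\bEuler$ via (\ref{eqn: Euler}). To produce a \emph{lower} bound for $\int_W\Euler$, I use the relation $8\pi^2\Euler=(|\Rm|^2-|\mathring\nabla^2 f|^2)\,\dvol_g$. The lower bound $I_{\Rm}^f(p,r)>\varepsilon_A(R)$ gives $\int_W|\Rm|^2\,\dmu_f\gtrsim \mu_f(B(p,r))\,r^{-4}$, while the Hessian bound (\ref{eqn: Hessian_L2}) from Lemma~\ref{lem: Hessian_control}, applied with the scale determined by $r$, yields $\int_W|\nabla^2 f|^2\,\dmu_f\lesssim \mu_f(B(p,r))\,r^{-2}$. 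For $r<r_{FD}(R)$ sufficiently small the Hessian contribution is dominated, so $\int_W\Euler>0$; the upper bound $\int_W\Euler<1/4$ follows directly from the small-energy hypothesis.

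For the boundary term I must show $0<\int_{\partial W}\bEuler<1/4$. Since the Cheeger-Colding-Tian average Hessian estimate used in the Einstein case is not available here, I replace it with a pointwise propagation argument based on the regularity of $\tilde u$. I would choose a fine net $\{x_j\}$ in an annular sub-region where the average $H^2$-bound on $\nabla^2 u-g$ forces $II_{\tilde u^{-1}(\tilde u(x_j))}(x_j)>\tfrac12 I_3$; the higher regularity of $\tilde u$ (from locally bounded curvature) then propagates this to $II_{\partial\tilde u^{-1}(a)}>\tfrac14 I_3$ for all $a\in(0.7r,0.8r)$. Combined with the almost vanishing of $\Rm$ on the annulus, the formula (\ref{eqn: boundary_Pfaffian}) gives pointwise positivity and an explicit upper bound of the integrand of $\bEuler$ on $\partial W$; integrating and invoking the volume collapse of the level sets produces the desired $(0,1/4)$ bound. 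Combining this with $0<\int_W\Euler<1/4$ yields $0<\chi(W)<1/2$, contradicting integrality of the Euler characteristic. The main obstacle I anticipate is precisely this last step: propagating average Hessian information about $\tilde u$ to pointwise positivity of $II_{\partial\tilde u^{-1}(a)}$ on an \emph{entire} level set, which in the Einstein case was handled by lifting to a non-collapsed cover, and which here must instead be extracted from the regularity produced by the key estimate together with the soliton structure.
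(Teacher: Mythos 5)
Your overall strategy mirrors the paper's proof closely---contradiction, rescaling, almost volume annulus, Cheeger-Colding approximation $\tilde u$, a domain $W$ glued from a ball and a level-set region, and a lower bound on $\int_W\Euler$ obtained by playing the assumption $I_{\Rm}^f(p,r)>\varepsilon_A(R)$ (which makes $\int|\Rm|^2\,\dmu_f$ of order $\mu_f(B(p,r))r^{-4}$) against the Hessian bound from Lemma~\ref{lem: Hessian_control} (of order $\mu_f(B(p,r))r^{-2}$); this is exactly where the restriction $r<r_{FD}(R)$ enters, as you correctly identify. The gap is in the boundary term. The inference that ``the average $H^2$-bound on $\nabla^2 u-g$ forces $II_{\tilde u^{-1}(\tilde u(x_j))}(x_j)>\tfrac12 I_3$'' does not follow under volume collapsing: an $L^2$-average bound over a collapsing ball does not, by itself, yield a pointwise bound. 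The paper resolves this by exactly the cover-lifting step you hoped to avoid: it lifts to the (non-collapsed) universal covers $\tilde{B}_i$ around each net point, pulls back the averaged estimates to (\ref{eqn: covering_H_1})--(\ref{eqn: covering_H_2}), takes a smooth limit where the flat-metric Hessian rigidity $\nabla^2 v_\infty^2\equiv 2g_{Euc}$ holds exactly, and transfers pointwise control back down via smooth convergence in (\ref{eqn: pointwise_u_i})--(\ref{eqn: good_TPX_x_i}). What the soliton case changes versus the Einstein case of Cheeger-Tian is only that the Cheeger-Colding-Tian average level-set theory~\cite{CCT02} is unavailable; the local cover-lifting is retained.

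Moreover, the net must be chosen inside the regular part $\mathcal{R}(X)$ of the collapsed limit at a \emph{uniform} scale $\alpha_0$, because the localized average estimates (\ref{eqn: local_H_1})--(\ref{eqn: local_H_2}) are obtained only at scales $\alpha<\tfrac12 d_\infty(x_\infty,\mathcal{S}(X))$ and degenerate near $\mathcal{S}(X)$. It is precisely the uniform derivative bounds (\ref{eqn: u_i_regularity}) on $\tilde u_i$---coming from the key estimate, Anderson's theorem, and elliptic regularity at the curvature scale---that then propagate the pointwise positivity of the second fundamental form from the finite net over $\mathcal{R}(X)$ to the \emph{entire} level set $\tilde u_i^{-1}(a)$, including the part projecting near $\mathcal{S}(X)$. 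Your proposal does not engage with this distinction, and it is not a technicality: without placing the net in $\mathcal{R}(X)$ the covering argument has no place to run, and without the global pointwise regularity of $\tilde u_i$ there is no mechanism to reach the singular part of the level set.
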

\begin{remark}
Abusing notations, we will always denote a possible subsequence by the original one.
\end{remark}

In this subsection we will take several steps to prove this proposition. Essentially, the proof reduces the problem, by blowing up the radius $r$, to a situation similar to the Einstein case. But this principle works on two levels: on the level of $|\nabla f|$, its smallness after rescaling will directly give a comparison geometry picture similar to the Einstein case; however, on the level of $|\nabla^2 f|$, we notice that $\int_{B(p,r)}|\mathring{\nabla}^2f|^2\ \dmu_f$ is scaling invariant, and we need to use the Weitzenb\"ock formula (\ref{eqn: Bochner}) to give it a local $L^2$-control of order lower than that of $\int_{B(p,r)}|\Rm|^2\ \dmu_f$. This is in the same spirit as Lemma~\ref{lem: Hessian_control}.

Moreover, our argument avoids appealing to the theory of Cheeger-Colding-Tian, 
see Theorem 3.7 of~\cite{CCT02}. 
This is unavailable in the context of shrinking Ricci solitons since the Ricci curvature lower bound is not satisfied. However, we expect there to be a version of Cheeger-Colding-Tian's theory for manifolds with Bakry-\'Emery Ricci curvature bounded below.

We wish to point out that our argument is under the framework of Cheeger-Tian's in~\cite{CT05}, whose key observation is that the estimates (\ref{eqn: C_0})\ --\ (\ref{eqn: H_2}) of the approximating functions are in the average sense. Our new input is the elliptic regularity (\ref{eqn: u_i_regularity}) of the approximating functions that produces smooth annuli where we have \emph{global point-wise} derivative control, see Sub-sub-section (4.3.10).
We would also like to thank Jeff Cheeger for pointing out the paper~\cite{LiYe10} for an alternative treatment in a different context.



\subsubsection{\textbf{Control of Pfaffian form.}} In fact, we can assume
\begin{align}
\int_{B(p,r)}|\Rm|^2\ \dvol_{g}\ > \epsilon_A(R)\frac{\mu_{f}(B(p,r))}{e^{R^2}\omega_4\ r^4},
\end{align}
because otherwise we could have directly applied Anderson's $\epsilon$-regularity theorem to obtain the desired curvature bound, and there is no need to prove this proposition. Now we use Lemma~\ref{lem: cutoff} to obtain a cut-off function $\varphi$ supported on $B(p,2r)$, constantly equal to $1$ on $B(p,1.6r)$, and having uniform control $r|\nabla \varphi|+r^2|\Delta^f\varphi|\le C_{2.12}(R)$. Then we could estimate as in Lemma~\ref{lem: Hessian_control}:
\begin{align*}
\int_{B(p,1.6r)}|\nabla^2f|^2\ \dvol_g\ &\le\ \frac{e^{R^2}}{2}\int_{B(p,2r)}(|\Delta^f\varphi|+1)|\nabla f|^2\ \dmu_f\\
&\le\ C(R)\mu_f(B(p,r))\ r^{-2}.
\end{align*}
As long as $r<\sqrt{\frac{\epsilon_A(R)e^{-R^2}}{2C(R)\omega_4}}$, for any open set $B(p,r)\subset U\subset B(p,1.6r)$ with smooth boundary, the expression of Pfaffian (\ref{eqn: Pfaffian}) gives 
\begin{align*}
8\pi^2\int_{U}\mathcal{P}_{\chi}\ \ge\ \int_{B(p,r)}|\Rm|^2\ \dvol_{g}-\int_{B(p,1.6r)}|\mathring{\nabla}^2f|^2\ \dvol_g\ >\ 0.
\end{align*}
Let $\varepsilon_{FD}(R)\le \pi^2e^{-R^2}$, then the above inequality, together with (\ref{eqn: small_energy}) , gives 
\begin{align}
0\ <\ \int_U\mathcal{P}_{\chi}\ \le \frac{3e^{R^2}\ \varepsilon_{FD}(R)}{8\pi^2}\ <\ \frac{1}{2}
\label{eqn: bad_Euler}
\end{align}
for any open subset $U$ with smooth boundary such that $B(p,r)\subset U\subset B(p,1.6r)$.
\subsubsection{\textbf{Setting up a contradiction argument.}} We prove the proposition by a contradiction argument. Were the proposition false, then there exist $4$-d Ricci shrinkers $(M_i,g_i,f_i)$, sequences $r_i\rightarrow 0$, $\delta_i\rightarrow 0$ and $\eta_i\rightarrow 0$ as $i\rightarrow \infty$, such that for some $B(p_i,4r_i)\subset B(p^0_i,R)$ ($p^0_i$ denoting the base point of $M_i$), 
\begin{align}
\int_{B(p_i,2r_i)}|\Rm_{g_i}|^2\ \dmu_{f_i}\ &\le\ \varepsilon_{FD}(R),
\label{eqn: contradiction}\\
\text{and}\quad \frac{\mu_{f_i}(B(p_i,2r_i))}{\volfm_R(2r_i)}\ &<\ \delta_i
\label{eqn: volume_collapsing_assumption}
\end{align}
 but (\ref{eqn: decay}) is violated for each $i$. 

We will find, for each $i$ large enough, some open subset $U_i$ with smooth boundary such that $B(p_i,r_i)\subset U_i\subset B(p_i,2r_i)$ and that 
\begin{align}
0<\int_{\partial U_i} \bEuler\ <\ \frac{1}{2}.
\label{eqn: bad_bEuler}
\end{align}
Since $r_i\rightarrow 0$, (\ref{eqn: bad_Euler}) holds for all $i$ sufficiently large, so adding (\ref{eqn: bad_Euler}) and (\ref{eqn: bad_bEuler}) gives $$0<\chi(U_i)<1,$$ contradicting the integrality of $\chi(U_i)$.
\subsubsection{\textbf{Rescaling.}} Consider the rescaled sequence $(M_i,r_i^{-2}g_i,f_i)$. Denote $\tilde{g}_i:=r^{-2}_ig_i$, then the scaling invariance of the energy and (\ref{eqn: small_energy}) implies that for each $i$,
\begin{align}
\int_{\tilde{B}(p_i,2)}|\Rm_{\tilde{g}_i}|^2\ \tdmu_{f_i}\ \le \varepsilon_{FD}(R),
\label{eqn: small_energy_i}
\end{align}
where we add a tilde to an object to denote its rescaled correspondence. Moreover, the scaling invariance of volume ratio and the converse of (\ref{eqn: decay}) implies that 
\begin{align}
\frac{\volfm_{r_iR}(1)}{\tilde{\mu}_{f_i}(\tilde{B}(p_i,1))}\int_{\tilde{B}(p_i,1)}|\Rm_{\tilde{g}_i}|^2\ \tdmu_{f_i}\ >\ (1-\eta_i)\frac{\volfm_{r_iR}(2)}{\tilde{\mu}_{f_i}(\tilde{B}(p_i,2))}\int_{\tilde{B}(p_i,2)}|\Rm_{\tilde{g}_i}|^2\ \tdmu_{f_i}.
\label{eqn: no_decay_i}
\end{align}
These two inequalities will be the starting point of our future arguments. Moreover, the rescaled metrics and potential functions satisfy 
\begin{align}
\Rc_{\tilde{g}_i}+\tilde{\nabla}^2 f_i\ =\ \frac{r_i^2}{2}\tilde{g}_{i},
\end{align}
which implies the non-negativity of the rescaled Bakry-\'Emery-Ricci curvature
\begin{align}
\Rc^{f_i}_{\tilde{g}_i}\ =\ \frac{r_i^2}{2}\tilde{g}_i\ \ge\ 0,
\label{eqn: tBER}
\end{align}
and the potential function has the gradient estimates
\begin{align}
|\tilde{\nabla} f_i|_{\tilde{g}_i}\ \le\ r_iR.
\label{eqn: tgradient_estimate}
\end{align}
Finally, we denote the distance to the given point $p_i$ by $d_{p_i}(x):=d(p_i,x)$, then its rescaled version is denoted by $d_i:=r_i^{-1}d_{p_i}$.
\subsubsection{\textbf{Regularity on annuli.}} On the one hand, since 
$$\frac{\volfm_{r_iR}(1)\tilde{\mu}_{f_i}(\tilde{B}(p_i,2))}{\volfm_{r_iR}(2)\tilde{\mu}_{f_i}(\tilde{B}(p_i,1))}\ \le\ 1$$
by (\ref{eqn: volume_monotone}), we have
\begin{align*}
\int_{\tilde{A}(p_i;1,2)}|\Rm_{\tilde{g}_i}|^2\ \tdmu_{f_i}\ \le\ \frac{\eta_i}{1-\eta_i}\int_{\tilde{B}(p_i,1)}|\Rm_{\tilde{g}_i}|^2\ \tdmu_{f_i}.
\end{align*}
Let $\varepsilon_{FD}(R)>0$ be sufficiently small (and fixed from now on), so
that we can apply the key estimate (notice the correct order of the scaling
there) to obtain $$\int_{\tilde{B}(p_i,1)}|\Rm_{\tilde{g}_i}|^2\ \tdmu_{f_i}\
\le\ c(R)\tilde{\mu}_{f_i}(\tilde{B}(p_i,2)), $$ and it follows that
\begin{align*}
\int_{\tilde{A}(p_i;1,2)}|\Rm_{\tilde{g}_i}|^2\ \tdmu_{f_i}\ \le\
\frac{c(R)\eta_i}{1-\eta_i}\tilde{\mu}_{f_i}(\tilde{B}(p_i,2)).
\end{align*}
Now for any $x\in \tilde{A}(p_i;1.1,1.9)$, $\tilde{B}(x,0.1)\subset
\tilde{A}(p_i;1,2)\subset \tilde{B}(x,4)$ and
\begin{align*}
\int_{\tilde{B}(x,0.1)}|\Rm_{\tilde{g}_i}|^2\ \tdmu_{f_i}\ \le\
\frac{c(R)\eta_i}{1-\eta_i}\tilde{\mu}_{f_i}(\tilde{B}(x,4))\le
\frac{c(R)\eta_i}{1-\eta_i}\frac{\tilde{\mu}_{f_i}(\tilde{B}(x,0.1))}{\volfm_{r_iR}(0.1)}\volfm_{r_iR}(4),
\end{align*}
so by the scaling invariance of the renormalized energy, we have 
\begin{align*}
I_{\Rm_{\tilde{g}_i}}^{f_i}(x,0.1)\ \le\ \frac{c(R)\eta_i}{1-\eta_i}.
\end{align*}
 For all $i$ sufficiently large, Anderson's $\epsilon$-regularity theorem gives $|\Rm_{\tilde{g}_i}|_{\tilde{g}_i}^2(x)\le c(R)\eta_i$, thus 
\begin{align}
\sup_{\tilde{A}(p_i;1.1,1.9)}|\Rm_{\tilde{g}_i}|_{\tilde{g}_i}^2\le c(R)\eta_i\rightarrow 0\quad \text{as}\ i\rightarrow \infty.
\label{eqn: annulus_curvature_vanishing}
\end{align}
Notice that the above curvature estimate enables us to apply Lemma~\ref{lem: regularity_rescaling} and obtain uniform bounds for each $k\ge 0$:
\begin{align}
\sup_{\tilde{A}(p_i;1.2,1.8)}|\tilde{\nabla}^k\Rm_{\tilde{g}_i}|_{\tilde{g}_i}\le c(k,R),\quad \text{and}\quad \sup_{\tilde{A}(p_i;1.2,1.8)}|\tilde{\nabla}^kf_i|_{\tilde{g}_i}\le c'(k,R).
\label{eqn: annulus_regularity}
\end{align}
\subsubsection{\textbf{Almost volume annulus and smoothing distance function.}} On the other hand, since 
$$\int_{\tilde{B}(p_i,1)}|\Rm_{\tilde{g}_i}|^2\ \tdmu_{f_i}\ \le\ \int_{\tilde{B}(p_i,2)}|\Rm_{\tilde{g}_i}|^2\ \tdmu_{f_i},$$
then (\ref{eqn: no_decay_i}) implies that 
\begin{align*}
\frac{\volfm_{r_iR}(1)}{\tilde{\mu}_{f_i}(\tilde{B}(p_i,1))}\ \ge\ (1-\eta_i)\frac{\volfm_{r_iR}(2)}{\tilde{\mu}_{f_i}(\tilde{B}(p_i,2))},
\end{align*}
i.e. $\tilde{A}(p_i;1,2)$ is an annulus in an almost $f_i$-weighted volume cone for $i$ sufficiently large. By weighted volume comparison (\ref{eqn: volume_monotone}), for any $r\in (1.05,1,95)$,
\begin{align}
\frac{\tilde{\mu}_{f_i}(\partial \tilde{B}(p_i,r))}{\volfm'_{r_iR}(r)}\ \ge\ (1-\Psi(\eta_i |\ r))\frac{\tilde{\mu}_{f_i}(\tilde{B}(p_i,r))}{\volfm_{r_iR}(r)},
\label{eqn: volume_cone}
\end{align}
where $\Psi(\eta_i |\ r)$ denotes some positive function that approaches $0$ as $\eta_i\rightarrow 0$.

Now we smooth the square of the distance function $\frac{d_i^2}{2}$. For each $i$, we will solve the Dirichlet problem
$$\Delta^{f_i}_{\tilde{g}_i} u_i=4\quad \text{and}\quad u_i|_{\partial \tilde{A}(p_i;1,2)}=\frac{d^2_{i}}{2}.$$

In view of (\ref{eqn: tBER}), (\ref{eqn: tgradient_estimate}) and (\ref{eqn: volume_cone}), we can estimate $u_i$ and $\tilde{u}_i:=\sqrt{2u_i}$ by applying Lemma~\ref{lem: Cheeger-Colding}:
\begin{align}
\sup_{\tilde{A}(p_i;1.2,1.8)}\left|\tilde{u}_i-d_i\right|\ &\le\ \Psi(\eta_i,r_i\ |\ R); 
\label{eqn: C_0}\\
\fint_{\tilde{A}(p_i;1.1,1.9)}\left|\nabla \tilde{u}_i-\nabla d_i\right|^2\ \tdmu_{f_i}\ &\le\ \Psi(\eta_i,r_i\ |\ R);
\label{eqn: H_1}\\
\fint_{\tilde{A}(p_i;1.3,1.7)}|\nabla^2u_i-\tilde{g}_i|^2\ \tdmu_{f_i}\ &\le\ \Psi(\eta_i,r_i\ |\ R). 
\label{eqn: H_2}
\end{align}

Moreover, the elliptic regularity Lemma~\ref{lem: interior_regularity}, estimates (\ref{eqn: annulus_regularity}) and the $C^0$ bound (\ref{eqn: C_0}) ensures that each $\tilde{u}_i$ and $u_i$ are regular:
\begin{align}
\sup_{\tilde{A}(p_i;1.3,1.7)}|\nabla^k \tilde{u}_i|+|\nabla^k u_i|\ \le\ c''(k; R).
\label{eqn: u_i_regularity}
\end{align}

\subsubsection{\textbf{The collapsing limit}} According to Proposition~\ref{prop: Fukaya},
$\tilde{A}(p_i;1.2,1.8)\rightarrow_{GH}(X,d_{\infty})$ (after passing to a subsequence), with $X=\mathcal{R}(X)\cup \mathcal{S}(X)$. Here $\mathcal{R}(X)$ is a lower dimensional Riemannian manifold equipped with a smooth Riemannian metric $g_{\infty}$ with bounded curvature  (invoking (\ref{eqn: annulus_regularity})), such that $d_{\infty}|_{\mathcal{R}(X)}$ is induced by $g_{\infty}$. $\mathcal{S}(X)$ is a stratified collection of subsets of $X$, each strata of $\mathcal{S}(X)$ by itself being a Riemannian manifold of dimension even lower than that of $\mathcal{R}(X)$. There is a constant $\iota_X>0$ such that 
$$\forall x_{\infty}\in \mathcal{R}(X),\quad \text{inj}\ x_{\infty}\ge \min\left\{d_{\infty}(x_{\infty},\mathcal{S}(X)),d_{\infty}(x_{\infty},\partial X),\iota_X\right\}.$$



\subsubsection{\textbf{Local average control of $u_i$}} We will study the behavior of $u_i$ at each point of $\tilde{A}(p_i;1.3,1.7)$ by taking limit. Fix $x_{\infty}\in\mathcal{R}(X)$ such that $x_i\rightarrow_{GH} x_{\infty}$ for some sequence $x_i\in \tilde{A}(p_i;1.3,1.7)$. Fix a scale $\alpha=\alpha(x_{\infty})<\min\{0.001,\frac{1}{2}d_{\infty}(x_{\infty},\mathcal{S}(X)), \iota_X\}$, we have, by volume comparison,
\begin{align}
\frac{\tilde{\mu}_{f_i}(\tilde{B}(x_i,\alpha))}{\tilde{\mu}_{f_i}(\tilde{A}(p_i;1.3,1.7))}\ \ge\ \frac{\volfm_{r_i R}(\alpha)}{\volfm_{r_iR}(4)}.
\label{eqn: volume_lower_bound}
\end{align} 

Now we can localize the estimates (\ref{eqn: H_1}) and (\ref{eqn: H_2}): 
\begin{align}
\fint_{\tilde{B}(x_i,\alpha)} |\nabla \tilde{u}_i-\nabla d_i|^2\ \tdmu_{f_i}\ &\le\ \Psi(\eta_i,r_i\ |\ R,\alpha);
\label{eqn: local_H_1}\\
\fint_{\tilde{B}(x_i,\alpha)} |\nabla^2 u_i-g_i|^2\ \tdmu_{f_i}\ &\le\ \Psi(\eta_i,r_i\ |\ R,\alpha).
\label{eqn: local_H_2}
\end{align}

\subsubsection{\textbf{Limit local covering geometry}} Let $\pi_i:\tilde{B}_i \rightarrow \tilde{B}(x_i,\alpha)$ be the universal covering of $\tilde{B}(x_i,\alpha)$, with lifted base point $\tilde{x}_i$ and deck transformation group $\Gamma_i$.  Recall that the scale $\alpha=\alpha(x_{\infty})$ is chosen so that $B(x_{\infty},\alpha)$ is away from $\mathcal{S}(X)$ and simply connected. This means, by Fukaya's fibration theorem, that for all $i$ sufficiently large, $\tilde{B}(x_i,\alpha)$ is topologically a torus bundle over $B(x_{\infty},\alpha)$, whence topologically
\begin{align}
\tilde{B}_i\ \approx\ \mathbb{R}^{4-\dim_HX}\times B(x_{\infty},\alpha).
\label{eqn: covering_geometry}
\end{align} 
We equip $\tilde{B}_i$ with the pull-back metric $h_i:=\pi_i^{\ast}\tilde{g_i}$ and potential function $\tilde{f}_i:=\pi_i^{\ast}f_i$. Clearly $(B(\tilde{x}_i,\alpha),h_i)$ is non-collapsing, and on $B(\tilde{x}_i,\alpha)$, estimates (\ref{eqn: annulus_curvature_vanishing}) and (\ref{eqn: annulus_regularity}) hold for $\Rm_{h_i}$ and $\tilde{f}_i$. This ensures that $\{B(\tilde{x}_i,\alpha)\}$ converges, after passing to a subsequence, to $B(\tilde{x}_{\infty},\alpha)$, a 4-dimensional Riemannian manifold with limiting Riemannian metric $h_{\infty}$. Moreover, by (\ref{eqn: annulus_curvature_vanishing}), possibly passing to a subsequence, $h_i$ smoothly converges to the flat metric $h_{\infty}=g_{Euc}$ on $B(\tilde{x},\alpha)$. We will denote the pull-back measure by $\nu_i:=\pi_i^{\ast}(\tdmu_{f_i})$, and by $\tilde{d}_i:=\pi_i^{\ast}d_i$.

Recall that by (\ref{eqn: tgradient_estimate}), $|\nabla \tilde{f}_i|_{h_i}=|\nabla f_i|_{\tilde{g}_i}\le r_iR\rightarrow 0$ as $i\rightarrow \infty$,  and that $\{\tilde{f}_i\}$ has uniform derivative control (\ref{eqn: annulus_regularity}), the drifted Laplace operators $\Delta_{h_i}^{\tilde{f}_i}$ converge smoothly to $\Delta=\sum_{j=1}^4\partial_j\partial_j$, the standard Laplace operator for $(\mathbb{R}^4,g_{Euc})$.

Moreover, each pull-back smooth function $v_i:=\pi_i^{\ast}\tilde{u}_i$ satisfies the elliptic equation 
\begin{align*}
\Delta^{\tilde{f}_i}_{h_i}v_i^2\ =\ 8.
\end{align*}
The smooth convergence of the drifted Laplace operators $\Delta_{h_i}^{\tilde{f}_i}$ further gives, for $i$ large enough, uniform elliptic estimates
\begin{align}
\sup_{B(\tilde{x}_{\infty},0.09)}|\nabla^k v_i^2|_{h_i}\le c''(k,R).
\label{eqn: Poisson_regularity}
\end{align}
The uniform boundedness (\ref{eqn: C_0}) and the regularity estimates (\ref{eqn: Poisson_regularity}) ensure that $v_i\rightarrow v_{\infty}$ in $C^{\infty}(B(\tilde{x}_{\infty},0.9\alpha)$ (after possibly passing to a further subsequence), the limiting equation being 
\begin{align}
\Delta v_{\infty}^2\ =\ 8\quad \text{on}\quad B(\tilde{x}_{\infty},0.9\alpha).
\label{eqn: limit_elliptic}
\end{align} 
To summarize, when $i\rightarrow \infty$ and after passing to subsequences, we have smooth convergence on $B(\tilde{x}_{\infty},0.9\alpha)$, of the sequence of metrics $h_i\rightarrow g_{Euc}$, of the sequence of potential functions $\tilde{f}_i\rightarrow c(R)$ (whence the smooth convergence of the elliptic operators $L_i\rightarrow \Delta$) and of the sequence of Poisson solutions $v_i\rightarrow v_{\infty}$.
\subsubsection{\textbf{Local point-wise control of $u_i$}} Now we will discuss the effect of the estimates (\ref{eqn: local_H_1}) and (\ref{eqn: local_H_2}) on the local coverings. Let $B_i\ni \tilde{x}_i$ be a fundamental domain of $\tilde{B}_i$, then for each sufficiently large $i$, in view of (\ref{eqn: covering_geometry}), we have $B(\tilde{x}_i,\alpha)\subset \tilde{U}_i\subset B(\tilde{x}_i,2\alpha)$ where
\begin{align}
\tilde{U}_i:=\cup\{\gamma B_i:\gamma\in \Gamma_i, \gamma B_i\cap B(\tilde{x},\alpha)\not=\emptyset\}.
\label{eqn: cover}
\end{align}
Notice that estimates (\ref{eqn: local_H_1}) and (\ref{eqn: local_H_2}) on the local covering, for each $\gamma\in \Gamma_i$, read
\begin{align*}
\int_{\gamma B_i} |\nabla v_i-\nabla \tilde{d}_i|^2\ \text{d}\nu_i\ \le\ \Psi(\eta_i,r_i\ |\ R,\alpha) \nu_i(\gamma B_i);\\
\int_{\gamma B_i} |\nabla v_i-h_i|^2\ \text{d}\nu_i\ \le\ \Psi(\eta_i,r_i\ |\ R,\alpha) \nu_i(\gamma B_i).
\end{align*}
Then by Bishop-Gromov volume comparison on $\tilde{B}_i$, we have
\begin{align*}
\int_{B(\tilde{x}_i,\alpha)} |\nabla v_i-\nabla \tilde{d}_i|^2\ \text{d}\nu_i\ &\le\ \int_{\tilde{U}_i} |\nabla v_i-\nabla \tilde{d}_i|^2\ \text{d}\nu_i\\
&\le\ \sum_{\gamma B_i\cap B(\tilde{x}_i,\alpha)\not=\emptyset}\int_{\gamma B_i} |\nabla v_i-\nabla \tilde{d}_i|^2\ \text{d}\nu_i\\
&\le\ \Psi(\eta_i,r_i\ |\ R,\alpha) \sum_{\gamma B_i\cap B(\tilde{x}_i,\alpha)\not=\emptyset} \nu_i(\gamma B_i)\\
&\le\ \Psi(\eta_i,r_i\ |\ R,\alpha)\nu_i(B(\tilde{x}_i,2\alpha));
\end{align*}
whence
\begin{align}
\fint_{B(\tilde{x}_i,\alpha)} |\nabla v_i-\nabla \tilde{d}_i|^2\ \text{d}\nu_i\ \le\ \Psi(\eta_i,r_i\ |\ R,\alpha),
\label{eqn: covering_H_1}
\end{align}
and similarly,
\begin{align}
\fint_{B(\tilde{x}_i,\alpha)} |\nabla^2 v_i^2 - h_i|^2\ \text{d}\nu_i\ &\le\ \Psi(\eta_i,r_i\ |\ R,\alpha).
\label{eqn: covering_H_2}
\end{align}

When passing to the limit, these estimates, together with the regularity (\ref{eqn: Poisson_regularity}) give 
\begin{align}
|\nabla v_{\infty}|\ \equiv\ 1\quad\text{and}\quad
\nabla^2v_{\infty}^2\ \equiv\ 2\ g_{Euc}\quad \text{in}\ B(\tilde{x}_{\infty},0.7\alpha).
\label{eqn: Hessian_rigidity}
\end{align}
Thinking of $B(\tilde{x}_{\infty},0.7\alpha)$ as a region in $\mathbb{R}^4$ with $\tilde{x}_{\infty}=\mathbf{0}$, we see that $v_{\infty}^2(\mathbf{x})=|\mathbf{x}-\mathbf{x}_0|^2$ for $\mathbf{x}\in B(\mathbf{0},0.7\alpha)$ and some $\mathbf{x}_0\in \mathbb{R}^4$. Moreover, $v_{\infty}(\mathbf{0})=\lim_{i\rightarrow \infty}\tilde{u}_i(x_i)$.

For any $i>i_{x_{\infty}}$, since the local covering is equipped with the pull-back metric, the smoothness of the convergence (\ref{eqn: Poisson_regularity}) then gives 
\begin{align}
|\nabla \tilde{u}_i|\ \ge\ 1-10^{-10}\quad\text{and}\quad |\nabla^2 u_i-\tilde{g}_i|\ \le\ 10^{-10}\quad\text{in}\quad B(x_i,0.6\alpha).
\label{eqn: pointwise_u_i}
\end{align}

Now we consider the second fundamental form of $\tilde{u}_i^{-1}(\tilde{u}_i(x_i))$: since at $x_i$,
\begin{align}
\lim_{i\rightarrow \infty} \frac{\nabla^2 v_i}{|\nabla v_i|}(\tilde{x}_i)\ = \frac{1}{v_{\infty}(\tilde{x}_{\infty})}(g_{Euc}-\nabla r\otimes \nabla r),
\end{align}  
where $r$ is the Euclidean distance function to the origin, we have, especially, the principal curvatures of $\tilde{u}_i^{-1}(\tilde{u}_i(x_i))$ at $x_i$, satisfies 
\begin{align}
\left|\kappa_i^k(x_i)-\frac{1}{\tilde{u}_i(x_i)}\right|\ <\ 10^{-10}\quad\text{for all}\ i>i_{x_{\infty}}.
\label{eqn: good_principal_curvature_x_i}
\end{align}
This further implies a control of the boundary Gauss-Bonnet-Chern term for $\tilde{u}^{-1}_i(\tilde{u}_i(x_i))$ at $x_i$: since
\begin{align*}
\bEuler(x_i)\ =\ \frac{1}{4\pi^2}\left(2\prod_{k=1,2,3}\kappa_i^k(x_i)-\sum_{k=1,2,3}\kappa_i^k(x_i)\mathcal{K}_{\tilde{g}_i}^{\hat{k}}(x_i)\right)\ \darea_{\tilde{u}_i^{-1}(\tilde{u}_i(x_i))},
\end{align*}
where $\hat{k}$ is a pair of numbers in $\{1,2,3\}$ not containing $k$, we have, by (\ref{eqn: annulus_curvature_vanishing}) and (\ref{eqn: good_principal_curvature_x_i}), for all $i>i_{x_{\infty}}$,
\begin{align}
\left|\bEuler(x_i)-\frac{1}{2\pi^2\tilde{u}_i(x_i)^3}\ \darea_{\tilde{u}_i^{-1}(\tilde{u}_i(x_i))}\right|\ <\ 10^{-10}.
\label{eqn: good_TPX_x_i}
\end{align}
\subsubsection{\textbf{Global point-wise control of $u_i$}} Notice that (\ref{eqn: pointwise_u_i}) and (\ref{eqn: good_principal_curvature_x_i}) are actually point-wise controls, since the scale $\alpha$ depends on specific $x_{\infty}=\lim_{GH}x_i \in \mathcal{R}(X)$; especially, from the argument above we could not obtain any control as we approach $\mathcal{S}(X)$. Luckily, $u_i$ has very nice regularity (\ref{eqn: u_i_regularity}), so that we can choose a \emph{uniform} scale $\alpha_0>0$ sufficiently small such that for any $x',x''\in \tilde{A}(p_i;1.3,1.7)$, and $\kappa_i^k(x)$ being the $k$-th principal vector of $\tilde{u}_i^{-1}(\tilde{u}_i(x))$,
\begin{align}
d(x',x'')\ <\ 3\alpha_0\ \Rightarrow\ \left||\nabla \tilde{u}_i|(x')-|\nabla \tilde{u}_i|(x'')\right|+\sum_{k=1,2,3}\left|\kappa_i^k(x')-\kappa_i^k(x'')\right|\ <\ 10^{-10}.
\label{eqn: small_change}
\end{align}

Now let $\{x_{\infty}^j\}\subset \mathcal{R}(X)$ be a minimal $\alpha_0$-net of $\mathcal{R}(X)$, and $\{x_i^j\}\subset \tilde{A}(p_i;1.3,1.7)$ such that $x_i^j\rightarrow_{GH}x_{\infty}^j$. Obviously $j<J$ for some absolute constant $J$. For large enough $i$, $\{B(x_i^j,2\alpha_0)\}$ covers $\tilde{A}(p_i;1.3,1.7)$. Then (\ref{eqn: pointwise_u_i}), (\ref{eqn: good_principal_curvature_x_i}) and (\ref{eqn: good_TPX_x_i}) work for each $x_i^j$, when $i>i_0:=\max\{i_{x^j_{\infty}}:\ j=1,\cdots, J\}$ is large enough. We could further estimate, by (\ref{eqn: annulus_curvature_vanishing}) and (\ref{eqn: small_change}), that
\begin{align}
\inf_{\tilde{B}(x_i^j,2\alpha_0)}|\nabla \tilde{u}_i|\ >\ 1-10^{-5}\quad\text{and}\quad \sup_{\tilde{B}(x_i^j,2\alpha_0)}\sum_{k=1,2,3}\left|\kappa_i^k-\frac{1}{\tilde{u}_i}\right|\ <\ 10^{-5},
\label{eqn: u_i_good}
\end{align}
whence the same estimate globally on $\tilde{A}(p_i;1.3,1.7)$, for all $i>J$ sufficiently large.

Especially, since $(1.4,1.6)\subset Image(\tilde{u}_i)$ by (\ref{eqn: C_0}), this implies that $\tilde{u}_i^{-1}(a)$ is a smooth hyper-surface in $\tilde{A}(p_i;1.3,1.7)$, for all $a\in (1.4,1.6)$ and large enough $i$. Furthermore, we can control the boundary Gauss-Bonnet-Chern form of $\tilde{u}^{-1}_i$ by (\ref{eqn: annulus_curvature_vanishing}), (\ref{eqn: annulus_regularity}), (\ref{eqn: u_i_regularity}) and (\ref{eqn: u_i_good}): for all $i>i_0$ large enough and $a\in (1.4,1.6)$,
\begin{align}
\left|\bEuler-\frac{1}{2\pi^2a^3}\ \darea_{\tilde{u}_i^{-1}(a)}\right|\ <\ 10^{-4},
\label{eqn: TPX_good}
\end{align}
 since $|\nabla \bEuler|\ \le\ C(R)$.
\subsubsection{\textbf{Level sets of $u_i$}} From the co-area formula,  (\ref{eqn: H_1}) and the scaling invariance of (\ref{eqn: contradiction}), we can estimate
\begin{align*}
\int_{1.4}^{1.6}\frac{\tilde{\mu}_{f_i}(\tilde{u}_i^{-1}(s))}{2\pi^2s^3}\ \text{d} s\ &\le\ \frac{\int_{1.4}^{1.6}\tilde{\mu}_{f_i}(\tilde{u}_i^{-1}(s))\ \text{d}s}{\int_{1.4}^{1.6}2\pi^2 s^3\ \text{d}s}\\
&=\ C(R)\frac{\int_{\tilde{u}^{-1}([1.4,1.6])}|\nabla \tilde{u}_i|\ \tdmu_{f_i}}{\volfm_{r_iR}(1.6)-\volfm_{r_iR}(1.4)}\\
&\le\ C(R)\frac{\tilde{\mu}_{f_i}(\tilde{A}(p_i;1.1,1.9))(1+\Psi(\eta_i,r_i\ |\ R))}{\volfm_{r_iR}(1.6)-\volfm_{r_iR}(1.4)}\\
&\le\ C(R)\frac{\tilde{\mu}(\tilde{B}(p_i,2))}{\volfm_{r_iR}(2)}\ <\ C(R)\delta_i.
\end{align*}
Thus for all $i>i_0$ sufficiently large, by (\ref{eqn: u_i_regularity}) and (\ref{eqn: u_i_good}), there is some $a_i\in (1.4,1.6)$ such that  
\begin{align}
\frac{\tilde{\mu}_{f_i}(\tilde{u}_i^{-1}(a_i))}{2\pi^2a_i^3}\ <\ \frac{1}{6}e^{-R^2-r_iR},
\label{eqn: small_boundary}
\end{align}
whenever $i$ is large enough (so that $\delta_i<\frac{1}{6}e^{-2R^2}C(R)^{-1}$). 
\subsubsection{\textbf{The contradiction}} We fix any $i>i_0$ sufficiently large, and set $U_i:=\tilde{B}(p_i,1.4)\cup \tilde{u}_i^{-1}(1.3,a_i)$, and notice the smoothness of $\partial U_i =\tilde{u}_i^{-1}(a_i)$ by (\ref{eqn: u_i_good}). Moreover, we have
\begin{align}
0\ <\ \int_{\partial U_i}\bEuler\ <\ \frac{3Vol_{\tilde{g}_i}(\partial U_i)}{4\pi^2a_i^3},
\end{align}
by (\ref{eqn: TPX_good}), but then by (\ref{eqn: small_boundary})
$$
\frac{3\vol_{\tilde{g}_i}(\partial U_i)}{4\pi^2a_i^3}\ \le\ \frac{3}{2}e^{R^2+r_iR}\frac{\tilde{\mu}_{f_i}(\tilde{u}_i^{-1}(a_i))}{2\pi^2a_i^3}\ \le\ \frac{1}{4}.
$$
Further notice that $\int_{\partial U_i}\bEuler$ is a topological constant, invariant under rescaling, so the above two estimates confirm (\ref{eqn: bad_bEuler}). 

\begin{remark}
As kindly pointed out by Ruobing Zhang, (\ref{eqn: cover}) and the estimates that follow do not require the specific topological structure, thus we don't have to work within the injectivity radii at regular points, but instead, estimates (\ref{eqn: covering_H_1}) and (\ref{eqn: covering_H_2}) work for balls centered at any point. We wrote the estimates (\ref{eqn: covering_H_1}) and (\ref{eqn: covering_H_2}) only at the scale of injectivity radii because (\ref{eqn: covering_geometry}) gives a more intuitive explanation.
\end{remark}

\subsection{Conclusion of the proof} With the help of the key estimate (Proposition~\ref{prop: KE}) and the fast decay of renormalized energy (Proposition~\ref{prop: decay}), we can now prove Theorem~\ref{thm: main}:
\begin{proof}[Proof of Theorem~\ref{thm: main}]
Let $r_R:=\frac{1}{10}r_{FD}(R)$ and let $\epsilon_R=\min\{\epsilon_{KE}(R),\epsilon_{FD}(R)\}$. Fix some $r<r_R$, and assume that $B(p,r)\subset B(p_0,R)$ has small energy $\int_{B(p,r)}|\Rm|^2\ \dmu_f<\epsilon_R$. It then follows from Proposition~\ref{prop: KE} that $I_{\Rm}^f(p,r)<C_{KE}(R)$. If $I_{\Rm}^f(p,r)<\epsilon_A(R)$ we can apply Anderson's $\epsilon$-regularity theorem directly, or if 
$$\frac{\mu_f(B(p,r\slash2))}{\volfm_R(r\slash 2)}\ge \delta_{KE}(R),$$
we are reduced to the known non-collapsing case, see~\cite{HM10}. Otherwise, we can apply Proposition~\ref{prop: decay} so that $I_{\Rm}^{f}(p,r\slash 2)<(1-\eta_R) C_{KE}(R)$. Performing the same process at most $k_R:=\log_{1-\eta_R}\frac{\epsilon_A(R)}{2C_{KE}(R)}$ many times, we will have $I_{\Rm}^{f}(p,2^{-k_R}r)<\epsilon_A(R)$, whence 
\begin{align}
\sup_{B(p,2^{-k_R-1}r)}|\Rm|\ \le\ C_A(R)4^{k_R}r^{-2}I_{\Rm}^f(p,2^{-k_R}r)^{\frac{1}{2}}.
\label{eqn: pointwise_small_curvature}
\end{align}

Now cover $B(p,r\slash 4)$ by balls of radius $2^{-k_R-1}r$, we have
\begin{align*}
B\left(p, r\slash 4\right)\ \subset\ \bigcup_{q\in B(p,r\slash 4)} B(q,2^{-k_R-1}r)\ \subset\ \bigcup_{q\in B(p,r\slash 4)} B\left(q,r\slash2\right)\ \subset\  B(p, r),
\end{align*}
applying the argument above for each $q\in B(p,r\slash 4)$, we obtain by (\ref{eqn: pointwise_small_curvature}),
$$
\sup_{B(p,\frac{1}{4}r)} |\Rm|\ \le\ C_R\ r^{-2},
$$
with $C_R:=C_A(R)\sqrt{\epsilon_A(R)}\ 4^{k_R+1}$.
\end{proof}




\section{Strong convergence of 4-d Ricci shrinkers} 

In this section we will apply our $\epsilon$-regularity theorem to obtain structural results concerning the convergence and degeneration of the soliton metrics. We first have a straightforward application of Theorem~\ref{thm: main}: 
\begin{proposition}
Let $\{(M_i,g_i,f_i)\}$ be a sequence of complete non-compact 4-d Ricci shrinkers. Suppose 
\begin{align}
\int_{B(p^0_i,R)}|\Rm_{g_i}|^2\ \dmu_{f_i}\ \le\ C(R).
\label{eqn: local_Rm_L2}
\end{align}
Then for each $R>0$ fixed, it sub-converges to some length space $(X_R,d_{\infty})$ in the strong multi-pointed Gromov-Hausdorff sense (see Definition~\ref{defn: strong_convergence}), with $J\le J(R)$ marked points.
\label{prop: convergence}
\end{proposition}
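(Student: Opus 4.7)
The plan is to combine the uniform weighted doubling property with Theorem~\ref{thm: main} to locate a uniformly bounded set of \emph{curvature-concentration} points, then invoke Cheeger's smooth compactness (in the non-collapsing case) or Theorem~\ref{thm: CFGT} (in the collapsing case) on the complement to upgrade the weak Gromov-Hausdorff limit to strong convergence in the sense of Definition~\ref{defn: strong_convergence}.

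First, fix $R>0$. Since the weighted doubling property~(\ref{eqn: doubling}) holds with a constant depending only on $R$, the family $\{B(p_i^0,R)\}$ is uniformly totally bounded, so Gromov's compactness theorem yields a subsequence converging in the pointed Gromov-Hausdorff sense to some length space $(X_R,d_\infty,x_\infty^0)$. To choose the marked points I would fix $r_0\in(0,r_R)$ with $r_R$ from Theorem~\ref{thm: main}, then select a maximal $r_0$-separated net $\{q_i^{\alpha}\}\subset B(p_i^0,R)$; weighted volume comparison bounds the cardinality of the net by some $N(R)$, and the induced cover $\{B(q_i^{\alpha},2r_0)\}$ has multiplicity bounded by some $m(R)$. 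Declare the \emph{bad centers} $\{p_i^1,\ldots,p_i^{J_i}\}\subset\{q_i^{\alpha}\}$ to be those points at which
\begin{align*}
\int_{B(q_i^{\alpha},2r_0)}|\Rm_{g_i}|^2\ \dmu_{f_i}\ >\ \varepsilon_R.
\end{align*}
The hypothesis (\ref{eqn: local_Rm_L2}) together with the multiplicity bound then yields $J_i\le m(R)C(R)\slash \varepsilon_R=:\bar{J}(R)$, and passing to a further subsequence one may assume $J_i\equiv J$ is constant and $p_i^j\rightarrow_{GH}x_\infty^j$ for each $j$.

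Next, at any $q\in B(p_i^0,R)$ whose distance to every marked point exceeds $3r_0$, one finds a \emph{good} center $q_i^{\alpha}$ within distance $r_0\slash 2$, and Theorem~\ref{thm: main} then gives $|\Rm_{g_i}|(q)\le C_R\ r_0^{-2}$. Combined with the elliptic regularity Lemma~\ref{lem: elliptic_regulairty}, this produces uniform higher-derivative bounds for $\Rm_{g_i}$ and $f_i$ on any compact $K\subset X_R\setminus Mk_\infty$, upon choosing $r_0$ smaller than one-third the distance from $K$ to the marked points. On each such $K$, either the weighted volume ratio stays uniformly bounded below --- in which case Cheeger's smooth compactness~\cite{Cheeger70} delivers the smooth convergence of case~(1) of Definition~\ref{defn: strong_convergence} --- or else collapsing with locally bounded curvature occurs and Theorem~\ref{thm: CFGT} furnishes the structure required in case~(2). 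A diagonal extraction over an exhaustion of $X_R\setminus Mk_\infty$ then concludes the strong convergence.

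The chief obstacle I anticipate is ensuring that the two alternatives of Definition~\ref{defn: strong_convergence} are \emph{globally} consistent across $X_R\setminus Mk_\infty$: a priori, different components could fall into different cases. This is resolved by noting that the dichotomy is governed by $\dim_H\mathcal{R}(X_R)$, a single invariant of the limit space, so one alternative prevails uniformly. A secondary technicality is verifying that no marked points are \emph{spurious} --- that is, that the energy concentration at each $p_i^j$ persists under passage to the limit --- which follows from the lower semi-continuity of the renormalized energy at good centers combined with (\ref{eqn: local_Rm_L2}).
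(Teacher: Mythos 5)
Your overall strategy mirrors the paper's: find finitely many curvature-concentration points in $B(p_i^0,R)$ using the global weighted $L^2$ bound, converge away from them via Theorem~\ref{thm: main} and elliptic regularity, and apply Cheeger's smooth compactness or Theorem~\ref{thm: CFGT} depending on whether collapsing occurs. The genuine gap is in how you set the concentration scale. You fix $r_0\in(0,r_R)$ \emph{before} identifying the marked points, declare the bad centers relative to balls of radius $2r_0$, and take $Mk_\infty$ to be the GH limits of those centers. The resulting curvature bound $|\Rm_{g_i}|\le C_R r_0^{-2}$ only holds outside $3r_0$-neighborhoods of $Mk_\infty$. When you then say \emph{``upon choosing $r_0$ smaller than one-third the distance from $K$ to the marked points''} to get bounds on a given compact $K\subset X_R\setminus Mk_\infty$, you have introduced a circularity: the marked points themselves were defined via the $r_0$-scale good/bad dichotomy, so shrinking $r_0$ changes the net, changes which centers are bad, and hence changes $Mk_\infty$. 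To close this one would need to prove --- which you do not --- that the bad centers at any scale $r_0'\ll r_0$ are captured by small neighborhoods of the bad centers at scale $r_0$, and then diagonalize over a sequence $r_0^{(k)}\to 0$ to stabilize the marked set.

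The paper avoids this by working with concentration at scales $r_i\to 0$ from the outset, so that for any $x\notin Mk_\infty$ with $d_x:=\min_j d_\infty(x,x_\infty^j)$ one may, for $i$ large, apply the $\varepsilon$-regularity at a scale comparable to $d_x$ and obtain the distance-dependent bound $\sup_{B(p_i,d_x/4)}|\Rm_{g_i}|\le C_{R+1}d_x^{-2}$. This bound degenerates only as $x$ approaches $Mk_\infty$, and is exactly what is needed for convergence over an exhaustion of $X_R\setminus Mk_\infty$ without ever re-choosing the concentration scale. A secondary, repairable issue: a maximal $r_0$-separated net is only $r_0$-dense (not $r_0/2$-dense), and Theorem~\ref{thm: main} applied to $B(q_i^\alpha,2r_0)$ controls curvature only on $B(q_i^\alpha,r_0/2)$, so to cover a ball of radius $r_0$ around each good center you should run the bad/good dichotomy at scale $4r_0$.
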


\begin{proof}
Fix any $R>0$. By the assumption (\ref{eqn: local_Rm_L2}), there are only finitely many points $p_i^1,\cdots,p_i^{J_R} \in B(p_i^0,R)$, around which there is a curvature concentration
\begin{align}
\int_{B(p_i^j,r_i)} |\Rm_{g_i}|^2\ \dmu_{f_i}\ \ge\ \epsilon_{R+1},
\end{align}
with $r_i\rightarrow 0$ and $j_R\le C(R+1)\epsilon_{R+1}^{-1}$. On the other hand, for any $q\in B(p_i^0,R)$ outside $\cup_{j=1}^{J_R}B(p_i^1,2r_i)$, we have
$$|\Rm_{g_i}|(q)\ \le\ C_{R+1} r_i^{-2}.$$

By Lemma~\ref{lem: volume_comparison} and Gromov's compactness~\cite{GLP}, there is a compact length space $(X,d_{\infty})$ such that after passing to a subsequence, $(B(p_i^0,R),g_i)\rightarrow_{GH}(X,d_{\infty})$. Clearly $\diam_{d_{\infty}} X \le R$. Moreover, by compactness of $\overline{B(p_i^0,R)}$, possibly passing to a further subsequence, the set of points $\{p_i^1,\cdots,p_i^{J_R}\}$ also Gromov-Hausdorff converge to a set of marked points $\{x_{\infty}^1,\cdots,x_{\infty}^{J_R}\}\subset X$.

Now fix $x\in X\backslash \{x_{\infty}^1,\cdots,x_{\infty}^{J_R}\}$ and assume $B(p_i^0,R)\ni p_i\rightarrow_{GH} x$. Fix 
$$d_x:=\min_{1\le j\le J_R} d_{\infty}(x,x_{\infty}^j),$$ 
then for any $i$ sufficiently large, $d_x> 10r_i$ ($j=1,\cdots,J_R$), and we can conclude as above 
$$\sup_{B(p_i,\frac{1}{4}d_x)}|\Rm_{g_i}|\ \le\ C_{R+1}d_x^{-2},$$
a uniform constant for the sequence $\{B(p_i^0,R)\}$. Thus the Gromov-Hausdorff convergence to any $x\not=x_{\infty}^j$ ($j=1,\cdots, J_R$) is improved to strong convergence in Definition~\ref{defn: strong_convergence}.
\end{proof}

Presumably, as $R\rightarrow \infty$, $j_R\rightarrow \infty$ and the selection of the subsequence of $\{M_i,g_i,f_i\}$ depends on $R$. This is a feature of Ricci solitons different from the Einstein case. However, assuming weighted $L^2$-bound of curvature is much more realistic for non-compact 4-d Ricci shrinkers, compared to non-compact Ricci flat manifolds. For instance, as we will see in the following proof of Theorem~\ref{thm: moduli}, a global weighted $L^2$-curvature bound by the Euler characteristics could be easily obtained if we further assume a uniform scalar curvature bound, see also~\cite{HM10} and~\cite{MWang15}.

From (\ref{eqn: potential_growth}) and (\ref{eqn: gradient_estimate}), we notice that a uniform bound on the scalar curvature, eliminates singularities of $f$ outside a definite ball. It will then be convenient to use (sub-)level sets of $f$ instead of geodesic balls centered at $p_0$. Therefore we use the following notations:
\begin{definition}
Let $(M,g,f)$ be a 4-d Ricci shrinker such that the normalization condition (\ref{eqn: normalization}) is satisfied, and fix $R>0$, we define
$$\quad D(R):=\{x\in M: f(x)<R\}\quad \text{and}\quad \Sigma(R):=\{x\in M: f(x)=R\}=\partial D(R).$$
\end{definition} 

\begin{proof}[Proof of Theorem~\ref{thm: moduli}]
Fix $R_0>1$ so that there is no critical value of $f$ outside $D(R)$.

\noindent \textbf{\textit{Curvature bound outside some $D(R_{MW}).$}} We will start by examining the work of Munteanu-Wang~\cite{MWang15} carefully, and obtain a \emph{uniform} curvature control outside a fixed sized ball around the base point. (We cannot directly quote their results because their estimates involve the curvature of specific manifolds, but we need uniform estimates.) After a detailed study of the level sets $\Sigma(R)$, Munteanu-Wang observed, in Proposition 1.1 of~\cite{MWang15}, the following fundamental estimate for 4-d Ricci shrinkers: there is an absolute constant $c_{1.0}$ such that 
\begin{align*}
c_{1.0}|\Rm|\le \frac{|\nabla \Rc|}{\sqrt{t}}+\frac{|\Rc|^2+1}{f}+|\Rc|
\end{align*}
outside $D(R_{1.0})$. This estimate then enables them to obtain an elliptic inequality about the positive function $u:=|\Rc|^2\Sc^{-a}$ for some $a\in (0,1)$ (see Lemma 1.2 of~\cite{MWang15}): there exists some absolute constant $c_{1.1}>0$ such that
\begin{align*}
\Delta_fu\ge \left(2a-\frac{c_{1.1}}{1-a}\frac{\Sc}{f}\right)u^2\Sc^{a-1}-c_{1.1}u^{\frac{3}{2}}\Sc^{\frac{a}{2}}-c_{1.1}u
\end{align*}
outside $D(R_{1.1})$. For any $R>2\max\{R_0,R_{1.0},R_{1.1}\}$, as done in Proposition 1.3 of~\cite{MWang15}, one can construct a cut off function $\varphi$ supported on $D(3R)\backslash D(R\slash 2)$ such that $\varphi\equiv 1$ on $D(2R)\backslash D(R)$ and $|\nabla \varphi|+|\Delta^f\varphi|\le c_{1.2}$ ($c_{1.2}>0$ being some absolute constant, especially independent of $R$). Now choose $R_{1.2}>R$ and $a\in (0,1)$ such that 
$$2a-\frac{c_{1.2}}{1-a}\frac{\Sc}{f}\ \ge\ 1$$
outside $D(R)$, for any $R>R_{1.2}$, then $a$ becomes an absolute constant. We then obtain inequality (1.14) of~\cite{MWang15}:
$$\varphi^2\Delta^fG\ \ge\ S^{a-1}G^2-c_{1.3}G^{\frac{3}{2}}-c_{1.3}G+2\nabla G\cdot \nabla \varphi^2,$$
where $G:=u\varphi^2$. Applying maximum principle to this inequality we see $G\le c_{1.4}$, i.e. $|\Rc|\le c_{1.4}\Sc^{a}\le c_{1.5}$ on $D(2R)\backslash D(R)$, for any $R>R_{1.2}$. Munteanu-Wang then applied the cut off function and maximum principle to the elliptic inequality (see (1.17) and (1.18) of~\cite{MWang15})
\begin{align*}
\Delta_f(|\Rm|+|\Rc|^2)\ge |\Rm|^2-c_{1.6}\ge \frac{1}{2}(|\Rm|+|\Rc|^2)^2-c_{1.7},
\end{align*}
 whence
\begin{align}
\sup_{M\backslash D(R_{MW})}|\Rm|+|\Rc|^2\ \le\ C_{MW},
\label{eqn: Rm_bound_outside_DR}
\end{align}
for some absolute constants $R_{MW}>1000$ and $C_{MW}>0$, depending only on $\bar{S}$. From this estimate, we notice (as pointed out in~\cite{MWang15}), that under the assumption of uniform scalar curvature bound, the main concern of controlled geometry is about a bounded region $D(R_{MW})$ around the base point.

\noindent \textbf{\textit{Global weighted $L^2$-curvature bound.}} By the non-degeneration of $f$ outside $D(R)$ for any $R>R_{MW}$, we see that $D(R)$ is a smooth retraction of $M$, hence $\chi(M)=\chi(D(R))$ as the Euler characteristic is a homotopy invariant. Recall that for $\Sigma(R)$, the boundary Gauss-Bonnet-Chern term can be estimated as
$$\left|\bEuler\right| \le\ \frac{1}{4\pi^2}\left(\frac{2\left|\det \nabla^2f\right|}{|\nabla f|^3}+3\frac{|\nabla^2 f|}{|\nabla f|}|\Rm|\right),$$ 
since $|\nabla f|>1$ and $|\Rm|\le C_{MW}$ outside $D(R_{MW})$, we then have
\begin{align}
\int_{D(R)}|\Rm|^2\ \dmu_f\ \le\ \bar{E} +c_{2.0}\int_{\Sigma(R)} (|\nabla^2f|^{3}+|\nabla^2 f|)\ \darea_{\Sigma(R)}.
\end{align}
The defining equation (\ref{eqn: defn}) then gives
$$\int_{\Sigma(R)}|\nabla^2 f|^3+|\nabla^2 f|\ \darea_{\Sigma(R)}\ \le\ c_{2.1}\int_{\Sigma(R)}|\Rc|^3+|\Rc|\ \darea_{\Sigma(R)}\ \le\ c_{2.2}Vol(\Sigma(R)).$$
On the other hand, (\ref{eqn: potential_growth}) and Lemma~\ref{lem: volume_growth} gives control 
$$Vol(D(3R_{MW}))-Vol(D(2R_{MW}))\le c_{2.3}R_{MW}^2.$$
For some $R_2\in [2R_{MW},3R_{MW}]$ such that $Vol(\Sigma(R_2))=\min_{2R_{MW}\le R\le 3R_{MW}}Vol(\Sigma(R)),$ 
we can apply the coarea formula and (\ref{eqn: gradient_estimate}) to estimate
\begin{align*}
Vol(\Sigma(R_2))\ \le\ \frac{1}{R_{MW}}\int_{D(3R_{MW})\backslash D(2R_{MW})} |\nabla f|\ \dvol\ \le\ c_{2.4}R_{MW}^{\frac{3}{2}}.
\end{align*}
These inequalities together give:
\begin{align*}
\int_{M}|\Rm|^2\ \dmu_f\ &\le\ \int_{D(R_2)}|\Rm|^2\ \dvol+ C_{MW}^2\int_{M\backslash D(R_2)}1\ \dmu_f\\
&\le\ \chi(D(R_2))+c_{2.2}Vol(\Sigma(R_2))+c_{2.3}\sum_{k=1}^{\infty}e^{-2^kR_{MW}}8^kR_{MW}^2\\
&\le\ \chi(M)+c_{2.5}R_{MW}^{\frac{3}{2}}+c_{2.6}\\
&\le\ C(\bar{E},\bar{S}),
\end{align*}
since all the constants involved are solely determined by $\bar{E}$ and $\bar{S}$. Here we recall that $\bar{E}>0$ and $\bar{S}>0$ are the prescribed upper bounds of the Euler characteristics (in absolute value) and the scalar curvature, respectively.

With this bound at one hand, we can apply Proposition~\ref{prop: convergence} to $\{D_i(2R_{MW})\subset M_i\}$ and obtain a convergent subsequence, to some metric space $(X_{\infty}(2R_{MW}),d_{\infty})$ with marked points $\{x_{\infty}^1,\cdots,x_{\infty}^J\}$ and $J\le J(2R_{MW})$. On the other hand, we have a uniform curvature bound outside $D_i(2R_{MW})$, whence a non-compact length space $(X_{\infty},d_{\infty})$ as the Gromov-Hausdorff limit. The convergence will preserve the finitely many marked points, and away from these points, the Gromov-Hausdorff convergence is improved, by the locally uniform curvature bound, to strong multi-pointed Gromov-Hausdorff convergence in the sense of Definition~\ref{defn: strong_convergence}.
\end{proof}

\section{Discussion}

As pointed out in the introduction, the ultimate goal of studying the collapsing of 4-d Ricci shrinkers is to rule out the potential collapsing and to obtain a uniform lower entropy bound. At this point, we propose the following conjecture of Bing Wang:
\begin{conjecture}[Bing Wang]
Given $\bar{E}>0$. If $(M,g,f)$ is a complete non-compact four dimensional shrinking Ricci soliton, then $\chi(M)$, the Euler characteristic of $M$, is finite. Assume $|\chi(M)|\le \bar{E}$, then either $(M,g)\equiv \left(\mathbb{R}\times \mathbb{S}^3\slash \Gamma, \text{d}t^2\oplus g_{\mathbb{S}^3}\right)$ for some finite isometry group $\Gamma$, or else there exists some $\bar{\omega}>0$, depending only on $\bar{E}$, such that 
$$\mathcal{W}(M,g,f)\ge -\bar{\omega}.$$
\label{thm: conjecture_entropy}
\end{conjecture}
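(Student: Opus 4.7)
The plan is to proceed by contradiction: suppose there is a sequence of complete non-compact 4-d Ricci shrinkers $(M_i,g_i,f_i)$ with $|\chi(M_i)|\le \bar{E}$ and $\mathcal{W}(M_i,g_i,f_i)\to -\infty$, none of which is isometric to a round cylinder $\mathbb{R}\times\mathbb{S}^3\slash \Gamma$. One must first establish that $\chi(M)$ is a well-defined finite number: this should follow along the lines of the proof of Theorem~\ref{thm: moduli}, by refining the Munteanu-Wang curvature bound outside $D(R_{MW})$ so that the integral $\int_M |\Rm|^2\ \dmu_f$ is finite, then applying the Gauss-Bonnet-Chern identity (\ref{eqn: Euler}) on an exhaustion $\{D(R_k)\}$ and verifying that $\int_{\Sigma(R_k)}\bEuler$ has a definite limit via the asymptotic conical or cylindrical structure of the ends.

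For the contradiction, the divergence $\mathcal{W}(M_i,g_i,f_i)\to-\infty$ is equivalent, via Perelman's no-local-collapsing theorem, to the existence of points $p_i$ at which unit $\mu_{f_i}$-balls collapse. Since outside a fixed $D(R_{MW})$ one expects uniform curvature bounds, the collapsing is confined to a region of bounded potential, and Theorem~\ref{thm: moduli} extracts a strong multi-pointed Gromov-Hausdorff limit $(X_\infty, d_\infty, \{x_\infty^j\})$ with $J\le \bar{J}$ marked points. Away from these marked points Theorem~\ref{thm: main} yields uniformly locally bounded curvature, so the Cheeger-Fukaya-Gromov theory (Theorem~\ref{thm: CFGT}) endows the collapsing regular region with an $N$-structure of positive rank.

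The rigidity step then argues: the vanishing of Euler characteristic on saturated sets (Corollary~\ref{cor: vanishing_euler}), combined with $|\chi(M_i)|\le\bar{E}$, forces curvature concentration at no more than $\bar{E}$ marked points and a global $\mathbb{S}^1$-symmetry on the collapsing part. Combining this symmetry with the defining equation (\ref{eqn: defn}) should yield a warped product structure; the analyticity of the soliton metric, together with the classification of 3-dimensional shrinking solitons as quotients of the round sphere, then identifies $(M_i,g_i,f_i)$ for $i$ large with $\mathbb{R}\times\mathbb{S}^3\slash \Gamma$, contradicting the assumption.

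The main obstacle will be the global upgrading of the locally collapsed geometry to a globally rigid configuration. The $\varepsilon$-regularity theorem is fundamentally local, with constants depending on the distance to the base point, and propagating rigidity across dyadic shells $A(p_0; 2^k, 2^{k+1})$ as $k\to\infty$ will require a new ingredient, perhaps a monotonicity-type quantity tailored to the shrinker equation or a careful comparison with the conjectured cylinder limit through weighted Bakry-\'Emery estimates. A secondary but substantive difficulty is removing the scalar curvature upper bound hypothesis of Theorem~\ref{thm: moduli}, which is essential to the Munteanu-Wang machinery but is absent from Conjecture~\ref{thm: conjecture_entropy}; establishing an \emph{a priori} scalar curvature control from the topological bound $|\chi(M)|\le\bar{E}$ alone is itself a substantial subproblem that the author explicitly identifies as a future direction.
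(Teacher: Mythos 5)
The statement you were asked to prove is Conjecture~\ref{thm: conjecture_entropy}, which the paper explicitly states as an open conjecture attributed to Bing Wang; the paper does not contain a proof of it, and in fact frames its main results (the $\varepsilon$-regularity theorem and the moduli-space structure theorem) as partial progress \emph{toward} this conjecture. So there is no proof in the paper to compare your attempt against.

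Given that, your write-up is best read as a research program, and you are candid that it is not a proof. The outline you give is consistent with the direction the paper sets up: establish finiteness of $\chi(M)$ via Munteanu--Wang-type decay, run a contradiction argument on a sequence with $\mathcal{W}\to -\infty$, extract a strong multi-pointed Gromov--Hausdorff limit via Theorem~\ref{thm: moduli}, and use the $N$-structure and Corollary~\ref{cor: vanishing_euler} to force rigidity toward a cylinder. You also correctly identify the two genuine gaps, and it is worth emphasizing that both are substantial and are flagged by the paper itself. First, Theorem~\ref{thm: moduli} requires a uniform upper bound on scalar curvature, which the conjecture does not assume; the paper explicitly calls removing this hypothesis future work, and Conjecture~\ref{thm: conjecture_scalar} is a proposed way to bridge that gap, but it is itself open. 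Second, the $\varepsilon$-regularity constants depend on the distance $R$ to the base point, so the local structure degenerates as you move out the end, and no mechanism is provided in the paper for the ``global upgrading'' step you describe (identifying the limit as a cylinder quotient from local $\mathbb{S}^1$-symmetry plus the soliton equation). A third gap you do not flag: even if one obtains a cylinder as a pointed limit of rescalings or of the original metrics in the collapsing region, concluding that each $(M_i,g_i)$ for large $i$ is \emph{globally isometric} to $\mathbb{R}\times\mathbb{S}^3/\Gamma$ is a rigidity statement that does not follow from Gromov--Hausdorff proximity alone; one would need a quantitative rigidity or stability result for the cylinder among $4$-d shrinkers, which is not available here. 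In short, your strategy is sensible and aligned with the paper's stated roadmap, but the conjecture remains open, and your own caveats correctly locate where a proof would need new ideas.
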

If this conjecture is confirmed, the task of classifying $4$-d Ricci shrinkers will be reduced to the classification with a given Euler characteristic bound, with the help of the resulting uniform entropy lower bound. See~\cite{LiWang17} for several uniform estimates in this situation.

In the setting of mean curvature flows, Lu Wang~\cite{Wanglu1610} has recently studied the asymptotic behaviors of self-shrinkers of finite topology. Motivated by her result, we may expect a uniform scalar curvature bound for 4-d Ricci shrinkers with bounded Euler characteristics:
\begin{conjecture}
Given $\bar{E}>0$. If $(M,g,f)$ is a four dimensional shrinking Ricci soliton whose Euler characteristic is bounded above by $\bar{E}$ in absolute value, then there exists some $\bar{S}>0$, depending only on $\bar{E}$, such that 
$$\sup_M \Sc_g\le \bar{S}.$$
\label{thm: conjecture_scalar}
\end{conjecture}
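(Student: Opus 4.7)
The natural strategy is a blow-up contradiction argument coupling the $\varepsilon$-regularity theorem (Theorem~\ref{thm: main}) with the Euler characteristic bound. Assume the conjecture fails, so that there exists a sequence $(M_i,g_i,f_i)$ of 4-d Ricci shrinkers with $|\chi(M_i)|\le \bar{E}$ but $Q_i := \sup_{M_i}\Sc_{g_i}\to \infty$. A Hamilton-type point-picking argument, combined with the bound $\Sc\le f$ coming from the normalization (\ref{eqn: normalization}), produces near-maximum points $p_i\in M_i$ and radii $r_i$ with $r_i\sqrt{Q_i}\to\infty$, $\Sc_{g_i}(p_i)\sim Q_i$, and $\sup_{B(p_i,r_i)}\Sc_{g_i}\le 2Q_i$; the potential growth of Lemma~\ref{lem: potential_growth} then forces $\rad(p_i)\gtrsim \sqrt{Q_i}\to\infty$, so the concentration escapes to infinity in the original metric.

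Rescale by setting $\tilde g_i := Q_i\, g_i$ and $\tilde f_i := f_i - f_i(p_i)$; the soliton equation becomes $\Rc_{\tilde g_i}+\tilde\nabla^2 \tilde f_i = \frac{1}{2Q_i}\tilde g_i$, with $\Sc_{\tilde g_i}(p_i)=1$ and $|\tilde\nabla \tilde f_i|_{\tilde g_i}$ bounded on every fixed ball by the gradient estimate of Lemma~\ref{lem: gradient_estimate}. Applying the rescaled elliptic regularity of Lemma~\ref{lem: regularity_rescaling} together with a Shi-type bootstrap extracts a smooth pointed limit $(M_\infty,g_\infty,p_\infty,f_\infty)$, which is a non-trivial complete steady gradient Ricci soliton with $\Sc_{g_\infty}(p_\infty)=1$. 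In particular a definite amount of $|\Rm|^2$ survives on a unit ball around $p_\infty$, and the rescaled analogue of Lemma~\ref{lem: Hessian_control} shows that $\int_{B_\infty(p_\infty,\rho)}|\mathring{\nabla}^2 f_\infty|^2\,\dvol_{g_\infty}$ is negligible in comparison with $\int_{B_\infty(p_\infty,\rho)}|\Rm_{g_\infty}|^2\,\dvol_{g_\infty}\ge \kappa>0$ for $\rho$ sufficiently large.

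The next step is to convert this concentration of curvature into a nontrivial chunk of $\chi(M_i)$. Following the construction of Proposition~\ref{prop: integral_TP_control}, one would enclose each concentration ball in a smoothly bounded domain $W_i\subset M_i$ whose boundary is either saturated by the $N$-structure of Theorem~\ref{thm: CFGT} (where $\int_{\partial W_i}\bEuler$ is small because collapsing and the good chopping Proposition~\ref{prop: GC} force $|II_{\partial W_i}|\lesssim l_a^{-1}$) or lies in a non-collapsing region controlled by Theorem~\ref{thm: main} (where $\int_{\partial W_i}\bEuler$ is small by direct pointwise curvature bounds). The Pfaffian identity (\ref{eqn: Pfaffian}) combined with the Hessian smallness above then yields $\chi(W_i)\ge c\,\kappa>0$ for $i$ large. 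Summing over a maximal disjoint family of such concentration domains in $M_i$, additivity of $\chi$ together with $|\chi(M_i)|\le \bar{E}$ limits the number of concentration sites to $O(\bar{E}/\kappa)$, which after iterating on each blow-up limit forces $\sup_{M_i}\Sc_{g_i}\le C(\bar{E})$, a contradiction.

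The main obstacle is securing this compactness of the rescaled sequence without an a priori scalar curvature bound. The Munteanu-Wang decay estimate (\ref{eqn: Rm_bound_outside_DR}), which underlies the proof of Theorem~\ref{thm: moduli}, relies essentially on a uniform scalar curvature bound and cannot be invoked here; it must be replaced by a genuinely local estimate, presumably a self-improving maximum principle for $|\Rc|^2\Sc^{-a}$ that controls full curvature by scalar curvature on a definite rescaled scale. A second difficulty is the interplay with collapsing: if the rescaled sequence collapses the blow-up limit is lower dimensional, and the topological contribution must be extracted purely from the $N$-structure machinery of Section~3 rather than from a smooth limit, which in turn requires a delicate coordination between the collapsing scale and the scale at which the $\bEuler$ boundary term can be made small. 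For these reasons it is likely that Conjecture~\ref{thm: conjecture_scalar} will be resolved only in tandem with Conjecture~\ref{thm: conjecture_entropy}: a uniform entropy lower bound would guarantee non-collapsing on a definite scale after rescaling and render the blow-up contradiction argument significantly cleaner.
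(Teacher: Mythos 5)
You should first note that Conjecture~\ref{thm: conjecture_scalar} is posed as an open conjecture in the paper, not as a theorem; the paper offers no proof and explicitly frames it as a plausible analogue of Lu Wang's results for self-shrinkers of finite topology. There is therefore no argument in the paper to compare your proposal against, and the correct benchmark is whether your sketch closes the open problem or merely restates its difficulties.

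Judged on that basis, your outline is a sensible angle of attack and its preliminary steps are sound: the normalization $\Sc+|\nabla f|^2=f$ together with Lemma~\ref{lem: potential_growth} does force near-maximum points of scalar curvature out to distance $\gtrsim\sqrt{Q_i}$ from the base point, the rescaling $\tilde g_i = Q_i g_i$ does yield a near-steady soliton equation, and the gradient estimate of Lemma~\ref{lem: gradient_estimate} does stay bounded after rescaling. But the two gaps you identify in your final paragraph are not side issues --- they are the entire content of the conjecture, and your argument does not dent them. First, a scalar curvature bound alone does not control $|\Rm|$ for a 4-d Ricci shrinker: the Munteanu--Wang estimate (\ref{eqn: Rm_bound_outside_DR}) that you invoke for Shi-type smoothing is proved under a \emph{global} scalar curvature bound, and there is no local or rescaled version of it on record; invoking Lemma~\ref{lem: regularity_rescaling} presupposes a bound on the curvature scale $l_a$, which is precisely what is unknown at the concentration point. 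Second, without a lower bound on the Perelman entropy or on the weighted volume ratio at the rescaled scale, no smooth pointed Cheeger--Gromov limit exists, and if the rescaled sequence collapses then the proposed steady soliton $(M_\infty,g_\infty)$ is not there to supply the Euler-characteristic quantum $\kappa$; the $N$-structure route you gesture at would require the full collapsing machinery of Section~3 to be redone at every blow-up scale simultaneously, which is itself an open problem. Your closing assessment --- that Conjecture~\ref{thm: conjecture_scalar} is most plausibly resolved in tandem with Conjecture~\ref{thm: conjecture_entropy}, whose entropy lower bound supplies exactly the non-collapsing that the blow-up argument needs --- is honest and in fact mirrors the logical order the paper itself suggests (the paper presents Conjecture~\ref{thm: conjecture_scalar} as a would-be input to Conjecture~\ref{thm: conjecture_entropy}, not as a consequence of it, so the two are genuinely intertwined). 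In short: the proposal is a legitimate research sketch, not a proof, and the gaps you name are the real ones.
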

Clearly, the confirmation of this conjecture will pave paths towards the proof of Conjecture~\ref{thm: conjecture_entropy}. See also~\cite{MWang16} for recent progress.

\appendix

\section{Collapsing and equivariant good chopping}
The equivariant good chopping theorem when collapsing with locally bounded
curvature happens, as stated and used in~\cite{CT05}, is a generalization of
the original work of Cheeger-Gromov~\cite{CGIII} in two directions: in one
direction, the global curvature bound is relaxed to locally bounded curvature,
as carried out by Cheeger-Tian in the proof of Theorem 3.1 of~\cite{CT05}; in
the other, since the collapsing does not imply the existence of an isometry
group action --- the action being only by a sheaf of local isometries --- more
elaborations are needed to reduce the situation to the case considered
in~\cite{CGIII}. In this appendix, with respect to the proof given
in~\cite{CT05}, we provide additional details that were indicated but not
written out explicitly.

Fix $a\in (0,1)$ throughout this appendix. For the sake of simplicity, we will
assume the given metric to be locally regular under curvature scale, i.e.
\begin{enumerate}
\item[(R)] there exist $A_k>0$ for $k=0,1,2,\cdots$, such that
$$\sup_{B(p,l_a(p))}|\Rm_g|\ \le\ l_a(p)^{-2}\quad \Longrightarrow\quad
\sup_{B(p,\frac{1}{2}l_a(p))}|\nabla^k\Rm_g|\ \le\ A_k\ l_a(p)^{-2-k}.$$
\end{enumerate}
\begin{theorem}
Let $(M,g)$ be an $n$-dimensional Riemannian manifold satisfying property
$(R)$. There exist constants $\delta_{GC}>0$ and $C_{GC}(n)>0$ such that if
$E\subset M$ is $(\delta,a)$-collapsing with locally bounded curvature for some
$\delta<\delta_{GC}$ and $a\in (0,1)$, then there is an open subset $U\subset
B(E,\frac{a}{2})$ that contains $E$, saturated by some $a$-standard
$N$-structure, and has a smooth boundary $\partial U$ with $$|II_{\partial U}|\
\le\ C_{GC}l_a^{-1}.$$
\end{theorem}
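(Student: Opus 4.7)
The plan is to build $U$ by smoothing an $N$-invariant distance function and then applying an equivariant good chopping argument on the orthonormal frame bundle, in order to circumvent the fact that an $a$-standard $N$-structure is only a sheaf of local nilpotent isometries rather than a genuine group action. First I would invoke Theorem~\ref{thm: CFGT} with a suitably small $\epsilon$ (to be fixed later in terms of $\delta_{GC}$ and the regularity constants $A_k$) to produce an open $W$ with $E\subset W\subset B(E,a/2)$, an $a$-standard $N$-structure on $W$, and an $N$-invariant approximating metric $g^{\epsilon}$ whose $C^{k}$-closeness to $g$ is controlled at the scale $l_a$. All subsequent constructions will be carried out with respect to $g^{\epsilon}$, and the required estimates for $g$ will be recovered at the end from the comparison
\[
|\nabla^{g}-\nabla^{g^{\epsilon}}|<\epsilon\, l_a^{-1},\qquad |\nabla^{k}\Rm_{g^{\epsilon}}-\nabla^{k}\Rm_g|<\Psi(\epsilon\mid k)\, l_a^{-2-k}.
\]

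Next, following Fukaya~\cite{Fukaya88}, I would pass to the orthonormal frame bundle $F(M)$ equipped with a canonical connection metric $\hat{g}^{\epsilon}$ induced by $g^{\epsilon}$. The point of this lift is twofold: the structural group $O(n)$ acts on $F(M)$ by isometries of $\hat{g}^{\epsilon}$, and the local nilpotent actions of the $N$-structure, being generated by local Killing fields for $g^{\epsilon}$, lift to \emph{globally defined} commuting nilpotent actions on the preimage $\hat{W}$ of $W$. This places us in exactly the setting of Cheeger-Gromov~\cite{CGIII}: a manifold with a pure $N$-structure plus a compact group action, with globally bounded curvature after rescaling by $l_a$ (because (R) guarantees the higher derivative bounds needed to control the lift). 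On $\hat{W}$, I would let $\hat{E}$ be the preimage of $E$, smooth the distance function to the saturation of $\hat{E}$ using convolution along the orbits at scale comparable to $l_a$ (so that the smoothed function is automatically invariant under the nilpotent action and under $O(n)$, hence descends), and then apply the equivariant good chopping of~\cite{CGIII}, using relations (1.8) and (1.9) there in place of the problematic (3.7) of~\cite{CT05}, to extract a regular level $\hat{U}$ whose boundary is smooth, invariant under both the nilpotent and $O(n)$ actions, and satisfies $|II_{\partial \hat{U}}|\le C(n) l_a^{-1}$.

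The set $U$ is then defined as the image of $\hat{U}$ in $M$; its saturation by the $N$-structure is automatic from the $N$-invariance on $F(M)$, and $E\subset U\subset B(E,a/2)$ because the smoothing scale is a small multiple of $l_a$ and $l_a\le a$. The curvature bound on $\partial U$ for $\hat{g}^{\epsilon}$-data descends to a bound for $g^{\epsilon}$ on $\partial U$ via the Riemannian submersion structure $F(M)\to M$, and then transfers to $g$ using the $C^1$-closeness quoted above together with a suitable choice of $\epsilon$; the passage from $II$ with respect to $g^{\epsilon}$ to $II$ with respect to $g$ introduces only a factor $1+\Psi(\epsilon)$, which is absorbed into $C_{GC}(n)$.

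The hard part is the second step, and in particular the reduction of the \emph{mixed} $N$-structure on $W$ (where orbits may jump dimension along the stratification $\mathcal{S}_0\subset\mathcal{S}_1\subset\cdots$) to something to which~\cite{CGIII} applies verbatim. In~\cite{CT05} this was handled by a direct smoothing of the distance to the original set, but as noted in the remark after Proposition~\ref{prop: GC}, that argument is inconsistent in the presence of mixed orbits; the cleanest fix is precisely Fukaya's frame bundle argument, which pushes the problem to a setting where orbits have locally constant dimension after a further stratification on $F(M)$ and where the smoothing can be performed orbit-by-orbit within a single covering element, using the partitioning of the good covering from Lemma~\ref{lem: covering} to control overlaps. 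The remaining bookkeeping --- choosing $\epsilon$, the smoothing scale, and $\delta_{GC}$ so that the hypotheses of Theorem~\ref{thm: CFGT} hold and the constants $A_k$ propagate to uniform bounds on $\nabla^{j}II_{\partial U}$ --- is routine given the regularity property $(R)$.
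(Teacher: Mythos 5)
Your proposal follows essentially the same strategy as the paper's appendix: lift to the orthonormal frame bundle so that the local nilpotent actions globalize (Fukaya's argument) and the $O(n)$ action becomes available, smooth the distance to the $N$-saturation of $E$ at scale $l_a$, invoke the equivariant good chopping machinery of Cheeger-Gromov with the good covering of Lemma~\ref{lem: covering} controlling the locally bounded curvature and the overlaps, and finally descend to $M$ by the $O(n)$ quotient. The paper fills in the details you compress into ``extract a regular level'' and ``the remaining bookkeeping'' --- in particular it uses a quantitative Morse-Sard lemma to select a level set with a definite gradient lower bound, and carries out an explicit step-by-step gluing of the local choppings through the $N$ disjoint sub-collections of the covering --- but your outline correctly identifies all the key ideas and the resolution of the mixed-$N$-structure obstruction.
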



Fukaya's frame bundle argument~\cite{Fukaya88} enables us to overcome this
difficulty. Basically, we first lift to the frame bundle, where the collapsing
can only produce mutually diffeomorphic nilpotent orbits with controlled second
fundamental form. Then we apply the equivariant good chopping theorem of
Cheeger-Gromov~\cite{CGIII} to obtain a good neighborhood that is both
invariant under the nilpotent structure and the $O(n)$-actions. Taking the
quotient of this neighborhood by $O(n)$, we get the desired neighborhood on the
original manifold, because the $O(n)$-action commutes with the local actions of
the nilpotent structure.

We remark that the proof of this theorem utilizes Sections 3-7 of
Cheeger-Fukaya-Gromov's structural theory about the geometry of collapsing with
bounded curvature developed in~\cite{CFG92}, and its generalization to the case
of collapsing with locally bounded curvature by Cheeger-Tian~\cite{CT05}: to
begin with, we need the existence of a regular approximating metric on the
frame bundle, invariant under the nilpotent action resulted from the
collapsing. See  for a detailed description.



\subsubsection*{\textbf{Regularity of the frame bundle}} Consider the frame
bundle $FB(E,a)$, with each fiber diffeomorphic to $O(n)$ and $\pi:
FB(E,a)\rightarrow B(E,a)$ the natural projection. We follow the conventions of
Notation 1.3 in~\cite{Fukaya88}. Let $\bar{g}$ denote the Riemannian metric on
$FB(E,a)$, as defined in 1.3 of~\cite{Fukaya88}. Moreover, for any object $o$
associated to $B(E,a)$, we will let $\bar{o}$ denote the corresponding object
associated to $FB(E,a)$.

For any $p\in E$, do the rescaling $\bar{g}\mapsto
l_a(p)^{-2}\bar{g}=:\bar{g}_p$, then by (R) we can control, for $\bar{p}\in
\pi^{-1}(p)$, 
\begin{align*}
\sup_{FB(\bar{p},\frac{1}{2})}|\nabla^k
\Rm_{\bar{g}_p}|_{\bar{g}_p}\ \le\ B'_k(n,A_{\le k},l_a(p))\ \le\ B_k(n,A_{\le
k}),
\end{align*}
where we use $A_{\le k}$ to denote $A_1,\cdots,A_k$. This  because for $a<1$ the
rescaling will stretch the fiber metric on $O(n)$, making it less curved. This
means, in the original metric,
\begin{enumerate}
\item[(R1)] $\quad \sup_{FB(p,\frac{1}{2}l_a(p))}|\nabla^k
\Rm_{\bar{g}}|_{\bar{g}}\ \le\ B_k(n,A_{\le k})\ l_a(p)^{-2-k}$ for
$k=0,1,2,3,\cdots.$
\end{enumerate}

Now we use Lemma~\ref{lem: covering} to construct a good covering of
$B(E,\frac{a}{2})$, by $B_i:=B(p_i,2\zeta l_a(p_i))$ contained in $B(E,a)$.
Clearly $FB(E,\frac{a}{2})\subset \cup_iFB_i$.

\subsubsection*{\textbf{Fibration and invariant metric of the frame bundle}} We
first assume $\delta<\delta_{CFGT}$. Arguing as before, we notice that if
$B_i\cap B_j\not=\emptyset$, then (\ref{eqn: l_a_Harnack}) ensures that the
curvature of the frame bundle also satisfies for $k=0,1,2,\cdots,$
$$\sup_{FB_i\cup FB_j}|\nabla^k\Rm_{\bar{g}}|_{\bar{g}}\ \le\
\left(\frac{1-2\zeta}{1+2\zeta}\right)^{-2-k}B_k(n,A_{\le
k})\max\{l_a(p_i),l_a(p_j)\}^{-2-k}.$$ Thus rescaling $\bar{g}\mapsto
l_a(p_i)^{-2}\bar{g}=:\bar{g}_{ij}$ on $B_i\cup B_j$ will ensure for
$k=0,1,2,\cdots$,
\begin{align*}
\sup_{FB_i\cup FB_j}|\nabla^k \Rm_{\bar{g}_{ij}}|_{\bar{g}_{ij}}\ \le\
B_k(n,A_{\le k})\left(\frac{1-2\zeta}{1+2\zeta}\right)^{-2-k},
\end{align*}
so that we can think as on $FB_i\cup FB_j$ there is a uniformly regular
Riemannian metric $\bar{g}_{ij}$.


 By Lemma~\ref{lem: covering}, we notice that in each step of carrying out the
 procedure of Sections 3-7, especially applying Proposition A2.2
 of~\cite{CFG92}, we only need to deal with the case of smoothing within a
 single $FB_i$. Thus the above regularity of the metric restricted to
 intersecting balls is sufficient, and we can construct the following data:
\begin{enumerate}
\item[(F1)] there is a global fibration $f: FB(U,2a\slash 3)\rightarrow Y$;
\item[(F2)] $Y$ is a smooth Riemannian manifold of dimension $m'<\dim FE$;
\end{enumerate}
There is a simply connected nilpotent Lie group $\bar{N}$ of dimension $n-m$,
and a co-compact lattice $\Lambda$, such that:
\begin{enumerate}
\item[(N1)] $\bar{N}$ acts on $\cup_iFB_i$ so that each orbit
$\mathcal{N}(\bar{x})$ at some $\bar{x}\in \cup_iFB_i$ is a compact
submanifold, and up to a finite covering, $$\bar{N}\slash \Lambda\ \approx\
\mathcal{N}(\bar{x})\ =\ f^{-1}(f(\bar{x}));$$
\item[(N2)] the action of $\bar{N}$ commutes with the $O(n)$-action;
\item[(N3)] the action of $\bar{N}$ on $FB(E,a)$, after taking the $O(n)$
quotient, descends to the $a$-standard $N$-structure on $B(E,\frac{a}{2})$, as
described in Theorem~\ref{thm: CFGT}.
\end{enumerate}
Moreover, for any positive $\varepsilon$ which could be arbitrarily small,
there is a smooth metric $\bar{g}^{\epsilon}$ on $FB(E,a)$ and a constant
$\alpha_0=\alpha_0(n,a)>0$ such that:
\begin{enumerate}
\item[(G1)] $\bar{g}^{\epsilon}$ is a regular $\epsilon$-approximation of
$l_a(p_i)^{-2}\bar{g}|_{FB_i}$ for each $i$, see Theorem~\ref{thm: CFGT};
\item[(G2)] $\bar{g}^{\epsilon}$ is invariant under both the actions of
$\bar{N}$ and of $O(n)$;
\item[(G3)] $\forall \bar{x}\in FB_i$,
$\diam_{\bar{g}^{\epsilon}}\mathcal{N}(\bar{x})\ <\ \epsilon l_a(p_i)$ for each
$i$.
\item[(G4)] $\forall \bar{x}\in FB_i$, the normal injectivity radius
$\text{inj}^{\perp}_{\bar{g}^{\epsilon}}\ \bar{x}\ge \frac{2}{3}\alpha_0
l_a(p_i);$
\item[(G5)] $\forall \bar{x}\in FB_i$, $|II_{\mathcal{N}(\bar{x})}|\ \le\
C(B_{\le 2}(n,A_{\le 2}))\ l_a(p_i)^{-1}$.
\end{enumerate}
Without loss of generality, we may assume that $B_k(n,A_{\le k})\ge 1$ and
$\alpha_0 \le 1$.

Here we make a simple convention: $\forall X\subset FB(E,a\slash 2)$, let
$\mathcal{N}(X)$ and $\mathcal{O}(X)$ denote the orbits of $X$ under the action
of $\bar{N}$ and $O(n)$, respectively. Since both actions are local isometries
(G2), and they commute (N2), we have:
\begin{enumerate}
\item[(G6)] the operations $\mathcal{N}(-)$, $\mathcal{O}(-)$ and $B(-,r)$
(with respect to $\bar{g}^{\varepsilon}$) for $r\in (0,a\slash 2)$ on subsets of
$FB(E,a\slash 2)$ commute.
\end{enumerate}

We need to further notice that for $\epsilon>0$ arbitrarily small, we can
choose $\delta$ small enough so that $B(E,a)$ being $(\delta,a)$-collapsing
with locally bounded curvature implies the existence of the approximating
metric above, with the given $\epsilon$. Notice that as long as
$\delta<\delta_{CFGT}$, the existence of $\alpha_0$ and the $\bar{N}$-structure
is guaranteed. Here we fix $\epsilon=10^{-10}\alpha_0$, and let
$\delta_{GC}<\delta_{CFGT}$ be one that works for the fixed $\epsilon$. In
practice, once there exists some $\delta'<\delta_{CFGT}$, then there exists a
family of Riemannian metrics that are $(\delta,a)$-collapsing with locally
bounded curvature with $\delta\rightarrow 0$ (see~\cite{CGII}
and~\cite{Fukaya89}), so eventually $\delta<\delta_{GC}$.

\subsubsection*{\textbf{Distance to orbits}}Recall that we hope to smooth the
boundary of $E$. This smoothing will be obtained by taking certain level set of
a smoothing of the distance function to $\mathcal{N}(FE)$. Here for any
$O(n)$-invariant $\bar{U}\subset \cup_iFB_i$, we define the ``distance to
orbits of $\bar{U}$'' as following:
$$\bar{\rho}_{\bar{U}}:\cup_i FB_i\rightarrow [0,\infty)\quad \bar{x}\mapsto
d_{\bar{g}^{\epsilon}}\left(\bar{x},\mathcal{N}(\bar{U})\right).$$ Notice that
by (N2), $\mathcal{N}(\bar{U})$ is invariant under the $O(n)$-action:
$$\forall \gamma \in O(n),\quad
\gamma\mathcal{N}(\bar{U})=\mathcal{N}(\gamma\bar{U})=\mathcal{N}(\bar{U}).$$
Then $\bar{\rho}_{\bar{U}}$ immediately satisfies the following properties:
\begin{enumerate}
\item[(D1)] $\bar{\rho}_{\bar{U}}$ is invariant under the actions of
$\bar{N}$ and $O(n)$ by (G2);
\item[(D2)] $\forall \bar{x}\in \cup_iFB_i$, $\bar{\rho}_{\bar{U}}(\bar{x})\
\le\ d_{\bar{g}^{\epsilon}}(\bar{x},\bar{U})$, and thus
$$\bar{\rho}_{\bar{U}}^{-1}([0,a\slash 4])\subset B(\bar{U},a\slash 4).$$
\end{enumerate}
The possible non-smoothness is caused by the behavior of $\partial \bar{U}$,
since the distance to a single orbit is smooth within the normal injectivity
radii, by (R1), (G1) and (G4): defining
$d^{\bar{x}_0}(\bar{x}):=d_{\bar{g}^{\epsilon}}(\bar{x},\mathcal{N}(\bar{x}_0))$
for some fixed $\bar{x}_0\in \cup_iFB_i$, then for $k=0,1,2,\cdots$, we have
\begin{enumerate}
\item[(D3)]
$\sup_{B\left(\mathcal{N}(\bar{x}_0),\frac{\alpha_0}{10}l_a(p_i)\right)}|\nabla^k
d^{\bar{x}_0}|\ \le\ C_kl_a(p_i)^{1-k};$
\item[(D4)] $\bar{d}^{\bar{x}_0}$ is both $\bar{N}$-and $O(n)$-invariant;
\item[(D5)] $\bar{\rho}_{\bar{U}}=\inf_{\bar{x}_0\in \bar{U}}\bar{d}^{\bar{x}_0}$.
\end{enumerate}


\subsubsection*{\textbf{Local parametrization of the frame bundle}} For each
$\bar{q}\in FB_i\cap FE$, we start with setting
$\bar{H}^0:=B\left(\mathcal{N}(\bar{q}),\frac{\alpha_0}{2}l_a(p_i)\right)$ and
$\bar{H}:=\mathcal{O}(\bar{H}^0)$. Notice that by (G4), the normal injectivity
radius, constant on $\mathcal{N}(\bar{q})$, satisfies
$\text{inj}^{\perp}_{\bar{q}}\ge \frac{2\alpha_0}{3}l_a(p_{i})$. We can deduce
that $f(\bar{H}^0)$ is contractible and $\bar{H}^0$ deformation retracts to
$\mathcal{N}(\bar{q})$, therefore we can find, possibly after lifting to a
finite covering, a global orthonormal frame, consisting of left invariant
vector fields $\xi_1,\cdots,\xi_{m'}\perp \mathcal{N}(\bar{q})$, so that
$\forall \gamma \in \bar{N}$, the map
\begin{align*}
\exp^{\perp}_{\gamma\bar{q}}: B\left(\mathbf{0},
\frac{\alpha_0}{2}l_a(p_{i})\right)\ \rightarrow\ \bar{H}^0, \quad
\mathbf{v}=(v^1,\cdots,v^{m'})^T\ \mapsto\ \exp_{\gamma
\bar{q}}\left(\sum_{s=1}^{m'}v^s\xi_s(\gamma\bar{q})\right)
\end{align*}
is injective and diffeomorphic onto its image, where $B\left(\mathbf{0},
\frac{\alpha_0}{2}l_a(p_{i})\right)\subset \mathbb{R}^{m'}$.

According to (G2) and the definition, we immediately notice that
\begin{enumerate}
\item[(P1)] $\forall \gamma \in \bar{N}$ and $\forall \bar{x}\in \bar{H}^0$,
$Image (\exp_{\gamma\bar{q}}^{\perp})\perp \mathcal{N}(\bar{x})$;
\item[(P2)] $\forall \gamma\in \bar{N}$,
$(\exp_{\bar{q}}^{\perp})^{\ast}\bar{g}^{\epsilon}\ =\ (\exp_{\gamma
\bar{q}}^{\perp})^{\ast}\bar{g}^{\epsilon}$ on $B\left(\mathbf{0};
\frac{\alpha_0}{2}l_a(p_{i})\right)$;
\item[(P3)] $\forall \gamma \in \bar{N}$ and $\forall \bar{x}\in \bar{H}$,
$\exists ! \mathbf{v}_{\bar{x}}\in B\left(\mathbf{0},
\frac{\alpha_0}{2}l_a(p_{i})\right)$ such that
$\bar{\rho}_{\bar{U}}(\gamma\bar{x})\ =\
\bar{\rho}_{\bar{U}}(\exp_{\bar{q}}^{\perp}(\mathbf{v}_{\bar{x}}))$.
\end{enumerate}
We can consider the pull-back metric
$h_i:=(\exp_{\bar{q}}^{\perp})^{\ast}\bar{g}^{\epsilon}$ on
$B\left(\mathbf{0},\frac{\alpha_0}{2}l_a(p_i)\right)$, as a positive definite
2-tensor field, so that:
\begin{enumerate}
\item[(P4)]  according to (G1) and (G2), for any multi-index $I$ with
$|I|=k=0,1,2,\cdots$, $$\left|\frac{\partial^{|I|}}{\partial v^I} h_i\right|\
\le\ C_kl_a(p_{i})^{-k};$$
\item[(P5)] $B\left(\mathbf{0},\frac{\alpha_0}{2}l_a(p_{i})\right)$ is
geodesically convex under the metric $d_{h_i}$ defined by $h_i$;
\item[(P6)] $\forall \mathbf{v}\in B(\mathbf{0},\frac{\alpha_0}{2}l_a(p_{i}))$,
$d_{h_i}(\mathbf{v},\mathbf{0})\ =\
d^{\bar{q}}(\exp_{\bar{q}}^{\perp}(\mathbf{v})).$
\end{enumerate}

\subsubsection*{\textbf{Local smoothing and chopping}} In order to smooth
$\bar{\rho}_{\bar{U}}$, we mollify it by a smooth cut-off function within the
normal injectivity radius of $\bar{q}$, following~\cite{CGvonNeumann}. Let
$0\le \varphi_i\le 1$ be a smooth function such that for some $\zeta'>0$ to be
determined later,
\begin{enumerate}
\item[(S1)]  $\varphi_i$ is supported on
$[0,\frac{\zeta'\alpha_0}{100}l_a(p_{i}))$ and $\varphi_i(t)\equiv 1$ for $t\in
[0,\frac{\zeta'\alpha_0}{200}l_a(p_{i})]$;
\item[(S2)] $\varphi_i^{(k)}(t)\le C_k(\zeta')l_a(p_{i})^{-k}$ for
$k=0,1,2,\cdots$ and $t\in [0,\frac{\zeta'\alpha_0}{100}l_a(p_{i}))$.
\end{enumerate}

Now we focus on an $O(n)$-invariant $\bar{U}\subset \bar{H}^0$, and define on
$\bar{H}^0$:
\begin{align*}
\bar{\rho}_{\bar{U}}^{\sharp}(\bar{x}):=\frac{1}{\mu_i(\bar{x})}\int_{B\left(\mathcal{N}(\bar{p}),\frac{\alpha_0}{2}l_a(p_{i})\right)}\bar{\rho}_{\bar{U}}(\bar{z})\varphi_i(d^{\bar{x}}(\bar{z}))\
\dvol_{\bar{g}^{\epsilon}}(\bar{z}),
\end{align*}
where
$$\mu_i(\bar{x}):=\int_{B\left(\mathcal{N}(\bar{q}),\frac{\alpha_0}{2}l_a(p_{i})\right)}\varphi_i(d^{\bar{x}}(\bar{z}))\
\dvol_{\bar{g}^{\epsilon}}(\bar{z}).$$ Notice that in the definition of
$\bar{\rho}_{\bar{U}}^{\sharp}$ we have taken average by dividing
$\mu_i(\bar{x})$, thus the numerical value of $\bar{\rho}_{\bar{U}}^{\sharp}$
is not affected if we lift the original neighborhood to a finite covering.

By the invariance of $\bar{\rho}_{\bar{U}}$ and that $\exp_{\bar{x}}^{\perp}$
being a diffeomorphism onto its image,
we can reduce $\bar{\rho}_{\bar{U}}^{\sharp}$ to a function
$\tilde{\rho}_{\bar{U}}^{\sharp}$ on
$B\left(\mathbf{0},\frac{\alpha_0}{2}l_a(p_{i})\right)$:
$$\tilde{\rho}_{\bar{U}}^{\sharp}(\mathbf{v})\ :=\
\bar{\rho}_{\bar{U}}^{\sharp}(\exp_{\bar{q}}^{\perp}(\mathbf{v})).$$ The most
important property of $\tilde{\rho}_{\bar{U}}^{\sharp}$ is:
\begin{enumerate}
\item[(S3)] $\quad |\nabla_{\perp}^k\ \bar{\rho}_{\bar{U}}^{\sharp}|(\bar{x})\
=\ |\nabla^k
\tilde{\rho}_{\bar{U}}^{\sharp}|\left((\exp_{\bar{q}}^{\perp})^{-1}(\bar{x})\right)\quad$
for $k=0,1,2,3,\cdots$.
\end{enumerate}

Then by Fubini's theorem and the invariance of $\bar{\rho}_{\bar{U}}$ under the
$\bar{N}$-action,
$$\tilde{\rho}_{\bar{U}}^{\sharp}(\mathbf{v})=\frac{1}{\mu_i(\mathbf{v})}\int_{B\left(\mathbf{0},\frac{\alpha_0}{2}l_a(p_{i})\right)}\bar{\rho}_{\bar{U}}(\exp_{\bar{q}}^{\perp}(\mathbf{w}))\varphi_i\left(d_{h_i}\left(\mathbf{v},\mathbf{w}\right)\right)\
\psi(\mathbf{w})\ \dvol_{h_i}(\mathbf{w}),$$ where
$$\psi(\mathbf{w}):=Vol_{\bar{g}^{\epsilon}}(\mathcal{N}(\exp_{\bar{q}}^{\perp}(\mathbf{w})))\quad\text{and}\quad
\mu_i(\mathbf{v}):=\int_{B\left(\mathbf{0},\frac{\alpha_0}{2}l_a(p_{i})\right)}\varphi_i\left(d_{h_i}\left(\mathbf{v},\mathbf{w}\right)\right)\
\psi(\mathbf{w})\ \dvol_{h_i}(\mathbf{w}).$$ The smoothness of
$\tilde{\rho}_{\bar{U}}^{\sharp}(\mathbf{v})$ then follows from differentiating
$\varphi_i\left(d_{h_i}(\mathbf{v},\mathbf{w})\right)$ with respect to
$\mathbf{v}\in B\left(\mathbf{0},\frac{\alpha_0}{2}l_a(p_{i})\right)$, and the
derivative bounds are guaranteed by (S2) and (P4):
\begin{enumerate}
\item[(S4)] $\quad
\sup_{B\left(\mathbf{0},\frac{\alpha_0}{2}l_a(p_{i})\right)}|\nabla^k
\tilde{\rho}_{\bar{U}}^{\sharp}|\ \le\ C_kl_a(p_{i})^{1-k}\quad$ for $k=1,2,3,\cdots.$
\end{enumerate}

Now we can apply Yomdin's quantitative Morse Lemma (see~\cite{Burguet11}
and~\cite{LoiP12} for proofs) to the function
$\tilde{\rho}_{\bar{U}}^{\sharp}$, to find, for small $\eta>0$, some interval
$J_{\bar{U}}\subset [0,\frac{\alpha_0}{4}l_a(p_{i})]$ of length
$|J_{\bar{U}}|\approx \Psi_{YM}(a,n,\eta) l_a(p_{i})>0$ such that $$ \forall t\in
J_{\bar{U}},\quad |\nabla \tilde{\rho}_{\bar{U}}^{\sharp}|\ >\ \eta\quad \text{on}\quad 
(\tilde{\rho}_{\bar{U}}^{\sharp})^{-1}(t).
$$
Here we may assume the definite constant $\Psi_{YM}(a,n,\eta)<10^{-2}$.
 
By the definition of $\tilde{\rho}_{\bar{U}}^{\sharp}$ and (S3), we then have
\begin{enumerate}
\item[(Y1)] $\quad \forall t\in J_{\bar{U}},\quad |\nabla
\bar{\rho}_{\bar{U}}^{\sharp}|\ \ge\ |\nabla_{\perp}\
\bar{\rho}_{\bar{U}}^{\sharp}|\ >\ \eta\quad \text{on}\quad
(\bar{\rho}_{\bar{U}}^{\sharp})^{-1}(t).$
\end{enumerate}
Let $\bar{W}^0_{\bar{U}}:=(\bar{\rho}^{\sharp}_{\bar{U}})^{-1}([0,t])$ for some
$t\in J_{\bar{U}}$. We then have
\begin{enumerate}
\item[(C1)] $\bar{W}^0_{\bar{U}}$ is invariant under the actions of $\bar{N}$,
by (A3.1) above, and $O(n)$ acts as local isometry on $\bar{W}^0_{\bar{U}}$;
\item[(C2)] $\bar{U}\cap \bar{H}^0\subset \bar{W}^0_{\bar{U}}\subset
B(\bar{U},\frac{\alpha_0}{2}l_a(p_{i}))$;
\end{enumerate}
Define
$\Sigma_{\bar{U}}:=\exp_{\bar{q}}^{\perp}\left((\tilde{\rho}_{\bar{U}}^{\sharp})^{-1}([0,t_i])\right)$,
then $\bar{W}^0_{\bar{U}}=\mathcal{N}(\Sigma_{\bar{U}})$.
We immediately have the bound
\begin{enumerate}
\item[(C3)] $\quad |II_{\partial \Sigma_{\bar{U}}}|\ \le\ \frac{|\nabla^2
\tilde{\rho}_{\bar{U}}^{\sharp}|}{|\nabla \tilde{\rho}_{\bar{U}}^{\sharp}|}\
\le\ C(\eta)l_a(p_{i})^{-1}.$
\end{enumerate}
This, together with $|II_{\mathcal{N}(\bar{x})}|\le Cl_a(p_{i})^{-1},$ and the
fact that $\forall \gamma\in \bar{N}$, $\Sigma_{\bar{U}}\mapsto \gamma
\Sigma_{\bar{U}}$ is an isometry, give
\begin{enumerate}
\item[(C4)] $\quad |II_{\partial
\bar{W}^0_{\bar{U}}}|
\le C(\eta)l_a(p_{i})^{-1}.$
\end{enumerate}

We notice that the function $\bar{\rho}_{\bar{U}}^{\sharp}$ is locally
$O(n)$-invariant, therefore it extends smoothly from $\bar{H}^0$ to $\gamma
\bar{H}^0$ for any $\gamma\in O(n)$.
Thus we see that $\bar{W}_{\bar{U}}:=\mathcal{O}(\bar{W}^0)$ has a smooth
boundary on $\bar{H}=\mathcal{O}(\bar{H}^0)$, whence an $O(n)$-invariant
neighborhood of $\bar{U}\subset \bar{H}$. Moreover, since each $\gamma\in O(n)$
acts as an isometry, we have
\begin{enumerate}
\item[(C5)]$\quad |II_{\partial \bar{W}_{\bar{U}}}|=|II_{\partial
\bar{W}^0_{\bar{U}}}|\le C(\eta)l_a(p_{i})^{-1}$.
\end{enumerate}



\subsubsection*{\textbf{A refined good covering of the frame bundle}} We start
with fixing $\zeta=10^{-2}$ (see
Lemma~\ref{lem: covering}) and
$$\zeta':=\min\{0.1,\zeta\slash \alpha_0\}.$$

Choose a maximal set of points $\{q_j\}\subset E$, such that 
$$d_g(q_j,q_{j'})\ \ge\ \zeta'\alpha_0\min\{l_a(p_{i_j}),l_a(p_{i_{j'}})\},$$
and obtain a covering of $E$ by $\left\{B\left(q_j,2\zeta'\alpha_0
l_a(p_{i_j})\right)\right\}$ (with respect to the original metric $g$ on $M$).
Here for each $q_j$, $p_{i_j}$ is chosen as any $B_i$ containing $q_j$. Then by
Lemma~\ref{lem: covering}, we can find a finite number of sub-collections
$S_j'$ ($j=1,\cdots,N$), such that $E_{j,k}:=B(q_{j,k},2\zeta'\alpha_0
l_a(p_{i_{j,k}}))$ is disjoint from any $E_{j,k'}$, and intersects with at most
one $E_{j',k''}$ for $j'\not=j$.
 
Now the sets $\bar{E}_{j,k}=\pi^{-1}(E_{j,k})$ cover $FE$, and each
$\bar{E}_{j,k}$ is obviously $O(n)$ invariant. Fix $\bar{q}_{j,k}\in
\pi^{-1}(q_{j,k})$ for each $(j,k)$. Since by (G1), $\bar{g}^{\epsilon}$ is a
regular $\epsilon$ approximation of the original metric on $FB(E,a)$, with
$\epsilon<10^{-5}\zeta'\alpha_0$ as defined, we can \emph{redefine}
$$\bar{E}_{j,k}:=\mathcal{O}\left(B\left(\bar{q}_{j,k},
2\zeta'\alpha_0l_a(p_{i_{j,k}})\right)\right)\subset FB(E,a),$$ so that the
covering property and the partition into finitely many sub-collections are
still satisfied.

We further define
$\bar{D}_{j,k}^0:=\mathcal{N}(\bar{E}_{j,k})$,
then by (G6) and (G3), we have $$\bar{D}_{j,k}^0\ =\
\mathcal{O}\left(B\left(\mathcal{N}(\bar{q}_{j,k}),
2\zeta'\alpha_0l_a(p_{i_{j,k}})\right)\right)\ \subset\
B\left(\bar{E}_{j,k},\frac{(1+\zeta)\
\alpha_0}{10^{10}(1-\zeta)}l_a(p_{i_{j,k}})\right),$$ therefore
$\{\bar{D}^0_{j,k}\}$ still forms a covering, and could be divided into
finitely many sub-collections $S'_1,\cdots,S'_N$ as obtained above.

The point of constructing this new covering is that the original covering is
with respect to the original metric $g$, and we need to refinish it so that each
open set of the new covering is saturated by the nilpotent and orthogonal
group actions, yet the whole collection of open sets could still be divided
into $N$ disjoint sub-collections, a necessity for our future step-by-step
gluing to obtain the global chopping.

Now we define
$\bar{D}_{j,k}^m:=B\left(\mathcal{N}(\bar{q}_{j,k}),r_{j,k}^m\right)$, with
$r^m_{j,k}:=(2+\frac{1}{6}m)\zeta'\alpha_0l_a(p_{i_{j,k}})$, for each $m\in
\{0,1,2,3,4,5,6\}$. 
We need this fattening of open sets in the covering since later we will need
to ``glue'' the local smoothings, see the forthcoming claim.

\subsubsection*{\textbf{Global chopping}} We now do the final step, the global
chopping. The method we follow is briefly given in~\cite{CGIII}, where the
curvature is assumed to be uniformly bounded, here we take the (changing)
truncated curvature scale into consideration.

 For the collections
$S'_1,\cdots, S'_N$, we first do the above local chopping for each $FE\cap
\bar{D}_{j,k}$ to obtain $\bar{W}_{j,k}$ with $t_{1,k}\approx
2^{-1}\Psi_{YM}(a,n,\eta)l_a(p_{i_{1,k}})$, and define
$\bar{U}_1:=\cup_{k}\bar{W}_{1,k}$ as an open subset of $\cup_iFB_i$.

For the second step, we modify members of $S'_2$. Notice that if some
$\bar{D}_{2,k}$ intersects some $\bar{W}_{1,k'}$ non-trivially, then we have
the estimates of the truncated curvature scales as before
\begin{enumerate}
\item[(H1)] $\quad \min\{l_a(p_{i_{1,k'}}),l_a(p_{i_{2,k}})\}\ \le\
\max\{l_a(p_{i_{1,k'}}),l_a(p_{i_{2,k}})\}\ \le\
\frac{1+\zeta'}{1-\zeta'}\min\{l_a(p_{i_{1,k'}}),l_a(p_{i_{2,k}})\}.$
\end{enumerate}
Renormalizing $g\mapsto l_a(p_{i_{2,k}})^{-2}g=:g_{2,k}$ will ensure that 
$$\sup_{\bar{W}_{1,k'}\cup
\bar{D}^5_{2,k}}|\Rm_{\bar{g}_{2,k}}|_{\bar{g}_{2,k}}\ \le
C\frac{1+\zeta'}{1-\zeta'},$$ with corresponding bounds on $|\nabla^k
\Rm_{\bar{g}_{2,k}}|_{\bar{g}_{2,k}}$.

Now we can chop locally within $\bar{D}_{2,k}^6$ (see~\cite{CGIII}): first chop
$\bar{Z}_{2,k}:=(\bar{W}_{1,k'}\cup \bar{D}_{2,k}^0)\cap \bar{D}^3_{2,k}$ to
obtain some $\bar{Q}^0_{2,k}$, then choose a smooth interpolation to glue the
newly chopped piece to the previously chopped ones. More specifically, we have
the following
\begin{claim}[Gluing the local choppings]
There is a smooth interpolation between the boundaries $\partial
\bar{W}_{1,k'}\cap (\bar{D}_{2,k}^4\backslash \bar{D}_{2,k}^3)$ and $\partial
\bar{Q}_{2,k}^0\cap (\bar{D}_{2,k}^2\backslash \bar{D}_{2,k}^1)$, so that we
could obtain some $\bar{R}_{2,k}^0\subset \bar{D}_{2,k}^4$, with the property
\begin{align*}\bar{R}_{2,k}^0\
=\ \bar{W}^0_{1,k'}\quad \text{on}\quad \bar{D}_{2,k}^4\backslash
\bar{D}_{2,k}^3,\quad \text{and}\quad \bar{R}_{2,k}^0\ =\ \bar{Q}_{2,k}^0\quad
\text{on}\quad \bar{D}_{2,k}^2.
\end{align*}
\end{claim}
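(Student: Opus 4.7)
The plan is to realize both $\bar W_{1,k'}^0$ and $\bar Q_{2,k}^0$ as sublevel sets of smooth defining functions with uniformly non-degenerate gradients on the overlap annulus, and then glue them by a convex combination governed by a radial cutoff depending only on the distance to $\mathcal{N}(\bar q_{2,k})$. First, I would rescale $g\mapsto l_a(p_{i_{2,k}})^{-2}g$ on $\bar W_{1,k'}\cup \bar D^6_{2,k}$; by (H1) this puts the geometry in the bounded-curvature regime uniformly, with the derivative bounds (P4) and (S4) available on both sets in a single chart, and with the Yomdin-type lower bound (Y1) giving $|\nabla F_W|,|\nabla F_Q|\ge \eta$ on the respective zero level sets, where $F_W:=\bar\rho^{\sharp}_{\bar W_{1,k'}\cup\bar D_{2,k}^0}-t_{1,k'}$ and $F_Q:=\bar\rho^{\sharp}_{(\bar W_{1,k'}\cup\bar D_{2,k}^0)\cap \bar D^3_{2,k}}-t_{2,k}$ are the smoothed defining functions used in the local chopping step (both averaged by the same mollifier $\varphi_{i_{2,k}}$ on $\bar D^6_{2,k}$).

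Next, I would pick a smooth cutoff $\chi\colon M\to[0,1]$ depending only on the $\bar g^{\varepsilon}$-distance to $\mathcal{N}(\bar q_{2,k})$, with $\chi\equiv 1$ on $\bar D^2_{2,k}$, $\chi\equiv 0$ outside $\bar D^3_{2,k}$, and $|\nabla^k\chi|\le C_k(\zeta'\alpha_0)^{-k}l_a(p_{i_{2,k}})^{-k}$. Radial dependence and (G6) guarantee that $\chi$ is $\bar N$- and $O(n)$-invariant. Defining
\[
F_R:=\chi F_Q+(1-\chi)F_W,\qquad \bar R^0_{2,k}:=\{F_R\le 0\}\cap \bar D^4_{2,k},
\]
we automatically obtain $\bar R^0_{2,k}=\bar Q^0_{2,k}$ on $\bar D^2_{2,k}$ and $\bar R^0_{2,k}=\bar W^0_{1,k'}$ on $\bar D^4_{2,k}\setminus \bar D^3_{2,k}$, as required, while $\bar N$-and-$O(n)$-invariance is preserved because both $F_W$ and $F_Q$ are invariant by (D1) and (S3).

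The main obstacle, and the only nontrivial estimate, is to show that $F_R$ still has $|\nabla F_R|>0$ on its zero-set inside the interpolation annulus $\bar D^3_{2,k}\setminus \bar D^2_{2,k}$, so that $\partial \bar R^0_{2,k}$ is smooth with $|II_{\partial \bar R^0_{2,k}}|\le C(\eta)l_a(p_{i_{2,k}})^{-1}$. The key point is that $F_W$ and $F_Q$ are mollifications of the \emph{same} distance-to-orbits function on the overlap, so $F_W-F_Q$ differs only by (i) restricting the set of orbits used in the infimum in (D5), which on $\bar D^3_{2,k}$ changes the value by at most $O(\zeta'\alpha_0 l_a(p_{i_{2,k}}))$, and (ii) the choice of level values $t_{1,k'},t_{2,k}$, which lie in Yomdin's interval $J_{\bar U}$ of length $\Psi_{YM}(a,n,\eta)\,l_a(p_{i_{2,k}})$. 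By the freedom in choosing $t_{2,k}$ within $J_{\bar U}$, we can arrange $\{F_W\le 0\}=\{F_Q\le 0\}$ to lie within a tube of width $o(\eta)\,l_a(p_{i_{2,k}})$ of each other in $\bar D^3_{2,k}$, so the gradient lower bound persists under the convex combination: $|\nabla F_R|\ge \tfrac\eta2-C\|\nabla\chi\|_\infty|F_W-F_Q|\ge \eta/4$. Standard Gauss-map bounds for sublevel sets with non-degenerate gradient then yield the curvature estimate on $\partial\bar R^0_{2,k}$, exactly in the form used in (C4)–(C5). This is the gluing step corresponding to Proposition A2.2 of~\cite{CFG92}, which remains valid here because (H1) makes the two pieces comparable in the rescaled metric.
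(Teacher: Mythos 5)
Your overall plan --- interpolate between two defining functions by a radially
dependent cutoff $\chi$ --- is a reasonable alternative to the paper's
device, but as written it has two gaps, one in the setup and one in the
quantitative estimate; both are fixed by adopting the paper's sharper
observation.

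First, your $F_W:=\bar\rho^{\sharp}_{\bar W_{1,k'}\cup\bar D_{2,k}^0}-t_{1,k'}$
is not a defining function for $\bar W^0_{1,k'}$: the mollified distance to
orbits of $\bar W_{1,k'}\cup\bar D^0_{2,k}$ vanishes on the whole of
$\mathcal{N}(\bar W_{1,k'}\cup\bar D^0_{2,k})$, so
$\{F_W\le 0\}$ is, roughly, a $t_{1,k'}$-thickening of
$\bar W_{1,k'}\cup\bar D^0_{2,k}$, strictly larger than $\bar W^0_{1,k'}$.
Then the conclusion $\bar R^0_{2,k}=\bar W^0_{1,k'}$ on
$\bar D^4_{2,k}\setminus\bar D^3_{2,k}$ fails. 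What the paper uses instead is
that $\bar W_{1,k'}\cap(\bar D^4_{2,k}\setminus\bar D^1_{2,k})$ is the \emph{zero set}
of $\bar\rho^{\sharp}_{\bar Z'_{2,k}}$ with
$\bar Z'_{2,k}:=\bar W_{1,k'}\cap(\bar D^5_{2,k}\setminus\bar D^0_{2,k})$,
so the level $0$ of that single function gives $\bar W_{1,k'}$ in the outer
annulus, with no offset $t_{1,k'}$ needed there at all.

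Second, and more structurally: the paper does not interpolate between two
functions, it interpolates the \emph{level} of one function. The crucial
point, which your estimate (i) misses, is that since $\bar Z_{2,k}$ and
$\bar Z'_{2,k}$ coincide on $\bar D^3_{2,k}\setminus\bar D^0_{2,k}$ and the
mollification radius is much smaller than the annular gap $r^{m+1}-r^m$,
one has the \emph{exact} identity
$\bar\rho^{\sharp}_{\bar Z_{2,k}}\equiv\bar\rho^{\sharp}_{\bar Z'_{2,k}}$ on
$\bar D^2_{2,k}\setminus\bar D^1_{2,k}$ --- no error term of order
$\zeta'\alpha_0 l_a$. The paper then takes
$\bar R^0_{2,k}$ as the sublevel set
$\{\bar\rho^{\sharp}_{\bar Z'_{2,k}}\le\lambda_{2,k}(d^{\bar q_{2,k}})\}$ with
a level function $\lambda_{2,k}$ running from $t_{2,k}$ to $0$.
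Your convex combination $F_R=\chi F_Q+(1-\chi)F_W$ carries the cross term
$(\nabla\chi)(F_Q-F_W)$, and your bound (i),
$|F_W-F_Q|=O(\zeta'\alpha_0\,l_a)$, multiplied by
$\|\nabla\chi\|_\infty\sim(\zeta'\alpha_0\,l_a)^{-1}$, yields an $O(1)$
contribution, not $\le\eta/4$. Your proposal to ``choose $t_{2,k}$ close to
$t_{1,k'}$'' to beat this is not available to the paper, which fixes
$t_{j,k}\approx 2^{-j}\Psi_{YM}\,l_a(p_{i_{j,k}})$ for the iteration, and in
any case does not address the $\bar\rho^{\sharp}_A$ vs.\ $\bar\rho^{\sharp}_B$
difference from the different base sets. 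Once you replace (i) with the
paper's exact coincidence of the two mollified distances on the intermediate
annulus --- at which point $F_Q-F_W$ reduces to the constant $t_{2,k}-t_{1,k'}$
wherever $\nabla\chi\neq0$ --- your computation would close, but at that
point the argument is essentially the paper's with a constant offset
disguised as a second function.
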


\begin{proof}[Proof of claim]
By the proof of the local smoothing, within $\bar{D}_{2,k}^6$, we have
$\bar{\rho}_{\bar{Z}_{2,k}}^{\sharp}$ as the smoothed distance to
$\mathcal{N}(\bar{Z}_{2,k})$, and
$\bar{Q}^0_{2,k}=(\bar{\rho}_{\bar{Z}_{2,k}}^{\sharp})^{-1}([0,t_{2,k}])$ for
some $t_{2,k}\in I_{2,k}$ with $t_{2,k}\approx 2^{-2}\Psi_{YM}(a,n,\eta)\
l_a(p_{i_{2,k}})$. In addition, we could set
$\bar{Z}_{2,k}':=\bar{W}_{1,k'}\cap (\bar{D}^5_{2,k}\backslash
\bar{D}^0_{2,k})$, with the smoothing of the distance to the orbit of which
being $\bar{\rho}^{\sharp}_{\bar{Z}'_{2,k}}$. Notice that
$$\bar{\rho}_{\bar{Z}_{2,k}}\ \equiv\ \bar{\rho}_{\bar{Z}'_{2,k}}\quad
\text{on}\quad \bar{D}_{2,k}^3\backslash \bar{D}_{2,k}^0,$$ therefore
$$\bar{\rho}^{\sharp}_{\bar{Z}_{2,k}}\ \equiv\
\bar{\rho}^{\sharp}_{\bar{Z}'_{2,k}}\quad \text{on}\quad
\bar{D}_{2,k}^2\backslash \bar{D}_{2,k}^1,$$ and thus $$\bar{Q}^0_{2,k}\cap
(\bar{D}^2_{2,k}\backslash \bar{D}^1_{2,k})\ =\
(\bar{\rho}_{\bar{Z}_{2,k}}^{\sharp})^{-1}([0,t_{2,k}])\ =\
(\bar{\rho}_{\bar{Z}'_{2,k}}^{\sharp})^{-1}([0,t_{2,k}]).$$ On the other hand,
$$\bar{W}_{1,k'}\cap (\bar{D}^4_{2,k}\backslash \bar{D}^1_{2,k})\ =\
(\bar{\rho}_{\bar{Z}'_{2,k}}^{\sharp})^{-1}(0),$$ and now the existence of a
controlled interpolation required above is easily seen: choose a smooth cut-off
function $\lambda_{2,k}:[r^1_{2,k},r^4_{2,k}]\rightarrow [0,t_{2,k}]$ with
controlled derivatives, such that
$\lambda_{2,k}|_{[r^1_{2,k},r^2_{2,k}]}=t_{2,k}$ and
$\lambda_{2,k}|_{[r^3_{2,k},r^4_{2,k}]}=0$, and the desired region
$\bar{R}^0_{2,k}$ is defined as $$\bar{R}_{2,k}^0\ :=\
\left(\bar{D}_{2,k}^2\cap\bar{Q}_{2,k}^0\right)\ \cup\
\left(\bar{\rho}_{\bar{Z}'_{2,k}}^{\sharp}
\left(\lambda_{2,k}(d^{\bar{q}_{2,k}})\right)\right)^{-1}([r^1_{2,k},r^4_{2,k}]).$$
\end{proof}
Clearly $\bar{R}_{2,k}^0$ is $\bar{N}$-invariant and has the expected smooth
boundary whose second fundamental form has control $|II_{\partial
\bar{R}^0_{2,k}}|\ \le\ C\ l_a(p_{i_{2,k}})^{-1}$.

Let $\bar{R}_{2,k}:=\mathcal{O}(\bar{R}_{2,k}^0)$, then the isometric action of
$O(n)$ and the invariance of $\bar{\rho}_{\bar{Z}_{2,k}}^{\sharp}$,
$\bar{\rho}_{\bar{Z}'_{2,k}}^{\sharp}$ under such actions ensure that $\partial
\bar{R}_{2,k}$ is smooth with controlled second fundamental form $|II_{\partial
\bar{R}_{2,k}}|\le C\ l_a(p_{i_{2,k}})^{-1}$. Do such adjustments for each
$\bar{D}^0_{2,k}\in S_2$ and let $\bar{U}_2:=\bar{U}_1\cup
(\cup_k\bar{R}_{2,k})$, we have finished the second step.

Iterate the above procedure for $N$ steps. At the $j$-th step ($j\ge 2$), we
modify members of $\bar{S}_j$ with $t_{j,k}\approx 2^{-j}\Psi_{YM}(a,n,\eta)\
l_a(p_{i_{j,k}})$ for each $k$. By the Harnack inequality of (H1) for
intersecting balls, we could produce a neighborhood $\bar{U}_j$ of $FE$, which
is contained in $FB(E,\frac{a}{2})$, invariant under both $\bar{N}$- and
$O(n)$-actions, and has a smooth, controlled boundary $$|II_{\partial
\bar{U}_j}|\le C\ l_a^{-1}.$$

By (N3) and the invariance of $\bar{U}_N$ under the $O(n)$-action, define
$U:=\bar{U}_N\slash O(n)$, then $E\subset U\subset B(E,\frac{a}{2})$, and $U$
is saturated by the $a$-standard $N$-structure on $B(E,\frac{a}{2})$, with a
smooth and controlled boundary $$|II_{\partial U}|\le C_{GC}\ l_a^{-1}.$$

\quad

\quad
\end{document}